\newcommand{\leqnomode}{\tagsleft@true}
\newcommand{\reqnomode}{\tagsleft@false}
\date{}
\def\nd{\noindent}
\def\thend{\rule{3mm}{3mm}}
\newtheorem{theorem}{Theorem}[section]
\newtheorem{cor}{Corollary}[section]
\newtheorem{prop}{Proposition}[section]
\newtheorem{lem}{Lemma}[section]
\newtheorem{rmk}{Remark}[section]
\newcommand{\eps}{\epsilon}
\newcommand{\E}{\bar{\eps}}
\begin{document}
	\title[Singular elliptic systems via the nonlinear Rayleigh quotient]{Singular nonlocal elliptic systems via nonlinear Rayleigh quotient}
	\vspace{1cm}
	
	\author{Edcarlos D. Silva}
	\address{Edcarlos D da Silva \newline  Universidade Federal de Goi\'as, IME, Goi\^ania-GO, Brazil-----{\bf\it Email address: edcarlos@ufg.br}}
	
	\author{Elaine A. F. Leite}
	\address{Elaine A. F. Leite \newline Instituto Federal de Goi\'as, Campus Goi\^anica, Goi\^ania-GO, Brazil-----{\bf\it Email address: {elaine.leite@ifg.edu.br}} }
	
	\author{Maxwell L. Silva}
	\address{Maxwell L. Silva \newline Universidade Federal de Goi\'as, IME, Goi\^ania-GO, Brazil-----{\bf\it Email address:  maxwell@ufg.br} }

	\subjclass[2010]{35A01 ,35A15,35A23,35A25}

	\keywords{Fractional Laplacian, Nonlocal elliptic systems, Singular nonlinearities, Nonlinear Rayleigh quotient, Nehari set}
	\thanks{The first author was partially supported by CNPq with grant 309026/2020-2.}
	\begin{abstract}
		In the present work, we establish the existence of two positive solutions for singular nonlocal elliptic systems. More precisely, we consider the following nonlocal elliptic problem: 
		$$\left\{\begin{array}{lll} 
			(-\Delta)^su +V_1(x)u =   \lambda\frac{a(x)}{u^p} +  \frac{\alpha}{\alpha+\beta}\theta |u|^{\alpha - 2}u|v|^{\beta},  \,\,\, \mbox{in}  \,\,\, \mathbb{R}^N, \\
			(-\Delta)^sv +V_2(x)v=   \lambda \frac{b(x)}{v^q}+  \frac{\beta}{\alpha+\beta}\theta |u|^{\alpha}|v|^{\beta-2}v, \,\,\, \mbox{in}  \,\,\, \mathbb{R}^N, \\
			
		\end{array}\right. \;\;\;(u, v) \in H^s(\mathbb{R}^N) \times H^s(\mathbb{R}^N),$$
		where $ 0<p \leq q < 1<\;\alpha, \beta  \;,\;2<\alpha + \beta < 2^*_s$, $\theta > 0, \lambda > 0, N > 2s$, and $s \in (0,1)$. The potentials $V_1, V_2: \mathbb{R}^N \to \mathbb{R}$ are continuous functions which are bounded from below. Under our assumptions, we prove that there exists the largest positive number $\lambda^* > 0$ such that our main problem admits at least two positive solutions for each $\lambda \in (0, \lambda^*)$. Here we apply the nonlinear Rayleigh quotient together with the Nehari method. The main feature is to minimize the energy functional in Nehari set which allows us to prove our  results without any restriction on the size of parameter $\theta > 0$. Moreover, we shall consider the multiplicity of solutions for the case $\lambda = \lambda^*$ where degenerated points are allowed.
	\end{abstract}
	\maketitle
	
	\section{Introduction}
	In this work, we consider the existence and multiplicity of positive solutions for singular nonlocal elliptic systems. More specifically, we study the nonlocal elliptic problem:
	
	\begin{equation}\left\{\begin{array}{lll}\label{sistema Principal singular} 
			(-\Delta)^su +V_1(x)u =   \lambda\frac{a(x)}{u^p} +  \frac{\alpha}{\alpha+\beta}\theta |u|^{\alpha - 2}u|v|^{\beta},  \,\,\, \mbox{in}  \,\,\, \mathbb{R}^N, \\
			(-\Delta)^sv +V_2(x)v=   \lambda \frac{b(x)}{v^q}+  \frac{\beta}{\alpha+\beta}\theta |u|^{\alpha}|v|^{\beta-2}v, \,\,\, \mbox{in}  \,\,\, \mathbb{R}^N, \\
		\end{array}\right. \;\;\;(u, v) \in H^s(\mathbb{R}^N) \times H^s(\mathbb{R}^N).\;\;\tag{$S_\lambda$}
	\end{equation}
	The potentials $V_i: \mathbb{R}^N \to \mathbb{R}, i = 1, 2$ are continuous functions and $\lambda > 0, \theta > 0$. Here we also mention that $ 0<p \leq q < 1<\;\alpha, \beta  \;,\;2<\alpha + \beta < 2^*_s$, $2^*_s = 2 N/(N - 2s)$, $\theta > 0, \lambda > 0, N > 2s$, and $s \in (0,1)$. Later on, we shall discuss the hypotheses on $V, s, p, q$ and $\alpha, \beta$.
	
	For the scalar case we mention that nonlocal semilinear elliptic problems have been attracted many attention in the last years, see \cite{bisci,DipierroRegul, pala, guia,felmer,AIalMoscSqua2016,servadei,servadei1,servadei2,secchi,secchii} and reference therein. Furthermore, we observe there are several physical applications such as nonlinear optics. Furthermore, the fractional Laplacian operator has been accepted as a model for diverse physical phenomena such as diffusion-reaction equations, quasi-geostrophic theory, Schr\"odinger equations, Porous medium problems, see for instance \cite{aka,biboa,ya,consta,pablo,las}. For further applications such as continuum mechanics, phase transition phenomena, populations dynamics, image processes, game theory, see \cite{ber,cafa,las}. It is important to stress that  semilinear nonlocal reaction-diffusion equations have attracted some attention in the last few years. The main motivation for this kind of problem is to combine nonlinear and quasilinear nonlocal terms in order to model a nonlinear diffusion. On this subject we refer the reader to \cite{vasquez1,vasquez2,vasquez3} and references therein.

	Now, we mention that nonlocal elliptic systems have been widely studied considering some tools provided by variational methods, see \cite{ambro1,ambro2,brande,colorado,jotinha}.  For the local case, that is, assuming that $s = 1$ we refer the reader to the important works for elliptic systems \cite{defigueiredo,maia,oliveira,pompo}. In those works was proved several results on existence and multiplicity of solutions taking into account some hypotheses on the potential as well as in the nonlinearity. Recall that there exist some results on singular elliptic problems, see \cite{giai,mo}.  For further results on fractional elliptic system we refer the interested reader to \cite{guo,lu,Laskin1}.  It is important to recall that a pair $(u,v)$ is said to be a ground state solution for the System  \eqref{sistema Principal singular} when $(u,v)$ has the minimal energy among all nontrivial solutions. At the same time, a nontrivial solution $(u,v)$ is a bound solution for the System  \eqref{sistema Principal singular} whenever $(u,v)$ has finite energy.
	
	Recall that in \cite{colorado} the authors considered the following nonlocal elliptic system
	\begin{equation}\label{lindo}
		\left\{\begin{array}{lll}
			(-\Delta)^su + \lambda_1 u =   \mu_1|u|^{2p - 2}u +  \beta |u|^{p - 2}u |v|^{p},  \;\;\; \mbox{in}\;\;\; \mathbb{R}^N, \\
			(-\Delta)^sv + \lambda_2 v= \mu_2 |v|^{2 p - 2}v + \beta |u|^{p}|v|^{p-2}v,  \;\;\; \mbox{in}\;\;\; \mathbb{R}^N, \\
			(u, v) \in H^s(\mathbb{R}^N) \times  H^s(\mathbb{R}^N).
		\end{array}\right.
	\end{equation}
	where $N \in \{1, 2,3\}$, $\lambda_i, \mu_i > 0, i = 1,2, p \geq 2, (p-2)N/p < s < 1$. The authors proved several results on existence of ground and bound state solutions assuming that $p > 2$ or $p = 2$. The main ingredient in that work was to combine the Nehari method and the size of $\beta > 0$ in order to avoid semitrivial solutions. In fact, assuming that $\beta > 0$ is small, the authors proved also that the bound state for the Problem \eqref{lindo} is a semitrivial solution. On the other hand, assuming that $\beta > 0$ is large enough, the authors proved existence of ground state solutions $(u,v)$ for the Problem \eqref{lindo} where $u \neq 0$ and $v \neq 0$. In other words, for each $\beta > 0$ large enough, the authors ensured that for the Problem \eqref{lindo} there exists at least one non-semitrivial ground state solution.  At the same time, we observe that Problem \eqref{lindo} is superlinear at the origin and at infinity.  Motivated in part by the previous discussion we shall consider existence and multiplicity of solutions for the System  \eqref{sistema Principal singular} assuming that the nonlinear term admits a singular part and another which provides us the coupling term. Furthermore, for the coupling term is a more general function due to the fact that $1 < \alpha, \beta < 2^*_s$ where $\alpha$ and $\beta$ can be different. Hence, our main objective in the present work is to guarantee existence and multiplicity of solutions without any restriction on the size of $\theta$. For similar results on nonlocal elliptic problems we refer the reader also to \cite{ai,chen}. It is important to emphasize that $(0, 0)$ is not a trivial  solution for the System \eqref{sistema Principal singular}. On the other hand, given any weak solution $(u,v)$ for the System \eqref{sistema Principal singular}, we obtain that $u$ and $v$ are nonzero in $\Omega \subset \mathbb{R}^n$ for each subset $\Omega$ of positive Lebesgue measure. Our approach applies the minimization method in Nehari set which is related to the nonlinear Rayleigh quotient. Namely, there exists $\lambda^* > 0$ such that for each $\lambda \in (0, \lambda^*]$ the Nehari can be applied. 
	
	Singular elliptic problems considering local or nonlocal terms have been studied in recent years, see \cite{bai,canino, co,ga,gh,gia} and references therein. The main obstacle here is to apply variational methods due to the fact that the energy functional is only continuous. Indeed, looking for the singular term, the Gateaux derivatives for the energy functional are not well-defined in general. Many other types of research have been considered using some tools such as the sub and supersolution methods showing the existence and multiplicity of solutions for singular elliptic problems. On this subject, we refer the interested reader to \cite{giaa,hai,her,hi1,hi2}. 
	
	It is important to stress that the System \eqref{sistema Principal singular} has an associated energy functional where the first derivative and the second derivative do not make sense for each directions. However, we are able to use the Nehari method depending on the size of $\lambda > 0$. More specifically, we define the Nehari set $\mathcal{N}_{\lambda}$ where we can consider a minimization problem. Moreover, we split the Nehari set into three disjoint subsets given by $\mathcal{N}_{\lambda}^+,\mathcal{N}_{\lambda}^-$ and $\mathcal{N}_{\lambda}^0$. Recall also that a point $u$ is named a degenerate point whenever $u \in \mathcal{N}_{\lambda}^0$. Otherwise, the point $u$ is called as non-degenerated. Another difficulty in the present work is to avoid degenerate points where the standard minimization procedure for singular elliptic problems does not work anymore. Hence, for degenerate points, the Nehari set is not a natural constraint. In order to overcome this situation we prove that there exists $\lambda^* > 0$ such that for each $\lambda \in (0, \lambda^*)$ the energy functional does not admit any degenerate point. Furthermore, for $\lambda= \lambda^*$, we prove that the set of degenerate points is always not empty. Hence, our main contribution in the present work is to consider general singular nonlocal elliptic systems defined in the whole $\mathbb{R}^N$ exploring existence and multiplicity of solutions via the Nehari method and the Rayleigh quotient.  
	
	It is worthwhile to stress that there exists another extreme $\lambda_* < \lambda^*$ such that for each $\lambda \in (0, \lambda_*)$ the System \eqref{sistema Principal singular} admits at least one weak solution $(u,v)$ such that $E_{\lambda}(u,v) > 0$ where $E_{\lambda}$ denotes the energy functional. Furthermore, $E_{\lambda}(u,v) = 0$ for $\lambda = \lambda_*$ and $E_{\lambda}(u,v) < 0$ for each $\lambda \in (\lambda_*,\lambda^*)$. In order to use the nonlinear Rayleigh quotient we need to consider the fibering map $t \mapsto R_n(tu, tv)$ where $R_n$ is a suitable continuous functional. In the spirit of \cite{yavdat1,yavdat2} we shall prove that the map $t \mapsto R_n(tu, tv)$  has a unique critical point which is denoted by $t_n(u, v)$ where $(u,v) \neq (0,0)$. On the other hand, we need to prove that the functional $(u,v) \mapsto R_n(t_n(u, v)(u,v)$ is continuous to ensure the strong convergence of minimizers sequences in the Nehari set. The main idea here is to employ the Implicit Function Theorem showing that $(u,v) \mapsto R_n(t_n(u, v)(u,v)$ and $(u,v) \mapsto t_n(u,v)$ are in $C^0$ class.

	Another obstacle in the present work is to ensure that minimizers in the Nehari set provide us weak solutions for our main problem. The main feature here is to apply the same procedure discussed \cite{Yijing2001,silvasing2018}. More specifically, we shall prove that any minimizer $(u,v)$ in the Nehari set is a weak solution for the System \eqref{sistema Principal singular} using some auxiliary nonnegative test functions given by $((u + \varepsilon\phi_1)^+, (v + \varepsilon\phi_2)^+)$ where $(\phi_1, \phi_2)$ is any pair of functions in the Sobolev space. Moreover, looking for general testing functions, the nonlocal term in our main problem brings us some difficulties. Indeed, for any testing function $(\phi_1, \phi_2) \in X$, we need to control the behavior of the Gagliardo semi-norm for the pair $(u,v)$ considering some fine estimates. In this procedure, we prove also that $u, v > 0$ almost everywhere in $\mathbb{R}^N$. Furthermore, we ensure that $\mathcal{N}_{\lambda}^0 = \emptyset$ for each $\lambda \in (0, \lambda^*)$. Therefore, the System $\eqref{sistema Principal singular}$ has at least two positive weak solutions which stay in  $\mathcal{N}_{\lambda}^+$ and $\mathcal{N}_{\lambda}^-$, respectively.
	
	For the case $\lambda = \lambda^*$ we prove that $\mathcal{N}_{\lambda^*}^0 \ne \emptyset$. Hence, we prove existence and multiplicity of solutions for the System \eqref{sistema Principal singular} using an auxiliary sequence $(u_k, v_k) \in \mathcal{N}_{\lambda_k}^+$ and $(z_k, w_k) \in \mathcal{N}_{\lambda_k}^-$ where $0 < \lambda_k < \lambda^*$ and $\lambda_k \to \lambda^*$. The main difficulty here is to guarantee that our main problem does not admit any weak solution in $\mathcal{N}_{\lambda^*}^0$. This feature can be understood as a nonexistence result for weak solutions for the System \eqref{sistema Principal singular} in the set $\mathcal{N}_{\lambda^*}^0$.

	\subsection{Statement of the main results}
	As was told in the introduction the main objective in the present work is to find the existence and multiplicity of weak solutions for the System \eqref{sistema Principal singular}. In order to that we shall explore some extra conditions of $\lambda > 0$ and $p, q$.
	Throughout this work, we assume that 
	\begin{itemize}
		\item[($P_0$)]   $a \in L^{\frac{2}{1 + p}}(\mathbb{R}^N)$ , $b \in L^{\frac{2}{1 + q}}(\mathbb{R}^N)$ and $a(x),b(x) >0 \ \ \forall  x \in \mathbb{R}^N$;
		\item[($P_1$)] $a \notin L^{1}(\mathbb{R}^N)$ and $\beta \ge \frac{2^*_s}{2}(3 - \alpha - p)$;
		\item[($P$)]  $\alpha,\; \beta\;>\; 1,\;\; 0 < p \le q < 1, \;\;2\;      < \;\alpha + \beta \;< 2^*_s, \;\;      \theta > 0, \lambda > 0, N >        2s , s \in (0,1)$;
		\item [($V_0$)] $V_1(x)\ge V_0 > 0$ and $V_2(x)\ge V_0 > 0, \forall x \in \mathbb{R}^N$;
		\item [($V_1'$)]
		$|\{ x \in \mathbb{R}^n; V_i(x) \le M\}| < +\infty, \;\; \  i = 1, 2, \;\;\;\forall \;\; M > 0$.
	\end{itemize}

	Now, we consider our setting by choosing $X\! = \!X_1 \!\times\! X_2$. In the present work, we assume that the integrals are taken in $\mathbb{R}^N$ and
	$$X_j=\left \{u \in H^s(\mathbb{R}^N); \int V_j(x)u^2dx<\infty \right\}, \;\;j = 1, 2.$$  
	Here we consider the fractional Sobolev space as follows $$H^s(\mathbb{R}^N) = \left\{   u \in L^2(\mathbb{R}^N)\;;\; [u]<\infty, \int (-\Delta)^su\varphi dx = \int u(-\Delta)^s\varphi dx \forall \varphi \in C^{\infty}_c(\mathbb{R}^N) \right \}$$ where the Gagliardo seminorm of $u$ is given by $$[u]^2:=\iint\frac{\left[u(x)-u(y)\right]^2}{|x-y|^{N+2s}} dxdy.$$ 
	In $X$ we define the norm and the inner product as follows:
	$$\begin{array}{c} \Vert(u,v)\Vert ^2:=[(u, v)]^2 +\int  V_1(x) u^2 +V_2(x) v^2 dx;\\\\
		\left<(u,v),(\varphi, \psi)\right>  = \int  \int  \frac{\left[ u(x)-u(y)\right] \left[ \varphi(x) - \varphi(y)\right]}{\vert x-y\vert^{N+2s}} + \frac{\left[ v(x)-v(y) \right] \left[ \psi(x) - \psi(y)\right]}{\left [ x-y \right ]^{N+2s}} dxdy + \int  V_1 u \varphi +V_2  v \psi dx.
	\end{array}
	$$
	Notice also that the Gagliardo seminorm of the ordered pair $(u,v)$ is represented by $[(u, v)]^2:=[u]^2+[v]^2,\; (u, v) \in X.$ 
	Furthermore, by using ($V_0$) and ($V_1'$), we guarantee the continuity and the compactness of $X\hookrightarrow L^{r_1}(\mathbb{R}^N)\times L^{r_2}(\mathbb{R}^N)$ for $r_1, r_2 \in [2, 2^*_s)$ where $2^*_s=2N/(N-2s)$, see for instance \cite{bisci,wang}. 
	
	\begin{lem}\label{imersoes Sobolev 2, 2*s} (\cite[Lemma 1]{Vincenzo2}). Suppose $(V_0),(V_1'),s \in (0,1)$ and $2s < N$. Then there exists some  $C = C(N, p, s)>0$ such that
		$\Vert (u, v) \Vert_{r_1 \times r_2} \le C \Vert (u, v) \Vert$ with $(u,v) \in X$ holds for each $r_1, r_2 \in [2, 2^*_s].$
		Thus $X$ is continuously embedded in $L^{r_1}(\mathbb{R}^N) \times L^{r_2}(\mathbb{R}^N)$ where we take the usual norm
		$\Vert (u, v)\Vert_{r_1 \times r_2} := \Vert u \Vert_{r_1} + \Vert v \Vert_{r_2}.$
		Moreover, $X$ is compactly embedded into $L^{r_1}(\mathbb{R}^N)\times L^{r_2}(\mathbb{R}^N)$ for each $r_1, r_2 \in [2, 2^*_s)$.      
	\end{lem}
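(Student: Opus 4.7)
The plan is to reduce the statement to the scalar embedding for each of the factor spaces $X_j$, which is the content of the cited result in \cite{Vincenzo2}, and then assemble the product version from it. Since $\|(u,v)\|^2 = \|u\|_{X_1}^2 + \|v\|_{X_2}^2$ with $\|u\|_{X_j}^2 := [u]^2 + \int V_j(x) u^2 \,dx$, it suffices to prove that for $j=1,2$ one has $X_j \hookrightarrow L^r(\mathbb{R}^N)$ continuously for every $r \in [2, 2^*_s]$ and compactly for every $r \in [2, 2^*_s)$. Then the sum $\|u\|_{r_1} + \|v\|_{r_2}$ is controlled by $\|u\|_{X_1} + \|v\|_{X_2} \le \sqrt{2}\, \|(u,v)\|$, giving the product embedding inequality with a possibly enlarged constant $C$.

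For the continuous embedding of the scalar space, first I would note that $(V_0)$ gives $\|u\|_2^2 \le V_0^{-1} \int V_j u^2 \,dx \le V_0^{-1}\|u\|_{X_j}^2$, settling the case $r=2$. For $r=2^*_s$, the classical fractional Sobolev inequality provides $C_s>0$ with $\|u\|_{2^*_s}^2 \le C_s [u]^2 \le C_s \|u\|_{X_j}^2$. The intermediate exponents $r \in (2, 2^*_s)$ then follow by the standard interpolation inequality $\|u\|_r \le \|u\|_2^{\tau}\|u\|_{2^*_s}^{1-\tau}$ for a suitable $\tau \in (0,1)$.

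For compactness in the range $r \in [2, 2^*_s)$, I would take a bounded sequence $(u_n) \subset X_j$ and extract a weak limit $u$ (along a subsequence). On any ball $B_R$ of $\mathbb{R}^N$ the standard compact local fractional Sobolev embedding yields $u_n \to u$ strongly in $L^2(B_R)$. To upgrade to global strong convergence in $L^2(\mathbb{R}^N)$, I would exploit $(V_1')$ via the Bartsch--Wang splitting: for arbitrary $M>0$, the set $A_M := \{ V_j \le M\}$ has finite measure, and on $A_M^c$ the bound $\int_{A_M^c} u_n^2 \,dx \le M^{-1}\int V_j u_n^2\,dx \le M^{-1}\|u_n\|_{X_j}^2$ can be made uniformly small by choosing $M$ large; on $A_M$ one uses local compactness together with the finite measure to get smallness of the tails in $B_R^c \cap A_M$ via Hölder and the $L^{2^*_s}$-bound. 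Combining yields $u_n \to u$ in $L^2(\mathbb{R}^N)$, and interpolation with the uniform $L^{2^*_s}$-bound transfers this to every $r \in [2, 2^*_s)$.

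Finally, for the product statement, I would take a bounded sequence $(u_n,v_n)\subset X$ and apply the scalar compactness successively: pass to a subsequence so that $u_n \to u$ in $L^{r_1}(\mathbb{R}^N)$, then to a further subsequence so that $v_n \to v$ in $L^{r_2}(\mathbb{R}^N)$, whence $(u_n,v_n)\to (u,v)$ in $L^{r_1}\times L^{r_2}$ with respect to the norm $\|(u,v)\|_{r_1\times r_2}$. The main technical obstacle is the compactness step, specifically the Bartsch--Wang tail estimate requiring $(V_1')$; everything else reduces to classical fractional Sobolev inequalities and interpolation, and it is exactly this point that appears as the cited lemma in \cite{Vincenzo2}, so in practice the proof amounts to invoking that result on each factor and taking a diagonal subsequence.
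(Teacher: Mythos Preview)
The paper does not supply its own proof of this lemma: it is stated with an explicit citation to \cite[Lemma~1]{Vincenzo2} and no proof environment follows. Your outline is a correct sketch of the standard argument underlying that cited result---reducing to the scalar embedding $X_j \hookrightarrow L^r(\mathbb{R}^N)$ via the fractional Sobolev inequality, interpolation, and the Bartsch--Wang tail estimate driven by $(V_1')$, then assembling the product version---so there is nothing to compare against in the paper itself.
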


	The energy functional associated with the System \eqref{sistema Principal singular} denoted by $E_{\lambda}: X \to \mathbb{R}$ can be written as follows:
	\begin{eqnarray} \label{E cap 2}
		E_{_\lambda}(u,v) \nonumber =   \frac{1}{2}\Vert (u, v) \Vert^2 - \frac{\lambda}{1 - p}\int a(x)\vert u \vert ^{1 - p}dx - \frac{\lambda}{1 - q} \int b(x) \vert v \vert ^{1 - q}dx 
		-   \frac{\theta}{\alpha + \beta}\int\vert u \vert^{\alpha}\vert v \vert^{\beta} dx, \;\;\; (u, v) \in X.
	\end{eqnarray}
	We say that $(u, v)\in X$ is a weak solution to the System \eqref{sistema Principal singular} if and only if, for every $(\varphi_1, \varphi_2) \in X$, we obtain that 
	
	\begin{eqnarray}\label{equfracasingula}
		\left<(u, v), (\varphi_1, \varphi_2)\right> -  \frac{\theta}{\alpha + \beta}\int  \alpha\vert u\vert^{\alpha - 2}u\varphi_1\vert v\vert^{\beta}-\beta\vert u\vert^{\alpha}\vert v\vert^{\beta-2}v\varphi_2dx  - \lambda \int  a(x)\vert u\vert^{-p}\varphi_1 -  b(x)\vert v\vert^{-q}\varphi_2dx = 0.&&
	\end{eqnarray}
	
	\begin{prop}\label{uvnaonulas} Assume ($P_0$) and ($P$).  Let $(u, v) \in X$ be such that \eqref{equfracasingula} is satisfied. Then the sets $\{x\;;\;u(x)=0\}$ and $\{x\;;\;v(x)=0\}$ have zero Lebesgue measure.
	\end{prop}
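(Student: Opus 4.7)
The plan is to proceed by contradiction. Suppose, for instance, that $A := \{x \in \mathbb{R}^N : u(x) = 0\}$ has positive Lebesgue measure. The strategy is to exhibit a nonnegative test pair $(\varphi_1,0) \in X$ for which every term appearing in \eqref{equfracasingula} is finite \emph{except} the singular contribution $\lambda\int a(x)|u|^{-p}\varphi_1\,dx$, which I will show equals $+\infty$. This immediately contradicts \eqref{equfracasingula}, and the case $\{v=0\}$ follows by a symmetric argument testing with $(0,\varphi_2)$.

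By inner regularity of Lebesgue measure, I first select a bounded measurable set $A_0 \subset A$ with $0 < |A_0| < \infty$, enclose it in a ball $B \subset \mathbb{R}^N$, and choose $\varphi_1 \in C_c^\infty(\mathbb{R}^N)$ with $\varphi_1 \geq c > 0$ on $B$. Since $V_1$ is continuous, it is bounded on the compact support of $\varphi_1$, so $\varphi_1 \in X_1$, hence $(\varphi_1,0) \in X$ is admissible. The non-singular terms are now controlled directly: $\langle (u,v),(\varphi_1,0)\rangle$ is finite by the Cauchy--Schwarz inequality in $X$, while the coupling integral $\int |u|^{\alpha-1}|v|^{\beta}\varphi_1\,dx$ is finite by H\"older's inequality together with the Sobolev embedding provided by Lemma \ref{imersoes Sobolev 2, 2*s}, using the fact that $\alpha + \beta < 2^*_s$.

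To see that the singular term diverges, I would apply the Monotone Convergence Theorem to the nonnegative, pointwise increasing sequence $a(x)(|u(x)|+1/n)^{-p}\varphi_1(x)$, whose limit equals $a(x)|u(x)|^{-p}\varphi_1(x)$ (with the natural convention $0^{-p}=+\infty$). On $A_0$ the approximants equal $n^{p}\,a(x)\varphi_1(x) \geq c\,n^p a(x)$, so
\[
\int a(x)\bigl(|u|+\tfrac{1}{n}\bigr)^{-p}\varphi_1\,dx \;\geq\; c\,n^p \int_{A_0} a(x)\,dx \;\longrightarrow\; +\infty,
\]
using $a>0$ and $|A_0|>0$. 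Thus $\int a(x)|u|^{-p}\varphi_1\,dx = +\infty$, which is incompatible with \eqref{equfracasingula}, where every other term is finite.

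The main obstacle is essentially conceptual rather than technical: one has to notice that the weak formulation \eqref{equfracasingula} implicitly demands integrability of $a(x)|u|^{-p}\varphi_1$ against \emph{every} $\varphi_1 \in X_1$, so exhibiting any admissible test function that ``feels'' the zero set of $u$ already produces the contradiction. Once a compactly supported smooth bump is recognized as a valid element of $X$ (via continuity of $V_1,V_2$), the proof reduces to the elementary monotone convergence estimate above.
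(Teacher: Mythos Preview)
Your proof is correct and rests on the same core observation as the paper's: if $u$ vanishes on a set of positive measure, then the singular integral $\int a(x)|u|^{-p}\varphi_1\,dx$ diverges for a suitable compactly supported bump function, which is incompatible with the weak formulation \eqref{equfracasingula}.

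The framing differs slightly. The paper argues via the difference quotient: it writes $P(u)=\int a|u|^{1-p}$, splits $\mathbb{R}^N$ into $\Omega_o=\{u=0\}$ and its complement, and observes that on $\Omega_o$ one has $\frac{1}{t}\int_{\Omega_o}a|t\varphi|^{1-p}=t^{-p}\int_{\Omega_o}a\varphi^{1-p}\to\infty$, so the Gateaux derivative $P'(u)\varphi$ fails to exist. You instead work directly with the integral appearing in \eqref{equfracasingula} and use the Monotone Convergence Theorem on $a(|u|+1/n)^{-p}\varphi_1$. Your route is arguably cleaner, since it matches the weak formulation as literally stated (an identity involving $\int a|u|^{-p}\varphi_1$) rather than passing through the derivative of $P$; the paper's phrasing leaves implicit the step linking non-existence of $P'(u)\varphi$ to failure of \eqref{equfracasingula}. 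Both arguments are equally elementary and yield the same contradiction.
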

	\begin{proof} Assume that $\Omega_o:=\{x\;;\;u(x)=0\}$ has positive measure. Choosing any positive function $\varphi\in C_o^{\infty}(\Omega)$ where $|supp\;\varphi\cap\Omega_o|>0$ we obtain that 
		$$\frac{1}{t}\int_{\Omega_o^c} a(x)\left[|u(x)+t \varphi(x) |^{1 - p}-|u(x)|^{1-p}\right]dx\to \int_{\Omega_o^c} a(x)|u|^{-1-p}u\varphi dx,\;t\to 0.$$ 
		On the other hand, by using the fact that $\lim\limits_{t \to 0}\frac{1}{t^p}\int_{\Omega_o} a(x)\varphi^{1 - p}(x) dx$ does not exist and taking into account that $P(u): = \int a(x)|u|^{1-p}dx$, we infer that the limit $$P'(u)\varphi = \lim\limits_{t \to 0} (\frac{1}{t}\int_{\Omega_o^c} a(x)\left[|u(x)+t \varphi(x) |^{1 - p}-|u(x)|^{1-p}\right]dx\;\;+\;\;\frac{1}{t}\int_{\Omega_o} a(x)|t \varphi(x) |^{1 - p}dx)$$
		does not make sense. This ends the proof.
	\end{proof} 
	Recall that $E_{_\lambda} \in C^0(X, \mathbb{R}^N)$ since we are looking singular elliptic problems. Then the energy functional is not differentiable anymore. However, we compute the expression $E'_{_\lambda}(u,v)(u,v)$ for each $(u,v) \in X$. Namely, we obtain that
	\begin{eqnarray} \label{E' cap 2}
		E'_{_\lambda}(u,v)(u,v)  =    \Vert (u, v) \Vert^2 - \lambda\int a(x)\vert u \vert ^{1 - p}dx - \lambda\int b(x) \vert v \vert ^{1 - q}dx
		-  \theta\int \vert u \vert^{\alpha}\vert v \vert^{\beta} dx.
	\end{eqnarray}
	Similarly, we compute also the second derivative as follows
	\begin{eqnarray} \label{E'' cap 2}
		E''_{_\lambda}(u,v)(u,v)^2 =   2\Vert (u, v) \Vert^2 - \lambda(1 - p)\int a(x)\vert u \vert ^{1 - p}dx - \lambda(1 - q)\int b(x) \vert v \vert ^{1 - q}dx- \theta(\alpha + \beta) \int \vert u \vert^{\alpha}\vert v \vert^{\beta} dx.
	\end{eqnarray}
	Now, we consider the Nehari set for the System \eqref{sistema Principal singular} in the following form: 
	\begin{equation}\label{Nsing} 
		\mathcal{N}_{\lambda} = \left\{(u, v) \in X\backslash (0, 0); E'_{_{\lambda}}(u, v)(u, v) = 0 \right\}.
	\end{equation}
	As usual, we split the Nehari set $\mathcal{N}_{\lambda}$ into three disjoint subsets. Namely, we shall write:
	\begin{equation}\label{N+sing} 
		\mathcal{N}_{\lambda}^+  = \left\{(u, v) \in \mathcal {N}_\lambda; E''_{_{\lambda}}(u, v)(u, v)^2 > 0\right\};
	\end{equation}
	\begin{equation}\label{N-sing} 
		\mathcal{N}^-_{\lambda} = \left\{(u, v) \in \mathcal {N}_\lambda; E''_{_{\lambda}}(u, v)(u, v)^2 < 0 \right\};
	\end{equation}
	\begin{equation}\label{N0sing} 
		\mathcal{N}^0_{\lambda} = \left\{(u, v) \in \mathcal {N}_\lambda; E''_{_{\lambda}}(u, v)(u, v)^2 = 0 \right\}.
	\end{equation}
	Hence, we obtain $(u, v) \in \mathcal{N}_{\lambda}$ if and only if $$\lambda =  \frac{\Vert(u,v)\Vert ^2 -\theta \int  \vert u \vert^\alpha \vert v \vert^\beta dx}{\int  a(x) \vert u\vert^{1 - p}dx + \int  b(x) \vert v \vert^{1 - q}dx }.$$
	Similarly, $E_{\lambda}(u, v) = 0$ holds if and only if 
	$$\lambda =  \frac{\frac{1}{2}\Vert(u,v)\Vert ^2 -\frac{\theta}{\alpha + \beta} \int  \vert u \vert^\alpha \vert v \vert^\beta dx}{\frac{1}{1 - p}\int  a(x) \vert u\vert^{1 - p}dx + \frac{1}{1 - q}\int  b(x) \vert v \vert^{1 - q}dx }.$$
	
	Now, taking the nonlinear Rayleigh quotient, we define an auxiliary set where the coupled term for the System \eqref{sistema Principal singular} does not vanish. In fact, we  consider the following set:
	$$\mathcal{A}=\left \{ (u, v) \in X;  \int  |u|^\alpha|v|^\beta dx > 0 \right\}.$$
	Furthermore, we define the functionals $R_n, R_e: \mathcal{A} \rightarrow \mathbb{R}$ of $C^0 class$, for each parameters $\lambda >0$ and $\theta$, as follows:
	\begin{equation}
		R_n(u,v)=\frac{\Vert(u,v)\Vert ^2 -\theta  \int  \vert u \vert^\alpha \vert v \vert^\beta dx}{\int  a(x) \vert u\vert^{1 - p}dx + \int  b(x) \vert v \vert^{1 - q}dx}\;\;\;\;\;and
		\;\;\;\;\; R_e(u, v) =\frac{\frac{1}{2}\Vert(u,v)\Vert ^2 -\frac{\theta}{\alpha + \beta}   \int  \vert u \vert^\alpha \vert v \vert^\beta dx}{\frac{1}{1 - p} \int  a(x) \vert u\vert^{1 - p}dx + \frac{1}{1 - q} \int  b(x) \vert v \vert^{1 - q}dx}.
	\end{equation}
	Hence, we define the following extremes:
	$$\lambda^* := \inf\limits_{(u,v)\in \mathcal{A}}\max\limits_{t>0}R_n(tu, tv) \ \ \ \ and \ \ \ \ \lambda_* := \inf\limits_{(u,v)\in \mathcal{A}}\max\limits_{t>0}R_e(tu, tv).$$ 
	Hence, we have some interactions between the fibers of the Rayleigh quotient and the energy functional. In fact, we obtain that
	\begin{rmk}\label{Rn lambda e E'sing}
		Let $(u, v) \in \mathcal{A}$ be fixed. Then, we obtain the following statements:\\
		i) $ R_n(u, v) = \lambda \Leftrightarrow E'_{_{\lambda}}(u, v)(u, v) = 0;$\hspace{3mm}
		ii)$ R_n(u, v) > \lambda \Leftrightarrow E'_{_{\lambda}}(u, v)(u, v) > 0$;\hspace{3mm}
		iii) $R_n(u, v) < \lambda\Leftrightarrow E'_{_{\lambda}}(u, v)(u, v) < 0$.\\
		iv) $R_e(u, v) = \lambda\Leftrightarrow E_{_{\lambda}}(u, v) = 0;$\hspace{9mm}
		v) $R_e(u, v) > \lambda \Leftrightarrow E_{_{\lambda}}(u, v) > 0;$\hspace{11mm}
		vi) $R_e(u, v) < \lambda \Leftrightarrow E_{_{\lambda}}(u, v) < 0$.
		
	\end{rmk}
	
	Now, we state our main results without any restriction on the parameter $\theta > 0$. Firstly, we shall consider the following result:
	\begin{theorem}\label{teor principal 01sing}
		Assume ($P_0$), ($P$), ($V_0$) and ($V_1'$). Then we have that $0 < \lambda_* < \lambda^* < \infty$. Furthermore,  for each $\lambda \in (0, \lambda^*)$, the System \eqref{sistema Principal singular} admits at least one ground state solution $(u, v) \in \mathcal{A}$. Moreover, $(u, v)$ satisfies the following statements:
		\noindent i)$ E''_{_{\lambda}}(u, v)(u, v)^2 > 0$, that is, $(u, v) \in \mathcal{N}_{\lambda}^+ \cap \mathcal{A}$; 
		ii)There exists $C < 0$ such that $E_{_{\lambda}}(u, v) \le C$.
	\end{theorem}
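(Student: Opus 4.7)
The plan is to (a) analyze the fibering maps of the two nonlinear Rayleigh quotients $R_n$ and $R_e$ to establish $0 < \lambda_* < \lambda^* < \infty$; (b) show that for $\lambda \in (0,\lambda^*)$ the degenerate set $\mathcal{N}_{\lambda}^{0}$ is empty so that $\mathcal{N}_{\lambda}$ splits into $\mathcal{N}_{\lambda}^{+} \sqcup \mathcal{N}_{\lambda}^{-}$; (c) minimize $E_{\lambda}$ on $\mathcal{N}_{\lambda}^{+}\cap \mathcal{A}$, producing a candidate $(u_{0},v_{0})$ with strictly negative energy; and (d) verify that $(u_{0},v_{0})$ is actually a weak solution by the method of non-negative perturbations.

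For (a), fix $(u,v)\in \mathcal{A}$ and study
\[
\psi(t) := R_{n}(tu,tv) = \frac{t^{2}\Vert(u,v)\Vert^{2} - \theta\, t^{\alpha+\beta}\int |u|^{\alpha}|v|^{\beta}dx}{t^{1-p}\int a(x)|u|^{1-p}dx + t^{1-q}\int b(x)|v|^{1-q}dx}.
\]
Because $0 < 1-q \le 1-p < 2 < \alpha+\beta$, one checks $\psi(0^{+})=0$, $\psi(+\infty)=-\infty$ and $\psi$ admits a unique global maximum $t_{n}(u,v)\in(0,\infty)$; the analysis for $t\mapsto R_{e}(tu,tv)$ is identical. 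Combining H\"older's inequality with $a\in L^{2/(1+p)}$, $b\in L^{2/(1+q)}$ and the Sobolev embedding in Lemma \ref{imersoes Sobolev 2, 2*s} yields a uniform strictly positive lower bound for $\max_{t>0}\psi(t)$, and hence $\lambda^{*}>0$; evaluating on any fixed element of $\mathcal{A}$ gives the finite upper bound. The same argument handles $\lambda_{*}$, and the pointwise comparison of the numerator and denominator coefficients of $R_{n}$ versus $R_{e}$ — using $\frac{1}{2}>\frac{1}{\alpha+\beta}$ and $\frac{1}{1-p},\frac{1}{1-q}>1$ — forces $\lambda_{*}<\lambda^{*}$.

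For (b), any $(u,v)\in \mathcal{N}_{\lambda}^{0}$ would make $t=1$ a critical point of $\psi$, so by Remark \ref{Rn lambda e E'sing}(i) we would have $\lambda=\psi(1)=\max_{t>0}\psi(t)\ge \lambda^{*}$, contradicting $\lambda<\lambda^{*}$. Thus $\mathcal{N}_{\lambda}=\mathcal{N}_{\lambda}^{+}\sqcup \mathcal{N}_{\lambda}^{-}$ and, for every $(u,v)\in \mathcal{A}$, the equation $\psi(t)=\lambda$ has exactly two roots $0<t^{+}(u,v)<t^{-}(u,v)$, giving $(t^{\pm}u,t^{\pm}v)\in \mathcal{N}_{\lambda}^{\pm}$. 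For (c), substituting the Nehari identity into $E_{\lambda}$ rewrites the energy on $\mathcal{N}_{\lambda}$ as
\[
E_{\lambda}(u,v)=\Bigl(\tfrac{1}{2}-\tfrac{1}{\alpha+\beta}\Bigr)\Vert(u,v)\Vert^{2}-\lambda\Bigl(\tfrac{1}{1-p}-\tfrac{1}{\alpha+\beta}\Bigr)\!\!\int a|u|^{1-p}dx-\lambda\Bigl(\tfrac{1}{1-q}-\tfrac{1}{\alpha+\beta}\Bigr)\!\!\int b|v|^{1-q}dx,
\]
which is coercive since the singular terms grow like $\Vert(u,v)\Vert^{1-p}+\Vert(u,v)\Vert^{1-q}$. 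Because the fiber $t\mapsto E_{\lambda}(tu,tv)$ is strictly decreasing on $(0,t^{+})$ with limit $0$ at $t=0^{+}$, the value at $t^{+}$ is strictly negative, so $c_{+}:=\inf_{\mathcal{N}_{\lambda}^{+}\cap \mathcal{A}}E_{\lambda}<0$, and this will be the promised constant $C$.

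The main obstacle is (d): showing that a minimizing sequence $\{(u_{k},v_{k})\}\subset \mathcal{N}_{\lambda}^{+}\cap \mathcal{A}$ converges strongly to a limit $(u_{0},v_{0})\in \mathcal{N}_{\lambda}^{+}\cap \mathcal{A}$ which solves \eqref{equfracasingula}. Coercivity gives boundedness; Lemma \ref{imersoes Sobolev 2, 2*s} yields a weak limit and strong convergence in $L^{r_{1}}\times L^{r_{2}}$ for $r_{1},r_{2}\in[2,2_{s}^{*})$; continuity of the projection $(u,v)\mapsto t_{n}(u,v)$ via the Implicit Function Theorem preserves membership of $\mathcal{N}_{\lambda}^{+}$ in the limit, while $c_{+}<0$ prevents $(u_{0},v_{0})\notin \mathcal{A}$. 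Since $E_{\lambda}$ is only continuous, standard Lagrange multipliers fail, so I follow the Sun--Wu strategy of \cite{Yijing2001,silvasing2018}: test with $\bigl((u_{0}+\varepsilon\phi_{1})^{+},(v_{0}+\varepsilon\phi_{2})^{+}\bigr)$ for arbitrary $(\phi_{1},\phi_{2})\in X$ and pass to $\varepsilon\to 0^{+}$, invoking Proposition \ref{uvnaonulas} to ensure $u_{0},v_{0}>0$ almost everywhere. The delicate additional step not present in the local setting is controlling the Gagliardo seminorm differences $[(u_{0}+\varepsilon\phi_{1})^{+}]^{2}-[u_{0}]^{2}$ and the nonlocal cross terms to leading order in $\varepsilon$; once the variational inequality is obtained in both directions by varying the sign of $(\phi_{1},\phi_{2})$, identity \eqref{equfracasingula} follows, completing the proof and establishing (i) from $(u_{0},v_{0})\in \mathcal{N}_{\lambda}^{+}$ and (ii) from $E_{\lambda}(u_{0},v_{0})=c_{+}<0$.
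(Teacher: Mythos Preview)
Your overall strategy matches the paper's: Rayleigh-quotient fibering analysis, emptiness of $\mathcal{N}_\lambda^0$ for $\lambda<\lambda^*$, minimization on $\mathcal{N}_\lambda^+\cap\mathcal{A}$ with negative infimum, and the Sun--Wu perturbation argument from \cite{Yijing2001,silvasing2018}. Two steps, however, are not correct as written.

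First, your argument for $\lambda_*<\lambda^*$ does not work. A ``pointwise comparison of numerator and denominator coefficients'' fails because each numerator combines a positive and a negative term with different scalings: in $R_e$ the positive part $\tfrac{1}{2}\Vert(u,v)\Vert^{2}$ is smaller than in $R_n$, but the negative part $-\tfrac{\theta}{\alpha+\beta}\int|u|^{\alpha}|v|^{\beta}$ is \emph{less} negative than $-\theta\int|u|^{\alpha}|v|^{\beta}$, so no monotone comparison of numerators is available. The paper instead proves the identity (Proposition~\ref{diferença impsing})
\[
Q_n(t)-Q_e(t)=\frac{t}{(1-p)(1-q)}\cdot\frac{(1-q)t^{1-p}P(u)+(1-p)t^{1-q}Q(v)}{t^{1-p}P(u)+t^{1-q}Q(v)}\,Q_e'(t),
\]
so that $Q_n(t_e)=Q_e(t_e)$; uniqueness of the maximizer $t_n$ of $Q_n$ then gives $\Lambda_n(u,v)=Q_n(t_n)>Q_n(t_e)=Q_e(t_e)=\Lambda_e(u,v)$ for every $(u,v)\in\mathcal{A}$, whence $\lambda_*<\lambda^*$.

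Second, your invocation of Proposition~\ref{uvnaonulas} in step (d) is circular: that proposition assumes the weak formulation \eqref{equfracasingula} already holds and concludes that the zero sets of $u,v$ are null. You cannot use it to obtain $u_0,v_0>0$ before the weak formulation is established. The paper's route (Lemma~\ref{minimizasing}, parallel to Step~1 of Proposition~\ref{Neharilambda*}) is to first derive the one-sided inequality \eqref{IneqsolfracaN+} for nonnegative test functions $(\psi_1,\psi_2)\in X_+$; this forces $\int a(x)u_0^{-p}\psi_1\,dx<\infty$ for all $\psi_1\ge 0$, which in turn yields $u_0>0$ a.e., and only then is the extension to arbitrary $(\phi_1,\phi_2)\in X$ via the truncations $((u_0+\varepsilon\phi_1)^+,(v_0+\varepsilon\phi_2)^+)$ carried out. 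Your claim that ``$c_+<0$ prevents $(u_0,v_0)\notin\mathcal{A}$'' is also unjustified as stated; membership in $\mathcal{A}$ is recovered \emph{a posteriori} once positivity is known.
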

	
	Now, we shall consider the following result:
	\begin{theorem}\label{teor principal 02sing}
		Suppose ($P_0$), ($P$), ($V_0$),($V_1'$) and $\lambda \in (0, \lambda^*).$ Then System \eqref{sistema Principal singular} admits at least one weak solution $(z, w) \in \mathcal{A}$ satisfying the following assertions:
		i)$ E''_{_{\lambda}}(z, w)(z, w)^2 < 0$, that is, $(z, w) \in \mathcal{N}^-_{\lambda} \cap \mathcal{A} = \mathcal{N}^-_{\lambda}$;\hspace{5mm}ii) If $\lambda \in (0, \lambda_*)$ then $E_{_{\lambda}}(z, w) > 0$; iii) Assume also that $\lambda = \lambda_*$. Then, we obtain that $E_{_{\lambda}}(z, w) = 0$;
		iv) For each $\lambda \in (\lambda_*, \lambda^*)$ we deduce that that $E_{_{\lambda}}(z, w) < 0$.
	\end{theorem}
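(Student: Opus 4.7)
The plan is to realize $(z,w)$ as a minimizer of $E_\lambda$ on the constraint $\mathcal{N}^-_\lambda$. Non-emptiness of $\mathcal{N}^-_\lambda$ comes from the fibering analysis built on the Rayleigh quotient $R_n$: for every $(u,v)\in\mathcal{A}$, the map $t\mapsto R_n(tu,tv)$ has a unique critical point $t_n(u,v)$ attaining its maximum, and the assumption $\lambda<\lambda^*=\inf_{\mathcal{A}}\max_{t>0}R_n(t\,\cdot)$ forces the level set $\{R_n(tu,tv)=\lambda\}$ to contain exactly two positive numbers $0<t^+(u,v)<t_n(u,v)<t^-(u,v)$. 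By Remark \ref{Rn lambda e E'sing}, these give, respectively, points in $\mathcal{N}^+_\lambda$ and in $\mathcal{N}^-_\lambda$. The same fibering shows that $\phi(t)=E_\lambda(tu,tv)$ attains a local maximum at $t^-(u,v)$, so $E_\lambda$ is bounded below on $\mathcal{N}^-_\lambda$ and $c^-:=\inf_{\mathcal{N}^-_\lambda}E_\lambda$ is finite.

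Next, I would take a minimizing sequence $(z_k,w_k)\subset \mathcal{N}^-_\lambda$ for $c^-$. Combining the Nehari identity $E'_\lambda(z_k,w_k)(z_k,w_k)=0$ with the strict inequality $E''_\lambda(z_k,w_k)(z_k,w_k)^2<0$, together with the singular embeddings coming from ($P_0$) and Hölder, one gets a uniform bound on $\|(z_k,w_k)\|$. By Lemma \ref{imersoes Sobolev 2, 2*s} we may pass to a subsequence with $(z_k,w_k)\rightharpoonup (z,w)$ in $X$ and strong convergence in $L^{r_1}\times L^{r_2}$ for all $r_1,r_2\in[2,2^*_s)$. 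A lower bound on $\int|z_k|^\alpha|w_k|^\beta dx$, uniform in $k$, is then derived from the characterization of $\mathcal{N}^-_\lambda$ and the fact that $\mathcal{N}^0_\lambda=\emptyset$ for $\lambda\in(0,\lambda^*)$; this delivers $(z,w)\in\mathcal{A}$ with $(z,w)\neq(0,0)$.

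Now I would identify $(z,w)$ as a minimizer that solves \eqref{equfracasingula}. The continuity of $(u,v)\mapsto t_n(u,v)$ and of $(u,v)\mapsto R_n(t_n(u,v)(u,v))$ via the Implicit Function Theorem ensures that $t^-(z,w)$ exists, is finite, and that $t^-(z,w)(z,w)\in\mathcal{N}^-_\lambda$. Strong convergence $(z_k,w_k)\to(z,w)$ is obtained by ruling out the alternative: if the convergence were only weak, weak lower semicontinuity of the norm combined with strong $L^{r}$-convergence of the lower-order terms would produce $R_n(z,w)>\lambda$, and projecting along the fiber onto the Nehari level would then construct a new point in $\mathcal{N}^-_\lambda$ with energy strictly below $c^-$, a contradiction. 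Once the minimizer is secured, I would verify that $(z,w)$ solves \eqref{equfracasingula} via the one-sided variation technique with the nonnegative test pair $((z+\varepsilon\phi_1)^+,(w+\varepsilon\phi_2)^+)$, passing to the limit $\varepsilon\to 0^+$. This step is the main obstacle: the singular terms make $E_\lambda$ nondifferentiable (Proposition \ref{uvnaonulas}), and controlling the Gagliardo seminorm of the truncated pair under the nonlocal interaction $\frac{[(u+\varepsilon\phi)^+(x)-(u+\varepsilon\phi)^+(y)]^2}{|x-y|^{N+2s}}$ requires sharp pointwise inequalities. Once the weak-solution identity holds for arbitrary $(\phi_1,\phi_2)$, the a.e.\ positivity of $z,w$ follows from Proposition \ref{uvnaonulas} applied to $(z,w)$.

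Finally, for the conclusions (ii)--(iv) on the sign of $E_\lambda(z,w)$, I would exploit Remark \ref{Rn lambda e E'sing}, which equates the sign of $E_\lambda(z,w)$ with the comparison $R_e(z,w)\lessgtr \lambda$. Since $(z,w)\in\mathcal{N}^-_\lambda$ corresponds to the right critical point $t^-$ of the fiber for $R_n$, a direct comparison of the fibers of $R_n$ and $R_e$ at $(z,w)$, coupled with the definition $\lambda_*=\inf_{\mathcal{A}}\max_{t>0}R_e(tu,tv)$, gives $R_e(z,w)>\lambda$ when $\lambda\in(0,\lambda_*)$, $R_e(z,w)=\lambda$ when $\lambda=\lambda_*$, and $R_e(z,w)<\lambda$ when $\lambda\in(\lambda_*,\lambda^*)$, which translate exactly into statements (ii), (iii) and (iv), respectively.
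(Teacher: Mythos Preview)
Your plan for part (i) is essentially the paper's own route: the existence of $(z,w)$ is obtained by minimizing $E_\lambda$ over $\mathcal{N}^-_\lambda$ (Propositions \ref{convfortN-} and \ref{elemN+N-inf}), and the weak--solution property is then established via the one--sided test--function device with $((z+\varepsilon\phi_1)^+,(w+\varepsilon\phi_2)^+)$ (Lemma \ref{minimizasing} and Proposition \ref{solucaosing}). Your sketch of the strong--convergence argument by contradiction is also the paper's mechanism. Part (ii) is fine as well: for any $(z,w)\in\mathcal{N}^-_\lambda$ one has $t_n^-(z,w)=1$, and since $\lambda<\lambda_*\le\Lambda_e(z,w)=Q_e(t_e(z,w))=Q_n(t_e(z,w))$, the monotonicity of $Q_n$ past $t_n$ forces $1>t_e(z,w)$, whence $R_e(z,w)>R_n(z,w)=\lambda$ by Proposition \ref{diferença impsing}.

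There is, however, a genuine gap in your treatment of (iii) and (iv). You write that ``a direct comparison of the fibers of $R_n$ and $R_e$ at $(z,w)$, coupled with the definition $\lambda_*=\inf_{\mathcal A}\max_{t>0}R_e(tu,tv)$'' yields $R_e(z,w)=\lambda$ when $\lambda=\lambda_*$ and $R_e(z,w)<\lambda$ when $\lambda\in(\lambda_*,\lambda^*)$. This does \emph{not} follow for a generic element of $\mathcal{N}^-_\lambda$, and nothing in your sketch uses the fact that $(z,w)$ is the \emph{minimizer}. Indeed, for $\lambda\in(\lambda_*,\lambda^*)$ there certainly exist $(u,v)\in\mathcal{N}^-_\lambda$ with $\Lambda_e(u,v)>\lambda$, and for such $(u,v)$ the argument of (ii) gives $E_\lambda(u,v)>0$, not $<0$. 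What the paper does instead is exhibit a \emph{specific} test element: take $(\bar u,\bar v)\in\mathcal{A}$ with $\Lambda_e(\bar u,\bar v)=\lambda_*$ (Proposition \ref{Lambdae FSIsing}); then $Q_n(t_e(\bar u,\bar v))=Q_e(t_e(\bar u,\bar v))=\lambda_*<\lambda$, so along this fiber $t_n^-(\bar u,\bar v)<t_e(\bar u,\bar v)$, whence $R_e\big(t_n^-(\bar u,\bar v)(\bar u,\bar v)\big)<\lambda$ and therefore $C_{\mathcal{N}^-_\lambda}\le E_\lambda\big(t_n^-(\bar u,\bar v)(\bar u,\bar v)\big)<0$. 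Only then does $E_\lambda(z,w)=C_{\mathcal{N}^-_\lambda}<0$ follow. Part (iii) is handled by combining this upper bound (with $\lambda=\lambda_*$, giving $C_{\mathcal{N}^-_{\lambda_*}}\le 0$) with the lower bound from the (ii)--type argument (giving $E_{\lambda_*}\ge 0$ on all of $\mathcal{N}^-_{\lambda_*}$). You should make this two--sided structure explicit; the sign conclusion for the minimizer does not come from fiber analysis at $(z,w)$ alone.
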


	As a consequence, we obtain that following result:
	\begin{theorem}\label{teor principal 03cap2}
		Suppose ($P_0$), ($P$), ($V_0$) and ($V_1'$).
		Then, the System \eqref{sistema Principal singular} has at least two weak solutions  $(u, v)$ and $(z, w)$ for each $\lambda \in (0, \lambda^*)$.
		Furthermore, the functions $u, v, z$ and $w$ are strictly positive a.e. in $\mathbb{R}^N$.
	\end{theorem}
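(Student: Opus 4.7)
The plan is to combine the two existence results—Theorem \ref{teor principal 01sing} and Theorem \ref{teor principal 02sing}—with a symmetrization/positivity argument supported by Proposition \ref{uvnaonulas}. For $\lambda\in(0,\lambda^*)$, Theorem \ref{teor principal 01sing} supplies a ground state $(u,v)\in\mathcal{N}_{\lambda}^{+}\cap\mathcal{A}$ with $E_{\lambda}(u,v)<0$, and Theorem \ref{teor principal 02sing} supplies a second weak solution $(z,w)\in\mathcal{N}_{\lambda}^{-}\cap\mathcal{A}$. Since $\mathcal{N}_{\lambda}^{0}=\emptyset$ on the range $(0,\lambda^*)$ (a fact already emphasised in the introduction), the sets $\mathcal{N}_{\lambda}^{+}$ and $\mathcal{N}_{\lambda}^{-}$ are disjoint and the two solutions are automatically distinct, giving the multiplicity part of the statement.

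To upgrade these weak solutions to strictly positive functions, I would first argue that both pairs can be assumed nonnegative. Using the pointwise inequality $\bigl||u(x)|-|u(y)|\bigr|\le|u(x)-u(y)|$ one has $[|u|]\le[u]$, and the potential plus nonlinear integrals appearing in $E_{\lambda}$, $R_{n}$ and $R_{e}$ depend only on $|u|^{1-p}$, $|v|^{1-q}$ and $|u|^{\alpha}|v|^{\beta}$; consequently $\|(|u|,|v|)\|\le\|(u,v)\|$ and replacing a pair by the pair of its absolute values weakly decreases both $E_{\lambda}$ and $R_{n}$. Projecting $(|u|,|v|)$ back onto the Nehari set via the unique critical point $t_{n}(|u|,|v|)$ of the fiber $t\mapsto R_{n}(t(|u|,|v|))$ produced by the implicit function argument in the construction of $\lambda^*$, one verifies that the projected pair lies in $\mathcal{N}_{\lambda}^{+}$ with energy no larger than $E_{\lambda}(u,v)$; the ground state character of $(u,v)$ then forces $u=|u|$ and $v=|v|$ a.e. An analogous projection on the other branch is applied to $(z,w)$, with the role of minimum replaced by the saddle characterisation of $\mathcal{N}_{\lambda}^{-}$.

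Once nonnegativity is secured, Proposition \ref{uvnaonulas} applied to the weak solutions $(u,v)$ and $(z,w)$ shows that the four zero sets $\{u=0\}$, $\{v=0\}$, $\{z=0\}$ and $\{w=0\}$ have zero Lebesgue measure in $\mathbb{R}^{N}$. Combined with $u,v,z,w\ge 0$ this gives $u,v,z,w>0$ almost everywhere, completing the proof.

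The main technical obstacle I foresee lies in the symmetrization step applied to $(z,w)\in\mathcal{N}_{\lambda}^{-}$: there the relevant critical value of the fiber $t\mapsto R_{n}(t\,\cdot)$ is a \emph{maximum} rather than a minimum, so after substituting $(|z|,|w|)$ and reprojecting one must ensure that the correct branch of the Nehari set is recovered and that the extremal value is not accidentally pushed out of $\mathcal{N}_{\lambda}^{-}$. This requires an attentive monotonicity analysis along fibers of the form $t\mapsto R_{n}(t(u,v))$, relying on the uniqueness of the fibering critical point and on the continuity of $(u,v)\mapsto t_{n}(u,v)$ mentioned earlier in the paper.
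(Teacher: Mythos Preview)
Your plan for multiplicity is exactly what the paper does: it simply cites Proposition \ref{solucaosing} (which packages the minimisation arguments behind Theorems \ref{teor principal 01sing} and \ref{teor principal 02sing}) to obtain $(u,v)\in\mathcal N_\lambda^+$ and $(z,w)\in\mathcal N_\lambda^-$, and the disjointness of these sets for $\lambda\in(0,\lambda^*)$ (Proposition \ref{N0vaziosing}) gives two distinct solutions.

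Where you diverge is in the positivity step. The paper does \emph{not} symmetrize. In the course of proving Lemma \ref{minimizasing} (and Proposition \ref{Neharilambda*}) it shows, for any minimizer, that
\[
\int a(x)\,|u|^{-p}\psi_1\,dx<\infty\quad\text{and}\quad \int b(x)\,|v|^{-q}\psi_2\,dx<\infty
\]
for every $(\psi_1,\psi_2)\in X_+$, and this alone forces $u,v\ne 0$ a.e.; the paper's short proof of Theorem \ref{teor principal 03cap2} then just quotes Proposition \ref{uvnaonulas}. Your symmetrization route is a legitimate alternative, and your worry about the $\mathcal N_\lambda^-$ branch is easily resolved: taking absolute values leaves $P,Q,B$ unchanged and can only decrease $A$, so the entire fiber $t\mapsto E_\lambda(t|z|,t|w|)$ lies below $t\mapsto E_\lambda(tz,tw)$; in particular its local maximum (the value at $t_n^-$) drops, and $t_n^-(|z|,|w|)(|z|,|w|)\in\mathcal N_\lambda^-$ is again a minimizer.

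One genuine slip: from $E_\lambda(|u|,|v|)\le E_\lambda(u,v)$ and minimality you cannot conclude ``$u=|u|$ a.e.''\ since minimizers need not be unique. What you actually get is that the nonnegative projected pair is \emph{also} a minimizer, hence (by Proposition \ref{solucaosing}) a weak solution; just replace $(u,v)$ by that pair and proceed. With this correction your argument goes through, at the cost of an extra symmetrization step the paper avoids.
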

	
	Now, assuming $\lambda = \lambda^*$, we use an auxiliary sequence proving that our main problem does not admit any weak solution in $\mathcal{N}_\lambda^0$. Precisely, we prove the following result: 
	
	\begin{theorem}\label{teor principal 04cap2}
		Suppose ($P_0$), ($P_1$) ($P$), ($V_0$), ($V_1'$) and $\lambda = \lambda^*$. Then, the System $(S_{\lambda^*})$ admits at least two weak solutions $(u_*, v_*) \in \mathcal{N}_{\lambda^*}^+$ and $(z_*, w_*) \in \mathcal{N}_{\lambda^*}^-$. Furthermore, the functions $u_*, v_*, z_*$ and $w_*$  are strictly positive a.e. in $\mathbb{R}^N$. 
	\end{theorem}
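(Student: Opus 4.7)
My plan is to obtain the two solutions at $\lambda = \lambda^*$ as limits of the solutions already produced by Theorems \ref{teor principal 01sing} and \ref{teor principal 02sing} for $\lambda$ slightly below $\lambda^*$. I fix a sequence $\lambda_k \uparrow \lambda^*$ with $\lambda_k \in (\lambda_*, \lambda^*)$ and, for each $k$, select a ground state $(u_k, v_k) \in \mathcal{N}_{\lambda_k}^+ \cap \mathcal{A}$ together with a second solution $(z_k, w_k) \in \mathcal{N}_{\lambda_k}^-$ via those theorems. The objective is then to extract weak limits $(u_*, v_*)$ and $(z_*, w_*)$ in $X$ and to prove that these are the required solutions.

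The first step is boundedness. Using the Nehari identity $E'_{\lambda_k}(u_k, v_k)(u_k, v_k) = 0$ to eliminate the singular terms from $E_{\lambda_k}(u_k, v_k)$, together with H\"older's inequality applied to $\int a(x)|u_k|^{1-p}dx$ via $(P_0)$, the Sobolev embeddings of Lemma \ref{imersoes Sobolev 2, 2*s}, and $\alpha + \beta > 2$, one recovers uniform $X$-norm bounds for both sequences. The compact embedding $X \hookrightarrow L^{r_1}\times L^{r_2}$ for $r_i \in [2, 2^*_s)$ then yields strong convergence (along a subsequence) of $\int |u_k|^\alpha|v_k|^\beta dx$ and, after passing to a further a.e.\ pointwise convergent subsequence and invoking dominated convergence together with the positivity of $u_*$ and $v_*$, also of the singular integrals. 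Passing to the limit in the weak formulation \eqref{equfracasingula} at $\lambda = \lambda_k$ against any $(\varphi_1, \varphi_2) \in X$ then shows that $(u_*, v_*)$ and $(z_*, w_*)$ satisfy \eqref{equfracasingula} with $\lambda = \lambda^*$; the positivity a.e.\ follows from Proposition \ref{uvnaonulas} and the nonnegative test-function argument already exploited in the proof of Theorem \ref{teor principal 03cap2}.

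The main obstacle, and the reason for hypothesis $(P_1)$, is to rule out that the weak limits lie in $\mathcal{N}_{\lambda^*}^0$, which at $\lambda = \lambda^*$ is nonempty. The crucial technical input is the nonexistence of any weak solution of \eqref{sistema Principal singular} inside $\mathcal{N}_{\lambda^*}^0$. Assuming such a solution $(u,v)$ exists, the identities $E'_{\lambda^*}(u,v)(u,v) = 0$ and $E''_{\lambda^*}(u,v)(u,v)^2 = 0$ yield explicit linear relations among $\|(u,v)\|^2$, $\theta \int |u|^\alpha|v|^\beta dx$ and the two singular integrals; combining these with Remark \ref{Rn lambda e E'sing}, with the minimality of $\lambda^*$ as $\inf_{(u,v)\in \mathcal{A}} \max_{t>0} R_n(tu, tv)$, and with the H\"older–Sobolev interpolation afforded by $\beta \ge \tfrac{2^*_s}{2}(3-\alpha-p)$, one contradicts $a \notin L^1(\mathbb{R}^N)$. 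With this nonexistence established, the strict inequalities $E''_{\lambda_k}(u_k,v_k)(u_k,v_k)^2 > 0$ and $E''_{\lambda_k}(z_k,w_k)(z_k,w_k)^2 < 0$ pass to nonstrict ones in the limit, but the limits cannot be degenerate, forcing $(u_*, v_*) \in \mathcal{N}_{\lambda^*}^+$ and $(z_*, w_*) \in \mathcal{N}_{\lambda^*}^-$. Strong convergence in $X$ and the positivity assertion then follow exactly as in Theorem \ref{teor principal 03cap2}.
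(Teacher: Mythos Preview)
Your overall strategy---approximate $\lambda^*$ from below, take limits of the solutions $(u_k,v_k)\in\mathcal{N}_{\lambda_k}^+$ and $(z_k,w_k)\in\mathcal{N}_{\lambda_k}^-$, and exclude the degenerate set $\mathcal{N}_{\lambda^*}^0$ by a nonexistence argument relying on $(P_1)$---is exactly the paper's approach. However, two of your steps are not correct as written.

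First, the nonexistence argument. You say the scalar identities $E'_{\lambda^*}(u,v)(u,v)=0$ and $E''_{\lambda^*}(u,v)(u,v)^2=0$ give linear relations among $\|(u,v)\|^2$, $\int|u|^\alpha|v|^\beta$, $P(u)$, $Q(v)$, and that combining these with the minimality of $\lambda^*$ and with $(P_1)$ contradicts $a\notin L^1$. But two scalar relations among four integral quantities cannot force a \emph{pointwise} identity on $a(x)$, which is what you need to invoke $a\notin L^1$. The missing mechanism is this: any $(u,v)\in\mathcal{N}_{\lambda^*}^0$ automatically satisfies $\Lambda_n(u,v)=\lambda^*=\inf\Lambda_n$, so $(u,v)$ is a \emph{minimizer} of $\Lambda_n$ on $\mathcal{A}$. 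Writing the first-order optimality condition for $\Lambda_n$ in every direction $(\psi_1,\psi_2)\in X$ (which requires care with the singular part, handled via Fatou and the test functions $(u+\varepsilon\psi_1)^+$) yields an additional equation of the form $2\langle(u,v),(\psi_1,\psi_2)\rangle-\theta\int(\alpha|u|^{\alpha-2}u\psi_1|v|^\beta+\beta|u|^\alpha|v|^{\beta-2}v\psi_2)-\lambda^*\int((1-p)a u^{-p}\psi_1+(1-q)b v^{-q}\psi_2)=0$. Subtracting twice the weak-solution identity \eqref{equfracasingula} and choosing $(\psi_1,\psi_2)=(\psi_1,0)$ then forces $a(x)=C\,u^{p+\alpha-1}v^\beta$ pointwise, and only now does $(P_1)$ (via Young's inequality with exponents dictated by $\beta\ge\tfrac{2^*_s}{2}(3-\alpha-p)$) give $a\in L^1$, the contradiction.

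Second, your passage to the limit in the singular terms by dominated convergence is not justified: there is no dominating function for $a(x)|u_k|^{-p}\varphi_1$ when $\varphi_1$ is a general test function, since $|u_k|^{-p}$ has no a priori integrable majorant. The paper proceeds instead by testing only with $(\varphi_1,\varphi_2)\in X_+$, applying Fatou's lemma to obtain a one-sided inequality in the limit, and then recovering the full weak formulation via the test functions $((u_*+\varepsilon\psi_1)^+,(v_*+\varepsilon\psi_2)^+)$ together with the Nehari information on $(u_*,v_*)$. Your sketch already alludes to ``the nonnegative test-function argument'', but it should replace, not follow, the dominated-convergence step.
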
	
	
	\subsection{Outline} The present paper is organized as follows: In the forthcoming section we use the nonlinear Rayleigh quotient in order to lead with the minimization method in the Nehari set. Section $3$ is devoted to the proof of existence and multiplicity of solutions for our main problem whenever $\lambda=\lambda^*$. In Section 4 we prove our main results taking into account the Nehari method.

	\subsection{Notation} Throughout this work we shall use the following notation:
	
	$\bullet$ $E''_{_\lambda}(u, v)((u, v)(u, v)) = E''_{_\lambda}(u, v)(u, v)^2$ denotes the second derivatives in the $(u,v)$ direction.
	
	$\bullet$ $\|\cdot\|_{r}$ and $\|\cdot\|_{\infty}$ are the norms in $L^{\infty}(\mathbb{R}^N)$ and $L^{r}(\mathbb{R}^N)$ for each $r \in [1, \infty)$.
	
	$\bullet$ $S_r$ is the best constant for the embedding $X\hookrightarrow L^r(\mathbb{R}^N)\times  L^r(\mathbb{R}^N),\;r \in [2, 2_s^*]$.
	
	$\bullet$ $B_\epsilon = B_\epsilon(u, v) = \{(w, z) \in X: \Vert (u, v) - (w, z) \Vert < \epsilon\}$ and $B_\delta(r) = \{x \in \mathbb{R}^N: \vert x - r \vert < \delta \}$.

	$\bullet$ $A(u, v) := \Vert(u, v) \Vert^2,\;\;\; B(u, v) := \int\vert u \vert^{\alpha} \vert v \vert ^{\beta}dx \;\;\; P(u) := \int a(x) \vert u \vert ^{1 - p}dx, \;\;\; Q(v) := \int b(x) \vert v \vert^{1 - q}dx.$

	\section{Preliminary results and variational setting}
	
	In this section, we shall consider some results related with the Nehari method together with the nonlinear Rayleigh quotient. Firstly, we begin defining the fibers maps in the following way:
	\begin{equation}\label{R_n cap2} 
		Q_n(t) = R_n (tu, tv) = \frac{t^2A(u,v) - \theta t ^{\alpha + \beta }B(u, v)}{t^{1 - p}P(u) + t^{1 - q}Q(v)};
	\end{equation}
	\begin{equation}\label{R_e cap2} 
		Q_e(t) = R_e(tu, tv) =\frac{\frac{1}{2}t^2A(u,v) -t^{\alpha + \beta}\frac{\theta}{\alpha + \beta}  B(u, v)}{\frac{1}{1 - p}t^{1 - p}P(u) + \frac{1}{1 - q}t^{1 - q}Q(v)}.
	\end{equation}
	Under our assumptions we emphasize that $0 < p \le q < 1$. Thus, we mention that
	\begin{equation}\label{QntpeqQntgrand}
		\lim\limits_{t \to 0} \frac{Q_n(t)}{t^{1 + q}} \ge \frac{A(u, v)}{P(u) + Q(v)} > 0
		\;\;\;\;\;\;and\;\;\;\;\;\;\;
		\lim\limits_{t \to +\infty} \frac{Q_n(t)}{t^{\alpha + \beta - 1 + p}} \le \frac{-\theta B(u, v)}{P(u) + Q(v)} < 0.
	\end{equation}
	The main idea is to employ the same strategy as was done in \cite{yavdat1,yavdat2,cho}. Firstly, we obtain the following result:
	
	\begin{prop}\label{tn(u,v) únicosing}
		Suppose ($P_0$) and ($P$). Then for every $(u, v)\in \mathcal{A}$ there exists a unique  $t = t_n(u, v)$, such that $Q_n'(t) = 0$. Furthermore, $t_n: \mathcal{A} \to \mathbb{R}$ is a function of class $C^0(\mathcal{A}, \mathbb{R})$.
	\end{prop}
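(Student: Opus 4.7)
The plan is to study the one-variable map $t \mapsto Q_n(t)$ for fixed $(u,v) \in \mathcal{A}$ and then lift continuity of the critical value $t_n$ via the Implicit Function Theorem. First observe that $(u,v) \in \mathcal{A}$, combined with $(P_0)$, guarantees $A(u,v), B(u,v), P(u), Q(v) > 0$. The asymptotics in \eqref{QntpeqQntgrand} give $Q_n(t) \to 0^+$ as $t \to 0^+$ and $Q_n(t) \to -\infty$ as $t \to +\infty$, so $Q_n$ attains its supremum in $(0, \infty)$ and at least one critical point exists.

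For uniqueness, set $N(t) = t^2 A(u,v) - \theta t^{\alpha+\beta} B(u,v)$ and $D(t) = t^{1-p} P(u) + t^{1-q} Q(v)$. A direct computation of $N'(t) D(t) - N(t) D'(t)$ and factoring out $t^{2-q}$ recasts $Q_n'(t) = 0$ as $L(t) = R(t)$, with
\begin{equation*}
L(t) = A_1 t^{q-p} + A_2, \qquad R(t) = B_1 t^{\alpha+\beta-2+q-p} + B_2 t^{\alpha+\beta-2},
\end{equation*}
where $A_1 = (1+p) A(u,v) P(u)$, $A_2 = (1+q) A(u,v) Q(v)$, $B_1 = \theta(\alpha+\beta-1+p) B(u,v) P(u)$ and $B_2 = \theta(\alpha+\beta-1+q) B(u,v) Q(v)$ are all strictly positive. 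I will show that $\Phi(t) := \log L(t) - \log R(t)$ is strictly decreasing on $(0,\infty)$. Differentiating and simplifying yields a sum of two terms: one is a positive multiple of $(q-p)(A_1 B_2 - A_2 B_1)$, and the other equals $-(\alpha+\beta-2)/t$. The algebraic identity
\begin{equation*}
(1+p)(\alpha+\beta-1+q) - (1+q)(\alpha+\beta-1+p) = (q-p)(2-\alpha-\beta),
\end{equation*}
combined with $p \leq q$ and $\alpha+\beta > 2$, gives $A_1 B_2 - A_2 B_1 \leq 0$, so $\Phi'(t) < 0$ throughout $(0, \infty)$. Since $\Phi(0^+) = +\infty$ and $\Phi(+\infty) = -\infty$, the equation $L(t) = R(t)$ admits a unique root, which we denote $t_n(u,v)$.

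For the $C^0$ regularity of $t_n$, I apply the Implicit Function Theorem to $F(t,u,v) := Q_n'(t)$. From $Q_n'(t) D(t)^2 = t^{2-q}(L(t) - R(t))$ and $L(t_n) = R(t_n)$, one obtains $Q_n''(t_n(u,v)) = t_n^{2-q}(L'(t_n) - R'(t_n))/D(t_n)^2$; the strict monotonicity of $\Phi$ at $t_n$ yields $L'(t_n) < R'(t_n)$, hence $Q_n''(t_n(u,v)) \neq 0$. Joint continuity of $F$ in $(u,v)$ is standard: it follows from Lemma \ref{imersoes Sobolev 2, 2*s} (for $A$ and $B$, using the compact embeddings into $L^2 \times L^2$ and $L^{\alpha+\beta} \times L^{\alpha+\beta}$) together with H\"older's inequality applied to $a \in L^{2/(1+p)}(\mathbb{R}^N)$ and $b \in L^{2/(1+q)}(\mathbb{R}^N)$ with the embedding $X \hookrightarrow L^2(\mathbb{R}^N) \times L^2(\mathbb{R}^N)$ (for $P$ and $Q$). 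The IFT then yields that $t_n$ is continuous, in fact of class $C^1$, on $\mathcal{A}$. The main obstacle is establishing that $\Phi$ is strictly decreasing: the monotonicity is not term-by-term obvious because, when $p < q$, the function $L$ is itself strictly increasing; the saving cancellation is the algebraic identity $(q-p)(2-\alpha-\beta) \leq 0$, whose sign is controlled precisely by the two hypotheses $p \leq q$ and $\alpha + \beta > 2$ in $(P)$.
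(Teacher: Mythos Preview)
Your argument is correct and follows the same skeleton as the paper: existence from the asymptotics \eqref{QntpeqQntgrand}, uniqueness of the critical point, then continuity via the Implicit Function Theorem applied to $Q_n'$. The paper outsources uniqueness to Lemma~\ref{apendice1} (stated without proof), whereas you supply an explicit proof via the strict monotonicity of $\Phi=\log L-\log R$; the algebraic identity $(1+p)(\alpha+\beta-1+q)-(1+q)(\alpha+\beta-1+p)=(q-p)(2-\alpha-\beta)$ is precisely the mechanism that makes that lemma true, so your contribution here is to fill in what the paper leaves implicit.

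One small overclaim: the conclusion ``in fact of class $C^1$'' does not follow, because $P(u)=\int a|u|^{1-p}$ and $Q(v)=\int b|v|^{1-q}$ involve sublinear powers and are not Fr\'echet differentiable on $X$; hence $F(t,u,v)=Q_n'(t)$ is only continuous in $(u,v)$, not $C^1$. The continuous Implicit Function Theorem (the version the paper cites from \cite{Dra}) then yields $t_n\in C^0(\mathcal{A},\mathbb{R})$, which is exactly what the proposition asserts, so this does not affect the validity of your proof.
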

	\begin{proof}
		According to Lemma \ref{apendice1} we write
		$A = A(u, v), \, B = B(u, v),\, C = P(u),\, D = Q(v)$ and $\eta = \alpha + \beta.$ 
		Then, the identity $Q_n'(t) = 0$ has a unique solution $t_n(u, v)$ which is a point of global maximum for $Q_n(t)$. Now, we define the function $F: (0, +\infty)\times X \to \mathbb{R}$ given by
		\begin{equation}\label{Equa implíc sing} 
			F(t, (w_1, w_2)) = R_n'(t(z_1 + w_1, z_2 + w_2))(t(z_1 + w_1, z_2 + w_2)), \;t > 0, \;(w_1, w_2) \in X.
		\end{equation}
		Notice also that 
		$Q_n'(1)= F(1, (0, 0)) = R_n'(z_1, z_2)(z_1, z_2) = 0$ and $\partial_t F(1, (0, 0)) \ne 0$. Hence, by using the Implicit Function Theorem \cite[Remark 4.2.3]{Dra}, we guarantee that there exists a unique function $t_n: B_{\varepsilon}(z_1, z_2) \to B_1 \subset \mathbb{R}$ which belongs to $C^0(B_{\varepsilon}(z_1, z_2), \mathbb{R})$ such that $F(t_n(w_1,w_2), (w_1,w_2)) = 0$ holds for each $(w_1,w_2) \in B_\epsilon(z_1,z_2)$. Recall that $(z_1, z_2) \in \mathcal{A}$ is arbitrary which ensures that $t_n \in C^0(\mathcal{A}, \mathbb{R})$. This ends the proof.  
	\end{proof}
	
	Consider the functional $\Lambda_n : \mathcal{A} \to \mathbb{R}$ given by
	\begin{eqnarray}\label{def Lambda_nsing}
		\Lambda _n(u, v) &:=& \max\limits_{t>0}R_n(tu, tv) = Q_n(t_n(u,v)(u, v))= \frac{ \Vert(t_n(u,v)(u, v))\Vert ^2 -\theta  \int \vert t_n(u, v) u\vert^\alpha \vert t_n(u,v) v\vert^\beta }{  \int a(x) \vert t_n(u,v)u \vert^{1 - p} +  \int b(x) \vert t_n(u,v)v \vert^{1 - q}}.
	\end{eqnarray}
	Recall that $\Lambda_n$ is given by composition of continuous functions. Therefore, we conclude that $\Lambda_n \in C^0(\mathcal{A}, \mathbb{R})$.
	\begin{rmk}
		For the particular case $p = q$ the solution of $Q_n'(t) = 0$ is given explicitly in the following form:
		$$t = t_n(u, v) = \left (\frac{(1 + p) A(u, v)}{\theta(\alpha + \beta - 1 + p)B(u, v) }\right )^{\frac{1}{\alpha + \beta - 2}}.$$
		Hence, we see that
		\begin{equation}\label{Lambda n p=q} 
			\Lambda _n(u, v) := C_{\alpha, \beta, p, \theta} \frac{A(u, v)^{\frac{\alpha + \beta - 1 + p}{\alpha + \beta - 2}}}{B(u, v)^{\frac{1 + p}{\alpha + \beta - 2}}\left (P(u) + Q(v)\right )};\;\;\;\;\; C_{\alpha, \beta, p, \theta} = 
			\frac{(1 + p)^{\frac{1 + p}{\alpha + \beta - 2}}(\alpha + \beta - 2)}{\theta^{\frac{1 + p}{\alpha + \beta - 2}}(\alpha + \beta - 1 + p)^{\frac{\alpha + \beta - 1 + p}{\alpha + \beta - 2}}}.
		\end{equation}
		
	\end{rmk}
	\begin{prop}\label{lambda zero homsing}
		Suppose ($P_0$) and ($P$).  Then the functional $\Lambda_n$ is zero homogeneous.
	\end{prop}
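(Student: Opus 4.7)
The plan is to establish zero homogeneity directly from the definition of $\Lambda_n$ as a maximum over a one-parameter fiber, using a simple rescaling of the parameter. Fix $(u,v) \in \mathcal{A}$ and $s > 0$. First I would verify that the rescaled pair lies in the admissible set, that is $(su, sv) \in \mathcal{A}$: indeed
\[
\int |su|^{\alpha}|sv|^{\beta}\, dx = s^{\alpha+\beta}\int |u|^{\alpha}|v|^{\beta}\, dx > 0,
\]
so the maximum defining $\Lambda_n(su,sv)$ is attained (uniquely) by Proposition \ref{tn(u,v) únicosing}.

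Next, applying the definition of $\Lambda_n$ to $(su,sv)$ gives
\[
\Lambda_n(su,sv) = \max_{t>0} R_n(tsu, tsv).
\]
The key step is the substitution $\tau = ts$. Since $s > 0$ is fixed and positive, the map $t \mapsto ts$ is a bijection of $(0,\infty)$ onto itself, so ranging $t$ over $(0,\infty)$ is equivalent to ranging $\tau$ over $(0,\infty)$. Therefore
\[
\Lambda_n(su,sv) = \max_{\tau>0} R_n(\tau u, \tau v) = \Lambda_n(u,v),
\]
which is exactly the zero homogeneity assertion.

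There is essentially no obstacle here: the statement is a structural consequence of taking the maximum along the ray $\{(tu,tv) : t > 0\}$, which is invariant under positive rescaling of $(u,v)$. The only thing to be careful about is that the admissible set $\mathcal{A}$ is itself invariant under positive scaling and that the maximum indeed exists (both guaranteed by the preceding analysis). Alternatively, one could substitute the closed-form expression for $t_n(su,sv)$ into \eqref{def Lambda_nsing} and verify by a direct computation that the factors of $s$ cancel across numerator and denominator, but the fiber reparametrization argument is cleaner and avoids any explicit manipulation of the implicit critical point.
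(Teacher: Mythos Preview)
Your proof is correct and follows essentially the same approach as the paper: both exploit the substitution $\tau = ts$ (the paper writes $a = ts$) to identify $\max_{t>0} R_n(tsu,tsv)$ with $\max_{\tau>0} R_n(\tau u,\tau v)$. Your version is slightly more detailed in verifying that $(su,sv)\in\mathcal{A}$, but the core idea is identical.
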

	\begin{proof}
		Consider $s>0$. Notice that $\Lambda _n(su, sv) = \sup\limits_{t>0}Q_n(tsu, tsv) = \sup\limits_{a>0} Q_n(au, av) = \Lambda _n(u, v).$ This ends the proof. 
	\end{proof}

	In order to study where the energy functional is positive, negative or zero we need to study the behavior of the function provided in \eqref{R_e cap2}. Analogously, as was done for the behavior of $Q_n$, we analyze the fibering map for $Q_e$. More precisely, we obtain that
	$$\lim\limits_{t \to 0} \frac{Q_e(t)}{t^{1 + q}} \ge \frac{\frac{1}{2}A(u, v)}{\frac{1}{1 - p} P(u) + \frac{1}{1 - q}P(v)} > 0;\;\;\;\;\; \lim\limits_{t \to \infty} \frac{Q_e(t)}{t^{\alpha + \beta - 1 + p}} \le -\frac{\frac{\theta}{\alpha + \beta} B(u, v)}{\frac{1}{1 - p} P(u) + \frac{1}{1 - q}P(v)} < 0.$$ 
	\begin{prop}\label{te(u,v) único sing}
		Assume ($P_0$) and ($P$). Then, for every $(u,v)\in \mathcal{A}$, there exists a unique $t = t_e(u, v)$ such that $Q_e'(t) = 0$. Moreover, $(u, v) \to t_e(u, v)$ is in $C^0(\mathcal{A}, \mathbb{R})$ class.
	\end{prop}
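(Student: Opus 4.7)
The proof runs exactly parallel to that of Proposition \ref{tn(u,v) únicosing}, with the fibering map $Q_e$ playing the role of $Q_n$. My plan is therefore to verify the three ingredients needed: (i) existence of a global maximum of $Q_e$ on $(0,+\infty)$; (ii) uniqueness of the critical point of $Q_e$; (iii) $C^0$-regularity of $(u,v) \mapsto t_e(u,v)$ via the Implicit Function Theorem.

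For (i), note that the two limits displayed just before the proposition already yield that $Q_e(t) \to 0^+$ as $t \to 0^+$ (dominated by a positive multiple of $t^{1+q}$) and $Q_e(t) \to -\infty$ as $t \to +\infty$ (dominated by a negative multiple of $t^{\alpha+\beta-1+p}$). Since $Q_e$ is continuous on $(0,+\infty)$ and attains strictly positive values for $t$ small, it must attain a global maximum on $(0,+\infty)$, which is in particular a critical point. For (ii), I apply the quotient rule and set $Q_e'(t)=0$, which after multiplying through by the squared denominator reduces to the vanishing of $g'(t)h(t)-g(t)h'(t)$, where $g(t)=\tfrac{1}{2}t^2 A-\tfrac{\theta}{\alpha+\beta}t^{\alpha+\beta}B$ and $h(t)=\tfrac{1}{1-p}t^{1-p}P+\tfrac{1}{1-q}t^{1-q}Q$ with $A=A(u,v)$, $B=B(u,v)$, $P=P(u)$, $Q=Q(v)$. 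This is the same algebraic structure handled by Lemma \ref{apendice1} (applied with modified coefficients coming from the factors $\tfrac{1}{2}$, $\tfrac{1}{\alpha+\beta}$, $\tfrac{1}{1-p}$, $\tfrac{1}{1-q}$), so the same monotonicity argument used for $Q_n$ gives a unique positive critical point $t=t_e(u,v)$, and it is necessarily the global maximum already found in (i).

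For (iii), fix $(z_1,z_2)\in\mathcal{A}$ and define $G : (0,+\infty)\times X \to \mathbb{R}$ by
\begin{equation*}
G(t,(w_1,w_2)) = R_e'\bigl(t(z_1+w_1,z_2+w_2)\bigr)\bigl(t(z_1+w_1,z_2+w_2)\bigr).
\end{equation*}
Rescaling so that $t_e(z_1,z_2)=1$ (which we may do via the zero-homogeneity argument of Proposition \ref{lambda zero homsing}, applied to $\Lambda_e$), we have $G(1,(0,0))=Q_e'(1)=0$. The partial derivative $\partial_t G(1,(0,0))$ is, up to a non-zero factor, the second-order quantity $Q_e''(1)$; since $t=1$ is the unique (hence non-degenerate) maximum established in (ii), we get $\partial_t G(1,(0,0)) < 0$, in particular non-zero. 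The Implicit Function Theorem \cite[Remark 4.2.3]{Dra} then yields $\varepsilon>0$ and a unique continuous map $t_e : B_\varepsilon(z_1,z_2) \to \mathbb{R}$ such that $G(t_e(w_1,w_2),(w_1,w_2))=0$ for $(w_1,w_2)\in B_\varepsilon(z_1,z_2)$. Because $(z_1,z_2)\in\mathcal{A}$ was arbitrary and local uniqueness combined with global uniqueness from (ii) means the locally-defined map agrees with $t_e$, we conclude $t_e\in C^0(\mathcal{A},\mathbb{R})$.

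The only genuinely nontrivial step is the uniqueness in (ii), since all other parts are essentially repetitions of Proposition \ref{tn(u,v) únicosing}; the mild complication is that $Q_e$'s numerator and denominator have different internal weights than those of $Q_n$, so one must check that the assumption $0<p\le q<1<\alpha+\beta<2^*_s$ still forces the auxiliary single-variable equation coming from $g'h=gh'$ to be monotone. Once this is verified (the same kind of sign analysis as for $Q_n$ succeeds), the rest is routine.
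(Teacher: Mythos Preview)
Your proof is correct and follows essentially the same approach as the paper, which simply states ``Here we apply the same ideas discussed in the proof of Proposition \ref{tn(u,v) únicosing}. We omit the details.'' One small caveat: the inference ``unique (hence non-degenerate) maximum'' in part (iii) is not logically valid in general---the non-degeneracy $Q_e''(t_e)<0$ should instead be read off from the structure established in Lemma \ref{apendice1}, though the paper's own proof of Proposition \ref{tn(u,v) únicosing} takes the same shortcut by asserting $\partial_t F(1,(0,0))\ne 0$ without further comment.
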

	\begin{proof} Here we apply the same ideas discussed in the proof of Proposition \ref{tn(u,v) únicosing}. We omit the details. 
	\end{proof} 
	Similarly, we obtain that $\Lambda_e \in C^0(\mathcal{A}, \mathbb{R})$ where 
	\begin{equation}\label{defLambda_esing} 
		\Lambda _e(u, v) = \max\limits_{t>0} R_e(tu, tv) = R_e(t_e(u, v) (u,v)).
	\end{equation}
	
	\begin{prop}\label{diferença impsing}
		Suppose ($P_0$), ($P$), ($V_0$), ($V_1'$). Let us consider $(u, v) \in \mathcal{A}$. Then we conclude that
		\begin{equation}\label{eq dif impsing} 
			Q_n(t) - Q_e(t)=\frac{t}{(1 - p)(1 - q)}\left ( \frac{(1 - q)t^{1 - p} P(u) + (1 - p) t^{1 - q}Q(v)}{t^{1 - p} P(u) + t^{1 - q} Q(v)} \right )Q_e'(t).
		\end{equation}
		
	\end{prop}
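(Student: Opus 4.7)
The identity can be verified by a direct algebraic manipulation once one notices a crucial observation: the numerator and denominator of $Q_n$ are precisely $t$ times the derivatives of the numerator and denominator of $Q_e$. Concretely, write
$$N(t) := t^{2}A - \theta t^{\alpha+\beta}B, \quad D(t) := t^{1-p}P + t^{1-q}Q,$$
$$\widetilde N(t) := \tfrac{1}{2}t^{2}A - \tfrac{\theta}{\alpha+\beta}t^{\alpha+\beta}B, \quad \widetilde D(t) := \tfrac{1}{1-p}t^{1-p}P + \tfrac{1}{1-q}t^{1-q}Q,$$
where $A = A(u,v)$, $B = B(u,v)$, $P = P(u)$, $Q = Q(v)$. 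A one-line differentiation gives $t\,\widetilde N'(t) = N(t)$ and $t\,\widetilde D'(t) = D(t)$, so that
$$Q_n(t) = \frac{N(t)}{D(t)} = \frac{\widetilde N'(t)}{\widetilde D'(t)}.$$

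The plan is then to subtract the two rational functions $Q_n(t) = \widetilde N'/\widetilde D'$ and $Q_e(t) = \widetilde N/\widetilde D$ over a common denominator and compare with the quotient rule applied to $Q_e$:
$$Q_n(t) - Q_e(t) = \frac{\widetilde N'(t)\widetilde D(t) - \widetilde N(t)\widetilde D'(t)}{\widetilde D'(t)\,\widetilde D(t)} = \frac{\widetilde D(t)}{\widetilde D'(t)}\,Q_e'(t).$$

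The last step is to rewrite the factor $\widetilde D(t)/\widetilde D'(t)$ in the form appearing in the statement. Using the identity $t\widetilde D'(t) = D(t) = t^{1-p}P + t^{1-q}Q$ and reducing $\widetilde D(t)$ to the common denominator $(1-p)(1-q)$, one obtains
$$\frac{\widetilde D(t)}{\widetilde D'(t)} = t\cdot\frac{\widetilde D(t)}{D(t)} = \frac{t}{(1-p)(1-q)}\cdot \frac{(1-q)t^{1-p}P(u) + (1-p)t^{1-q}Q(v)}{t^{1-p}P(u) + t^{1-q}Q(v)},$$
which combined with the previous display yields \eqref{eq dif impsing}. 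There is no real obstacle here; the only thing one must spot is the derivative relation $N = t\widetilde N'$, $D = t\widetilde D'$, which is forced by the fact that the Nehari fiber is $t\,\partial_t$ applied to the energy fiber.
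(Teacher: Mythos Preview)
Your proof is correct and is precisely the ``simple computation'' that the paper alludes to but omits. The key observation $N = t\widetilde N'$, $D = t\widetilde D'$ (equivalently $Q_n = \widetilde N'/\widetilde D'$) is the cleanest way to organize the algebra, and from there the quotient-rule identity $Q_n - Q_e = (\widetilde D/\widetilde D')\,Q_e'$ together with the explicit form of $\widetilde D/\widetilde D'$ gives \eqref{eq dif impsing} immediately.
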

	\begin{proof} The proof follows by a simple computation. We omit the details. \end{proof}

	\begin{rmk} It follows from \eqref{eq dif impsing} and Remark \ref{ex3.18Elon analise reta} the following estimates: 
		\begin{equation}\label{dif inequalities} 
			t(1-p)^{-1}Q_e^{\prime}(t) \;\;\;\leq \;\;\; Q_n(t) - Q_e(t)\;\;\; \leq \;\;\; t(1-q)^{-1}Q_e^{\prime}(t).
		\end{equation} 
		Moreover, 
		$Q_n(t) > Q_e(t)$ if and only if $ Q_e^{\prime}(t) > 0$ which makes sense only for $0 < t < t_e(u, v)$.
		Similarly, $Q_n(t) < Q_e(t)$ holds if and only if $Q^{\prime}_e(t) < 0$ which makes sense only for $t > t_e(u, v)$. Furthermore, we mention that
		$Q_n(t) = Q_e(t)$ holds if and only if $Q_e^{\prime}(t) = 0$ which occurs only for $t = t_e(u, v)$.
	\end{rmk}
	
	\begin{rmk}
		For the particular case where $p = q$, the value $t_e(u, v)$ is explicitly given by 
		\begin{equation}\label{t_e sing} 
			t_e(u,v) = \left ( \frac{\left ( 1 + p \right )\left ( \alpha + \beta \right ) A(u, v)}{ 2\theta\left (\alpha +\beta - 1 + p \right ) B(u, v)} \right )^ \frac{1}{\alpha +\beta -2}.
		\end{equation}
		Furthermore, we mention that 
		\begin{equation}\label{Lambda_e = R_e (te(u, v)) q sing} 
			\Lambda _e(u, v) = \tilde{C}_{\alpha, \beta, p, \theta} \frac{A(u, v)^{\frac{\alpha +\beta - 1 + p}{ \alpha + \beta -2}}}{B(u, v)^{\frac{1 + p}{\alpha + \beta - 2}}\left (P(u) + Q(v)\right)};
		\end{equation}
		\begin{equation}\label{Ctilalphabetaqtetasing}
			\tilde{C}_{\alpha, \beta, q, \theta} = (1 - p) (\alpha + \beta - 2) \left ( \frac{(1 + p)(\alpha + \beta)}{\theta} \right )^{\frac{1 + p}{\alpha + \beta - 2}} \left ( \frac{1}{2(\alpha + \beta - 1 + p)} \right )^{\frac{\alpha + \beta - 1 +p}{\alpha + \beta - 2}}.
		\end{equation}
	\end{rmk}

	\begin{rmk}\label{Qn - Qe, p=qsing}
		Assume that $p = q$. Hence, we obtain that
		$Q_n(t) - Q_e(t)=t(1-p)^{-1}Q_e^{\prime}(t)$.
	\end{rmk}
	\begin{prop}\label{seqeintlongedozero} Assume that ($P_0$),($P$),($V_0$) and ($V_1'$) hold. Consider $(u_k, v_k) \in \mathcal{A}$ a minimizing sequence for $\lambda^*$. Then the auxiliary sequence $(\tilde{u}_k, \tilde{v}_k) \in \mathcal{A}$ given by
		$\tilde{u}_k = t_n(u_k, v_k) u_k,\,\tilde{v}_k = t_n(u_k, v_k)v_k,
		$
		is also a minimizing sequence. Moreover, there exist $\rho,\; \tilde{\rho} > 0$ such that $\Vert(\tilde{u}_k, \tilde{v}_k)\Vert \ge \tilde{\rho}$ and $\theta  \int \vert \tilde{u}_k \vert ^\alpha \vert \tilde{v}_k \vert ^\beta dx \ge \rho$.
	\end{prop}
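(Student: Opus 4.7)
The plan proceeds in three stages plus a remark on what I expect to be the technical hinge.

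\smallskip

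\textbf{Stage 1 (Minimizing character and Nehari identity).} By Proposition \ref{lambda zero homsing} the functional $\Lambda_n$ is zero-homogeneous, so $\Lambda_n(\tilde u_k,\tilde v_k)=\Lambda_n(u_k,v_k)\to\lambda^*$, proving that $(\tilde u_k,\tilde v_k)$ is still a minimizing sequence. A direct scaling argument applied to the uniqueness statement in Proposition \ref{tn(u,v) únicosing} shows $t_n(\tilde u_k,\tilde v_k)=1$, hence
$\lambda_k:=R_n(\tilde u_k,\tilde v_k)=\Lambda_n(\tilde u_k,\tilde v_k)\to\lambda^*$. By Remark \ref{Rn lambda e E'sing}, this forces $(\tilde u_k,\tilde v_k)\in\mathcal{N}_{\lambda_k}$, i.e.
\begin{equation*}
\|(\tilde u_k,\tilde v_k)\|^2 \;=\; \lambda_k\bigl[P(\tilde u_k)+Q(\tilde v_k)\bigr]+\theta\, B(\tilde u_k,\tilde v_k). \qquad(\star)
\end{equation*}

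\smallskip

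\textbf{Stage 2 (Exploiting the maximality of $t=1$).} Using the factorization $E'_{\lambda}(tu,tv)(tu,tv)=[R_n(tu,tv)-\lambda]\bigl(t^{1-p}P(u)+t^{1-q}Q(v)\bigr)$ with $\lambda=\lambda_k$, the inequality $R_n(t\tilde u_k,t\tilde v_k)\le\lambda_k$ for all $t>0$ forces $t\mapsto E_{\lambda_k}(t\tilde u_k,t\tilde v_k)$ to have a horizontal inflection at $t=1$. Therefore $E''_{\lambda_k}(\tilde u_k,\tilde v_k)(\tilde u_k,\tilde v_k)^2\le 0$ (in fact equals $0$, but the inequality suffices). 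Subtracting twice $(\star)$ from this $E''$ inequality eliminates $\|\cdot\|^2$ and $B$ partially, yielding
\begin{equation*}
\lambda_k(1+p)P(\tilde u_k)+\lambda_k(1+q)Q(\tilde v_k)\;\le\;\theta(\alpha+\beta-2)\,B(\tilde u_k,\tilde v_k). \qquad(\star\star)
\end{equation*}

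\smallskip

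\textbf{Stage 3 (Bounds from Sobolev embedding).} Since $p\le q$, $(\star\star)$ gives $\lambda_k\bigl(P(\tilde u_k)+Q(\tilde v_k)\bigr)\le \tfrac{\theta(\alpha+\beta-2)}{1+p}B(\tilde u_k,\tilde v_k)$, and plugging into $(\star)$,
\begin{equation*}
\|(\tilde u_k,\tilde v_k)\|^2 \;\le\; \theta\,B(\tilde u_k,\tilde v_k)\cdot\frac{\alpha+\beta-1+p}{1+p}.
\end{equation*}
By Lemma \ref{imersoes Sobolev 2, 2*s} applied with $r_1=r_2=\alpha+\beta\in(2,2^*_s)$ and Hölder's inequality, $B(\tilde u_k,\tilde v_k)\le C\|(\tilde u_k,\tilde v_k)\|^{\alpha+\beta}$. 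Combining,
\begin{equation*}
\|(\tilde u_k,\tilde v_k)\|^{\alpha+\beta-2}\;\ge\;\frac{1+p}{C\theta(\alpha+\beta-1+p)},
\end{equation*}
from which $\|(\tilde u_k,\tilde v_k)\|\ge\tilde\rho>0$ follows. Reinserting into the previous display produces $\theta B(\tilde u_k,\tilde v_k)\ge\tfrac{1+p}{\alpha+\beta-1+p}\tilde\rho^2=:\rho>0$, and the proof is complete.

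\smallskip

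\textbf{Main obstacle.} The nontrivial step is Stage 2: converting the information that $t=1$ is the (unique, global) maximizer of the fiber $Q_n$ of $(\tilde u_k,\tilde v_k)$ into a usable inequality on $E''_{\lambda_k}$. The singular terms preclude applying second-derivative tests to $E_{\lambda_k}$ directly; the passage has to be made through the identity $E'_{\lambda_k}(tu,tv)(tu,tv)=\bigl[R_n(tu,tv)-\lambda_k\bigr]\cdot(\text{positive})$, which is exactly where the structure of the nonlinear Rayleigh quotient plays its essential role. Once $(\star\star)$ is available, the hypothesis $\alpha+\beta>2$ together with the assumption $p\le q$ handle the remaining algebra.
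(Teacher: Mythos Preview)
Your proof is correct and follows essentially the same approach as the paper. The only cosmetic difference is that the paper differentiates $R_n(t\tilde u_k,t\tilde v_k)$ directly at $t=1$ to obtain the exact identity
\[
\theta B(\tilde u_k,\tilde v_k)=\frac{A(\tilde u_k,\tilde v_k)\bigl[(1+p)P(\tilde u_k)+(1+q)Q(\tilde v_k)\bigr]}{(\alpha+\beta-1+p)P(\tilde u_k)+(\alpha+\beta-1+q)Q(\tilde v_k)},
\]
and then applies the elementary fraction bound (Remark~\ref{ex3.18Elon analise reta}), whereas you pass through the equivalent condition $E''_{\lambda_k}(\tilde u_k,\tilde v_k)(\tilde u_k,\tilde v_k)^2=0$ (this equivalence is precisely Proposition~\ref{tR_n'G' = E''sing}) and combine it with $(\star)$; both routes land on the same inequality $\tfrac{1+p}{\alpha+\beta-1+p}\|(\tilde u_k,\tilde v_k)\|^2\le\theta B(\tilde u_k,\tilde v_k)$ and conclude identically via H\"older and Lemma~\ref{imersoes Sobolev 2, 2*s}.
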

	\begin{proof}
		Firstly, by using Proposition \ref{lambda zero homsing}, the functional $\Lambda_n$ is zero homogeneous and $(\tilde{u}_k, \tilde{v}_k)$ is also a minimizing sequence. Furthermore, we see that 
		$$ \left.\frac{d}{dt} R_n(t \tilde{u}_k, t\tilde{v}_k)\right|_{t=1}=0.$$
		Hence, we deduce that
		$$ \left ( 2A(\tilde{u}_k, \tilde{v}_k) - \left (\alpha + \beta \right )\theta B(\tilde{u}_k, \tilde{v}_k) \right )\left ( P(\tilde{u}_k) + Q(\tilde{v}_k) \right ) = \left ( A(\tilde{u}_k, \tilde{v}_k) - \theta B(\tilde{u}_k, \tilde{v}_k)\right )\left ( (1 - p)(P(\tilde{u}_k) + (1 - q)Q(\tilde{v}_k)\right ). $$
		Consequently, we obtain 
		$$2A(\tilde{u}_k, \tilde{v}_k) \left [ P(\tilde{u}_k) + Q(\tilde{v}_k) \right ] - \theta(\alpha + \beta) B(\tilde{u}_k, \tilde{v}_k)\left [ P(\tilde{u}_k) + Q(\tilde{v}_k) \right ] =$$
		$$A(\tilde{u}_k, \tilde{v}_k)\left [ (1 - p) P(\tilde{u}_k) + (1 - q)Q(\tilde{u}_k) \right ] - \theta B(\tilde{u}_k, \tilde{v}_k)\left [ (1 - p) P(\tilde{u}_k) + (1 - q)Q(\tilde{u}_k) \right ].$$
		Similarly, we see that
		\begin{equation}\label{integrates coupling}
			\theta B(\tilde{u}_k, \tilde{v}_k) = \frac{A(\tilde{u}_k, \tilde{v}_k ) \left [ \left (p + 1 \right ) P(\tilde{u}_k) + \left (q + 1\right ) Q(\tilde{v}_k) \right ]}{\left [ \left ((\alpha + \beta \right ) - 1 + p) P(\tilde{u}_k) + \left ((\alpha + \beta \right ) - 1 + q) Q(\tilde{v}_k)\right ]}.
		\end{equation}
		Now, by using Lemma \ref{ex3.18Elon analise reta} and taking into account the fact that the function $f(x) = \frac{1 + x}{(\alpha +\beta) - 1 + x}$ is increasing, we deduce that  
		\begin{equation}\begin{array}{lll}\label{intervalo da integralização}
				f(p) \;A(\tilde{u}_k, \tilde{v}_k) \;\;\leq\;\; \theta B(\tilde{u}_k, \tilde{v}_k) \;\;\leq \;\;f(q)\; A(\tilde{u}_k, \tilde{v}_k).  
			\end{array}
		\end{equation}
		Hence, by applying H\"older's inequality and Sobolev embedding, see Lemma \ref{imersoes Sobolev 2, 2*s}, we infer that
		\begin{equation}\begin{array}{lll}\label{desdeHolder1}
				f(p)\Vert (\tilde{u}_k, \tilde{v}_k) \Vert^2 \leq \theta \Vert \tilde{u}_k \Vert_{\alpha + \beta}^\alpha \Vert \tilde{v}_k \Vert_{\alpha + \beta}^\beta \leq \theta || (\tilde{u}_k,\tilde{v}_k)||_{\alpha+\beta}^{{\alpha+\beta}} \leq \theta S_{\alpha + \beta}^{\alpha + \beta}\Vert (\tilde{u}_k, \tilde{v}_k) \Vert^{\alpha + \beta}.
			\end{array}
		\end{equation}
		As a consequence, we see that 
		\begin{equation}\begin{array}{lll}\label{limitaçao inf do módulosing}
				\Vert (\tilde{u}_k, \tilde{v}_k) \Vert \ge \left ( \frac{f(p)}{\theta S^{\alpha +\beta}_{\alpha + \beta}}\right )^{\frac{1}{\alpha + \beta -2}} = \tilde{\rho}.
			\end{array}
		\end{equation}
		In view of \eqref{limitaçao inf do módulosing} and \eqref{intervalo da integralização} we obtain that the minimizing sequence is far away from the boundary of $\partial\mathcal{A}$. In fact, we observe that 
		\begin{equation}\label{hrosing} 
			\theta B(\tilde{u}_k, \tilde{v}_k)\;=\;\theta  \int \vert \tilde{u}_k \vert ^\alpha \vert \tilde{v}_k \vert ^\beta dx \;\geq\;\left(\frac{f^{\alpha+\beta}(p)}{\theta^2S_{\alpha+\beta}^{2(\alpha+\beta)}}\right)^{1/(\alpha+\beta-2)}:=\rho.
		\end{equation}
		This finishes the proof. 
	\end{proof}
	\begin{prop}
		Suppose ($P_0$), ($P$), ($V_0$) and ($V_1'$). Then there exists $C_\rho > 0$ such that $\lambda^*\ge C_\rho > 0.$ 
	\end{prop}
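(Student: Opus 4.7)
The plan is to bound $\Lambda_n$ from below by a positive constant on all of $\mathcal{A}$, which, since $\lambda^* = \inf_{\mathcal{A}} \Lambda_n$, yields the claim. Because $\Lambda_n$ is zero-homogeneous (Proposition \ref{lambda zero homsing}), I may replace any minimizing sequence $(u_k,v_k)\in\mathcal{A}$ by its rescaling $(\tilde u_k,\tilde v_k)=t_n(u_k,v_k)(u_k,v_k)$, which by Proposition \ref{seqeintlongedozero} is again a minimizing sequence and additionally satisfies the quantitative bounds $\|(\tilde u_k,\tilde v_k)\|\geq \tilde\rho>0$ and $\theta B(\tilde u_k,\tilde v_k)\geq\rho>0$. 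At such rescaled points $Q_n'(1)=0$, so
\[
\Lambda_n(\tilde u_k,\tilde v_k) \;=\; R_n(\tilde u_k,\tilde v_k) \;=\; \frac{A(\tilde u_k,\tilde v_k)-\theta B(\tilde u_k,\tilde v_k)}{P(\tilde u_k)+Q(\tilde v_k)}.
\]

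The next step is to estimate the numerator from below using \eqref{intervalo da integralização}. Since $f(x)=(1+x)/((\alpha+\beta)-1+x)$ satisfies $f(q)<1$ (which is exactly hypothesis $\alpha+\beta>2$ in $(P)$), I get
\[
A(\tilde u_k,\tilde v_k)-\theta B(\tilde u_k,\tilde v_k)\;\geq\;\bigl(1-f(q)\bigr)\,A(\tilde u_k,\tilde v_k)\;=\;\bigl(1-f(q)\bigr)\,\|(\tilde u_k,\tilde v_k)\|^{2}.
\]

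For the denominator, I use $(P_0)$ together with H\"older's inequality with conjugate exponents $2/(1+p)$ and $2/(1-p)$ to get $P(\tilde u_k)\leq \|a\|_{2/(1+p)}\|\tilde u_k\|_2^{1-p}$, and analogously $Q(\tilde v_k)\leq \|b\|_{2/(1+q)}\|\tilde v_k\|_2^{1-q}$. Combining with the continuous embedding $X\hookrightarrow L^2(\Rn)\times L^2(\Rn)$ from Lemma \ref{imersoes Sobolev 2, 2*s}, I obtain
\[
P(\tilde u_k)+Q(\tilde v_k)\;\leq\;C_1\,\|(\tilde u_k,\tilde v_k)\|^{1-p}+C_2\,\|(\tilde u_k,\tilde v_k)\|^{1-q}.
\]

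Putting these two estimates together and writing $s_k:=\|(\tilde u_k,\tilde v_k)\|\geq \tilde\rho$, I deduce
\[
\Lambda_n(\tilde u_k,\tilde v_k)\;\geq\;\frac{(1-f(q))\,s_k^{\,2}}{C_1\,s_k^{\,1-p}+C_2\,s_k^{\,1-q}}.
\]
Since $0<1-q\leq 1-p<1<2$, the right-hand side, viewed as a function $\varphi(s)$ on $[\tilde\rho,\infty)$, tends to $+\infty$ as $s\to\infty$ and is continuous and strictly positive on $[\tilde\rho,\infty)$; thus $\inf_{s\geq\tilde\rho}\varphi(s)=:C_\rho>0$. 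Passing to the limit gives $\lambda^*=\lim_k\Lambda_n(\tilde u_k,\tilde v_k)\geq C_\rho>0$, as required. No genuinely hard step arises: the main thing to be careful with is invoking \eqref{intervalo da integralização} in the right direction (the upper bound $\theta B\leq f(q)A$, which is what forces $\alpha+\beta>2$ to be used), and choosing H\"older exponents compatible with the integrability assumed in $(P_0)$.
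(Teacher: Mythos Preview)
Your proof is correct and follows essentially the same route as the paper: rescale the minimizing sequence so that $Q_n'(1)=0$, invoke the upper bound $\theta B\le f(q)A$ from \eqref{intervalo da integralização} to control the numerator, bound the denominator via H\"older and the Sobolev embedding (exactly the estimates \eqref{bounded} in the paper), and then use the uniform lower bound $\|(\tilde u_k,\tilde v_k)\|\ge\tilde\rho$ from Proposition~\ref{seqeintlongedozero}. The only cosmetic difference is the final step: the paper converts $\dfrac{s^2}{s^{1-p}+s^{1-q}}$ into $\tfrac12\min\{s^{1+p},s^{1+q}\}$ via Remark~\ref{ex3.18Elon analise reta}, whereas you argue by continuity and positivity of $\varphi$ on $[\tilde\rho,\infty)$; both are valid and lead to the same conclusion.
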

	\begin{proof}
		Firstly, by using \eqref{intervalo da integralização}, we see that 
		$$\Lambda_n(\tilde{u}_k, \tilde{v}_k) \geq [1 - f(q)] \frac{||(\tilde{u}_k, \tilde {v}_k)||^2}{P(\tilde{u}_k)+Q( \tilde {v}_k)} .$$
		It follows from Sobolev embedding and  H\"older inequality that 
		\begin{equation}\label{bounded}
			P(\tilde{u}_k)\leq S_2^{1-p}||a||_{2/(p+1)}||(\tilde{u}_k, \tilde{v}_k)||^{1-p}\leq S||(\tilde{u}_k, \tilde{v}_k)||^{1-p}, \,\, Q(\tilde{v}_k)\leq S_2^{1-q}||b||_{2/(q+1)}||(\tilde{u}_k, \tilde{v}_k)||^{1-q}\leq S||(\tilde{u}_k, \tilde{v}_k)||^{1-q}
		\end{equation}
		where $S := \max\{\Vert a \Vert_{\frac{2}{1 + p}}S_2^{1 - p}, \Vert b \Vert_{\frac{2}{1 + q}}S_2^{1 - q} \}$. Therefore, by using Remark \ref{ex3.18Elon analise reta},
		we obtain 
		$$\Lambda_n(\tilde{u}_k, \tilde{v}_k) \ge \frac{1 - f(q)}{2S} \min \{\Vert (\tilde{u}_k, \tilde{v}_k)\Vert^{1 + p}, \Vert (\tilde{u}_k, \tilde {v}_k)\Vert^{1 + q}\}.$$
		Now, by using \eqref{limitaçao inf do módulosing}, we see that 
		$\Lambda_n(\tilde{u}_k, \tilde{v}_k) \ge \frac{1 - f(q)}{2S} \max \left\{ \tilde{\rho}^{1 + q}, \tilde{\rho}^{1 + p} \right \}=: C_{\tilde{\rho}}.$ This ends the proof. 
	\end{proof}
	\begin{prop}\label{Limitadasing}
		Suppose ($P_0$), ($P$), ($V_0$) and ($V_1'$). Then the minimizing sequence provided by Proposition \ref{seqeintlongedozero} is bounded.
	\end{prop}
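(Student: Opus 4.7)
The plan is to combine the upper bound coming from minimality with a coercivity lower bound on $\Lambda_n(\tilde{u}_k,\tilde{v}_k)$ in terms of $\Vert(\tilde{u}_k,\tilde{v}_k)\Vert$. Since $(\tilde{u}_k,\tilde{v}_k)$ is a minimizing sequence for $\lambda^*$, we have $\Lambda_n(\tilde{u}_k,\tilde{v}_k) \to \lambda^*$, so this quantity is bounded above by some constant $C > 0$. A direct consequence of the relations $\tilde{u}_k = t_n(u_k,v_k) u_k$, $\tilde{v}_k = t_n(u_k,v_k) v_k$, together with the uniqueness in Proposition \ref{tn(u,v) únicosing}, is that $t_n(\tilde{u}_k,\tilde{v}_k) = 1$. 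Therefore
$$\Lambda_n(\tilde{u}_k,\tilde{v}_k) = R_n(\tilde{u}_k,\tilde{v}_k) = \frac{A(\tilde{u}_k,\tilde{v}_k) - \theta B(\tilde{u}_k,\tilde{v}_k)}{P(\tilde{u}_k) + Q(\tilde{v}_k)}.$$

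Next, I would invoke the upper bound $\theta B(\tilde{u}_k,\tilde{v}_k) \leq f(q) A(\tilde{u}_k,\tilde{v}_k)$ from \eqref{intervalo da integralização}, where $f(q) = (1+q)/(\alpha+\beta-1+q) < 1$ thanks to the hypothesis $\alpha+\beta > 2$ in $(P)$. This immediately produces the coercivity estimate
$$\Lambda_n(\tilde{u}_k,\tilde{v}_k) \geq \bigl(1 - f(q)\bigr) \frac{\Vert(\tilde{u}_k,\tilde{v}_k)\Vert^2}{P(\tilde{u}_k) + Q(\tilde{v}_k)}.$$
The denominator is then controlled exactly as in \eqref{bounded}: H\"older's inequality (using $a \in L^{2/(1+p)}$ and $b \in L^{2/(1+q)}$ from $(P_0)$) combined with the Sobolev embedding in Lemma \ref{imersoes Sobolev 2, 2*s} yields
$$P(\tilde{u}_k) + Q(\tilde{v}_k) \leq S \bigl(\Vert(\tilde{u}_k,\tilde{v}_k)\Vert^{1-p} + \Vert(\tilde{u}_k,\tilde{v}_k)\Vert^{1-q}\bigr),$$
with $S$ as in the previous proposition.

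Setting $M_k := \Vert(\tilde{u}_k,\tilde{v}_k)\Vert$, chaining the inequalities above gives
$$C + o(1) \geq \Lambda_n(\tilde{u}_k,\tilde{v}_k) \geq \frac{1 - f(q)}{S} \cdot \frac{M_k^2}{M_k^{1-p} + M_k^{1-q}}.$$
If $M_k$ were unbounded along a subsequence, the right-hand side would blow up, since $2 > 1-p \geq 1-q$; this contradicts the uniform upper bound $C$. Hence $\{M_k\}$ is bounded, which is the desired conclusion. The only subtle point is verifying that the coercivity constant is strictly positive, i.e., $1 - f(q) > 0$; but this is immediate from $\alpha+\beta > 2$. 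In fact, all the essential work has already been done in Proposition \ref{seqeintlongedozero}: the present lemma merely reads off the coercivity estimate implicit in the identity \eqref{intervalo da integralização}.
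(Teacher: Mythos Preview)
Your argument is correct and follows essentially the same route as the paper's: both combine the upper bound $\theta B(\tilde u_k,\tilde v_k)\le f(q)A(\tilde u_k,\tilde v_k)$ from \eqref{intervalo da integralização} with the H\"older/Sobolev estimate \eqref{bounded} to obtain a coercive inequality relating $\Lambda_n(\tilde u_k,\tilde v_k)$ and $\Vert(\tilde u_k,\tilde v_k)\Vert$, and then use the minimizing property $\Lambda_n(\tilde u_k,\tilde v_k)\le \lambda^*+o(1)$. The only cosmetic difference is that the paper rearranges to produce an explicit upper bound on the norm, whereas you conclude by contradiction; the content is identical.
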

	\begin{proof}
		Up to a subsequence, we see that $\Lambda_n(\tilde{u}_k,\tilde{v}_k) \le \lambda^* + \frac{1}{k}.$ Without loss of generality we suppose that $\Vert(\tilde{u}_k, \tilde{v}_k)\Vert \ge 1$. Therefore, by using \eqref{bounded}, we obtain  
		\begin{equation}\label{desholdersing}
			\begin{array}{lll}
				\begin{array}{c}       
					\Vert(\tilde{u}_k,\tilde{v}_k)\Vert ^2 \le f(q)\Vert (\tilde{u}_k, \tilde{v}_k) \Vert ^2 + \left ( \lambda^* +\frac{1}{k}\right ) S \left ( \Vert (\tilde{u}_k, \tilde{v}_k) \Vert ^{p+1} + \Vert (\tilde{u}_k, \tilde{v}_k) \Vert ^{q+1} \right). 
				\end{array}
			\end{array}
		\end{equation}
		The last assertion implies that
		\begin{equation}\begin{array}{lll}
				\begin{array}{c}
					\Vert(\tilde{u}_k,\tilde{v}_k)\Vert \le \left (\frac{\left ( \lambda^* +\frac{1}{k}\right )2S}{1 - f(q)} \right )^{\frac{1}{1 - q}}.
				\end{array}
			\end{array}
		\end{equation}
		This ends the proof. 
	\end{proof}
	\begin{prop}\label{lambda frac Sem Infsing}
		Assume ($P_0$), ($P$), ($V_0$) and ($V_1'$). Then the sequence given by Proposition \ref{seqeintlongedozero} satisfies
		\begin{equation}\label{fsirest}
			\Lambda_n(\tilde{u},\tilde{v}) \le \lim\inf\limits_{k \to \infty}\Lambda_n(\tilde{u}_k,\tilde{v}_k),   
		\end{equation}
		where $(\tilde{u}_k, \tilde{v}_k) \rightharpoonup (\tilde{u}, \tilde{v})$ in $X$.
	\end{prop}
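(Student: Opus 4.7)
The plan is to extract a weak subsequential limit, show that the ``lower order'' ingredients $P$, $Q$, $B$ pass to the limit under the strong convergence given by the compact embedding, use weak lower semicontinuity of the norm-squared $A$, and then exploit the sup-representation $\Lambda_n(u,v)=\max_{t>0}R_n(tu,tv)$ to transfer the semicontinuity to $\Lambda_n$.

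\textbf{Step 1 (compactness and continuity of lower order terms).} By Proposition \ref{Limitadasing} the sequence $(\tilde u_k,\tilde v_k)$ is bounded in $X$, so up to a subsequence $(\tilde u_k,\tilde v_k)\rightharpoonup(\tilde u,\tilde v)$ in $X$. Lemma \ref{imersoes Sobolev 2, 2*s} gives $\tilde u_k\to\tilde u$, $\tilde v_k\to\tilde v$ strongly in $L^r(\mathbb R^N)$ for every $r\in[2,2^*_s)$; in particular in $L^2$ and $L^{\alpha+\beta}$ since $2<\alpha+\beta<2^*_s$. Using the subadditive inequality $\bigl||s|^{1-p}-|t|^{1-p}\bigr|\le|s-t|^{1-p}$, hypothesis $(P_0)$ and Hölder,
\begin{equation*}
|P(\tilde u_k)-P(\tilde u)|\le \int a(x)|\tilde u_k-\tilde u|^{1-p}dx\le \|a\|_{2/(1+p)}\|\tilde u_k-\tilde u\|_2^{1-p}\to 0,
\end{equation*}
and analogously $Q(\tilde v_k)\to Q(\tilde v)$. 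A standard Hölder argument with conjugate exponents $(\alpha+\beta)/\alpha$ and $(\alpha+\beta)/\beta$ combined with strong $L^{\alpha+\beta}$ convergence gives $B(\tilde u_k,\tilde v_k)\to B(\tilde u,\tilde v)$.

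\textbf{Step 2 (weak lsc of $A$ and verification that $(\tilde u,\tilde v)\in\mathcal A$).} The quadratic form $A(u,v)=\|(u,v)\|^2$ is weakly lower semicontinuous, so $A(\tilde u,\tilde v)\le\liminf_k A(\tilde u_k,\tilde v_k)$. From Proposition \ref{seqeintlongedozero} we have $\theta B(\tilde u_k,\tilde v_k)\ge\rho>0$, so passing to the limit yields $\theta B(\tilde u,\tilde v)\ge\rho>0$, i.e.\ $(\tilde u,\tilde v)\in\mathcal A$; in particular, $\tilde u$ and $\tilde v$ do not vanish on sets of positive measure, which together with $a,b>0$ a.e.\ forces $P(\tilde u)>0$ and $Q(\tilde v)>0$.

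\textbf{Step 3 (semicontinuity of $\Lambda_n$).} Let $t_*:=t_n(\tilde u,\tilde v)>0$, well defined by Proposition \ref{tn(u,v) únicosing}, so that $\Lambda_n(\tilde u,\tilde v)=R_n(t_*\tilde u,t_*\tilde v)$. Since $\Lambda_n(\tilde u_k,\tilde v_k)=\max_{t>0}R_n(t\tilde u_k,t\tilde v_k)$, one has $R_n(t_*\tilde u_k,t_*\tilde v_k)\le\Lambda_n(\tilde u_k,\tilde v_k)$ for every $k$. Step 1 shows the denominator of $R_n(t_*\tilde u_k,t_*\tilde v_k)$ converges to $t_*^{1-p}P(\tilde u)+t_*^{1-q}Q(\tilde v)>0$, while Steps 1--2 give
\begin{equation*}
\liminf_k\bigl(t_*^2A(\tilde u_k,\tilde v_k)-\theta t_*^{\alpha+\beta}B(\tilde u_k,\tilde v_k)\bigr)\ge t_*^2A(\tilde u,\tilde v)-\theta t_*^{\alpha+\beta}B(\tilde u,\tilde v).
\end{equation*}
Dividing (possible because the denominator limit is strictly positive) yields
\begin{equation*}
\liminf_k R_n(t_*\tilde u_k,t_*\tilde v_k)\ge R_n(t_*\tilde u,t_*\tilde v)=\Lambda_n(\tilde u,\tilde v),
\end{equation*}
and combining with $R_n(t_*\tilde u_k,t_*\tilde v_k)\le\Lambda_n(\tilde u_k,\tilde v_k)$ gives \eqref{fsirest}.

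\textbf{Main obstacle.} The delicate point is pushing the liminf through a quotient: this works only because the denominator's limit is strictly positive. The crucial input is the uniform lower bound $\theta B(\tilde u_k,\tilde v_k)\ge\rho$ from Proposition \ref{seqeintlongedozero}, which survives in the limit thanks to the strong $L^{\alpha+\beta}$ convergence (guaranteed by $\alpha+\beta<2^*_s$), and then forces both $P(\tilde u)$ and $Q(\tilde v)$ to be positive via $(P_0)$; without this the weak limit could lie on $\partial\mathcal A$ and the argument would break down.
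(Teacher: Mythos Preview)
Your proof is correct and follows essentially the same approach as the paper: compact embedding to pass $P$, $Q$, $B$ to the limit, weak lower semicontinuity of the norm, and then conclude. Your Step~3, which exploits the sup-representation $\Lambda_n=\max_{t>0}R_n(t\,\cdot)$ at the fixed value $t_*=t_n(\tilde u,\tilde v)$, makes explicit the passage the paper compresses into ``putting all these things together,'' and your use of the elementary subadditivity $\bigl||s|^{1-p}-|t|^{1-p}\bigr|\le|s-t|^{1-p}$ in place of dominated convergence for $P,Q$ is a harmless (and slightly cleaner) variant.
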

	\begin{proof}		
		From the boundedness of minimizing sequence we suppose that $(\tilde{u}_k, \tilde{v}_k) \rightharpoonup (\tilde{u}, \tilde{v})$ in $X$.  Using the compact embedding, $X_i\hookrightarrow \hookrightarrow L^{r_i}(\mathbb{R}^N)$ for $r_i \in [2, 2^*_s)$, $i = 1, 2$,we obtain the following statements:

		\begin{equation}\left\{\begin{array}{lll}
				\tilde{u}_k \rightharpoonup \tilde{u}, & in\;\; X_1;\\
				\tilde{u}_k \rightarrow  \tilde{u}, & in \;\; L^{r_1}({\mathbb{R}^N}); \\
				
			\end{array}\right.\;\;and\;\;\left\{\begin{array}{lll}\label{Seq limitada uksing} 
				\tilde{u}_k(x) \rightarrow  \tilde{u}(x) & a.e. \;\; {\mathbb{R}^N}; \\
				|\tilde{u}_k| \le h_{r_1} & \in L^{r_1}({\mathbb{R}^N})\;\; r_1\in [2, 2^*_s).
			\end{array}\right.
		\end{equation}
		Notice that 
		$$\left |\int a(x)\left[ \vert \tilde{u}_k \vert ^{1 - p}- \vert \tilde{u} \vert ^{1 - p}\right]dx\right |\;\;\leq\;\; \Vert a \Vert_{\frac{2}{1 + p}} \left ( \int  \left | \vert \tilde{u}_k \vert ^{1 - p} - \vert \tilde{u} \vert ^{1 - p} \right |^\frac{2}{1 - p}dx\right )^{\frac{1 - p}{2}}.$$
		Furthermore, we infer that
		\begin{eqnarray*}
			\left | \vert \tilde{u}_k \vert ^{1 - p} - \vert \tilde{u} \vert ^{1 - p} \right |^\frac{2}{1 - p} &\le& \left | \vert \tilde{u}_k \vert ^{1 - p} + \vert \tilde{u} \vert ^{1 - p} \right |^\frac{2}{1 - p} \le 2^{\frac{2}{1 - p}}\left (\vert \tilde{u}_k \vert ^2 + \vert \tilde{u} \vert ^2 \right ) \leq 2^{\frac{2}{1 - p}}\left (\vert h_2 \vert ^2 + \vert \tilde{u} \vert ^2\right )\in L^1(\mathbb{R}^N).
		\end{eqnarray*}
		Now, by applying the Dominated Convergence Theorem, we deduce that  
		$$\lim\limits_{k\to \infty} \left |\int a(x) \vert \tilde{u}_k \vert ^{1 - p}dx - \int a(x) \vert \tilde{u} \vert ^{1 - p}dx\right |\le  \lim\limits_{k \to \infty}\Vert a \Vert_{\frac{2}{1 + p}} \left ( \int  \left | \vert \tilde{u}_k \vert ^{1 - p} - \vert \tilde{u} \vert ^{1 - p} \right |^\frac{2}{1 - p}dx\right )^{\frac{1 - p}{2}}= 0. $$
		Similarly, we show that $\lim\limits_{k \to \infty}\int b(x) \vert \tilde{v}_k \vert ^{1 - q}dx = \int b(x) \vert \tilde{v} \vert ^{1 - q}dx.$
		From now on, by using the H\"older inequality with the exponents $r = \frac{\alpha + \beta}{\alpha}$ and $r' = \frac{\alpha + \beta}{\beta}$,  we obtain that 
		\begin{eqnarray*}
			\int h_{r_1}^\alpha h_{r_2}^\beta dx &\leq& 	 \Vert h_{\alpha + \beta} \Vert_{\alpha + \beta}^\alpha \Vert h_{\alpha + \beta} \Vert_{\alpha + \beta}^\beta < \infty.\nonumber
		\end{eqnarray*}The Dominated Convergence Theorem implies
		\begin{equation}\label{=integ}
			\lim\limits_{k\to \infty} \int  |\tilde{u}_k|^\alpha |\tilde{v}_k|^\beta dx = \int  |\tilde{u}|^\alpha |\tilde{v}|^\beta dx.  
		\end{equation}
		Putting all these things together and using that the norm is weakly lower-semicontinuous we infer that  
		$$\Lambda_n(\tilde{u},\tilde{v}) \le \liminf\limits_{k \to \infty}\Lambda_n(\tilde{u}_k,\tilde{v}_k).$$
		This ends the proof. 
	\end{proof}
	
	\begin{rmk}\label{E E' E'' infer sem frac}
		If $(u_k, v_k) \rightharpoonup (u, v)$ in $X,$ using the same argument employed in the proof of Proposition \ref{lambda frac Sem Infsing}, we obtain that 
		$$\!\!\!E_{_{\lambda}}(u, v)\! \leq\! \liminf_{k \to \infty}E_{_{\lambda}}(u_k, v_k); \; E'_{_{\lambda}}(u, v)(u, v)\! \leq\! \liminf_{k \to \infty}E'_{_{\lambda}}(u_k, v_k)(u_k, v_k);\;E''_{_{\lambda}}(u, v)(u, v)^2 \!\leq \!\liminf_{k \to \infty}E''_{_{\lambda}}(u_k, v_k)(u_k, v_k)^2.$$
	\end{rmk}
	\begin{prop}\label{Longe de zerosing}
		Suppose hypotheses ($P_0$), ($P$), ($V_0$) and ($V_1'$). Then $\lambda^* = \inf\limits_{(z, w) \in \mathcal{A}}\Lambda_n(z,w)$ is attained.
	\end{prop}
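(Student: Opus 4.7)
The plan is to assemble the previous propositions in order: pick an arbitrary minimizing sequence for $\lambda^*$, replace it by its normalized version along the fibers, extract a weak limit using boundedness, and show that this weak limit lies in $\mathcal{A}$ and attains the infimum by weak lower semicontinuity.

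First, I would take $(u_k, v_k) \in \mathcal{A}$ with $\Lambda_n(u_k, v_k) \to \lambda^*$. Since $\Lambda_n$ is zero homogeneous (Proposition \ref{lambda zero homsing}), the rescaled sequence $(\tilde u_k, \tilde v_k) := (t_n(u_k, v_k) u_k,\, t_n(u_k, v_k) v_k)$ is again a minimizing sequence. By Proposition \ref{seqeintlongedozero} we have the uniform lower bounds $\|(\tilde u_k, \tilde v_k)\| \geq \tilde\rho > 0$ and $\theta B(\tilde u_k, \tilde v_k) \geq \rho > 0$, and by Proposition \ref{Limitadasing} the sequence is bounded in $X$. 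Hence, up to a subsequence, there exists $(\tilde u, \tilde v) \in X$ such that $(\tilde u_k, \tilde v_k) \rightharpoonup (\tilde u, \tilde v)$ weakly in $X$.

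Next I would verify that $(\tilde u, \tilde v) \in \mathcal{A}$. This is the delicate step, since a priori the weak limit could satisfy $B(\tilde u, \tilde v) = 0$ and then $\Lambda_n(\tilde u, \tilde v)$ would not be defined. However, from Proposition \ref{imersoes Sobolev 2, 2*s} the embedding $X \hookrightarrow L^{\alpha+\beta}(\mathbb{R}^N) \times L^{\alpha+\beta}(\mathbb{R}^N)$ is compact (since $2 < \alpha+\beta < 2^*_s$), so as in \eqref{=integ} we get $B(\tilde u_k, \tilde v_k) \to B(\tilde u, \tilde v)$. Combined with the lower bound $\theta B(\tilde u_k, \tilde v_k) \geq \rho$, this forces $\theta B(\tilde u, \tilde v) \geq \rho > 0$, so $(\tilde u, \tilde v) \in \mathcal{A}$.

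Finally, I would apply Proposition \ref{lambda frac Sem Infsing} to obtain
\begin{equation*}
\lambda^* \leq \Lambda_n(\tilde u, \tilde v) \leq \liminf_{k \to \infty} \Lambda_n(\tilde u_k, \tilde v_k) = \lambda^*,
\end{equation*}
so $\Lambda_n(\tilde u, \tilde v) = \lambda^*$ and the infimum is attained at $(\tilde u, \tilde v) \in \mathcal{A}$. The main obstacle, as indicated above, is really the verification that the weak limit stays inside $\mathcal{A}$; this rests crucially on the compact embedding into $L^{\alpha+\beta}$ provided by hypotheses $(V_0)$ and $(V_1')$ together with $\alpha+\beta < 2^*_s$.
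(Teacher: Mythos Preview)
Your proof is correct and follows exactly the same strategy as the paper: normalize the minimizing sequence via Proposition \ref{seqeintlongedozero}, use Proposition \ref{Limitadasing} for boundedness, extract a weak limit, and conclude via Proposition \ref{lambda frac Sem Infsing}. In fact, you spell out more carefully than the paper does why the weak limit lies in $\mathcal{A}$ (using the compact embedding and the lower bound $\theta B(\tilde u_k,\tilde v_k)\ge\rho$), a point the paper asserts without further comment.
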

	\begin{proof}
		Firstly, using Propositions \ref{Limitadasing}, the minimizing sequence provided by Proposition \ref{seqeintlongedozero} that is denoted by $(\tilde{u}_k, \tilde{v}_k)$ is bounded. Thus, we deduce that  $(\tilde{u}_k, \tilde{v}_k) \rightharpoonup (\tilde{u}, \tilde{v})$ in $X$ where $ (\tilde{u},\tilde{v})\in \mathcal{A}$. It follows also from Proposition \ref{lambda frac Sem Infsing} that  
		$$\lambda^*\le \Lambda_n(\tilde u, \tilde v) \le \liminf_{k \longrightarrow +\infty} \Lambda_n(\tilde{u}_k, \tilde{v}_k) = \lambda^*$$
		Hence, $\lambda^* = \Lambda_n(\tilde u, \tilde v)$. The last assertion implies that $\lambda^*$ is attained. This ends the proof.  
	\end{proof}
	\begin{lem}
		Assume ($P_0$), ($P$), ($V_0$) and ($V_1'$). Then $\Lambda_n$ is continuous and unbounded from above.    
	\end{lem}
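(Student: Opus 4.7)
I would split the proof into the two assertions of the lemma: continuity of $\Lambda_n$ on $\mathcal{A}$, and the existence of a sequence in $\mathcal{A}$ along which $\Lambda_n\to+\infty$.

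For continuity, the idea is that $\Lambda_n=R_n\circ F$ where $F(u,v):=(t_n(u,v)u,\,t_n(u,v)v)$. Proposition~\ref{tn(u,v) únicosing} already provides $t_n\in C^0(\mathcal{A},\mathbb{R})$, hence $F\in C^0(\mathcal{A},X)$. So it is enough to check that $R_n$ is continuous on $\mathcal{A}$. The numerator of $R_n$ is continuous on $X$: $(u,v)\mapsto \|(u,v)\|^2$ is smooth, and $(u,v)\mapsto B(u,v)$ is continuous thanks to the Sobolev embedding $X\hookrightarrow L^{\alpha+\beta}\times L^{\alpha+\beta}$ from Lemma~\ref{imersoes Sobolev 2, 2*s} together with H\"older. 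For the denominator, a short H\"older/subadditivity computation using $a\in L^{2/(1+p)}$, $b\in L^{2/(1+q)}$, and the elementary bound $\big||s|^{1-p}-|t|^{1-p}\big|\le |s-t|^{1-p}$ gives that $u\mapsto P(u)$ and $v\mapsto Q(v)$ are (H\"older) continuous on $X_1$ and $X_2$, respectively. Moreover, on $\mathcal{A}$ the denominator is strictly positive: $B(u,v)>0$ forces $u$ and $v$ to be nonzero on sets of positive measure, and since $a,b>0$ pointwise this yields $P(u)+Q(v)>0$. Hence $R_n$ is continuous on $\mathcal{A}$ and the composition $\Lambda_n$ is continuous.

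For the unboundedness I would use the zero-homogeneity of $\Lambda_n$ (Proposition~\ref{lambda zero homsing}) to construct an explicit diverging sequence by ``squeezing'' the coupling. Fix any $(u_0,v_0)\in\mathcal{A}$ and set $(u_k,v_k):=(u_0,v_0/k)$. Writing $A_1=[u_0]^2+\int V_1u_0^2$, $A_2=[v_0]^2+\int V_2v_0^2$, $B_0=B(u_0,v_0)$, $P_0=P(u_0)$, $Q_0=Q(v_0)$, direct scaling gives
\begin{equation*}
A_k:=A(u_k,v_k)=A_1+k^{-2}A_2,\qquad B_k:=B(u_k,v_k)=k^{-\beta}B_0,\qquad P(u_k)+Q(v_k)=P_0+k^{-(1-q)}Q_0.
\end{equation*}
So $A_k\to A_1>0$, $B_k\to 0$, and $P(u_k)+Q(v_k)$ stays in a bounded interval of the form $[P_0,P_0+Q_0]$ for large $k$. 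In particular $(u_k,v_k)\in\mathcal{A}$. To estimate $\Lambda_n(u_k,v_k)=\sup_{t>0}R_n(tu_k,tv_k)$ from below I would choose $t_k>0$ by
\begin{equation*}
t_k^{\alpha+\beta-2}=\frac{A_k}{2\theta B_k},
\end{equation*}
so that $\theta t_k^{\alpha+\beta}B_k=\tfrac12 t_k^2 A_k$ and the numerator of $R_n(t_ku_k,t_kv_k)$ equals $\tfrac12 t_k^2 A_k$. Using $p\le q$ (hence $t_k^{1-p}\ge t_k^{1-q}$ for $t_k$ large), the denominator is bounded above by $t_k^{1-p}(P(u_k)+Q(v_k))$, yielding
\begin{equation*}
\Lambda_n(u_k,v_k)\ge R_n(t_ku_k,t_kv_k)\ge \frac{t_k^{1+p}A_k}{2\bigl(P(u_k)+Q(v_k)\bigr)}.
\end{equation*}
Since $t_k$ grows like $k^{\beta/(\alpha+\beta-2)}$ while $A_k$ and $P(u_k)+Q(v_k)$ remain in a bounded positive range, the right-hand side diverges at the rate $k^{\beta(1+p)/(\alpha+\beta-2)}\to+\infty$, which proves the claim.

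\textbf{Main obstacle.} The routine part is continuity; the slightly subtle point is the unboundedness argument, where one must verify that the denominator stays bounded \emph{away from zero} along the test sequence so that the scaling $t_k\to\infty$ really forces $R_n(t_ku_k,t_kv_k)\to\infty$. The choice $(u_k,v_k)=(u_0,v_0/k)$ keeps $P(u_k)=P_0$ fixed and hence avoids this issue; by Proposition~\ref{lambda zero homsing} the same conclusion would follow from the equivalent sequence $(ku_0,v_0)$, so the construction is flexible.
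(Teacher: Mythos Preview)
Your proof is correct; the continuity part matches what the paper already records right after defining $\Lambda_n$ (composition of continuous maps, using $t_n\in C^0$ from Proposition~\ref{tn(u,v) únicosing}), and you simply spell out the continuity of $R_n$ in more detail.

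For the unboundedness you take a genuinely different route. The paper works on the unit sphere $S$ and argues that the functional $\mathcal{G}(u,v)=P(u)+Q(v)$ is not bounded away from zero there, picks a sequence $(u_k,v_k)\in S$ with $\mathcal{G}(u_k,v_k)\to 0$, and then uses the critical-point identity for $t_k=t_n(u_k,v_k)$ (essentially \eqref{integrates coupling}) to rewrite $\Lambda_n(u_k,v_k)$ as a quotient whose denominator is $\mathcal{G}(u_k,v_k)$ times a bounded factor. Your approach instead keeps $P(u_k)=P_0$ fixed and drives the coupling $B(u_k,v_k)$ to zero by scaling the second component, then bounds $\Lambda_n$ from below by evaluating $R_n$ at an \emph{explicit} test point $t_k$ (not the maximiser), so you never need the critical-point identity. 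This is more elementary and entirely self-contained; the paper's approach, by contrast, relies on producing a sphere sequence with $\mathcal{G}\to 0$, a step it asserts rather briskly. Both arguments exploit the same structural fact---$\Lambda_n$ diverges when the ratio $A/B$ becomes large relative to $P+Q$---but attack it from opposite ends.
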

	\begin{proof}
		Let us consider functions in $\mathcal{A}^+ \cap S$ where $\mathcal{A}^+ = \{(u, v) \in \mathcal{A}: u, v > 0\}$ and $S = \{(u, v)\in X: \Vert(u, v) \Vert = 1\}$. Define the continuous function $\mathcal{G}: X \to \mathbb{R}$ given by
		$\mathcal{G}(u, v) = P(u) + Q(v), \;\; (u, v) \in X.$
		In view of the last assertion, we obtain $\mathcal{G}^{-1}((0, \infty))\cap S$ is a relative open set in $S$. Furthermore, $t\;\;\mapsto\;\; \mathcal{G}(tu, tv) \;=\; t^{1 - p}P(u) + t^{1 - q}Q(v)$ is increasing for $t > 0$. 
		Under these conditions, we mention that $\mathcal{G}^{-1}((0, \infty)) \cap S \ne S$. Therefore, there exists a sequence $(u_k, v_k) \in \mathcal{G}^{-1}((0, \infty)) \cap S$ such that $\mathcal{G}(u_k, v_k) \to 0$ in $\mathbb{R}$. Now, we shall use $t_k$ instead of $t_n(u_k, v_k)$ to simplify the notation. Hence, we obtain that
		\begin{equation}\label{Lambdanilimit}
			\lim\limits_{k \to \infty} \Lambda_n(u_k, v_k) = \lim\limits_{k \to \infty} \frac{ \Vert(t_k u_k, t_k v_k)\Vert ^2 -\theta  \int  \vert t_k u_k\vert^\alpha \vert t_k v_k\vert^\beta dx}{  \int a(x) \vert t_k u_k \vert^{1 - p}dx +  \int b(x) \vert t_k v_k \vert^{1 - q}dx}.
		\end{equation}
		Recall also that 
		$$ \left.\frac{d}{dt} R_n(t u_k, t v_k)\right|_{t=t_k}=0.$$
		Now, by using \eqref{integrates coupling} for $t = t_n(u_k, v_k) =: t_k$, we deduce that
		$$\theta  B(t_ku_k, t_kv_k) = \frac{ \Vert(t_ku_k, t_kv_k)\Vert^2\left[(1 + p)P(t_ku_k) + (1 + q)  Q(t_kv_k) \right]} {(\alpha + \beta - 1 + p)P(t_ku_k) + (\alpha + \beta - 1 + q) Q(t_kv_k)}.$$
		In light of \eqref{Lambdanilimit} we deduce that
		$$\Lambda_n(t_ku_k, t_kv_k) = \frac{\Vert(t_ku_k,t_k v_k)\Vert^2\left[ 1 - \frac{(1 + p)P(t_ku_k) + (1 + q)  Q(t_kv_k)}{(\alpha + \beta - 1 + p)P(t_ku_k) + (\alpha + \beta - 1 + q)  Q(t_kv_k)} \right]}{P(t_ku_k) +  Q(t_kv_k)}.$$
		Note that for each $(u_k, v_k) \in S$, we have that $\Vert(u_k, v_k) \Vert = 1$. As a consequence, we obtain that
		$$\Lambda_n(t_ku_k, t_kv_k) = \frac{t_k^2 (\alpha + \beta - 2)}{(\alpha + \beta - 1 + p)t_k^{1 - p}P(u_k) + (\alpha + \beta - 1 + q) t_k^{1 - q} Q(v_k)}.$$
		According to Proposition \ref{seqeintlongedozero} we infer that $\Vert(t_k u_k, t_k v_k)\Vert \ge \tilde{\rho}$ holds. The last statement implies that $t_k \ge \tilde{\rho}$. Hence, by using hypothesis ($P$), we see that 
		$$\Lambda_n(t_ku_k, t_kv_k) \geq \frac{t_k^2}{\max\{t_k^{1 - p}, t_k^{1 - q}\}}\frac{\alpha + \beta -2}{(\alpha + \beta - 1 + q)}\frac{1}{\mathcal{G}(u_k, v_k)}\;\;\to\;\;\infty.$$
		This ends the proof. 
	\end{proof}
	\begin{prop}\label{tn-,tn+sing}
		Assume ($P_0$), ($P$), ($V_0$) and ($V_1$). Then for each $\lambda \in (0, \lambda^*)$ and $(u, v) \in \mathcal{A}$ fixed the fibering map $\gamma_{_{\lambda}}(t) : = E_{_{\lambda}}(tu,tv)$ has exactly two distinct critical points $t_n^+(u, v)$ and $t_n^-(u, v)$ such that $0 < t_n^+(u, v) < t_n(u, v) < t_n^-(u, v)$. Furthermore, we deduce the following statements:
		\begin{itemize}
			\item[$i)$] It holds $ t_n^+(u, v)$ is a local minimum for $\gamma_{_{\lambda}}$ while $t_n^-(u, v)$ is a local maximum. Moreover, $t_n^{\pm}(u, v)(u, v)\in \mathcal{N}_{\lambda}^{\pm}$;
			\item[$ii)$] The functions $(u, v)\mapsto t_n^+(u, v)$ and $(u, v)\mapsto t_n^-(u, v)$ belong to $C^0(\mathcal{A}, \mathbb{R}).$
		\end{itemize}
	\end{prop}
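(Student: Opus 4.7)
\medskip

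\textbf{Proof plan for Proposition \ref{tn-,tn+sing}.} The guiding idea is to translate the condition $\gamma_\lambda'(t)=0$ into a level-set equation for the Rayleigh quotient $Q_n$ and then read off everything from the shape of $Q_n$ established in Proposition \ref{tn(u,v) únicosing}. First I would observe that, for $t>0$, a direct differentiation of $E_\lambda(tu,tv)$ gives $t\,\gamma_\lambda'(t)=E_\lambda'(tu,tv)(tu,tv)$, and by Remark \ref{Rn lambda e E'sing}(i) this equals zero if and only if $R_n(tu,tv)=Q_n(t)=\lambda$. Writing $h(t):=t\gamma_\lambda'(t)$ in its factored form
\[
h(t)=\bigl(Q_n(t)-\lambda\bigr)\bigl(t^{1-p}P(u)+t^{1-q}Q(v)\bigr),
\]
which is valid on $\mathcal{A}$ since $P(u),Q(v)>0$, reduces the critical point count to counting solutions of $Q_n(t)=\lambda$.

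Next I would invoke the behaviour of $Q_n$: from \eqref{QntpeqQntgrand} we have $Q_n(0^+)=0$ and $Q_n(t)\to-\infty$ as $t\to+\infty$, while Proposition \ref{tn(u,v) únicosing} says $Q_n$ has a unique critical point $t_n(u,v)$, which must therefore be a global maximum with value $\Lambda_n(u,v)\ge \lambda^*>\lambda$. Elementary monotonicity (strictly increasing on $(0,t_n(u,v))$, strictly decreasing on $(t_n(u,v),\infty)$) then yields exactly two solutions $0<t_n^+(u,v)<t_n(u,v)<t_n^-(u,v)$ to $Q_n(t)=\lambda$, with the sign information $Q_n'(t_n^+)>0$ and $Q_n'(t_n^-)<0$.

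To classify these critical points I would use the identity $t^2\gamma_\lambda''(t)=E_\lambda''(tu,tv)(tu,tv)^2$ (obtained by differentiating $h(t)=t\gamma_\lambda'(t)$ and comparing with the scaling formula from \eqref{E'' cap 2}), together with $h'(t_n^\pm)=t_n^\pm\gamma_\lambda''(t_n^\pm)$ since $\gamma_\lambda'(t_n^\pm)=0$. From the factored form above,
\[
h'(t_n^\pm)=Q_n'(t_n^\pm)\bigl((t_n^\pm)^{1-p}P(u)+(t_n^\pm)^{1-q}Q(v)\bigr),
\]
so $h'(t_n^+)>0$ and $h'(t_n^-)<0$. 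This gives $E_\lambda''(t_n^+u,t_n^+v)(t_n^+u,t_n^+v)^2>0$ and $E_\lambda''(t_n^-u,t_n^-v)(t_n^-u,t_n^-v)^2<0$, i.e.\ $t_n^\pm(u,v)(u,v)\in\mathcal{N}_\lambda^\pm$, and simultaneously identifies $t_n^+$ as a local minimum and $t_n^-$ as a local maximum of $\gamma_\lambda$.

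Finally, for the $C^0$ regularity of $(u,v)\mapsto t_n^\pm(u,v)$, I would mimic the Implicit Function Theorem argument used in Proposition \ref{tn(u,v) únicosing}: the map $F(t,(w_1,w_2)):=Q_n(t(z_1+w_1),t(z_2+w_2))-\lambda$ satisfies $F(1,(0,0))=0$ at the points $(z_1,z_2)=t_n^\pm(u,v)(u,v)$, and the partial $\partial_tF(1,(0,0))$ is, up to a positive factor, $Q_n'(t_n^\pm(u,v))\neq 0$ by the previous paragraph. Hence $t_n^\pm$ are locally, therefore globally, continuous on $\mathcal{A}$. The only delicate step I anticipate is verifying cleanly that the non-vanishing of $Q_n'$ at $t_n^\pm$ translates into a non-degenerate implicit function condition in the infinite-dimensional parameter, but this is exactly the manoeuvre already carried out for $t_n$.
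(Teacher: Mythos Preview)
Your proposal is correct and follows essentially the same route as the paper: reduce $\gamma_\lambda'(t)=0$ to $Q_n(t)=\lambda$, use the single-peak shape of $Q_n$ to get exactly two roots with $Q_n'(t_n^+)>0$, $Q_n'(t_n^-)<0$, translate these signs into $E_\lambda''$ signs, and apply the Implicit Function Theorem for continuity. The only cosmetic differences are that the paper obtains the sign correspondence between $Q_n'$ and $E_\lambda''$ by invoking the separate identity of Proposition~\ref{tR_n'G' = E''sing} and Remark~\ref{relação entre R_n' e E''sing}, whereas you derive it directly from the factored form $h(t)=(Q_n(t)-\lambda)\bigl(t^{1-p}P(u)+t^{1-q}Q(v)\bigr)$; and for part~(ii) the paper sets up the IFT with $F(t,(u,v))=E_\lambda'(t(z_1+u,z_2+v))(t(z_1+u,z_2+v))$ and uses $\partial_t F(1,0)=E_\lambda''(z_1,z_2)(z_1,z_2)^2\neq 0$, while you use $Q_n-\lambda$ and $Q_n'\neq 0$ --- these differ only by the positive factor $P+Q$ and are logically equivalent.
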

	\begin{proof}
		$(i)$ Firstly, by using \eqref{QntpeqQntgrand} and Lemma \ref{apendice1}, we show that $t \mapsto R_n(tu, tv)$ is increasing for $0 < t < t_n(u, v)$ and decreasing for $t > t_n(u, v).$ Since $\lambda < R_n(t_n(u, v)(u, v))$ we have that $R_n(tu, tv) = \lambda$ has exactly two roots $0 < t_n^+(u, v) < t_n(u, v) < t_n^-(u, v)$. Notice that $t_n^{\pm}(u, v)$ are critical points of  $\gamma_{_{\lambda}}(t) := E_{_{\lambda}}(tu, tv)$, see Remark \ref{Rn lambda e E'sing}. Under these conditions, we have $Q_n'(t_n^+(u, v)) > 0$ and $Q_n'(t_n^-(u, v)) < 0$. Therefore, using Remark \ref{relação entre R_n' e E''sing}, we conclude that $E''_{_{\lambda}}(t_n^+(u, v)(u, v))(t_n^+(u, v)(u, v))^2 > 0$ and $E''_{_{\lambda}}(t_n^-(u, v)(u, v))(t_n^-(u, v)(u, v))^2 < 0$. According to \eqref{N+sing} and \eqref{N-sing} we get $t_n^+(u, v)(u, v) \in \mathcal{N}^+_\lambda$ and $t_n^-(u, v)(u, v) \in \mathcal{N}^-_\lambda$.
		
		$(ii)$ Suppose $\lambda \in (0, \lambda^*)$. Thus, $\mathcal{N}^0_{\lambda} = \emptyset$ and  $\mathcal{N}_{\lambda} = \mathcal{N}^+_{\lambda} \cup \mathcal{N}^-_{\lambda}$. 
		Furthermore, $Q_n \in C^1((0, \infty), \mathbb{R})$, $Q_n'(t_n^+(u, v)) > 0$ and $Q_n'(t_n^-(u, v)) < 0$. Let $(z_1, z_2) \in \mathcal{N}^-_{\lambda}$ be fixed. In order to ensure that $(u, v)\mapsto t_n^+(u, v)$ and $(u, v)\mapsto t_n^-(u, v)$ are in $C^0(\mathcal{A}, \mathbb{R})$, we consider the function $F: (0, +\infty) \times X \to \mathbb{R}$ given by 
		$$
		F(t, (u, v)) =  A(t(z_1 + u, z_2 + v)) - \lambda P(t(z_1 + u)) - \lambda Q(t(z_2 + v)) - \theta B(t (z_1 + u, z_2 +v)).
		$$
		Recall also that
		$$F(1, (0, 0)) = E'_{_{\lambda}}(z_1, z_2)(z_1, z_2) = 0, \,\,\,\, \frac{\partial}{\partial t}F(1, (0, 0)) = E''_{_{\lambda}}(z_1, z_2)(z_1, z_2)^2 < 0.$$\\
		It is important to emphasize that $F\in C^0((0, +\infty) \times X, \mathbb{R})$. Hence, using the Implicit Function Theorem \cite[Remark 4.2.3]{Dra}, 
		there is a function $f \in C^0(B_\varepsilon(0,0), (1 - \delta, 1 + \delta))$  such that
		$$F(f(u, v), (u, v)) = 0,,, (u, v) \in B_{\varepsilon}(0, 0). $$
		Recall also that $B_\varepsilon(z_1, z_2) = \{(w_1, w_2) \in X : \Vert (w_1, w_2) - (z_1, z_2) \Vert < \varepsilon\}$.
		From now on, due to the continuity of the derivative, we infer that $$\frac{\partial}{\partial t}F(f(u, v), (u, v)) < 0, (u,v) \in B_\varepsilon(z_1, z_2).$$
		Now, choosing $(\bar{z_1}, \bar{z_2}) = f(u, v)(z_1 +u, z_2 + v)$, we obtain that
		$$E''_{_{\lambda}}(\bar{z_1}, \bar{z_2})(\bar{z_1}, \bar{z_2})^2 = [f(u, v)]^{-1}\frac{\partial}{\partial t}F(f(u, v), (u, v)) < 0.$$ 
		The last assertion implies that $f(u, v)(z_1 + u, z_2 + v) \in \mathcal{N}^-_{\lambda}$. Hence, we see that
		$$t_n^-(z_1 + u, z_2 + v) = f(u, v), \;\; (u, v) \in B_{\varepsilon}(0, 0).$$
		In fact, we observe that $(z_1 + u, z_2 + v) \in B_\varepsilon(z_1, z_2)$ and $(u, v) \in B_\varepsilon(0, 0)$. Therefore, $t_n^- \in C^0(B_\varepsilon(z_1, z_2), \mathbb{R})$ holds for each $\lambda \in (0, \lambda^*).$
		Since $(z_1, z_2\!) \in \mathcal{N}^-_{\lambda}\!$ is arbitrary, we know that $t_n^-\! \in\! C^0(\mathcal{A}, \mathbb{R}).$ Similarly, $t_n^+ \!\in C^0(\mathcal{A}, \mathbb{R})$ holds true. This finishes the proof.
	\end{proof}
	
	\begin{prop}\label{tR_n'G' = E''sing} 
		Consider $(u, v)\in \mathcal{A}$ and $t>0$ in such a way that $\lambda = R_n(tu,tv)$. Let us consider the function $G: \mathcal{A}\rightarrow \mathbb{R}$ given by $G(u,v) = P(u) + Q(v).$ Then
		$$\frac{d}{dt}R_n(tu, tv) = \frac{E''_{_{\lambda}}(tu, tv) (tu, tv)^2}{t G(tu, tv)}.$$ 
	\end{prop}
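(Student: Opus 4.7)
The plan is to carry out a direct computation using the explicit polynomial expressions for $R_n(tu, tv)$ in terms of $t$, exploiting the hypothesis $\lambda = R_n(tu, tv)$ to collapse the quotient rule into something that aligns with the second derivative of the energy functional.

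First I would write, using the notation $A, B, P, Q$ from the beginning of Section~2,
\begin{equation*}
R_n(tu, tv) = \frac{N(t)}{D(t)}, \qquad N(t) := t^2 A(u,v) - \theta t^{\alpha+\beta} B(u,v), \qquad D(t) := t^{1-p} P(u) + t^{1-q} Q(v),
\end{equation*}
so that $D(t) = G(tu, tv)$. Differentiating the quotient, and then substituting $N(t) = \lambda D(t)$, which is exactly the hypothesis $\lambda = R_n(tu, tv)$, gives
\begin{equation*}
\frac{d}{dt} R_n(tu, tv) = \frac{N'(t) D(t) - N(t) D'(t)}{D(t)^2} = \frac{N'(t) - \lambda D'(t)}{D(t)}.
\end{equation*}

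Next I would compute $N'(t)$ and $D'(t)$ explicitly and multiply the resulting numerator by $t$:
\begin{equation*}
t\bigl[N'(t) - \lambda D'(t)\bigr] = 2 t^2 A(u,v) - \theta(\alpha+\beta) t^{\alpha+\beta} B(u,v) - \lambda(1-p) t^{1-p} P(u) - \lambda(1-q) t^{1-q} Q(v).
\end{equation*}
Comparing this expression to formula~\eqref{E'' cap 2}, together with the scaling identities $A(tu, tv) = t^2 A(u,v)$, $B(tu, tv) = t^{\alpha+\beta} B(u,v)$, $P(tu) = t^{1-p} P(u)$ and $Q(tv) = t^{1-q} Q(v)$, one sees that the right-hand side is precisely $E''_{_{\lambda}}(tu, tv)(tu, tv)^2$.

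Combining these two identities yields
\begin{equation*}
\frac{d}{dt} R_n(tu, tv) = \frac{t[N'(t) - \lambda D'(t)]}{t\, D(t)} = \frac{E''_{_{\lambda}}(tu, tv)(tu, tv)^2}{t\, G(tu, tv)},
\end{equation*}
which is the claim. There is no real obstacle here: the statement is a scaling identity, and the only step requiring attention is the bookkeeping between the scaling exponents $2,\ \alpha+\beta,\ 1-p,\ 1-q$ appearing in $N'(t), D'(t)$ and those appearing in \eqref{E'' cap 2}, which is exactly why the factor $t$ in the denominator shows up.
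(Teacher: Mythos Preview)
Your proof is correct and takes essentially the same approach as the paper: a direct computation exploiting the hypothesis $\lambda = R_n(tu,tv)$. The only cosmetic difference is that the paper differentiates the product $G(tu,tv)\,R_n(tu,tv) = N(t)$ and then isolates $\frac{d}{dt}R_n$, whereas you differentiate the quotient $R_n = N/D$ directly; the algebra is identical.
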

	\begin{proof}
		Firstly, we mention that
		\begin{equation}\label{tG1 sing} 
			t\frac{d}{dt}G(tu, tv) = (1 - p) P(tu) + ( 1 - q)Q(tv).
		\end{equation}
		Recall also that
		$$G(tu, tv) R_n(tu, tv) = t^2\Vert (u, v) \Vert^2 - \theta t^{\alpha + \beta}\int  \vert u \vert^{\alpha} \vert v \vert^{\beta}dx.$$
		Hence, we obtain that
		\begin{eqnarray*}
			t\frac{d}{dt}[G(tu, tv)] R_n(tu, tv) + tG(tu, tv) \frac{d}{dt}[R_n(tu, tv)] &=& 2t^2\Vert(u, v) \Vert^2 -\theta(\alpha + \beta) t^{\alpha +\beta}\int  \vert u \vert^{\alpha} \vert v \vert^{\beta}dx.
		\end{eqnarray*}
		Now, by using the fact that $R_n(tu, tv) = \lambda$, we infer that
		$$t\frac{d}{dt}G(tu, tv) \lambda + t G(tu, tv) \frac{d}{dt}R_n(tu, tv) = 2\Vert(tu, tv)\Vert^2 - \theta(\alpha + \beta) \int \vert tu \vert^{\alpha} \vert tv \vert ^{\beta}.$$
		In light of \eqref{tG1 sing} we mention that
		$$ \lambda(1 - p)P(tu) + \lambda(1 - q) Q(tv) + tG(tu, tv) \frac{d}{dt}R_n(tu, tv) = 2 \Vert(tu, tv)\Vert^2 - \theta(\alpha + \beta) B(tu, tv).$$
		Furthermore, we observe that
		\begin{eqnarray*}
			tG(tu, tv) \frac{d}{dt}R_n(tu, tv)=2 \Vert(tu, tv)\Vert^2 - \theta(\alpha + \beta) B(tu, tv) - \lambda(1 - p)P(tu) - \lambda(1 - q) Q(tv) = E''_{_{\lambda}}(tu, tv) (tu, tv)^2.
		\end{eqnarray*}
		This ends the proof. 
	\end{proof}
	\begin{rmk}\label{relação entre R_n' e E''sing} Assume $\lambda = R_n(tu, tv)$. Then $\frac{d}{dt}R_n(tu, tv)$ has the same sign as $E''_{_{\lambda}}(tu, tv)(tu, tv)^2.$\end{rmk}
	
	\begin{prop}
		Assume ($P_0$), ($P$), ($V_0$) and ($V_1$). Then we obtain that $\mathcal{N}^0_{ \lambda^*} \ne \emptyset$.
	\end{prop}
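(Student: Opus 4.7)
The plan is to produce an explicit element of $\mathcal{N}_{\lambda^*}^0$ by exploiting the minimizer of $\Lambda_n$ provided by Proposition \ref{Longe de zerosing}, combined with the identification between critical points of the fiber map $t \mapsto R_n(tu,tv)$ and the vanishing of $E''_{\lambda}$ furnished by Proposition \ref{tR_n'G' = E''sing}.

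First, I would invoke Proposition \ref{Longe de zerosing} to obtain a pair $(\tilde u, \tilde v)\in \mathcal{A}$ such that $\Lambda_n(\tilde u, \tilde v) = \lambda^*$. Then I would set $t_* := t_n(\tilde u, \tilde v)$ given by Proposition \ref{tn(u,v) únicosing} and define
\begin{equation*}
(u_*, v_*) := (t_* \tilde u,\, t_* \tilde v) \in \mathcal{A}.
\end{equation*}
By the definition of $\Lambda_n$ in \eqref{def Lambda_nsing}, we have $R_n(u_*, v_*) = R_n(t_* \tilde u, t_* \tilde v) = \Lambda_n(\tilde u, \tilde v) = \lambda^*$, and therefore Remark \ref{Rn lambda e E'sing}(i) gives
\begin{equation*}
E'_{\lambda^*}(u_*, v_*)(u_*, v_*) = 0,
\end{equation*}
so that $(u_*, v_*)\in \mathcal{N}_{\lambda^*}$ by the definition \eqref{Nsing}.

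Next, since $t_*$ is the global maximum point of $t \mapsto R_n(t \tilde u, t \tilde v)$, we have $\frac{d}{dt} R_n(t\tilde u, t\tilde v)\big|_{t=t_*} = 0$. Applying Proposition \ref{tR_n'G' = E''sing} with $t = t_*$ and the pair $(\tilde u, \tilde v)$ (noting that $G(u_*,v_*) > 0$ since $(u_*,v_*)\in\mathcal{A}$ has positive $P$ and $Q$ components by the positivity of $a$, $b$ in $(P_0)$), we conclude
\begin{equation*}
E''_{\lambda^*}(u_*, v_*)(u_*, v_*)^2 = t_* G(u_*, v_*) \cdot \tfrac{d}{dt}R_n(t\tilde u, t\tilde v)\big|_{t=t_*} = 0.
\end{equation*}
Combined with the previous step, this shows $(u_*, v_*)\in \mathcal{N}_{\lambda^*}^0$, proving $\mathcal{N}_{\lambda^*}^0 \neq \emptyset$.

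No serious obstacle is anticipated: the proof is essentially an assembly of results already established in this section. The only subtle point is verifying that the factor $t_* G(u_*, v_*)$ appearing in Proposition \ref{tR_n'G' = E''sing} is nonzero, which is immediate because $(\tilde u, \tilde v)\in\mathcal{A}$ together with the positivity assumption $(P_0)$ on $a$ and $b$ forces $P(u_*) + Q(v_*) > 0$, and Proposition \ref{seqeintlongedozero} already ensured $t_* \geq \tilde\rho > 0$ in the limit (alternatively, one just observes $t_* > 0$ by construction in Proposition \ref{tn(u,v) únicosing}).
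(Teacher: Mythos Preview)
Your proposal is correct and follows essentially the same approach as the paper: both take the minimizer of $\Lambda_n$ on $\mathcal{A}$, observe that at the corresponding critical point $t_n$ of the fiber map the derivative of $R_n$ vanishes, and use the identity in Proposition \ref{tR_n'G' = E''sing} (equivalently Remark \ref{relação entre R_n' e E''sing}) to conclude $E''_{\lambda^*}=0$. The only cosmetic difference is that the paper invokes the zero-homogeneity of $\Lambda_n$ to assume $t_n(\tilde u,\tilde v)=1$ without loss of generality, whereas you explicitly rescale by $t_*$; this is the same argument.
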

	\begin{proof}
		Recall that $\Lambda_n$ is zero homogeneous. Without loss of generality we suppose that $\lambda^* = \Lambda_n(\tilde{u}, \tilde{v})=R_n(\tilde{u}, \tilde{v})$. In other words,  $t_n(\tilde{u}, \tilde{v})=1$ holds for some $(\tilde{u}, \tilde{v}) \in \mathcal{A}$.  As a consequence, $\frac{d}{dt}R_n(t\tilde{u}, t\tilde{v}) = 0$ at $t=1$. Now, by using Remark \ref{relação entre R_n' e E''sing}, we obtain that  $(\tilde{u}, \tilde{v}) \in \mathcal{N}^0_{\lambda^*}$. This finishes the proof.
	\end{proof}
	\begin{rmk} It holds that $\lambda = \lambda^*$ is the smallest positive value such that $\mathcal{N}^0_{\lambda} \ne \emptyset$.
	\end{rmk}
	\begin{prop}\label{N0vaziosing}
		Assume ($P_0$), ($P$), ($V_0$) and ($V_1'$). Suppose also that $\lambda \in ( 0, \lambda^*)$. Then $\mathcal{N}^0_{\lambda} = \emptyset$.
	\end{prop}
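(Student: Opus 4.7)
The plan is to argue by contradiction. Suppose there exists $\lambda \in (0,\lambda^*)$ and a pair $(u,v) \in \mathcal{N}^0_\lambda$. The strategy is to show that such $(u,v)$ must in fact realize $\Lambda_n(u,v) = \lambda$, which forces $\lambda \geq \lambda^*$, contradicting the hypothesis.

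First, I would verify that $(u,v) \in \mathcal{A}$. Indeed, if $B(u,v) = \int |u|^\alpha |v|^\beta\,dx = 0$, then the conditions $E'_\lambda(u,v)(u,v)=0$ and $E''_\lambda(u,v)(u,v)^2=0$ read
\begin{equation*}
\|(u,v)\|^2 = \lambda P(u) + \lambda Q(v), \qquad 2\|(u,v)\|^2 = \lambda(1-p)P(u) + \lambda(1-q)Q(v).
\end{equation*}
Multiplying the first identity by $(1-p)$ and using $1-p \geq 1-q$ from hypothesis ($P$) yields $(1-p)\|(u,v)\|^2 \geq 2\|(u,v)\|^2$, hence $(p+1)\|(u,v)\|^2 \leq 0$, forcing $(u,v)=(0,0)$, a contradiction. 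Thus $B(u,v) > 0$ and $(u,v) \in \mathcal{A}$.

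Now since $(u,v) \in \mathcal{N}_\lambda \cap \mathcal{A}$, Remark \ref{Rn lambda e E'sing}(i) gives $R_n(u,v) = \lambda$. Applying Proposition \ref{tR_n'G' = E''sing} at $t = 1$ and using $G(u,v) = P(u)+Q(v) > 0$ (which holds by $a,b>0$ and $(u,v)\neq(0,0)$), we obtain
\begin{equation*}
\left.\tfrac{d}{dt} R_n(tu,tv)\right|_{t=1} = \frac{E''_\lambda(u,v)(u,v)^2}{G(u,v)} = 0,
\end{equation*}
since $(u,v) \in \mathcal{N}^0_\lambda$. Hence $t=1$ is a critical point of the fiber $t \mapsto R_n(tu,tv) = Q_n(t)$. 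By Proposition \ref{tn(u,v) únicosing}, the unique critical point of $Q_n$ is $t_n(u,v)$, which is the global maximizer. Therefore $t_n(u,v) = 1$ and consequently
\begin{equation*}
\Lambda_n(u,v) = \max_{t>0} R_n(tu,tv) = R_n(u,v) = \lambda.
\end{equation*}

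Finally, from the definition $\lambda^* = \inf_{(z,w)\in \mathcal{A}} \Lambda_n(z,w)$, we get $\lambda^* \leq \Lambda_n(u,v) = \lambda$, which contradicts the assumption $\lambda < \lambda^*$. This forces $\mathcal{N}^0_\lambda = \emptyset$. The only delicate step is the preliminary verification that $\mathcal{N}^0_\lambda \subset \mathcal{A}$, since otherwise one cannot invoke $R_n$ or $\Lambda_n$; everything else is a direct chain of implications from the structural results already established for $R_n$, $Q_n$, and $t_n$.
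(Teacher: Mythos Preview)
Your proof is correct and follows essentially the same contradiction argument as the paper: from $(u,v)\in\mathcal{N}^0_\lambda$ one deduces $t_n(u,v)=1$, hence $\Lambda_n(u,v)=\lambda\ge\lambda^*$, contradicting $\lambda<\lambda^*$. Your version is in fact more complete, since you explicitly verify $\mathcal{N}^0_\lambda\subset\mathcal{A}$ (needed for $R_n$ and $t_n$ to be defined), a point the paper's terse proof leaves implicit.
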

	\begin{proof}
		Assume by contradiction that there exists $(u, v) \in \mathcal{N}^0_{\lambda}$. Hence, $t_n(u, v) = 1$ which implies that $\lambda=R_n(u, v) =R_n(t_n(u, v)(u, v)) =\Lambda_n(u, v) \geq \lambda^* > \lambda$. This is a contradiction showing the desired result. 
	\end{proof}

	Now, we prove a relation between 
	$\frac{d}{dt}R_e(tu, tv)$ and $E'_{_{\lambda}}(tu, tv)(tu, tv)$. Define the function $G: \mathcal{A}\rightarrow \mathbb{R}$ given by  $$G(u,v) = \frac{1}{1 - p}P(u)  + \frac{1}{1 - q}Q(v).$$
	
	\begin{prop}\label{tR_e'G = E'sing} 
		Consider $(u, v)\in \mathcal{A}$ where $\lambda = R_e(tu,tv)$ holds true for some $t > 0$. Then
		$$\frac{d}{dt}R_e(tu, tv) = \frac{1}{t}\frac{E'_{_{\lambda}}(tu, tv)(tu, tv)}{G(tu, tv)}.$$ 
	\end{prop}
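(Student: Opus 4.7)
The plan is to mimic the argument used in Proposition \ref{tR_n'G' = E''sing}, substituting the appropriate integrands and using the first derivative $E'_\lambda(tu,tv)(tu,tv)$ in place of the second derivative that appears there. The key observation is that, by the very definition of $R_e$, one has the algebraic identity
\[
G(tu, tv)\, R_e(tu, tv) \;=\; \tfrac{1}{2}\Vert (tu, tv)\Vert^{2} \;-\; \tfrac{\theta}{\alpha+\beta}\int |tu|^{\alpha}|tv|^{\beta}\,dx,
\]
where the right-hand side rewrites as $\tfrac{1}{2}t^{2}A(u,v) - \tfrac{\theta}{\alpha+\beta}t^{\alpha+\beta}B(u,v)$. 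Both sides are smooth functions of $t>0$, so I differentiate in $t$ and afterwards multiply through by $t$ in order to produce the homogeneous quantities $\Vert (tu,tv)\Vert^{2}$ and $\theta B(tu,tv)$ on the right.

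The next step is to compute $t\frac{d}{dt}G(tu, tv)$ explicitly. Since $G(tu, tv) = \tfrac{1}{1-p}t^{1-p}P(u) + \tfrac{1}{1-q}t^{1-q}Q(v)$, a direct differentiation yields
\[
t\,\frac{d}{dt}G(tu, tv) \;=\; t^{1-p}P(u) + t^{1-q}Q(v) \;=\; P(tu) + Q(tv).
\]
This is the analogue of equation \eqref{tG1 sing} in the preceding proposition; the crucial simplification is that the coefficients $\tfrac{1}{1-p}$ and $\tfrac{1}{1-q}$ in $G$ are precisely those required to make the derivative reduce to $P(tu)+Q(tv)$ without prefactors. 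Substituting into the differentiated identity and invoking the hypothesis $R_e(tu,tv) = \lambda$ on the term originating from $\frac{d}{dt}G(tu,tv)\cdot R_e(tu,tv)$, one obtains
\[
tG(tu, tv)\,\frac{d}{dt}R_e(tu, tv) \;=\; \Vert(tu, tv)\Vert^{2} \;-\; \theta B(tu, tv) \;-\; \lambda P(tu) \;-\; \lambda Q(tv).
\]

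The right-hand side is, by the formula \eqref{E' cap 2} for $E'_{\lambda}(u,v)(u,v)$, exactly $E'_{\lambda}(tu,tv)(tu,tv)$. Dividing by $tG(tu,tv)$, which is positive on $\mathcal{A}$, gives the desired identity. There is no real obstacle here: the proof is essentially a bookkeeping exercise that relies only on the specific choice of weights in $G$ and on the homogeneity of $R_e$'s numerator and denominator. The only point worth double-checking is the precise power of $t$ in each term after differentiation, in order to confirm that multiplying by $t$ restores the scaled quantities $P(tu), Q(tv), \Vert(tu,tv)\Vert^{2}, B(tu,tv)$ without stray factors.
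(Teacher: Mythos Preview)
Your proof is correct and follows exactly the approach the paper indicates, namely adapting the computation from Proposition~\ref{tR_n'G' = E''sing} to $R_e$ and $G(u,v)=\tfrac{1}{1-p}P(u)+\tfrac{1}{1-q}Q(v)$. The key identity $t\frac{d}{dt}G(tu,tv)=P(tu)+Q(tv)$ and the subsequent recognition of $E'_\lambda(tu,tv)(tu,tv)$ are precisely what the omitted details amount to.
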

	\begin{proof} The proof follows the same ideas discussed in the proof of Proposition \ref{tR_n'G' = E''sing}. The details are omitted.  
	\end{proof}
	Now, we show whether the function $t \mapsto E(tu,tv)$ intersects the $t$ axis. In fact, we consider the following result:
	\begin{rmk}\label{signal R_e' = signal E'sing} 
		Assume $\lambda = R_e(tu,tv)$. Then $\frac{d}{dt}R_e(tu, tv) $ has the same sign as $E'_{_{\lambda}}(tu, tv)(tu, tv)$. Furthermore, $\frac{d}{dt}R_e(tu, tv) $ is zero if and only if $E'_{_{\lambda}}(tu, tv)(tu, tv)$ is zero.
	\end{rmk}
	Now, we are able to prove the following result:
	\begin{prop}
		Assume ($P_0$, ($P$), ($V_0$) and ($V_1'$). Then we obtain that
		$\Lambda_n(u, v) > \Lambda_e(u, v), \;\; (u, v) \in \mathcal{A}.$
		As a consequence, we deduce that $0 < \lambda_* <\lambda^* < + \infty$.
	\end{prop}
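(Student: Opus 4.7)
The plan is to establish the pointwise strict inequality $\Lambda_n(u,v) > \Lambda_e(u,v)$ for every $(u,v) \in \mathcal{A}$, and then to read off the ordering of the extremes together with their finiteness and positivity. The central tool is the algebraic identity from Proposition \ref{diferença impsing}, which I rewrite as
\begin{equation*}
Q_n(t) - Q_e(t) = \frac{t\, h(t)}{(1-p)(1-q)}\, Q_e'(t), \qquad h(t) := \frac{(1-q)t^{1-p}P(u) + (1-p)t^{1-q}Q(v)}{t^{1-p}P(u) + t^{1-q}Q(v)} > 0.
\end{equation*}
Evaluating at $t = t_e(u,v)$, where $Q_e'$ vanishes, yields $Q_n(t_e) = Q_e(t_e) = \Lambda_e(u,v)$, whence $\Lambda_n(u,v) = Q_n(t_n) \ge Q_n(t_e) = \Lambda_e(u,v)$. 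The task reduces to ensuring that $t_n \ne t_e$.

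To verify this I would differentiate the identity in $t$ and evaluate at $t = t_e$. Two of the three product-rule contributions carry the factor $Q_e'(t_e) = 0$, so only one term survives:
\begin{equation*}
Q_n'(t_e) = \frac{t_e\, h(t_e)}{(1-p)(1-q)}\, Q_e''(t_e).
\end{equation*}
Since $t_e$ is the unique critical point of the smooth fiber $Q_e$ and a global maximizer (Proposition \ref{te(u,v) único sing}), one has $Q_e''(t_e) < 0$, so $Q_n'(t_e) < 0$. Because $Q_n$ itself possesses a unique critical point $t_n$ at which it attains its global maximum (Proposition \ref{tn(u,v) únicosing}), the sign $Q_n'(t_e) < 0$ forces $t_n < t_e$. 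In particular $t_n \ne t_e$, and the strict-maximum property delivers $\Lambda_n(u,v) = Q_n(t_n) > Q_n(t_e) = \Lambda_e(u,v)$.

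To transfer the comparison to the extremes I would apply the pointwise inequality at a minimizer of $\Lambda_n$: by Proposition \ref{Longe de zerosing} there exists $(\tilde u, \tilde v) \in \mathcal{A}$ with $\lambda^* = \Lambda_n(\tilde u, \tilde v)$, and hence
\begin{equation*}
\lambda^* = \Lambda_n(\tilde u, \tilde v) > \Lambda_e(\tilde u, \tilde v) \ge \lambda_*.
\end{equation*}
This yields $\lambda_* < \lambda^*$; the attainment gives $\lambda^* < +\infty$, the bound $\lambda^* \ge C_\rho > 0$ has already been established, and positivity of $\lambda_*$ is obtained by replaying the minimizing-sequence analysis of Propositions \ref{seqeintlongedozero}--\ref{Longe de zerosing} with the zero-homogeneous functional $\Lambda_e$ in place of $\Lambda_n$.

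The main obstacle is the clean derivation that $Q_n'(t_e) < 0$. This rests on two points: first, that differentiating $t h(t) Q_e'(t)$ at $t = t_e$ collapses to the single term $t_e h(t_e) Q_e''(t_e)$, which is immediate from the product rule once $Q_e'(t_e) = 0$ is used; and second, that $Q_e''(t_e) < 0$ strictly, which follows from the same concavity/uniqueness analysis for $Q_e$ that underlies Proposition \ref{te(u,v) único sing}. Without the strict sign of $Q_e''(t_e)$ one would obtain only $\Lambda_n \ge \Lambda_e$, missing the strict inequality that is essential for $\lambda_* < \lambda^*$.
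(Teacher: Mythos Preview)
Your argument is correct and follows the paper's route: use the identity from Proposition~\ref{diferença impsing} to get $Q_n(t_e)=Q_e(t_e)=\Lambda_e(u,v)$, then invoke the strict global maximum of $Q_n$ at $t_n$ to conclude $\Lambda_n(u,v)>\Lambda_e(u,v)$, and finally evaluate at the minimizer $(\tilde u,\tilde v)$ of $\Lambda_n$. The paper simply writes $Q_n(t_n)>Q_n(t_e)$ without justifying $t_n\neq t_e$; your differentiation of the identity to obtain $Q_n'(t_e)=\dfrac{t_e\,h(t_e)}{(1-p)(1-q)}\,Q_e''(t_e)<0$ (hence $t_n<t_e$) is an extra step that the paper omits but that actually closes this small gap, at the modest cost of appealing to the strict second-order condition $Q_e''(t_e)<0$.
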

	\begin{proof}
		According to the Proposition \ref{diferença impsing} we infer that
		$$Q_n(t) - Q_e(t)=\frac{t}{(1 - p)(1 - q)}\left ( \frac{(1 - q)t^{1 - p} P(u) + (1 - p) t^{1 - q}Q(v)}{t^{1 - p} P(u) + t^{1 - q} Q(v)} \right )\frac{d}{dt}Q_e(t).$$
		On the other hand, we know that $Q_n(t_n(u, v)) > Q_n(t)$ holds for each $t \ne t_n(u, v)$. Therefore, we see that 
		$$\Lambda_n(u, v) - \Lambda_e(u, v) = Q_n(t_n(u, v)) - Q_e(t_e(u, v)) > Q_n(t_e(u, v)) - Q_e(t_e(u, v)) = 0.$$
		Here, was used the fact that $\frac{dQ_e(t)}{dt}\! =\! 0$ for $t\! =\! t_e(u, v)$. Under these conditions, $\Lambda_e(u, v) < \Lambda_n(u, v)$. Moreover, by using Proposition \ref{Longe de zerosing}, we infer also that $\lambda^* = \Lambda_n(\tilde{u}, \tilde{v})$ and $$\lambda^* = \inf\limits_{(u, v)\in \mathcal{A}} \Lambda_n(u,v) = \Lambda_n(\tilde u, \tilde v)> \Lambda_e(\tilde u, \tilde v) \ge \inf\limits_{(z, w) \in \mathcal{A}}\Lambda_e(z, w)  = \lambda_*.$$
		This ends the proof. 
	\end{proof}
	\begin{prop}\label{Lambdae zero homsing}
		Assume ($P_0$) and ($P$). Then the functional $\Lambda_e$ is zero homogeneous.
	\end{prop}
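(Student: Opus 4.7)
The plan is to imitate the short argument used to establish Proposition \ref{lambda zero homsing} for $\Lambda_n$, since the definition \eqref{defLambda_esing} of $\Lambda_e$ has the same structural form as that of $\Lambda_n$, namely a supremum over $t>0$ of a Rayleigh-type functional evaluated along the fiber $t\mapsto (tu, tv)$.

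First, I would fix $s>0$ and an arbitrary $(u,v)\in\mathcal{A}$, and note that $(su,sv)\in\mathcal{A}$ as well (this is immediate from the definition of $\mathcal{A}$, since the coupling integral $\int |u|^\alpha |v|^\beta dx$ scales by a positive factor $s^{\alpha+\beta}$). Then I would write out
\[
\Lambda_e(su,sv)=\max_{t>0} R_e(t(su), t(sv)) = \max_{t>0} R_e((ts)u, (ts)v).
\]
Performing the change of variable $a=ts$, which gives a bijection $(0,\infty)\to(0,\infty)$ because $s>0$ is fixed, the supremum becomes
\[
\max_{a>0} R_e(au, av) = \Lambda_e(u,v).
\]
Hence $\Lambda_e(su,sv) = \Lambda_e(u,v)$, which is the desired zero homogeneity.

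There is no real obstacle here: the result is a purely formal consequence of the fact that the maximum of $R_e$ is taken along a ray through the origin, and rescaling $(u,v)$ merely reparametrizes that ray. The only thing worth making explicit, as above, is the bijectivity of $t\mapsto ts$ on $(0,\infty)$, which justifies that the two suprema are taken over the same set and are therefore equal. This same remark could equally well be phrased in terms of $t_e$: by Proposition \ref{te(u,v) único sing} we have $t_e(su,sv)=t_e(u,v)/s$, from which $\Lambda_e(su,sv) = R_e(t_e(su,sv)(su,sv)) = R_e(t_e(u,v)(u,v)) = \Lambda_e(u,v)$ follows directly.
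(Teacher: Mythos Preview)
Your proof is correct and essentially identical to the paper's own argument: the paper also fixes $s>0$, writes $\Lambda_e(su,sv)=\sup_{t>0}R_e(tsu,tsv)=\sup_{a>0}R_e(au,av)=\Lambda_e(u,v)$ via the same substitution $a=ts$. Your additional remark via $t_e(su,sv)=t_e(u,v)/s$ is a valid alternative phrasing but is not needed.
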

	\begin{proof}
		For any $s>0$, we see that $\Lambda_e(su, sv) = \sup\limits_{t>0}Q_e(tsu, tsv) = \sup\limits_{a>0} Q_e(au, av) = \Lambda_e(u, v)$. This finishes the proof.
	\end{proof}
	
	\begin{rmk}\label{ex3.18Elon analise reta} 
		Consider nonzero numbers $a_1,a_2,b_1,b_2\in\mathbb{R}^+$ where $b_1,b_2>0$. Then, we obtain that 
		$$\min\left\{\frac{a_1}{b_1}, \frac{a_2}{b_2}\right\} \le \frac{a_1 + a_2}{b_1 + b_2} \le \max\left\{\frac{a_1}{b_1}, \frac{a_2}{b_2}\right\}.$$
	\end{rmk}

	\begin{lem}\label{apendice1} Consider a function $f: \mathbb{R} \to \mathbb{R}$ given by $f(t) := \frac{At^2 - Bt^\eta}{Ct^{p} + Dt^{q}},t>0,$ where $A, B, C, D>0$. Then there exists a unique critical point of f which correspondents to a global maximum point.
	\end{lem}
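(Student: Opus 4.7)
I would start by noting the boundary behaviour of $f$: under the standing assumptions of the paper ($0<p\leq q<1$, $\eta=\alpha+\beta>2$, and $A,B,C,D>0$), one has $f(t)\sim (A/C)\,t^{2-p}\to 0^+$ as $t\to 0^+$, while $f(t)\sim -(B/D)\,t^{\eta-q}\to -\infty$ as $t\to\infty$. In particular $f$ is strictly positive on an interval $(0,\delta)$ and attains its supremum on $(0,\infty)$ at an interior point, which must be a critical point; hence proving that the critical equation $f'(t)=0$ has a unique solution is enough to conclude that this solution is the global maximum.

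Next, I would compute $f'$ from $f=N/D$ with $N(t)=At^2-Bt^{\eta}$ and $D(t)=Ct^{p}+Dt^{q}$. After multiplying $N'D-ND'$ by $t^{-(p+1)}$, the equation $f'(t)=0$ is equivalent to
\[
AC(2-p)+AD(2-q)t^{q-p}=BC(\eta-p)t^{\eta-2}+BD(\eta-q)t^{q-p+\eta-2},
\]
which I would rearrange as $L(t)=R(t)$, where
\[
L(t):=\frac{AC(2-p)+AD(2-q)t^{q-p}}{BC(\eta-p)+BD(\eta-q)t^{q-p}},\qquad R(t):=t^{\eta-2}.
\]
Since $\eta>2$, the function $R$ is strictly increasing from $0$ to $+\infty$. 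The substitution $s=t^{q-p}$ reduces $L$ to a M\"obius-type function in $s$, whose derivative has the sign of the constant
\[
(2-q)(\eta-p)-(2-p)(\eta-q)=(q-p)(2-\eta).
\]
Since $q\geq p$ and $\eta>2$, this quantity is $\leq 0$, and is strictly negative when $p<q$; consequently $L$ is non-increasing on $(0,\infty)$ and bounded between the positive limits $A(2-p)/[B(\eta-p)]$ (at $t\to 0^+$) and $A(2-q)/[B(\eta-q)]$ (at $t\to\infty$). The case $p=q$ is even simpler, since then $L$ is a positive constant.

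With $L$ bounded and non-increasing and $R$ strictly increasing from $0$ to $+\infty$, the intermediate value theorem yields exactly one intersection $t^{\ast}\in(0,\infty)$, and this is the unique critical point of $f$. Combining with the boundary analysis from the first paragraph, $t^{\ast}$ realises the global maximum of $f$ on $(0,\infty)$. The main (and essentially only) delicate step is spotting the algebraic identity $(2-q)(\eta-p)-(2-p)(\eta-q)=(q-p)(2-\eta)$: once this is in hand, the uniqueness of $t^{\ast}$ collapses to a clean strict-monotone comparison and the remainder of the argument is elementary calculus.
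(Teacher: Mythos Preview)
Your argument is correct and complete. The paper actually states Lemma~\ref{apendice1} without proof, so there is nothing to compare against; your route via the boundary behaviour of $f$ and the monotone comparison $L(t)=R(t)$ (with the key identity $(2-q)(\eta-p)-(2-p)(\eta-q)=(q-p)(2-\eta)$) is a clean and self-contained justification of the claim.
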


	\begin{prop}\label{ukbar vkbar Limitadasing}
		Suppose ($P_0$), ($P$), ($V_0$) and ($V_1'$). Then there exists a bounded minimizing sequence for $\lambda_*$. 
	\end{prop}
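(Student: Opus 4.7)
The plan is to mimic the argument used for $\lambda^*$ (Propositions \ref{seqeintlongedozero} and \ref{Limitadasing}), replacing the fibering map $Q_n$ by $Q_e$ and using Proposition \ref{Lambdae zero homsing} in place of Proposition \ref{lambda zero homsing}. Starting from an arbitrary minimizing sequence $(u_k,v_k)\in\mathcal{A}$ for $\lambda_*$, I would rescale it by setting
$$\bar u_k := t_e(u_k,v_k)\,u_k,\qquad \bar v_k := t_e(u_k,v_k)\,v_k.$$
Because $\Lambda_e$ is zero homogeneous (Proposition \ref{Lambdae zero homsing}), the sequence $(\bar u_k,\bar v_k)$ is still minimizing and by construction $t_e(\bar u_k,\bar v_k)=1$, so $R_e(\bar u_k,\bar v_k)=\Lambda_e(\bar u_k,\bar v_k)\to\lambda_*$. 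Set $\lambda_k:=R_e(\bar u_k,\bar v_k)$.

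The core computation is to extract a useful identity from $Q_e'(1)=0$. Using Proposition \ref{tR_e'G = E'sing} at $t=1$ with $\lambda=\lambda_k$, one gets
$$A(\bar u_k,\bar v_k)=\lambda_k\bigl[P(\bar u_k)+Q(\bar v_k)\bigr]+\theta B(\bar u_k,\bar v_k),$$
and combining with the defining equality $R_e(\bar u_k,\bar v_k)=\lambda_k$, namely
$$\tfrac12 A(\bar u_k,\bar v_k)-\tfrac{\theta}{\alpha+\beta}B(\bar u_k,\bar v_k)=\lambda_k\Bigl[\tfrac{1}{1-p}P(\bar u_k)+\tfrac{1}{1-q}Q(\bar v_k)\Bigr],$$
one can eliminate $\theta B(\bar u_k,\bar v_k)$ and arrive at an identity of the form
$$\frac{\alpha+\beta-2}{2}\,A(\bar u_k,\bar v_k)=\lambda_k\Bigl[\tfrac{\alpha+\beta-1+p}{1-p}P(\bar u_k)+\tfrac{\alpha+\beta-1+q}{1-q}Q(\bar v_k)\Bigr].$$
This is the exact analogue of \eqref{integrates coupling} adapted to the energy fibering.

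From here the boundedness follows almost verbatim from the argument in Proposition \ref{Limitadasing}. Since $\lambda_k$ is convergent and hence bounded, and since the Hölder/Sobolev estimates in \eqref{bounded} give $P(\bar u_k)\le S\,\|(\bar u_k,\bar v_k)\|^{1-p}$ and $Q(\bar v_k)\le S\,\|(\bar u_k,\bar v_k)\|^{1-q}$, the identity above yields, assuming without loss of generality that $\|(\bar u_k,\bar v_k)\|\ge 1$,
$$\|(\bar u_k,\bar v_k)\|^2\le \widetilde C\,\lambda_k\,\|(\bar u_k,\bar v_k)\|^{1-p},$$
with $\widetilde C$ depending only on $p,q,\alpha,\beta,S$. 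Since $1-p<2$, this provides a uniform upper bound for $\|(\bar u_k,\bar v_k)\|$, proving the proposition.

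The routine obstacle is only bookkeeping: handling $p\neq q$ in the algebraic manipulation and tracking the correct exponents in the Hölder bounds for $P$ and $Q$. A conceptual point to be careful about is that the rescaling $t_e$ is well defined on all of $\mathcal{A}$ (Proposition \ref{te(u,v) único sing}) and preserves membership in $\mathcal{A}$ because $t_e(u,v)>0$, so the rescaled sequence is legitimate for minimization. No new phenomenon appears, and a separate lower bound on $\|(\bar u_k,\bar v_k)\|$ is not needed for the statement, which only asserts boundedness.
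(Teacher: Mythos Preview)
Your argument is correct and in fact slightly more direct than the paper's. Both proofs rescale by $t_e$ and use that $Q_e'(1)=0$, but the paper eliminates $\lambda_k$ between the critical-point equation and the identity $R_e(\bar u_k,\bar v_k)=\lambda_k$ to obtain two-sided bounds of the form $\theta B(\bar u_k,\bar v_k)\asymp \|(\bar u_k,\bar v_k)\|^2$ (equation \eqref{estimativa do acoplamentosing}), and only then substitutes the upper bound into the minimizing inequality $\Lambda_e(\bar u_k,\bar v_k)\le\lambda_*+1/k$ to control $\|(\bar u_k,\bar v_k)\|$. You instead eliminate $\theta B$ between the same two equations and land directly on $\tfrac{\alpha+\beta-2}{2}A=\lambda_k\bigl[\tfrac{\alpha+\beta-1+p}{1-p}P+\tfrac{\alpha+\beta-1+q}{1-q}Q\bigr]$, which immediately yields boundedness via \eqref{bounded}. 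One small remark: your identity is not really ``the exact analogue of \eqref{integrates coupling}''---that equation expresses $\theta B$ in terms of $A$ with $\lambda$ already eliminated, whereas yours keeps $\lambda_k$ and drops $B$; it is a different (and for this purpose cleaner) elimination. Be aware, though, that the paper's detour through \eqref{estimativa do acoplamentosing}--\eqref{estimativa do acoplamento2sing} also records lower bounds on $\|(\bar u_k,\bar v_k)\|$ and on $\theta B(\bar u_k,\bar v_k)$ which are invoked later (Propositions \ref{Lambdae FSIsing} and \ref{lambda*logzerosing}); your shortcut skips those, so if you adopt it you must supply them separately when needed.
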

	\begin{proof}
		The proof follows the same ideas employed in the proof of Proposition \ref{Limitadasing}. Consider the $(\bar{u}_k, \bar{v}_k) \in \mathcal{A}$ given by
		$\bar{u}_k=t_e(u_k, v_k)u_k$ and $\bar{v}_k=t_e(u_k, v_k)v_k$. 
		Hence, by using Proposition \ref{Lambdae zero homsing}, we obtain that $(\bar{u}_k, \bar{v}_k)$ is also a minimizing sequence for $\lambda_*$. Notice also that
		\begin{equation}\label{deriv Re em 1sing} 
			\left.\frac{d}{dt}R_e(t \bar{u}_k, t\bar{{v}_k})\right|_{t=1}=0.
		\end{equation}
		In particular, we mention that
		$$\left (A(\bar{u}_k, \bar{v}_k) - \theta B(\bar{u}_k, \bar{v}_k)\right )\left ( \frac{1}{1 - p}P(\bar{u}_k)  + \frac{1}{1 - q}Q(\bar{v}_k) \right )= 
		\left ( \frac{1}{2}A(\bar{u}_k, \bar{v}_k) - \frac{\theta}{\alpha +\beta} B(\bar{u}_k, \bar{v}_k)\right )\left ( P(\bar{u}_k) 
		+ Q(\bar{v}_k) \right ).$$
		The last identity is equivalent to
		$$\theta B(\bar{u}_k, \bar{v}_k) = \frac{\left (\frac{1}{2} - \frac{1}{1 - p} \right )P(\bar{u}_k) + \left (\frac{1}{q} - \frac{1}{2}\right )Q(\bar{v}_k)}{\left (\frac{1}{\alpha + \beta} - \frac{1}{1 - p } \right )P(\bar{u}_k)  + \left (\frac{1}{\alpha + \beta} - \frac{1}{1 - q } \right )Q(\bar{v}_k)}A(\bar{u}_k, \bar{v}_k).$$
		Notice that
		$$\frac{\frac{1}{2} - \frac{1}{1 - x}}{\frac{1}{\alpha + \beta} - \frac{1}{1 - x}} = \frac{\alpha + \beta}{2}\frac{1 + x}{\alpha + \beta - 1 + x} = \frac{\alpha + \beta}{2}f(x) \;\;\; \mbox{where}\;\;\;f(x) = \frac{1 + x}{\alpha + \beta - 1 + x},$$ 
		is an increasing function. Now, by using Remark \ref{ex3.18Elon analise reta}, we obtain the following estimate:
		\begin{equation}\label{estimativa do acoplamentosing} 
			\frac{\alpha + \beta}{2}f(p)\Vert(\bar{u}_k, \bar{v}_k) \Vert ^2 \le \theta \int  \vert \bar{u}_k \vert ^\alpha \vert \bar{v}_k \vert ^\beta dx \le \frac{\alpha + \beta}{2}f(q)\Vert(\bar{u}_k, \bar{v}_k) \Vert ^2.
		\end{equation}
		Similarly, by using \eqref{limitaçao inf do módulosing}, we infer that
		\begin{equation}\label{estimativa do seq bar using} 
			\Vert(\bar{u}_k, \bar{v}_k) \Vert \ge \left (  \frac{(\alpha + \beta)f(p)}{2\theta S^{\alpha +\beta}_{\alpha + \beta}}\right )^{\frac{1}{\alpha + \beta -2}}:=\hat{\rho}.
		\end{equation}
		As a consequence, by using estimate \eqref{estimativa do seq bar using} and \eqref{estimativa do acoplamentosing}, we deduce that
		\begin{equation}\label{estimativa do acoplamento2sing} 
			\theta \int  \vert \bar{u}_k \vert ^\alpha \vert \bar{v}_k \vert ^\beta dx \geq \left (\frac{(\alpha + \beta)^{\alpha + \beta }f(p)^{\alpha + \beta }}{2^{\alpha + \beta }\theta^2 S_{\alpha + \beta}^{2(\alpha + \beta)}}\right )^{\frac{1}{\alpha + \beta - 2}} =  \bar{\rho} > 0.
		\end{equation}
		Furthermore, we infer that $\Lambda_e (\bar{u}_k, \bar{v}_k) \le \lambda_* +\frac{1}{k}.$ Thus, we obtain that $$ \frac{\frac{1}{2}\Vert (\bar{u}_k, \bar{v}_k)\Vert^2 - \frac{\theta}{\alpha + \beta}  \int  \vert\bar{u}_k\vert^\alpha \vert\bar{v}_k\vert^\beta dx}{\frac{1}{1 - p} \int  a(x)\vert \bar{u}_k \vert^{1 - p}dx 
			+  \frac{1}{1 - q} \int  b(x)\vert \bar{v}_k \vert^{1 - q}dx} \le \lambda_* +\frac{1}{k}.$$
		The last assertion implies that
		$$||(\bar{u}_k, \bar{v}_k)||^2\leq \frac{2}{\alpha+\beta}\theta B(\bar{u}_k, \bar{v}_k)+2\left(\frac{1}{1-p}P(\bar{u}_k)+\frac{1}{1-q}Q(\bar{v}_k)\right)$$
		
		Now, by using \eqref{bounded} and \eqref{estimativa do acoplamentosing}, we deduce that 
		\begin{equation}\label{desig barra uk, vksing}
			||(\bar{u}_k, \bar{v}_k)||^2\leq f(q)||(\bar{u}_k, \bar{v}_k)||^2+2\frac{2S}{1-q}\left(||(\bar{u}_k, \bar{v}_k)||^{1-p}+||(\bar{u}_k, \bar{v}_k)||^{1-q}\right)(\lambda_*+\frac{1}{k})
		\end{equation}
		Without loss of generality we assume that $\Vert(\bar{u}_k, \bar{v}_k) \Vert \geq 1$. Hence, we infer that
		$$\Vert(\bar{u}_k,\bar{v}_k)\Vert \le \left (\frac{4 S\left ( \lambda_* +\frac{1}{k} \right )}{(1 - f(q))(1 - q)}\right )^{\frac{1}{p+1}}.$$
		This ends the proof.
	\end{proof}
	\begin{prop}\label{Lambdae FSIsing}
		Assume ($P_0$),($P$),($V_0$) and ($V_1'$). Then the weak limit $(\bar{u}, \bar{v})$ of sequence provided by Proposition \ref{ukbar vkbar Limitadasing} satisfies $$\Lambda_e(\bar{u}, \bar{v}) = \liminf\limits_{k \to \infty}\Lambda_e(\bar{u}_k, \bar{v}_k) = \lambda_*$$
	\end{prop}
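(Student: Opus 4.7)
The plan is to mimic the argument of Proposition \ref{Longe de zerosing}, but with $R_n,\Lambda_n$ replaced by $R_e,\Lambda_e$. By Proposition \ref{ukbar vkbar Limitadasing} the sequence $(\bar{u}_k,\bar{v}_k)$ is bounded in $X$, so up to a subsequence $(\bar{u}_k,\bar{v}_k)\rightharpoonup(\bar{u},\bar{v})$ in $X$ and, thanks to Lemma \ref{imersoes Sobolev 2, 2*s}, strongly in $L^{r_1}(\mathbb{R}^N)\times L^{r_2}(\mathbb{R}^N)$ for every $r_1,r_2\in[2,2_s^*)$, together with a.e.\ convergence and a shared integrable majorant.

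First I show that $(\bar{u},\bar{v})\in\mathcal{A}$. Reproducing verbatim the argument leading to \eqref{=integ} in Proposition \ref{lambda frac Sem Infsing}, the H\"older exponents $r=(\alpha+\beta)/\alpha$, $r'=(\alpha+\beta)/\beta$ together with the Dominated Convergence Theorem give
$$\lim_{k\to\infty}\int|\bar{u}_k|^\alpha|\bar{v}_k|^\beta\,dx=\int|\bar{u}|^\alpha|\bar{v}|^\beta\,dx.$$
The uniform lower bound \eqref{estimativa do acoplamento2sing} then forces $\theta\int|\bar{u}|^\alpha|\bar{v}|^\beta\,dx\ge\bar{\rho}>0$, so $(\bar{u},\bar{v})\in\mathcal{A}$ and in particular $\bar{u},\bar{v}\not\equiv 0$. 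In the same manner the singular parts pass to the limit,
$$\lim_{k\to\infty}P(\bar{u}_k)=P(\bar{u}),\qquad \lim_{k\to\infty}Q(\bar{v}_k)=Q(\bar{v}),$$
by the domination $||\bar{u}_k|^{1-p}-|\bar{u}|^{1-p}|^{2/(1-p)}\le 2^{2/(1-p)}(h_2^2+\bar{u}^2)\in L^1$ and the analogous estimate for $\bar{v}_k$, exactly as in Proposition \ref{lambda frac Sem Infsing}.

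Next, since the norm on $X$ is weakly lower semicontinuous, $\|(\bar{u},\bar{v})\|^2\le\liminf_{k\to\infty}\|(\bar{u}_k,\bar{v}_k)\|^2$. Combining this with the three limits above and recalling the expression
$$Q_e(t)=\frac{\tfrac12 t^2\|(u,v)\|^2-\tfrac{\theta}{\alpha+\beta}t^{\alpha+\beta}B(u,v)}{\tfrac{1}{1-p}t^{1-p}P(u)+\tfrac{1}{1-q}t^{1-q}Q(v)},$$
for every fixed $t>0$ we obtain $R_e(t\bar{u},t\bar{v})\le\liminf_{k\to\infty}R_e(t\bar{u}_k,t\bar{v}_k)\le\liminf_{k\to\infty}\Lambda_e(\bar{u}_k,\bar{v}_k)=\lambda_*$, because $\Lambda_e(\bar{u}_k,\bar{v}_k)=\max_{t>0}R_e(t\bar{u}_k,t\bar{v}_k)$. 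Taking the supremum in $t>0$ on the left gives
$$\Lambda_e(\bar{u},\bar{v})=\max_{t>0}R_e(t\bar{u},t\bar{v})\le\liminf_{k\to\infty}\Lambda_e(\bar{u}_k,\bar{v}_k)=\lambda_*.$$
On the other hand, $(\bar{u},\bar{v})\in\mathcal{A}$ together with the definition $\lambda_*=\inf_{(u,v)\in\mathcal{A}}\Lambda_e(u,v)$ yields $\Lambda_e(\bar{u},\bar{v})\ge\lambda_*$, so both inequalities collapse to an equality and the claim follows.

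The main subtlety is verifying that $(\bar{u},\bar{v})\in\mathcal{A}$: one cannot simply transplant the proof of Proposition \ref{Longe de zerosing} because the quantitative lower bound \eqref{estimativa do acoplamento2sing} is the precise ingredient that prevents the coupling term from vanishing in the limit; passing the coupling integral to the limit through the Dominated Convergence Theorem and applying \eqref{estimativa do acoplamento2sing} is the crucial step that turns weak convergence into membership in $\mathcal{A}$. Once this is granted, weak lower semicontinuity of $R_e(t\cdot,t\cdot)$ at each fixed $t>0$ is routine, and the supremum over $t$ is handled by taking the sup after passing to the liminf on the right-hand side.
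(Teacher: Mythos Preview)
Your proof is correct and follows essentially the same approach as the paper, which simply refers back to the analogous argument for $\Lambda_n$ in Propositions \ref{lambda frac Sem Infsing} and \ref{Longe de zerosing}. Your treatment is in fact slightly more careful: by establishing weak lower semicontinuity of $R_e(t\,\cdot,t\,\cdot)$ at each fixed $t>0$ and only then taking the supremum, you make explicit the passage from $R_e$ to $\Lambda_e$ that the paper leaves implicit.
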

	\begin{proof} 
		Firstly, by using Proposition \ref{ukbar vkbar Limitadasing}, we suppose that $(\bar{u}_k, \bar{v}_k)  \rightharpoonup  (\bar{u}, \bar{v})$ in $X$. Hence, using the same arguments discussed in the proof of the Proposition \ref{Longe de zerosing}, we obtain that
		$$\lambda_* \le \Lambda_e(\bar{u}, \bar{v}) \le \liminf\limits_{k \to \infty}\Lambda_e(\bar{u}_k, \bar{v}_k) = \lambda_*.$$ 
		This ends the proof. 
	\end{proof}
	
	\begin{prop}\label{lambda*logzerosing}
		Assume ($P_0$), ($P$), ($V_0$) and ($V_1'$). Then there exists $\bar{C}_{\delta}$ such that $\lambda_*\ge \bar{C}_{\delta} > 0$. 
	\end{prop}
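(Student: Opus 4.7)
The plan is to mirror the argument that established the lower bound $\lambda^* \ge C_{\tilde\rho} > 0$, but now using the fiber map $Q_e$ and the minimizing sequence adapted to $\lambda_*$. Specifically, I would start from the normalized minimizing sequence $(\bar u_k, \bar v_k)$ of Proposition \ref{ukbar vkbar Limitadasing}. By the zero-homogeneity of $\Lambda_e$ (Proposition \ref{Lambdae zero homsing}) this is still a minimizing sequence, and it inherits the lower bound $\Vert(\bar u_k, \bar v_k)\Vert \ge \hat\rho$ from \eqref{estimativa do seq bar using}, as well as the crucial estimate \eqref{estimativa do acoplamentosing} saying that
\[
\frac{\alpha+\beta}{2}f(p)\,\Vert(\bar u_k,\bar v_k)\Vert^2 \;\le\; \theta B(\bar u_k,\bar v_k) \;\le\; \frac{\alpha+\beta}{2}f(q)\,\Vert(\bar u_k,\bar v_k)\Vert^2,
\]
where $f(x)=(1+x)/(\alpha+\beta-1+x)$ is increasing and $f(q)<1$ thanks to $q<1$ and $\alpha+\beta>2$.

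Next, I would plug the right-hand inequality into the numerator of $R_e$ to obtain
\[
\tfrac12\Vert(\bar u_k,\bar v_k)\Vert^2 - \tfrac{\theta}{\alpha+\beta}B(\bar u_k,\bar v_k) \;\ge\; \tfrac12\bigl(1-f(q)\bigr)\Vert(\bar u_k,\bar v_k)\Vert^2,
\]
which is strictly positive. For the denominator, I would use hypothesis ($P_0$), the Sobolev embedding from Lemma \ref{imersoes Sobolev 2, 2*s}, and Hölder's inequality exactly as in \eqref{bounded} to get
\[
\tfrac{1}{1-p}P(\bar u_k)+\tfrac{1}{1-q}Q(\bar v_k) \;\le\; \tfrac{S}{1-q}\bigl(\Vert(\bar u_k,\bar v_k)\Vert^{1-p}+\Vert(\bar u_k,\bar v_k)\Vert^{1-q}\bigr),
\]
with $S=\max\{\Vert a\Vert_{2/(1+p)}S_2^{1-p},\,\Vert b\Vert_{2/(1+q)}S_2^{1-q}\}$.

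Combining these two inequalities and applying Remark \ref{ex3.18Elon analise reta} to the quotient, I obtain
\[
\Lambda_e(\bar u_k,\bar v_k) \;\ge\; \frac{(1-f(q))(1-q)}{4S}\,\min\bigl\{\Vert(\bar u_k,\bar v_k)\Vert^{1+p},\Vert(\bar u_k,\bar v_k)\Vert^{1+q}\bigr\}.
\]
Finally, inserting the a priori bound $\Vert(\bar u_k,\bar v_k)\Vert \ge \hat\rho$ and taking infimum over $k$ yields
\[
\lambda_* \;=\; \lim_{k\to\infty}\Lambda_e(\bar u_k,\bar v_k) \;\ge\; \frac{(1-f(q))(1-q)}{4S}\,\min\bigl\{\hat\rho^{1+p},\hat\rho^{1+q}\bigr\} \;=:\; \bar C_\delta > 0,
\]
which is the claim. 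The only delicate point is ensuring the numerator remains positive; this is where $f(q)<1$ (guaranteed by $\alpha+\beta>2$ together with $q<1$) is essential, and it is precisely the same inequality that made the analogous argument for $\lambda^*$ work, so I do not expect any real obstacle beyond keeping track of constants.
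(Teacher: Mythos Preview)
Your proof is correct and follows essentially the same approach as the paper: bound the numerator of $R_e(\bar u_k,\bar v_k)$ from below via \eqref{estimativa do acoplamentosing}, bound the denominator from above via \eqref{bounded}, apply Remark~\ref{ex3.18Elon analise reta} to the resulting quotient, and finish with the norm lower bound \eqref{estimativa do seq bar using}. The constants and intermediate inequalities you write match the paper's proof line for line.
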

	\begin{proof}
		Firstly, by using \eqref{estimativa do acoplamentosing}, we obtain in the following estimate
		\begin{eqnarray*}
			\Lambda_e({u}_k, 
			{v}_k) & = & R_e(t_e(u_k,v_k)(u_k, v_k)) = R_e(\bar{u}_k, \bar{v}_k)
			=\frac{\frac{1}{2}A(\bar{u}_k,\bar{v}_k) - \frac{\theta}{\alpha + \beta}  B(\bar{u}_k,\bar{v}_k)}{\frac{1}{1 - p}P(\bar{u}_k) + \frac{1}{1 - q}Q(\bar{v}_k) }. 
		\end{eqnarray*}
		Now, by using the H\"older inequality, \eqref{estimativa do acoplamentosing} together with \eqref{bounded}, we deduce that
		$$\Lambda_e({u}_k, 
		{v}_k) \geq \frac{\frac{1}{2}\Vert(\bar{u}_k,\bar{v}_k)\Vert ^2 -\frac{f(q)}{2}  \Vert(\bar{u}_k, \bar{v}_k)\Vert^2}{\frac{S}{1 - q}\left( \Vert(\bar{u}_k,\bar{v}_k)\Vert ^{1-p}+\Vert(\bar{u}_k,\bar{v}_k)\Vert ^{1-q}\right)}= \frac{(1-q)(1-f(q))}{2S}\frac{\Vert(\bar{u}_k,\bar{v}_k)\Vert ^2}{\Vert(\bar{u}_k,\bar{v}_k)\Vert ^{1-p}+\Vert(\bar{u}_k,\bar{v}_k)\Vert ^{1-q}}.$$
		Furthermore, by applying Remark \ref{ex3.18Elon analise reta} and   \eqref{estimativa do seq bar using}, we rewrite the last estimate as follows:
		$$\Lambda_e({u}_k, 
		{v}_k) \geq \frac{(1-q)(1-f(q))}{4S}\min\{\Vert(\bar{u}_k,\bar{v}_k)\Vert ^{1+p},\Vert(\bar{u}_k,\bar{v}_k)\Vert ^{1+p}\}\geq \frac{(1-q)(1-f(q))}{4S}\min\{\hat{\rho}^{1+p},\hat{\rho}^{1+p}\}:=\bar{C}_{\delta}.$$
		This ends the proof. 
	\end{proof}
	
	\begin{prop}\label{N-longe zerosing}
		Suppose ($P_0$), ($P$), ($V_0$) and ($V_1'$). Assume also that $\lambda \in ( 0, \lambda^*)$. Then there exists $C > 0$ such that $\Vert (u, v) \Vert \ge C:= C(p, \theta, \alpha , \beta) > 0$ holds for any $(u, v) \in \mathcal{N}^-_{\lambda}$.
	\end{prop}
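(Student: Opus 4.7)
The plan is to combine the two defining conditions $E'_\lambda(u,v)(u,v)=0$ and $E''_\lambda(u,v)(u,v)^2<0$ of $\mathcal{N}^-_\lambda$ so as to eliminate the singular terms $P(u),Q(v)$ and the parameter $\lambda$, obtaining an inequality of the form $c_1\Vert(u,v)\Vert^2 < c_2 B(u,v)$ with constants depending only on $p,q,\alpha,\beta,\theta$. Applying the Sobolev embedding of Lemma \ref{imersoes Sobolev 2, 2*s} to estimate $B(u,v)$ will then yield a uniform lower bound for $\Vert(u,v)\Vert$ which is independent of $\lambda$.

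First, I would substitute $2\Vert(u,v)\Vert^2 = 2\lambda P(u)+2\lambda Q(v)+2\theta B(u,v)$, coming from $E'_\lambda(u,v)(u,v)=0$, into the inequality $E''_\lambda(u,v)(u,v)^2<0$. This gives
\[
\lambda(1+p)P(u)+\lambda(1+q)Q(v) < \theta(\alpha+\beta-2)B(u,v).
\]
To remove $P$ and $Q$ in favor of the norm, I would exploit the monotonicity afforded by hypothesis $(P)$, namely $0<p\le q<1$, which yields $(1-q)\le(1-p)$ and hence
\[
\lambda\bigl[(1-p)P(u)+(1-q)Q(v)\bigr] \;\le\; \lambda(1-p)\bigl[P(u)+Q(v)\bigr].
\]
Using $E'_\lambda(u,v)(u,v)=0$ once more, $\lambda[P(u)+Q(v)]=\Vert(u,v)\Vert^2-\theta B(u,v)$, so substituting into $E''_\lambda(u,v)(u,v)^2<0$ I obtain
\[
2\Vert(u,v)\Vert^2-\theta(\alpha+\beta)B(u,v) \;<\; (1-p)\bigl(\Vert(u,v)\Vert^2-\theta B(u,v)\bigr),
\]
which after rearrangement becomes
\[
(1+p)\Vert(u,v)\Vert^2 \;<\; \theta(\alpha+\beta-1+p)\,B(u,v).
\]

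Finally, the Sobolev embedding together with H\"older's inequality yields $B(u,v)\le S_{\alpha+\beta}^{\alpha+\beta}\Vert(u,v)\Vert^{\alpha+\beta}$, so dividing the previous line by $\Vert(u,v)\Vert^2>0$ (recall $(u,v)\neq(0,0)$ as $\mathcal{N}_\lambda\subset X\setminus\{(0,0)\}$) and using $\alpha+\beta>2$, I conclude
\[
\Vert(u,v)\Vert \;>\; \left(\frac{1+p}{\theta(\alpha+\beta-1+p)\,S_{\alpha+\beta}^{\alpha+\beta}}\right)^{\frac{1}{\alpha+\beta-2}} =: C(p,\theta,\alpha,\beta)>0,
\]
which is independent of $\lambda\in(0,\lambda^*)$. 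The only subtle point is the sign-preserving use of $p\le q$ when bounding $(1-p)P(u)+(1-q)Q(v)$; all other manipulations are algebraic identities or straightforward applications of the Sobolev inequality, so no serious obstacle is expected.
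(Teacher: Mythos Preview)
Your proof is correct and follows essentially the same route as the paper: eliminate $\lambda P(u)+\lambda Q(v)$ via the Nehari identity, use $p\le q$ to bound $\lambda[(1-p)P(u)+(1-q)Q(v)]\le(1-p)[\Vert(u,v)\Vert^2-\theta B(u,v)]$, arrive at $(1+p)\Vert(u,v)\Vert^2<\theta(\alpha+\beta-1+p)B(u,v)$, and then apply the Sobolev embedding to conclude with exactly the same constant $C$. The preliminary inequality $\lambda(1+p)P(u)+\lambda(1+q)Q(v)<\theta(\alpha+\beta-2)B(u,v)$ you derive first is correct but superfluous, since you do not use it afterwards.
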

	\begin{proof}
		Let us consider $(u, v) \in \mathcal{N}^-_{\lambda}$. Thus, we obtain that
		\begin{equation}\label{equação de Neharising} 
			\lambda P(u) + \lambda Q(v)= \Vert (u, v) \Vert ^2  - \theta B(u, v).
		\end{equation}
		Moreover, we know that
		\begin{eqnarray*}\label{equação de Nehari -}
			2\Vert (u, v) \Vert ^2 - \theta ( \alpha +\beta) B(u, v) & < & \lambda(1 - p) P(u)  + \lambda(1 - q) Q(v) < (1 - p)\left[\Vert (u, v) \Vert ^2  - \theta B(u, v)\right].
		\end{eqnarray*}
		Here was used the the estimate \eqref{equação de Neharising}. Hence, we obtain that
		\begin{equation}\label{limitação u vsing} 
			(1 + p)\Vert (u, v) \Vert ^2  <  \theta (\alpha + \beta - 1 + p)\int  |u|^\alpha |v|^\beta dx.
		\end{equation}
		Since $\int  |u|^\alpha |v|^\beta dx \le S^{\alpha + \beta}_{\alpha + \beta}\Vert(u, v)\Vert^{\alpha + \beta}$ we mention also that
		\begin{eqnarray*}\label{2ª equação de Nehari -}
			\Vert(u, v)\Vert & \ge &  \left (\frac{1 + p}{\theta [\alpha + \beta - 1 + p]S^{\alpha + \beta}_{\alpha + \beta}}\right )^{\frac{1}{\alpha + \beta - 2}}\;:=\;C, \;\;\; (u, v) \in \mathcal{N}_{\lambda}^-.
		\end{eqnarray*}
		This ends the proof. 
	\end{proof}
	\begin{prop}
		Assume ($P_0$), ($P$), ($V_0$) and ($V_1'$). Let us consider $(u_k, v_k)\subset \mathcal{N}^-_{\lambda}$ a minimizing sequence for $C_{\mathcal{N}_{\lambda}^-}$ where $(u_k, v_k) \rightharpoonup (u, v)$ in $X$. Then there exists $\delta_C > 0$ such that $\int  |u|^\alpha|v|^\beta dx \geq \delta_C > 0$.
	\end{prop}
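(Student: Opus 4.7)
The plan is to combine the quantitative lower bound on $\|(u_k,v_k)\|$ furnished by Proposition \ref{N-longe zerosing} with the Nehari-type algebraic inequality that already appears in its proof, and then pass to the weak limit by exploiting the compact embedding $X \hookrightarrow L^{\alpha+\beta}(\mathbb{R}^N) \times L^{\alpha+\beta}(\mathbb{R}^N)$ provided by Lemma \ref{imersoes Sobolev 2, 2*s}.

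More precisely, since $(u_k,v_k)\in \mathcal{N}^-_\lambda$, the estimate \eqref{limitação u vsing} yields
$$(1+p)\Vert(u_k,v_k)\Vert^{2} < \theta\bigl(\alpha+\beta-1+p\bigr) \int |u_k|^{\alpha}|v_k|^{\beta}\,dx.$$
By Proposition \ref{N-longe zerosing} there exists $C=C(p,\theta,\alpha,\beta)>0$ such that $\Vert(u_k,v_k)\Vert\ge C$ for every $k$. Therefore
$$\int |u_k|^{\alpha}|v_k|^{\beta}\,dx \;\geq\; \frac{(1+p)\,C^{2}}{\theta(\alpha+\beta-1+p)} \;=:\;\delta_C\;>\;0.$$

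It remains to verify that this lower bound is preserved after passing to the weak limit. Here I would use the hypothesis $2<\alpha+\beta<2^{*}_s$: by Lemma \ref{imersoes Sobolev 2, 2*s}, the embedding $X\hookrightarrow L^{\alpha+\beta}(\mathbb{R}^N)\times L^{\alpha+\beta}(\mathbb{R}^N)$ is compact, so up to a subsequence $u_k\to u$ and $v_k\to v$ strongly in $L^{\alpha+\beta}(\mathbb{R}^N)$. A Hölder application with conjugate exponents $(\alpha+\beta)/\alpha$ and $(\alpha+\beta)/\beta$, together with the dominated convergence theorem (the almost-everywhere limits combined with the domination by $L^{\alpha+\beta}$-convergent sequences), ensures
$$\int |u_k|^{\alpha}|v_k|^{\beta}\,dx \;\longrightarrow\; \int |u|^{\alpha}|v|^{\beta}\,dx.$$
Passing to the limit in the lower bound above, we conclude $\int |u|^{\alpha}|v|^{\beta}\,dx \ge \delta_C > 0$, which is the desired assertion.

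The argument is essentially routine once the pieces are assembled, and the only subtle point is the strong $L^{\alpha+\beta}$ convergence of the coupling term; but this is exactly where the subcriticality assumption $\alpha+\beta<2_s^{*}$ together with the potential conditions $(V_0)$ and $(V_1')$ pay off via Lemma \ref{imersoes Sobolev 2, 2*s}. Note also that the constant $\delta_C$ depends only on the structural parameters $p,\theta,\alpha,\beta$ and on the embedding constant, not on $\lambda$, so the estimate is uniform on the minimizing sequence.
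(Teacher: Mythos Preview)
Your proof is correct and follows essentially the same route as the paper: both combine the inequality \eqref{limitação u vsing} with the norm lower bound from Proposition \ref{N-longe zerosing} to obtain the same constant $\delta_C = \frac{(1+p)C^{2}}{\theta(\alpha+\beta-1+p)}$, and then pass to the limit via the Dominated Convergence Theorem. The only difference is that you spell out the compactness argument behind the convergence of $\int |u_k|^{\alpha}|v_k|^{\beta}\,dx$, whereas the paper simply invokes dominated convergence (relying implicitly on the same reasoning already carried out in the proof of Proposition \ref{lambda frac Sem Infsing}).
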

	\begin{proof}
		Initially, by using \eqref{limitação u vsing} and Proposition \ref{N-longe zerosing}, we infer that
		$$\int  \vert{u}_k\vert^\alpha\vert{v}_k\vert^\beta dx \ge \frac{(1 + p)}{\theta(\alpha + \beta - 1 + p)} C^2=:\delta_C > 0.$$
		Therefore, the desired result follows by using the Dominated Convergence Theorem. This ends the proof.
	\end{proof}
	\begin{prop}\label{Neharilambda*}
		Suppose ($P_0$), ($P$), ($V_0$) and ($V_1'$). Let us consider $\lambda = \lambda^*$ and $(u, v) \in \mathcal{N}_{\lambda^*}^0$. Then, for any $ (\psi_1, \psi_2) \in X$, we obtain that
		\begin{eqnarray}\label{eqauxiliarsing1}
			2\left <(u, v), (\psi_1, \psi_2)\right> - \theta \int  \alpha\vert u\vert^{\alpha - 2} u \psi_1 \vert v \vert^\beta  - \beta  \vert u\vert^{\alpha} \vert v \vert^{\beta - 2} v \psi_2dx 
			- \lambda^* \int (1 - p) a(x) u^{-p}\psi_1 + (1 - q) b(x) v^{-q}\psi_2dx = 0.&&
		\end{eqnarray}
	\end{prop}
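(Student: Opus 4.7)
The plan is to identify $(u,v)\in\mathcal{N}_{\lambda^*}^0$ as a minimizer of $\Lambda_n$ on $\mathcal{A}$ and extract the Euler--Lagrange condition. First, from $E'_{\lambda^*}(u,v)(u,v)=0$ one has $R_n(u,v)=\lambda^*$ by Remark~\ref{Rn lambda e E'sing}, and from $E''_{\lambda^*}(u,v)(u,v)^2=0$ combined with Proposition~\ref{tR_n'G' = E''sing} one gets $\frac{d}{dt}R_n(tu,tv)|_{t=1}=0$, hence $t_n(u,v)=1$. These together yield $\Lambda_n(u,v)=R_n(u,v)=\lambda^*=\inf_{\mathcal{A}}\Lambda_n$, so $(u,v)$ realizes the infimum defining $\lambda^*$.

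Since $\mathcal{A}$ is open in $X$, for every $(\psi_1,\psi_2)\in X$ the curve $\epsilon\mapsto(u+\epsilon\psi_1,v+\epsilon\psi_2)$ stays in $\mathcal{A}$ for small $|\epsilon|$, and by minimality $\Lambda_n(u+\epsilon\psi_1,v+\epsilon\psi_2)\geq\lambda^*$ with equality at $\epsilon=0$. Differentiating at $\epsilon=0$, and using the $C^0$, indeed $C^1$, regularity of $t_n$ obtained from the Implicit Function Theorem in Proposition~\ref{tn(u,v) únicosing}, the chain rule gives
\[
\Lambda_n'(u,v)(\psi_1,\psi_2) = t_n(u,v)\,R_n'(t_n u,t_n v)(\psi_1,\psi_2) + t_n'(u,v)(\psi_1,\psi_2)\,R_n'(t_n u,t_n v)(u,v).
\]
The second summand vanishes because $R_n'(t_n u,t_n v)(u,v)=\frac{d}{dr}R_n(ru,rv)|_{r=t_n}=0$ by the defining property of $t_n$, and the normalization $t_n(u,v)=1$ reduces the identity to $\Lambda_n'(u,v)=R_n'(u,v)$. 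Minimality then forces $R_n'(u,v)(\psi_1,\psi_2)=0$ for every $(\psi_1,\psi_2)\in X$. Writing $R_n=(A-\theta B)/(P+Q)$ and applying the quotient rule together with $R_n(u,v)=\lambda^*$ and $P(u)+Q(v)>0$ yields
\[
(A-\theta B)'(u,v)(\psi_1,\psi_2)=\lambda^*\,(P+Q)'(u,v)(\psi_1,\psi_2),
\]
which, since $u,v>0$ a.e.\ so that $|u|^{-p-1}u=u^{-p}$ and $|v|^{-q-1}v=v^{-q}$, is exactly \eqref{eqauxiliarsing1}.

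The principal technical obstacle is the differentiability of the singular terms $P$ and $Q$: the formal expressions $(1-p)\int a u^{-p}\psi_1$ and $(1-q)\int b v^{-q}\psi_2$ involve factors $u^{-p}$ and $v^{-q}$ that may blow up where $u$ or $v$ approach zero, and need not be integrable against an arbitrary $\psi_i\in X_i$. I would handle this in two passes. First, restrict to nonnegative perturbations $\psi_1,\psi_2\geq 0$: for $\epsilon>0$ the elementary bound $(u+\epsilon\psi_1)^{1-p}-u^{1-p}\leq(1-p)\epsilon\,u^{-p}\psi_1$, combined with a monotone convergence/Fatou argument in the spirit of Proposition~\ref{uvnaonulas}, legitimizes the one-sided directional derivative and, in the process, forces the finiteness of the singular integrals at $(u,v)$. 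Second, writing an arbitrary $(\psi_1,\psi_2)\in X$ as a difference of nonnegative pairs and exploiting linearity of the resulting identity yields \eqref{eqauxiliarsing1} in full generality.
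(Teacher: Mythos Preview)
Your identification of $(u,v)$ as a minimizer of $\Lambda_n$ and the reduction to the vanishing of $R_n'(u,v)$ is correct and matches the paper's starting point. The gap is in your second pass. From the one-sided argument with $\psi_1,\psi_2\geq 0$ and $\epsilon\to 0^+$, minimality only delivers the \emph{inequality}
\[
2\langle(u,v),(\psi_1,\psi_2)\rangle - \theta\!\int\!\bigl(\alpha|u|^{\alpha-2}u\psi_1|v|^\beta+\beta|u|^\alpha|v|^{\beta-2}v\psi_2\bigr)dx - \lambda^*\!\int\!\bigl((1-p)a\,u^{-p}\psi_1+(1-q)b\,v^{-q}\psi_2\bigr)dx \geq 0,
\]
not an identity. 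Writing a general $(\psi_1,\psi_2)$ as $(\psi_1^+-\psi_1^-,\psi_2^+-\psi_2^-)$ then produces two inequalities with the same sign, and subtracting them gives nothing. To get an identity directly you would need the two-sided Gateaux derivative of $P$ in a direction $\psi_1$ that changes sign; but even knowing $\int a\,u^{-p}|\psi_1|<\infty$, the difference quotient $\epsilon^{-1}\bigl(|u+\epsilon\psi_1|^{1-p}-u^{1-p}\bigr)$ admits no $\epsilon$-uniform dominating function near the set where $u$ is small, so dominated convergence is unavailable and the ``linearity of the resulting identity'' step does not go through.

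The paper closes this gap by a different mechanism. After obtaining the inequality above for $(\psi_1,\psi_2)\in X_+$ (its Step~2), it does \emph{not} decompose $\psi_i$ into positive and negative parts; instead it inserts, for an arbitrary $(\psi_1,\psi_2)\in X$, the specific nonnegative pair $\phi_i=(u+\varepsilon\psi_i)^+$ into the inequality, expands every term, and estimates the contributions coming from the sets $\{u+\varepsilon\psi_1<0\}$ and $\{v+\varepsilon\psi_2<0\}$. Because the operator is nonlocal, the Gagliardo part of $\langle(u,v),(\phi_1,\phi_2)\rangle$ must be split over the four regions $A_x^\pm\times A_y^\pm$ and bounded separately; this is the technical core of the paper's Step~3. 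After dividing by $\varepsilon$ and sending $\varepsilon\to 0$, all error terms vanish and one obtains the inequality for every $(\psi_1,\psi_2)\in X$; replacing $(\psi_1,\psi_2)$ by $(-\psi_1,-\psi_2)$ then forces equality. This test-function trick, taken from \cite{Yijing2001,silvasing2018}, is precisely the missing ingredient in your proposal.
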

	\begin{proof}
		Firstly, taking into account Proposition \ref{Longe de zerosing}, we obtain the characterization of $\mathcal{N}_{\lambda^*}^0$. In what follows, we shall split the proof of \eqref{eqauxiliarsing1} into three steps.  Let $(u, v) \in \mathcal{A}$ be fixed such that $\lambda^*=\Lambda_n(u, v)$. Hence, we write $\Lambda_n(u, v)$ as $$\Lambda_n(u, v) = f(z, w) g(z, w),\;\;\;\;where\;\;\;\;f(z, w) := \frac{1}{P(z) + Q(w)}\;\;\; \mbox{and} \;\;\; g(z, w) := A(z, w) + \theta B(z, w).$$ Under these conditions,  we write $z := t_n(u, v)u$ and $w := t_n(u, v) v $.

		\textbf{Step 1}. Here we shall prove that $\left <f'(z, w), (\psi_1, \psi_2)\right>$ exists for each $(z, w) \in \mathcal{N}^0_{\lambda^*}$ where $z, w \ge 0$ and for any $(\psi_1, \psi_2) \in X_+$. It follows from continuity of $B$ that $\int  \vert z + t\psi_1\vert^{\alpha} \vert w + t\psi_2\vert^{\beta} dx > 0$ is satisfied for each small $t>0$. In other words, we obtain that $(z + t\psi_1, w + t\psi_2) \in \mathcal{A}$. Hence, by using the fact that $(z,w) \mapsto A(z, w)$ and $(z,w) \mapsto B(z, w)$ are in $C^1$ class, we obtain that
		$$\left <g'(z, w), (\psi_1, \psi_2) \right > = \lim\limits_{t \to \infty} \frac{g(z+ t\psi_1, w + t\psi_2) - g(z, w)}{t}.$$
		Recall also that $(u, v)$ is a minimizer for the functional $\Lambda_n$. As a consequence, we obtain that 
		$$\Lambda_n(u, v) = f(z, w)g(z, w) = \lambda^* = \inf\limits_{(z, w) \in \mathcal{A}}\Lambda_n(z, w).$$
		Furthermore, we mention that
		$\Lambda_n(u + t\psi_1, v + t\psi_2) - \Lambda_n(u, v) \ge 0$
		is satisfied for each $t \ge 0$.
		
		From now on, we shall use the following notation $\bar{z} = t_n(u + t\psi_1, v + t\psi_2)(u + t\psi_1)$ and $\bar{w} = t_n(u + t\psi_1, v + t\psi_2)(v + t\psi_2)$. Now, we infer also that
		$$0 \le f(\bar{z}, \bar{w})g(\bar{z}, \bar{w}) - f(z, w)g(z, w), \,\, 0 \le f(\bar{z}, \bar{w})g(\bar{z}, \bar{w}) - f(z, w)g(z, w) + f(\bar{z}, \bar{w})g(z, w) - f(\bar{z}, \bar{w})g(z, w).$$
		As a consequence, we obtain that
		\begin{equation}\label{desig sing imp*}
			f(\bar{z}, \bar{w})(g(\bar{z}, \bar{w}) - g(z, w)) \ge - g(z, w)(f(\bar{z}, \bar{w}) - f(z, w)). 
		\end{equation}
		At this stage, we define the following functionals:
		$$L(t) := f(t_n(z+t\psi_1, w + t\psi_2)(z + t\psi_1, w + t\psi_2) )= \frac{1}{P(\bar{z}) + Q(\bar{w})}.$$
		Hence, by using the Mean Value Theorem, there exists $\theta \in (0, t)$ 
		such that
		$$\frac{L(t) - L(0)}{t} = L'(\theta) = \frac{f(\bar{z}, \bar{w}) - f(z, w)}{t}, \lim\limits_{t \to 0}\frac{L(t) - L(0)}{t} = \lim\limits_{t \to 0} L'(\theta) = L'(0) = \lim\limits_{t \to 0}\frac{f(\bar{z}, \bar{w}) - f(z, w)}{t}$$
		and
		$$L'(\theta) = -\frac{P'(\bar{z})\psi_1 + Q'(\bar{w})\psi_2}{(P(\bar{z}) +  Q(\bar{w}))^2}.$$
		Notice also that
		\begin{equation}\label{L'(0)}
			L'(0)= - \frac{(1 - p) \int a(x)\vert z \vert^{-1 - p}z \psi_1 dx + (1 - q) \int b(x)\vert w \vert^{-1 - q}w \psi_2 dx}{(P(z) +  Q(w))^2}.  
		\end{equation}
		Hence, by using \eqref{desig sing imp*}, we infer that
		\begin{equation}\label{desi}
			f(z, w)\lim\limits_{t \to 0}\frac{g(\bar{z}, \bar{w}) - g(z, w)}{t} \ge - g(z, w)\liminf\limits_{t \to 0}\frac{f(\bar{z}, \bar{w}) - f(z, w)}{t}. 
		\end{equation}
		As a consequence, we deduce that
		\begin{equation}\label{desi}
			\infty > f(z, w)\left < g'(z, w), (\psi_1, \psi_2) \right > \ge - g(z, w)\liminf\limits_{t \to 0}\frac{f(\bar{z}, \bar{w}) - f(z, w)}{t}. 
		\end{equation}
		Now, we mention that $(\bar{z}, \bar{w}) \to (z, w)$ as $t \to 0^+$. Under these conditions, by using \eqref{L'(0)} and the last estimate, we obtain that
		$$\infty > g(z, w)\liminf\limits_{t\to 0^+} \frac{(1 - p) \int a(x)\vert z \vert^{-1 - p}z \psi_1 dx + (1 - q) \int b(x)\vert w \vert^{-1 - q}w \psi_2 dx}{(P(z) +  Q(w))^2}.$$
		In view of Fatou's lemma we see that
		$$ \infty > g(z, w)\frac{ \int a(x) \liminf\limits_{t \to 0^+} \frac{\vert \bar{z}\vert^{1 - p} - \vert z\vert^{1 - p}}{t}dx +  \int b(x) \liminf\limits_{t \to 0^+} \frac{\vert \bar{w}\vert^{1 - q} - \vert w\vert^{1 - q}}{t}dx}{(P(z) + Q(w))^2}.$$
		Similarly, we define the following functionals:
		$$L_1(z) = z^{1 - p},\;\;\;\;\; L_1'(z)\psi_1 = (1 - p) z^{-p}\psi_1, \;\;\;\;\; G_1(x) = z^{-p}(x),$$ 
		$$L_2(w) = w^{1 - q}, \;\;\;\;\;L_2'(w)\psi_2 = (1 - q) w^{-q}\psi_2,\;\;\;\; G_2(x) = w^{-q}(x).$$
		Here, we emphasize that $(u, v)\in \mathcal{N}^0_{\lambda^*}$ and $z, w \ge 0$, a.e., in $\mathbb{R}^N$. Furthermore, we mention that
		\begin{equation}\nonumber
			G_1(x) =
			\left \{
			\begin{array}{cc}
				z^{-p}(x), & \mbox{if} \;\;\; z(x) \ne 0; \\
				\infty, & \mbox{if} \;\;\; z(x) = 0.  \\
			\end{array}
			\right.
			\;\;\;\;\;\;and\;\;\;\;\;
			G_2(x) =
			\left \{
			\begin{array}{cc}
				w^{-q}(x), & \mbox{if} \;\;\; w(x) \ne 0; \\
				\infty, & \mbox{if} \;\;\; w(x) = 0. \\
			\end{array}
			\right.
		\end{equation}
		Hence, by choosing $\psi_1, \psi_2 \in X_+$, we deduce that $G_1(x) = z^{-p}(x)$ and $G_2(x) = w^{-q}(x)$ holds for each $ x \in \mathbb{R}^N$. Therefore,  $z > 0$ and $w > 0$ a.e. in $\mathbb{R}^N$. Under these conditions, for each $(\psi_1, \psi_2) \in X_+$, we conclude that
		$$0 < \int a(x)z^{-p}\psi_1 dx < \infty \;\;\;\;\mbox{and}\;\;\;\;0 < \int b(x)w^{-q}\psi_2 dx < \infty.$$
		As a consequence, we obtain that
		$$\left < P'(z), \psi_1 \right > = (1 - p)\int a(x)\vert z \vert^{-p}\psi_1dx \;\;\;\; \mbox{and}\;\;\;\;\left < Q'(w), \psi_2 \right > = (1 - q)\int b(x)\vert w \vert^{-q}\psi_2dx.$$
		Recall also that $f(z, w) = \frac{1}{P(z) + Q(w)}.$
		Hence, we infer that
		\begin{equation}\label{f'sing}
			\left < f'(z, w), (\psi_1, \psi_2) \right > = -\frac{\left < P'(z), \psi_1 \right > + \left < Q'(w), \psi_2 \right >}{(P(z) + Q(w))^2}, \, \, (\psi_1, \psi_2) \in X_+.   
		\end{equation}
		This finishes the proof of Step 1.
		
		\textbf{Step 2.} Here we shall prove that the expression of the first member of \eqref{eqauxiliarsing1} is nonnegative for each $(\psi_1, \psi_2) \in X_+$. 
		In order to do that, we shall prove that
		\begin{eqnarray}\label{ineqsingauxi}
			2\left <(z, w), (\psi_1, \psi_2)\right> - \theta\alpha \int  \vert z\vert^{\alpha - 2} z \psi_1 \vert w \vert^\beta dx - \theta\beta \int  \vert z\vert^{\alpha - 2} z \vert w \vert^\beta \psi_2dx +  && \nonumber\\
			- \lambda^* \left[(1 - p)\int a(x) z^{-p}\psi_1dx + (1 - q)\int b(x) w^{-q}\psi_2dx\right] \ge 0,  &&
		\end{eqnarray}
		holds for each $(\psi_1, \psi_2) \in X_+$. Recall that $(z,w) \mapsto g(z, w)$ is in $C^1$ class. Hence, we deduce that
		$$\left <g'(z, w), (\psi_1, \psi_2)\right> = A'(z, w)(\psi_1, \psi_2) - \theta B'(z, w)(\psi_1, \psi_2), \,\, (\psi_1, \psi_2) \in X.$$
		Furthermore, by using \eqref{desi} and \eqref{f'sing}, we infer that
		$$g(z, w) \frac{L(z, w)}{(P(z) + Q(w))^2} \le f(z, w)\left <g'(z, w)(\psi_1, \psi_2)\right>.$$
		Here, we observe that $L(z, w)= \int a(x)(1 - p)z^{-p}\psi_1 dx + \int b(x)(1 - q)w^{-q}\psi_2 dx$. 
		Similarly, we mention that 
		$$L(z, w) \le \frac{f(z, w)\left <g'(z, w)(\psi_1, \psi_2)\right>}{g(z, w) }(P(z) + Q(w))^2,\, \int a(x)(1 - p)z^{-p}\psi_1 dx + \int b(x)(1 - q)w^{-q}\psi_2 dx < \infty.$$
		As a consequence, all the terms in the weak formulation given in \eqref{eqauxiliarsing1} are finite. Recall also that 
		$$\int a(x)(1 - p)z^{-p}\psi_1 dx < \infty \,\, \mbox{and} \, \,\int b(x)(1 - q)w^{-q}\psi_2 dx < \infty.$$
		In view of the last assertion we obtain that  $z > 0$ and $w > 0$  a.e. in $\mathbb{R}^N$. As a consequence, we obtain that 
		$$\left< f'(z, w), (\psi_1, \psi_2) \right> = - \frac{(P'(z)\psi_1 + Q'(w)\psi_2)}{(P(z) + Q(w))^2} = - (f(z, w))^2(P'(z)\psi_1 + Q'(w)\psi_2).$$
		Furthermore, by using \eqref{desi}, we see that
		$$f(z, w)\left< g'(z, w), (\psi_1, \psi_2) \right> + g(z, w)\left< f'(z, w), (\psi_1, \psi_2) \right> \ge 0, \;\; (\psi_1, \psi_2)\in X^+.$$
		Let us apply the last assertion which proves that $\left< g'(z, w), (\psi_1, \psi_2) \right>$ is well defined. In fact, we obtain that 
		\begin{eqnarray*}
			\left< g'(z, w), (\psi_1, \psi_2) \right> & = & 2\left<(z, w), (\psi_1, \psi_2) \right> - \theta \alpha \int \vert z\vert^{\alpha - 2}z \psi_1 \vert w\vert^{\beta}dx- \theta \beta\int \vert z\vert^{\alpha} \vert w\vert^{\beta-2}w\psi_2dx.  
		\end{eqnarray*}
		Then, by using the function $f(z, w)$, we deduce that
		$$f(z, w)\left(2\left<(z, w), (\psi_1, \psi_2) \right> - \theta \alpha \int \vert z\vert^{\alpha - 2}z \psi_1 \vert w\vert^{\beta}dx - \theta \beta\int \vert z\vert^{\alpha} \vert w\vert^{\beta-2}w\psi_2dx\right.  + $$
		$$\left. - f(z, w)g(z, w)\left[(1 - p)\int a(x)\vert z\vert^{-p}\psi_1dx + (1 - q)\int b(x)\vert w\vert^{-q}\psi_2dx \right]\right) \ge 0.$$
		Moreover, we have that $\lambda^* = f(z, w)g(z, w)$ and $f(z, w) > 0$. Hence, by using the last estimate, we obtain that
		\begin{eqnarray*}
			2\left <(z, w), (\psi_1, \psi_2)\right> - \theta\alpha \int  \vert z\vert^{\alpha - 2} z \psi_1 \vert w \vert^\beta dx - \theta\beta \int  \vert z\vert^{\alpha - 2} z \vert w \vert^\beta \psi_2dx +  && \\
			- \lambda^* \left[(1 - p)\int a(x) z^{-p}\psi_1dx + (1 - q)\int b(x) w^{-q}\psi_2dx\right] \ge 0,
		\end{eqnarray*}
		holds for each $(\psi_1, \psi_2) \in X_+$.
		
		\textbf{Step 3.}  In this step we follow the strategy of \cite{Yijing2001,silvasing2018} proving that \eqref{eqauxiliarsing1} is verified for each  $(\psi_1, \psi_2) \in X$. Recall that \eqref{ineqsingauxi} is verified for non-negative functions. Hence, by taking any functions $(\psi_1, \psi_2) \in X$, we obtain that $\phi_1 = (z + \varepsilon\psi_1)^+$ and $\phi_2 = (w + \varepsilon\psi_2)^+$ satisfies $(\phi_1, \phi_2) \in X_+$. As a consequence, by using Step 2 and \eqref{ineqsingauxi}, we see that
		\begin{eqnarray}\label{eqauximpsing}
			0 \le 2\left <(z, w), (\phi_1, \phi_2)\right> - \theta\alpha \int  \vert z\vert^{\alpha - 2} z \phi_1 \vert w \vert^\beta dx - \theta\beta \int  \vert z\vert^{\alpha} \vert w \vert^{\beta - 2}w \phi_2dx + && \nonumber\\
			- \lambda^* \left[(1 - p)\int a(x) z^{-p}\phi_1dx + (1 - q)\int b(x) w^{-q}\phi_2dx\right].
		\end{eqnarray}
		Let us analyze each term given just above separately. First, we consider the following decomposition:
		\begin{eqnarray}\label{prodintsing}
			\left<(z,w),(\phi_1, \phi_2)\right> & = &\iint  \frac{\left[ z(x)-z(y)\right] \left[ \phi_1(x) - \phi_1(y)\right]}{\vert x-y\vert^{N+2s}} + \frac{\left[ w(x)-w(y) \right] \left[ \phi_2(x) - \phi_2(y)\right]}{\vert x-y \vert^{N+2s}} dydx    + \int  V_1 z \phi_1 +V_2  w \phi_2 dx\nonumber\\ 
			& & = I + I' + II + II'.\nonumber
		\end{eqnarray}
		Let us analyze the estimates for $I$ and $II$. Here, we observe that
		$$ I = \iint \frac{\left[ z(x)-z(y)\right] \left[ \phi_1(x) - \phi_1(y)\right]}{\vert x-y\vert^{N+2s}} dydx.$$
		Recall also that $\phi_1= (z + \varepsilon\psi_1)^+$. Hence, we obtain that
		$$ I = \iint \frac{\left[ z(x)-z(y)\right] \left[ (z + \varepsilon\psi_1)^+(x) - (z + \varepsilon\psi_1)^+(y)\right]}{\vert x-y\vert^{N+2s}} dydx.$$
		Now, by using the that $u = u^+ + u^-$, we write
		\begin{eqnarray*}
			I\! & = &\!\iint \frac{\left[ z(x)-z(y)\right] \left[ (z + \varepsilon\psi_1)(x) - (z + \varepsilon\psi_1)(y)\right]}{\vert x-y\vert^{N+2s}} dydx \!-\! \iint  \frac{\left[ z(x)-z(y)\right] \left[ (z + \varepsilon\psi_1)^-(x) - (z + \varepsilon\psi_1)^-(y)\right]}{\vert x-y\vert^{N+2s}} dydx\nonumber\\
			& = & S_1 + S_2.
		\end{eqnarray*}
		As a consequence, we mention that
		\begin{eqnarray}\label{S1}
			S_1 & = & \iint \frac{\left[ z(x)-z(y)\right] \left[ z(x) + \varepsilon\psi_1(x) - z(y) - \varepsilon\psi_1(y)\right]}{\vert x-y\vert^{N+2s}} dydx \nonumber\\
			& = &\iint \frac{\left[ z(x)-z(y)\right] \left[ z(x) - z(y)\right]}{\vert x-y\vert^{N+2s}} dydx + \varepsilon \iint  \frac{\left[ z(x)-z(y)\right] \left[\psi_1(x) - \psi_1(y)\right]}{\vert x-y\vert^{N+2s}} dydx, \nonumber
		\end{eqnarray}
		and 
		$$S_2 = - \iint  \frac{\left[ z(x)-z(y)\right] \left[ (z + \varepsilon\psi_1)^-(x) - (z + \varepsilon\psi_1)^-(y)\right]}{\vert x-y\vert^{N+2s}} dydx.$$
		Furthermore, by using the same strategy developed in \cite{Yijing2001}, we also write 
		$$A_{x}^+ = \{x \in \mathbb{R}^N; (z + \varepsilon\psi_1)(x) \ge 0 \}\;\;\; \mbox{and}\;\;\; A_{x}^- = \{x \in \mathbb{R}^N; (z + \varepsilon\psi_1)(x) < 0 \}.$$
		Notice also that $\mathbb{R}^N = A^+_x \cup A^-_x$ and $A^+_x \cap A^-_x = \emptyset$. Hence, we obtain that
		\begin{eqnarray*}
			S_2 & = & - \int\limits_{A_x^-} \int  \frac{\left[ z(x)-z(y)\right] \left[ (z + \varepsilon\psi_1)(x) - (z + \varepsilon\psi_1)^-(y)\right]}{\vert x-y\vert^{N+2s}} dydx - \int\limits_{A_x^+} \int  \frac{\left[ z(x)-z(y)\right] \left[ 0 - (z + \varepsilon\psi_1)^-(y)\right]}{\vert x-y\vert^{N+2s}} dydx. \nonumber
		\end{eqnarray*}
		Under these conditions, we split the domain of integration of the second integral into $A_y^+$ and $A_y^-$. Namely, we consider
		\begin{eqnarray*}
			S_2 & = & - \int\limits_{A_x^-} \int\limits_{A_y^-} \frac{\left[ z(x)-z(y)\right] \left[ (z +\varepsilon\psi_1)(x) - (z + \varepsilon\psi_1)(y)\right]}{\vert x-y\vert^{N+2s}} dydx - \int\limits_{A_x^-} \int\limits_{A_y^+} \frac{\left[ z(x)-z(y)\right] \left[ (z + \varepsilon\psi_1)(x) - 0\right]}{\vert x-y\vert^{N+2s}} dydx\nonumber\\
			&& - \int\limits_{A_x^+} \int\limits_{A_y^-} \frac{\left[ z(x)-z(y)\right] \left[ 0 - (z + \varepsilon\psi_1)(y)\right]}{\vert x-y\vert^{N+2s}} dydx \nonumber \\
			& = & L_1 + L_2 + L_3 \nonumber.
		\end{eqnarray*}
		Now, we mention that
		\begin{eqnarray*}
			L_1 
			&=& - \int\limits_{A_x^-} \int\limits_{A_y^-} \frac{\left[ z(x)-z(y)\right] \left[ z(x) - z(y)\right]}{\vert x-y\vert^{N+2s}} dydx - \varepsilon\int\limits_{A_x^-} \int\limits_{A_y^-} \frac{\left[ z(x)-z(y)\right] \left[ \psi_1(x) - \psi_1(y)\right]}{\vert x-y\vert^{N+2s}} dydx. \nonumber 
		\end{eqnarray*}
		As a consequence, we obtain that
		\begin{equation}\label{L1sing}
			L_1 \le - \varepsilon\int\limits_{A_x^-} \int\limits_{A_y^-} \frac{\left[ z(x)-z(y)\right] \left[ \psi_1(x) - \psi_1(y)\right]}{\vert x-y\vert^{N+2s}} dydx.  \end{equation}
		Recall also that $L_2=L_3$ and 
		$$L_2 = - \int\limits_{A_x^-} \int\limits_{A_y^+} \frac{\left[ z(x)-z(y)\right] \left[ z(x) + \varepsilon\psi_1(x)\right]}{\vert x-y\vert^{N+2s}} dydx. $$ 
		Thus, we see that
		\begin{eqnarray*}
			L_2 &=& - \int\limits_{A_x^-} \int\limits_{A_y^+} \frac{\left[ z(x)-z(y)\right]^+ \left[ z(x) + \varepsilon\psi_1(x)\right]}{\vert x-y\vert^{N+2s}} dydx -\int\limits_{A_x^-} \int\limits_{A_y^+} \frac{\left[ z(x)-z(y)\right]^- \left[ z(x) + \varepsilon\psi_1(x)\right]}{\vert x-y\vert^{N+2s}} dydx. \nonumber
		\end{eqnarray*}
		It is important to stress that $[z(x) - z(y)]^+$ needs to be considered only in the set $z(x) \ge z(y)$. However, $z(y) \ge -\varepsilon\psi_1(y)$ in $A_y^+$ proving that $ z(x) \ge - \varepsilon\psi_1(x)$ in $A_x^+$. 
		Furthermore, we infer that 
		$-\left[ z(x)-z(y)\right]^- \left[ z(x) + \varepsilon\psi_1(x)\right] \le 0, \;\;\; \mbox{in}\;\;\; A_x^- \times A_y^+$. The last assertion implies that 
		\begin{eqnarray*}
			L_2 &\le& - \varepsilon\int\limits_{A_x^-} \int\limits_{A_y^+} \frac{\left[ z(x)-z(y)\right]^+ \left[ -\psi_1(y) + \psi_1(x)\right]}{\vert x-y\vert^{N+2s}} dydx.
		\end{eqnarray*}
		Under these conditions, we deduce that
		\begin{equation}\label{L2L3sing}
			L_2 + L_3 \le - 2\varepsilon \int\limits_{A_x^-} \int\limits_{A_y^+} \frac{\left[ z(x)-z(y)\right]^+ \left[ -\psi_1(y) + \psi_1(x)\right]}{\vert x-y\vert^{N+2s}} dydx.
		\end{equation}
		Now, putting estimates \eqref{S1}, \eqref{L1sing}, \eqref{L2L3sing} together, we obtain that  
		\begin{eqnarray}\label{Ising}
			I &=& S_1 + S_2\nonumber \\
			& = & \iint \frac{\left[ z(x)-z(y)\right] \left[ z(x) - z(y)\right]}{\vert x-y\vert^{N+2s}} dydx  +\; \varepsilon \iint \frac{\left[ z(x)-z(y)\right] \left[\psi_1(x) - \psi_1(y)\right]}{\vert x-y\vert^{N+2s}}dydx + L_1 + L_2 + L_3.
			\nonumber
		\end{eqnarray} 
		Moreover, we obtain that
		\begin{eqnarray}\label{1singus}  
			I & \le &\iint \frac{\left[ z(x)-z(y)\right] \left[ z(x) - z(y)\right]}{\vert x-y\vert^{N+2s}} dydx + \varepsilon\iint \frac{\left[ z(x)-z(y)\right] \left[\psi_1(x) - \psi_1(y)\right]}{\vert x-y\vert^{N+2s}}dydx \nonumber\\
			&& - \varepsilon\int\limits_{A_x^-} \int\limits_{A_y^-} \frac{\left[ z(x)-z(y)\right] \left[ \psi_1(x) - \psi_1(y)\right]}{\vert x-y\vert^{N+2s}} dydx - 2\varepsilon \int\limits_{A_x^-} \int\limits_{A_y^+} \frac{\left[ z(x)-z(y)\right]^+ \left[ -\psi_1(y) + \psi_1(x)\right]}{\vert x-y\vert^{N+2s}} dydx.\nonumber    
		\end{eqnarray}
		Similarly, considering the the estimates for $I'$, we write $B_x^+ = \{x \in \mathbb{R}^N / (w + \varepsilon \psi_2)(x) \ge 0\}$ and 
		$B_x^- = \{x \in \mathbb{R}^N / (w + \varepsilon\psi_2)(x) \le 0\}$. Hence, using $z$ instead of $w$, $\phi_1$ for $\phi_2$, $\psi_1$ for $\psi_2$, $A_x^+$ for $B_x^+$ and $A_x^-$ for $B_x^-$, respectively, we deduce that
		\begin{eqnarray}\label{I'sing}
			I' & \le &\iint \frac{\left[ w(x)-w(y)\right] \left[ w(x) - w(y)\right]}{\vert x-y\vert^{N+2s}} dydx + \varepsilon \iint \frac{\left[ w(x)-w(y)\right] \left[\psi_2(x) - \psi_2(y)\right]}{\vert x-y\vert^{N+2s}}dydx \nonumber\\
			&&- \varepsilon\int\limits_{B_x^-} \int\limits_{B_y^-} \frac{\left[ w(x)-w(y)\right] \left[ \psi_2(x) - \psi_2(y)\right]}{\vert x-y\vert^{N+2s}} dydx - 2\varepsilon \int\limits_{B_x^-} \int\limits_{B_y^+} \frac{\left[ w(x)-w(y)\right]^+ \left[ -\psi_2(y) + \psi_2(x)\right]}{\vert x-y\vert^{N+2s}} dydx.\nonumber
		\end{eqnarray}
		Let us estimate the integral given in $II$ by using \eqref{prodintsing}. It is not hard to see that
		\begin{eqnarray}\label{IIestsing}
			II &=& \int  V_1 z \phi_1dx = \int  V_1 z (z + \varepsilon\psi_1)^+dx = \int  V_1 z [(z + \varepsilon\psi_1) - (z + \varepsilon\psi_1)^-]dx\nonumber \\ 
			&=& \int  V_1 z^2dx + \varepsilon\int  V_1z\psi_1dx - \int\limits_{A_x^-}V_1z^2dx - \varepsilon\int\limits_{A_x^-}V_1z\psi_1dx\nonumber\\&\leq&\int  V_1 z^2dx + \varepsilon\int  V_1z\psi_1dx - \varepsilon\int\limits_{A_x^-}V_1z\psi_1dx. 
		\end{eqnarray}
		Similarly, we are able to prove that
		\begin{equation}\label{II'estsing}
			II' \le \int  V_2 w^2dx + \varepsilon\int  V_2w\psi_2dx - \varepsilon\int\limits_{B_x^-}V_2w\psi_2dx.
		\end{equation}
		Let us estimate the other terms given in \eqref{eqauximpsing}. Here is important to stress that
		\begin{eqnarray*}
			-\theta\alpha \int  \vert z\vert^{\alpha - 2} z \phi_1 \vert w \vert^\beta dx & = & -\theta\alpha \int  \vert z\vert^{\alpha - 2} z (z + \varepsilon \psi_1)^+ \vert w \vert^\beta dx \nonumber \\
			& = & -\theta\alpha \left[\int  \vert z\vert^{\alpha - 2} z (z + \varepsilon \psi_1)\vert w \vert^\beta dx  - \int  \vert z\vert^{\alpha - 2} z (z + \varepsilon \psi_1)^- \vert w \vert^\beta dx \right]\nonumber\\
			& = & -\theta\alpha B(z, w) - \theta\alpha \varepsilon\int  \vert z\vert^{\alpha - 2} z\psi_1\vert w \vert^\beta dx - \theta\alpha \int\limits_{A_x^-} \vert z\vert^{\alpha} \vert w \vert^\beta dx - \varepsilon\theta\alpha \int\limits_{A_x^-} \vert z\vert^{\alpha - 2} \psi_1\vert w \vert^\beta dx.
		\end{eqnarray*}
		Now, by dropping the term $- \theta\alpha \int\limits_{A_x^-} \vert z\vert^{\alpha} \vert w \vert^\beta dx < 0$, we ensure that 
		\begin{eqnarray}\label{termoteta1}
			-\theta\alpha \int  \vert z\vert^{\alpha - 2} z \phi_1 \vert w \vert^\beta dx
			& \le & -\theta\alpha \int  \vert z\vert^{\alpha} \vert w \vert^\beta dx - \theta\alpha \varepsilon\int  \vert z\vert^{\alpha - 2} z\psi_1\vert w \vert^\beta dx - \varepsilon\theta\alpha \int\limits_{A_x^-} \vert z\vert^{\alpha - 2} \psi_1\vert w \vert^\beta dx \nonumber
		\end{eqnarray}
		Similarly, we are able to prove that
		\begin{eqnarray}\label{acopsingbeta}
			- \theta\beta \int  \vert z\vert^{\alpha} \vert w \vert^{\beta - 2} w \phi_2dx 
			& \le & -\theta\beta \int  \vert z\vert^{\alpha} \vert w \vert^\beta dx - \theta\beta \varepsilon\int  \vert z\vert^{\alpha} \vert w \vert^{\beta - 2}w \psi_2 dx - \varepsilon\theta\beta \int\limits_{B_x^-} \vert z\vert^{\alpha} \vert w \vert^{\beta - 2} \psi_2 dx.\nonumber
		\end{eqnarray}
		On the other hand, we see that
		\begin{eqnarray}\label{termo acopsing}
			- \int a(x) z^{-p}\phi_1dx& = &- \int a(x) z^{-p}(z + \varepsilon \psi_1)^+dx = - \int a(x) z^{-p}(z + \varepsilon \psi_1)dx - \int a(x) z^{-p}(z + \varepsilon \psi_1)^-dx. \nonumber
		\end{eqnarray}
		Hence, we mention that
		$$- \int a(x) z^{- p}\phi_1dx = - \int a(x) z^{1 - p}dx -\varepsilon\int a(x) z^{-p} \psi_1dx - \int\limits_{A_x^-}a(x) z^{-p}(z + \varepsilon \psi_1)dx.$$
		The last assertion provide us
		\begin{equation}\label{estimsing P(z)}
			- \int a(x) z^{- p}\phi_1dx \le - \int a(x) z^{1 - p}dx -\varepsilon\int a(x) z^{-p} \psi_1dx - \varepsilon \int\limits_{A_x^-}a(x)z^{-p}\psi_1dx.   
		\end{equation}
		Similarly, we mention that
		\begin{equation}\label{estimsing Q(w)}
			-\int b(x) w^{- q}\phi_2dx \le - \int b(x) w^{1 - q}dx -\varepsilon\int b(x) w^{-q} \psi_2dx - \varepsilon \int\limits_{B_x^-}b(x)w^{-q}\psi_2dx. 
		\end{equation}
		Recall also that $(z, w) \in \mathcal{N}_{ \lambda^*}^0$, i.e., we know that $E_{\lambda^*}''(z, w)(z, w)^2 = 0$. Hence, by using \eqref{prodintsing}, \eqref{1singus}, \eqref{I'sing}, \eqref{IIestsing}, \eqref{II'estsing}, \eqref{termoteta1}, \eqref{acopsingbeta},
		\eqref{estimsing P(z)} and \eqref{estimsing Q(w)} together, we prove the following estimates:
		\begin{eqnarray}\label{desimpsing2}
			0& \le & E_{\lambda^*}''(z, w)(z, w)^2 + \varepsilon 2\left <(z, w), (\psi_1, \psi_2)\right> - \varepsilon\theta\alpha \int  \vert z\vert^{\alpha - 2} z \psi_1 \vert w \vert^\beta dx \nonumber\\
			&&- \varepsilon\theta\beta \int  \vert z\vert^{\alpha} \vert w \vert^{\beta - 2}w \psi_2dx 
			- \varepsilon\lambda^* (1 - p)\int a(x) z^{-p}\psi_1 dx \nonumber\\
			&& - \varepsilon\lambda^*(1 - q)\int b(x) w^{-q}\psi_2dx - \varepsilon\int\limits_{A_x^-} \int\limits_{A_y^-} \frac{\left[ z(x)-z(y)\right] \left[ \psi_1(x) - \psi_1(y)\right]}{\vert x-y\vert^{N+2s}} dydx  \nonumber\\
			&&- 2\varepsilon \int\limits_{A_x^-} \int\limits_{A_y^+} \frac{\left[ z(x)-z(y)\right]^+ \left[ -\psi_1(y) + \psi_1(x)\right]}{\vert x-y\vert^{N+2s}} dydx - \varepsilon\int\limits_{A_x^-} \int\limits_{A_y^-} \frac{\left[ w(x)-w(y)\right] \left[ \psi_2(x) - \psi_2(y)\right]}{\vert x-y\vert^{N+2s}} dydx\nonumber\\
			&&- 2\varepsilon \int\limits_{A_x^-} \int\limits_{A_y^+} \frac{\left[ w(x)-w(y)\right]^+ \left[ -\psi_2(y) + \psi_2(x)\right]}{\vert x-y\vert^{N+2s}} dydx - \varepsilon\int\limits_{A_x^-}V_1z\psi_1dx - \varepsilon\int\limits_{B_x^-}V_2w\psi_2dx  \nonumber\\
			&&- \varepsilon\theta\alpha \int\limits_{A_x^-} \vert z\vert^{\alpha} \vert w \vert^{\beta - 2} \psi_1 dx- \varepsilon\theta\beta \int\limits_{B_x^-} \vert z\vert^{\alpha} \vert w \vert^{\beta - 2} \psi_2 dx  - \varepsilon\int\limits_{A_x^-}a(x)z^{-p}\psi_1dx - \varepsilon \int\limits_{B_x^-}b(x)w^{-q}\psi_2dx. 
		\end{eqnarray}
		From non on, by using the last estimate divided by $\varepsilon$ and doing $\varepsilon \to 0$, we infer that all characteristic functions for the sets $A_x^-$,$A_y^-$, $B_x^-$ and $B_y^-$ goes to zero. The key point here is to apply  the Dominated convergence Theorem. In conclusion, we obtain that
		\begin{eqnarray*}
			0& \le & 2\left <(z, w), (\psi_1, \psi_2)\right> - \theta\alpha \int  \vert z\vert^{\alpha - 2} z \psi_1 \vert w \vert^\beta dx - \theta\beta \int  \vert z\vert^{\alpha} \vert w \vert^{\beta - 2}w\psi_2dx \nonumber\\
			&&
			- \lambda^* (1 - p)\int a(x) z^{-p}\psi_1dx - \lambda^* (1 - q)\int b(x) w^{-q}\psi_2dx,\;\;(\psi_1, \psi_2) \in X.
		\end{eqnarray*}
		Now, using $(-\psi_1, -\psi_2) \in X$ instead of $(\psi_1, \psi_2)$, we guarantee that
		\begin{eqnarray*}
			0& = & 2\left <(z, w), (\psi_1, \psi_2)\right> - \theta\alpha \int  \vert z\vert^{\alpha - 2} z \psi_1 \vert w \vert^\beta dx - \theta\beta \int  \vert z\vert^{\alpha} \vert w \vert^{\beta - 2}w \psi_2dx \\
			&&- \lambda^* (1 - p)\int a(x) z^{-p}\psi_1dx - \lambda^* (1 - q)\int b(x) w^{-q}\psi_2dx,\;\;\; (\psi_1, \psi_2) \in X.
		\end{eqnarray*}
		This ends the proof.
	\end{proof}
	\begin{cor}\label{incompatibilidade}
		Suppose ($P_0$), ($P_1$), ($P$), ($V_0$) and ($V_1'$). Then the System $(S_{\lambda^*})$ does not admit weak solutions $(z, w) \in \mathcal{N}^0_{\lambda^*}$.
	\end{cor}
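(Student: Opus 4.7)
The plan is to argue by contradiction: suppose $(z,w) \in \mathcal{N}_{\lambda^*}^0$ is a weak solution of $(S_{\lambda^*})$. I would combine the weak formulation \eqref{equfracasingula} with the auxiliary identity \eqref{eqauxiliarsing1} furnished by Proposition \ref{Neharilambda*}, extract a pointwise identity tying $a(x)$ to $z$ and $w$, and then contradict $a \notin L^1(\mathbb{R}^N)$ from $(P_1)$ via a H\"older estimate whose admissibility is tuned exactly by the second inequality in $(P_1)$. Concretely, testing both identities with the same pair $(\varphi_1,\varphi_2) = (\psi_1,\psi_2) \in X$ and subtracting twice the weak formulation from the auxiliary identity, the duality pairing $\langle(z,w),(\psi_1,\psi_2)\rangle$ cancels, leaving
\begin{equation*}
\theta\frac{\alpha+\beta-2}{\alpha+\beta}\left[\alpha\int z^{\alpha-1}w^\beta\psi_1\,dx + \beta\int z^\alpha w^{\beta-1}\psi_2\,dx\right] = \lambda^*(1+p)\int a(x)z^{-p}\psi_1\,dx + \lambda^*(1+q)\int b(x)w^{-q}\psi_2\,dx.
\end{equation*}
Setting $\psi_2=0$ and letting $\psi_1$ range over $C_c^\infty(\mathbb{R}^N) \subset X_1$, the fundamental lemma of the calculus of variations (together with the positivity $z, w > 0$ a.e.\ recorded in Proposition \ref{uvnaonulas} and in Step 1 of the proof of Proposition \ref{Neharilambda*}) yields the pointwise identity
\begin{equation*}
a(x) = K_a\, z(x)^{\alpha-1+p} w(x)^{\beta} \quad \text{a.e.\ in } \mathbb{R}^N, \qquad K_a := \frac{\theta\alpha(\alpha+\beta-2)}{\lambda^*(1+p)(\alpha+\beta)} > 0.
\end{equation*}

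The next step is to show that $z^{\alpha-1+p}w^\beta \in L^1(\mathbb{R}^N)$; by the previous identity this would force $a \in L^1(\mathbb{R}^N)$, contradicting $(P_1)$. I would apply H\"older with conjugate exponents $r' = 2^*_s/\beta$ and $r = 2^*_s/(2^*_s-\beta)$, obtaining
\begin{equation*}
\int z^{\alpha-1+p}w^\beta\,dx \;\le\; \|z\|_{r(\alpha-1+p)}^{\alpha-1+p}\,\|w\|_{2^*_s}^{\beta}.
\end{equation*}
A direct algebraic check shows that the upper bound $r(\alpha-1+p) \le 2^*_s$ is equivalent to $\alpha+\beta+p \le 2^*_s + 1$, which is immediate from $\alpha+\beta < 2^*_s$ and $p < 1$, whereas the lower bound $r(\alpha-1+p) \ge 2$ is algebraically equivalent to $\beta \ge \frac{2^*_s}{2}(3-\alpha-p)$, which is precisely the second inequality in $(P_1)$. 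Hence $r(\alpha-1+p) \in [2, 2^*_s]$, both factors on the right are finite by the Sobolev embedding of Lemma \ref{imersoes Sobolev 2, 2*s}, and the contradiction with $a \notin L^1(\mathbb{R}^N)$ follows.

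The main obstacle I expect is the rigorous passage from the integral identity to the pointwise one: one must verify that the singular term $a(x) z^{-p}$ and the coupling term $z^{\alpha-1}w^{\beta}$ are genuinely locally integrable when paired against $\psi_1 \in C_c^\infty$, so that the variational lemma truly applies. This finiteness is already supplied by the estimates in Step 1 of the proof of Proposition \ref{Neharilambda*}, which rest on $z, w > 0$ a.e. Once that is in place, the H\"older choice above is essentially forced by the structure of the problem, and the threshold $\frac{2^*_s}{2}(3-\alpha-p)$ in $(P_1)$ is revealed to be exactly the sharp value that prevents the exponent on $z$ from dropping below $2$ and thus preserves the applicability of the Sobolev embedding.
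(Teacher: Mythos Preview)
Your proposal is correct and follows essentially the same route as the paper: contradiction via the combination of the weak formulation with the auxiliary identity from Proposition~\ref{Neharilambda*}, extraction of the pointwise relation $a(x)=K_a\,z^{\alpha-1+p}w^\beta$, and then an integrability estimate with the conjugate exponents $r'=2^*_s/\beta$, $r=2^*_s/(2^*_s-\beta)$ whose admissibility range $[2,2^*_s]$ is exactly governed by $(P_1)$. The only cosmetic differences are that the paper invokes Young's inequality where you use H\"older (equivalent here), and your constant $(1+p)$ in $K_a$ is in fact the correct one---the paper's displayed $(1-p)$ is a typo.
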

	\begin{proof}
		The proof follows arguing by contradiction. Assume that there exists a weak solution $(z, w) \in \mathcal{N}^0_{\lambda^*}$ for the System $(S_{\lambda^*})$. Now, by using Proposition \ref{Neharilambda*}, we infer that  
		\begin{eqnarray}\label{solufracaauxiliar}
			2\left <(z, w), (\psi_1, \psi_2)\right> - \theta\alpha \int  \vert z\vert^{\alpha - 2} z \psi_1 \vert w \vert^\beta dx - \theta\beta \int  \vert z\vert^{\alpha} \vert w \vert^{\beta - 2} w \psi_2dx  && \nonumber\\
			- \lambda^* \left[(1 - p)\int a(x) z^{-p}\psi_1dx + (1 - q)\int b(x) w^{-q}\psi_2dx \right]= 0,\;\;\; (\psi_1, \psi_2) \in X&&
		\end{eqnarray}
		Furthermore, we know that  
		\begin{eqnarray}\label{solfracasisprinc*}
			\left <(z, w), (\psi_1, \psi_2)\right> & = & \lambda^*\left( \int a(x) z^{-p}\psi_1dx + \int b(x) w^{-q}\psi_2dx\right) + \theta\frac{\alpha}{\alpha + \beta} \int  \vert z\vert^{\alpha - 2} z \psi_1 \vert w \vert^\beta dx\nonumber \\
			&& + \theta\frac{\beta}{\alpha + \beta} \int  \vert z \vert^{\alpha}\vert w \vert^{\beta - 2} w\psi_2dx ,\;\;\; (\psi_1, \psi_2) \in X. 
		\end{eqnarray}
		Hence, by using \eqref{solfracasisprinc*} and \eqref{solufracaauxiliar}, we are able to use the testing function $(\psi_1, \psi_2) = (\psi_1, 0)$ proving that
		\begin{eqnarray*}
			2 \left (\lambda^* \int a(x) z^{-p}\psi_1dx + 
			\theta \frac{\alpha}{\alpha + \beta} \int  \vert z\vert^{\alpha - 2} z \psi_1 \vert w \vert^\beta dx \right)   - \theta\alpha \int  \vert z\vert^{\alpha - 2} z \psi_1 \vert w \vert^\beta dx 
			- \lambda^* [(1 - p)\int a(x) z^{-p}\psi_1dx = 0.&&
		\end{eqnarray*}
		Now, we rewrite the last identity as follows:
		$$\int \left[ a(x) z^{-p} \lambda^*(1 - p) + \theta\alpha\left(\frac{2 -(\alpha + \beta)}{\alpha + \beta}\right) \vert z\vert^{\alpha - 1} \vert w \vert^\beta \right] \psi_1 dx = 0.$$
		In particular, we mention that
		$$a(x) z^{-p} \lambda^*(1 - p) + \theta\alpha\left(\frac{2 -(\alpha + \beta)}{\alpha + \beta}\right) \vert z\vert^{\alpha - 1} \vert w \vert^\beta = 0, \;\;\;\;  \mbox{a.e. in} \;\;\;\; \mathbb{R}^N .$$
		Hence, we obtain that 
		$$a(x) = \left(\frac{\theta\alpha(\alpha + \beta - 2) 
		}{\lambda^*(1 - p)(\alpha + \beta)} \right)z^{p + \alpha - 1} w^{\beta}\;=:\; C_{\alpha, \beta, p, \lambda^*, \theta}\;z^{p + \alpha - 1} w^{\beta}.$$
		Under these conditions, by using Young's inequality with $r' = 2^*_s/\beta$ and $r = 2^*_s/(2^*_s - \beta)$, there exists a constant $\tilde{C}>0$ such that
		$$\int a(x)dx  \le \tilde{C} \int  \left(\vert w \vert^{2^*_s} + \vert z \vert^{(p + \alpha - 1)r}\right)dx\;\;<\;\;\infty.$$
		In view of hypotheses ($P$)  and ($P_1$) we mention that
		$(p + \alpha - 1) r = (p + \alpha - 1)\frac{2^*_s}{2^*_s - \beta} \le 2^*_s$ holds whenever $2 < \alpha + \beta < 2^*_s$ and $0 < p < 1$. Similarly, we obtain that $(p + \alpha - 1)\frac{2^*_s}{2^*_s - \beta} \ge 2$ holds assuming that $\beta \ge \frac{2^*_s}{2}(3 - \alpha -p).$
		This is a contradiction due to the fact that $a\notin L^1(\mathbb{R}^N)$. This ends the proof.
	\end{proof}
	
	Now, we are looking for weak solutions for the System \eqref{sistema Principal singular} taking into account the following minimization problems:
	\begin{equation}\label{C^N-sing} C_{\mathcal{N}_{\lambda}^-} :=  \inf\limits_{(u, v) \in \mathcal{N}_{\lambda}^-} E_{_\lambda}(u, v);
	\end{equation}
	and
	\begin{equation}\label{C^N+sing} 
		C_{N^{+}_{\lambda}\cap\mathcal{A}} :=  \inf\limits_{(u, v) \in \mathcal{N}_{\lambda}^+\cap \mathcal{A}} E_{_\lambda}(u, v).
	\end{equation}
	
	\begin{rmk}\label{N- cont Asing} It is important to stress that $\mathcal{N}^-_{\lambda} \subset \mathcal{A}$. In fact, for any $(u, v) \in \mathcal{N}^-_{\lambda}$, we prove that $B(u, v) > 0$. Here was used the that that $E'_{_{\lambda}}(u, v)(u, v)=0$ implies
		$$0> E''_{_{\lambda}}(u, v)(u, v)^2 =   \lambda(1 + p)P(u) + \lambda (1 + q)Q(v) + \theta (2-\alpha - \beta) B(u, v)$$
		holds true for each $(u,v) \in \mathcal{N}^-_{\lambda}$.
	\end{rmk} 
	
	\begin{prop}\label{E coersivasing}
		Suppose ($P_0$), ($P$), ($V_0$) and ($V_1'$). The energy functional $E_{\lambda}: X \to \mathbb{R}$ is coercive in the Nehari set.
	\end{prop}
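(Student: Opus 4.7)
The plan is to use the Nehari constraint to eliminate the coupling term $\theta B(u,v)$ from the energy functional, thereby reducing $E_\lambda$ on $\mathcal{N}_\lambda$ to an expression involving only the norm and the two singular integrals $P(u)$ and $Q(v)$, both of which are controlled by sub-quadratic powers of the norm.

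More precisely, for any $(u,v)\in\mathcal{N}_\lambda$ one has $E'_\lambda(u,v)(u,v)=0$, that is
$$\theta B(u,v)=\|(u,v)\|^2-\lambda P(u)-\lambda Q(v).$$
Substituting this identity into the definition of $E_\lambda(u,v)$, I will obtain
$$E_\lambda(u,v)=\left(\frac{1}{2}-\frac{1}{\alpha+\beta}\right)\|(u,v)\|^2-\lambda\left(\frac{1}{1-p}-\frac{1}{\alpha+\beta}\right)P(u)-\lambda\left(\frac{1}{1-q}-\frac{1}{\alpha+\beta}\right)Q(v).$$
Since $\alpha+\beta>2$ and $0<p\le q<1$, each of the three coefficients is strictly positive; I will denote them $c_1,c_2,c_3>0$.

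Next, using the H\"older inequality and the Sobolev embedding provided by Lemma \ref{imersoes Sobolev 2, 2*s}, exactly as in estimate \eqref{bounded}, I will bound
$$P(u)\le S\,\|(u,v)\|^{1-p},\qquad Q(v)\le S\,\|(u,v)\|^{1-q},$$
with $S=\max\{\|a\|_{2/(1+p)}S_2^{1-p},\,\|b\|_{2/(1+q)}S_2^{1-q}\}$. Plugging these into the displayed identity above yields
$$E_\lambda(u,v)\ge c_1\|(u,v)\|^2-\lambda c_2 S\,\|(u,v)\|^{1-p}-\lambda c_3 S\,\|(u,v)\|^{1-q},\qquad (u,v)\in\mathcal{N}_\lambda.$$

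Because $1-p,\,1-q\in(0,1)\subset(0,2)$, the quadratic term dominates as $\|(u,v)\|\to\infty$, so the right-hand side tends to $+\infty$; this gives the coercivity of $E_\lambda$ on $\mathcal{N}_\lambda$. There is no serious obstacle here; the only point that must be checked carefully is the sign of the three coefficients after the algebraic manipulation, which relies precisely on the hypothesis $2<\alpha+\beta<2_s^*$ together with $0<p\le q<1$ from $(P)$. The argument does not require any smallness of $\lambda$, and in particular is valid on the whole of $\mathcal{N}_\lambda$ (hence a fortiori on $\mathcal{N}_\lambda^+$ and $\mathcal{N}_\lambda^-$).
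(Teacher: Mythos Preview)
Your proof is correct and follows essentially the same approach as the paper: both use the Nehari constraint to eliminate $\theta B(u,v)$ (the paper phrases this as $E_\lambda(u,v)=E_\lambda(u,v)-\frac{1}{\alpha+\beta}E'_\lambda(u,v)(u,v)$, but the resulting identity is identical to yours), then invoke the H\"older--Sobolev estimate \eqref{bounded} on $P(u)$ and $Q(v)$ to reach the same lower bound $E_\lambda(u,v)\ge C_1\|(u,v)\|^2-C_2\|(u,v)\|^{1-p}-C_3\|(u,v)\|^{1-q}$.
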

	\begin{proof}    
		Let $(u, v) \in \mathcal{N}_{\lambda}$ be fixed. Hence, we mention that $E'_{_{\lambda}}(u, v)(u, v) = 0$. As a consequence, we see that 
		$$\theta\int  |u|^\alpha|v|^\beta dx = \Vert (u, v) \Vert ^2 - \lambda\int  a(x) \vert u \vert^{1 - p}dx - \lambda\int  b(x) \vert v \vert^{1 - q}dx.$$
		Now, we write $E_{\lambda}(u, v) = E_{\lambda}(u, v) - \frac{1}{\alpha + \beta}E_{\lambda}'(u, v)(u, v)$. The last assertion implies that
		\begin{eqnarray*}
			E_{_{\lambda}}(u, v) &=& \left ( \frac{1}{2} - \frac{1}{\alpha + \beta} \right )A(u, v) + \lambda\left ( -\frac{1}{1 - p} + \frac{1}{\alpha + \beta}\right )P(u) +  \lambda\left ( -\frac{1}{1 - q} + \frac{1}{\alpha + \beta}\right )Q(v).
		\end{eqnarray*}
		Now, by using \eqref{bounded}, we infer that 
		\begin{eqnarray}\label{coerciva}
			E_{_{\lambda}}(u, v) & \ge & C_1 \Vert(u, v)\Vert^2 - C_2 \Vert (u, v) \Vert^{1 - p} - C_3\Vert (u, v) \Vert^{1 - q}.
		\end{eqnarray} 
		As a consequence, we deduce that $E_{_{\lambda}}(u,v) \to +\infty$ as $\Vert (u, v) \Vert \to +\infty$. This ends the proof.
	\end{proof}

	\begin{prop}\label{convfortN-}
		Assume that ($P_0$), ($P$), ($V_0$), ($V_1'$) and $\lambda \in (0, \lambda^*)$ hold. Consider a minimizer sequence $(u_k, v_k) \in \mathcal{N}_{\lambda}^-$ for $C_{\mathcal{N}_{\lambda}^-}$. Then there exists $(u, v) \in \mathcal{N}_{\lambda}^-$ such that $(u_k, v_k) \to (u, v)$ in $X$. 
	\end{prop}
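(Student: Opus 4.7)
By Proposition~\ref{E coersivasing}, $E_{\lambda}$ is coercive on $\mathcal{N}_{\lambda}$, so the minimizing sequence $(u_k,v_k)\subset\mathcal{N}_{\lambda}^-$ is bounded in $X$. Passing to a subsequence, $(u_k,v_k)\rightharpoonup (u,v)$ in $X$, strongly in $L^{r_1}\times L^{r_2}$ for $r_i\in[2,2_s^*)$, and pointwise a.e. Proceeding as in the proof of Proposition~\ref{lambda frac Sem Infsing}, the Dominated Convergence Theorem yields $P(u_k)\to P(u)$, $Q(v_k)\to Q(v)$ and $B(u_k,v_k)\to B(u,v)$. From the preceding proposition, $B(u_k,v_k)\ge\delta_C>0$, so $B(u,v)\ge\delta_C>0$ and therefore $(u,v)\in\mathcal{A}$.

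The core step is to show $E'_{\lambda}(u,v)(u,v)=0$, which is equivalent to $R_n(u,v)=\lambda$. Weak lower semicontinuity of the norm together with $E'_{\lambda}(u_k,v_k)(u_k,v_k)=0$ gives $E'_{\lambda}(u,v)(u,v)\le 0$. Suppose, for contradiction, that strict inequality holds, i.e.\ $R_n(u,v)<\lambda$. Since $(u,v)\in\mathcal{A}$ and $\lambda<\lambda^*$, Proposition~\ref{tn-,tn+sing} provides $t^*:=t_n^-(u,v)>0$ with $t^*(u,v)\in\mathcal{N}_{\lambda}^-$, whence $E_{\lambda}(t^*u,t^*v)\ge C_{\mathcal{N}_{\lambda}^-}$. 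On the other hand, passing to the limit in the Nehari identity $\|(u_k,v_k)\|^2=\lambda P(u_k)+\lambda Q(v_k)+\theta B(u_k,v_k)$ yields $L:=\lim_k\|(u_k,v_k)\|^2=\lambda P(u)+\lambda Q(v)+\theta B(u,v)>\|(u,v)\|^2$, the strict inequality coming precisely from $E'_{\lambda}(u,v)(u,v)<0$. Evaluating the fiber at $t^*$ therefore gives the strict comparison
$$E_{\lambda}(t^*u,t^*v)=\frac{(t^*)^2}{2}\|(u,v)\|^2-\lambda\frac{(t^*)^{1-p}}{1-p}P(u)-\lambda\frac{(t^*)^{1-q}}{1-q}Q(v)-\frac{\theta(t^*)^{\alpha+\beta}}{\alpha+\beta}B(u,v)<\lim_k E_{\lambda}(t^*u_k,t^*v_k).$$
Now, since $t_n^-(u_k,v_k)=1$ is the local maximum of the fiber $t\mapsto E_{\lambda}(tu_k,tv_k)$ on $[t_n^+(u_k,v_k),\infty)$, we have $E_{\lambda}(t^*u_k,t^*v_k)\le E_{\lambda}(u_k,v_k)$ whenever $t^*\ge t_n^+(u_k,v_k)$. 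Combining the two estimates yields $E_{\lambda}(t^*u,t^*v)<C_{\mathcal{N}_{\lambda}^-}$, contradicting $t^*(u,v)\in\mathcal{N}_{\lambda}^-$. Hence $E'_{\lambda}(u,v)(u,v)=0$, equivalently $\|(u,v)\|^2=L=\lim_k\|(u_k,v_k)\|^2$. Combined with the weak convergence and the Hilbert structure of $X$, we obtain strong convergence $(u_k,v_k)\to(u,v)$ in $X$.

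Strong convergence lets us pass to the limit in $E''_{\lambda}(u_k,v_k)(u_k,v_k)^2<0$, giving $E''_{\lambda}(u,v)(u,v)^2\le 0$. Since $\mathcal{N}_{\lambda}^0=\emptyset$ by Proposition~\ref{N0vaziosing}, we conclude $(u,v)\in\mathcal{N}_{\lambda}^-$, and $E_{\lambda}(u,v)=\lim_k E_{\lambda}(u_k,v_k)=C_{\mathcal{N}_{\lambda}^-}$.

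The main obstacle is the fiber comparison $t^*\ge t_n^+(u_k,v_k)$ when $t^*<1$. This is handled by examining the ``limit fiber'' $\tilde{\gamma}(t)=\tfrac{L}{2}t^2-\lambda\tfrac{t^{1-p}}{1-p}P(u)-\lambda\tfrac{t^{1-q}}{1-q}Q(v)-\tfrac{\theta t^{\alpha+\beta}}{\alpha+\beta}B(u,v)$, whose critical points $\tilde t_n^\pm$ are limits of $t_n^\pm(u_k,v_k)$ by the Implicit Function Theorem argument underlying Proposition~\ref{tn-,tn+sing}, and which satisfies $\tilde t_n^+<t_n^+(u,v)<t_n^-(u,v)=t^*$ because $A(u,v)<L$ shifts the zeros of $\gamma_{\lambda}'$ strictly inside those of $\tilde\gamma'$. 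Consequently $t_n^+(u_k,v_k)\to\tilde t_n^+<t^*$, so $t^*>t_n^+(u_k,v_k)$ for $k$ large, validating the fiber inequality used above.
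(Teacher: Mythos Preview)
Your argument is correct and follows the same overall strategy as the paper: assume non-strong convergence (equivalently $E_\lambda'(u,v)(u,v)<0$, i.e.\ $\|(u,v)\|^2<L$), project $(u,v)$ onto $\mathcal N_\lambda^-$ via $t^*=t_n^-(u,v)$, and derive the contradiction $C_{\mathcal N_\lambda^-}\le E_\lambda(t^*u,t^*v)<\lim_k E_\lambda(t^*u_k,t^*v_k)\le\lim_k E_\lambda(u_k,v_k)=C_{\mathcal N_\lambda^-}$ by using that $t=1$ is the maximum of each $\gamma_k$ on $[t_n^+(u_k,v_k),\infty)$.

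The only technical difference lies in how you establish $t^*>t_n^+(u_k,v_k)$ for large $k$. You route this through the auxiliary ``limit fiber'' $\tilde\gamma$ and appeal to convergence $t_n^+(u_k,v_k)\to\tilde t_n^+$; the paper argues more directly that the strict norm inequality forces $E_\lambda'(t^*u,t^*v)(u,v)=0<\liminf_k E_\lambda'(t^*u_k,t^*v_k)(u_k,v_k)$, so $\gamma_k'(t^*)>0$ for large $k$, which immediately places $t^*\in(t_n^+(u_k,v_k),1)$. This direct route sidesteps the mild issue your IFT invocation leaves open, namely whether $\tilde\gamma$ is non-degenerate (one only knows $\tilde\gamma''(1)=\lim_k\gamma_k''(1)\le 0$, not strict). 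The gap is harmless, however, since your own computation already gives $\gamma_k'(t^*)\to\tilde\gamma'(t^*)=(L-\|(u,v)\|^2)\,t^*>0$, which is all that is needed.
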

	\begin{proof}
		Consider a sequence $(u_k, v_k) \in \mathcal{N}_{\lambda}^-$ such that $E_{\lambda}(u_k, v_k) \to C_{\mathcal{N}_{\lambda}^-}$. Here we observe that the sequence $(u_k, v_k)$ is bounded, see Proposition \ref{E coersivasing}. Up to a subsequence, we know that $(u_k, v_k) \rightharpoonup (u, v)$ for some $(u,v)\in X$. Now, by using Proposition \ref{N-longe zerosing}, together with \eqref{limitação u vsing}, we obtain that $u \ne 0$ and $v \ne 0$. According to Proposition \ref{tn-,tn+sing}, there exist $t_n^+(u, v) > 0$ and $t_n^-(u, v) > 0$ such that $t_n^+(u, v) < t_n(u, v) < t_n^-(u, v)$ where $t_n^+(u, v)(u, v) \in \mathcal{N}_{\lambda}^+$ and $t_n^-(u, v)(u, v) \in \mathcal{N}_{\lambda}^-$.
		Under these conditions, we deduce that  
		\begin{equation}\label{CN-uv}
			C_{ \mathcal{N}_{_{\lambda}}^-} \le E_{\lambda}(t_n^-(u, v)(u, v)). 
		\end{equation}
		On the other hand, we see that 
		$E_{\lambda}'(u_k, v_k)(u_k, v_k) = 0,\, \, E_{\lambda}''(u_k, v_k)(u_k, v_k)^2 < 0,$
		$$\Vert (u_k, v_k)\Vert \ge C > 0, \,\,  \int \vert u_k \vert^{\alpha}\vert v_k \vert^{\beta}dx \ge \delta_c > 0, \,\, k \in \mathbb{N}.$$
		Now we claim that $(u_k, v_k) \to (u,v)$ in $X$. The proof follows arguing by contradiction assuming that $(u_k, v_k)$ does not converge. Therefore, 
		$$\Vert(tu, tv)\Vert^2 < \liminf\limits_{k \to \infty} \Vert(tu_k, tv_k)\Vert^2, \;\;\; \forall\;\; t > 0.$$
		In particular, we obtain that
		\begin{equation}\label{EuvEukvk}
			E_{\lambda}(t(u, v)) < \liminf\limits_{k \to \infty} E_{\lambda}(t(u_k, v_k)),  \;\;\; \forall\;\; t > 0.
		\end{equation}
		Hence, for $t = t_n^-(u, v)$, we deduce that
		\begin{equation}\label{EuvEukvk}
			E_{\lambda}(t_n^-(u, v)(u, v)) < \liminf\limits_{k \to \infty} E_{\lambda}(t_n^-(u, v)(u_k, v_k)).
		\end{equation}
		Thus, using \eqref{CN-uv} and \eqref{EuvEukvk}, we infer that
		\begin{equation}\label{c,liminEsing}  
			C_{\mathcal{N}_{_{\lambda}}^-} < \liminf\limits_{k \to \infty}E_{_{\lambda}}(t_n^-(u, v)(u_k, v_k)). 
		\end{equation}
		Now, by using Remark \ref{E E' E'' infer sem frac}, we see that 
		$$E_{\lambda}'(u, v)(u, v) < \liminf\limits_{k \to \infty}E_{\lambda}'(u_k, v_k)(u_k, v_k) = 0 \;\;\;\; \mbox{and}\;\;\;\;E_{\lambda}''(u, v)(u, v)^2 < \liminf\limits_{k \to \infty}E_{\lambda}''(u_k, v_k)(u_k, v_k)^2 \le 0.$$
		It is not hard to verify that
		$\gamma_{\lambda}'(t) = E_{\lambda}'(tu, tv)(u, v) < 0, \;\;\; 0 < t < t_n^+(u, v)\;\;\; \mbox{and}\;\;\; t > t_n^-(u, v).$
		Recall also that
		$$0 = E_{\lambda}'(tu, tv)(u, v) \le \liminf\limits_{k \to \infty} E_{\lambda}'(tu_k, tv_k)(u_k, v_k), \;\;\; \mbox{for} \;\;\; t = t_n^+(u, v).$$
		Thus, $1 > t_n^-(u, v)$ holds. 
		Notice also that 
		$E_{\lambda}'(tu_k, tv_k)(u_k, v_k) < 0$ for each  $0 < t < t_n^+(u_k, v_k)$. Moreover, we mention that
		$$E_{\lambda}'(tu, tv)(u, v) < \liminf\limits_{k \to \infty}E_{\lambda}'(tu_k, tv_k)(u_k, v_k).$$
		Now, by using the last assertion, we obtain that 
		$$0 = E_{\lambda}'(t_n^+(u, v)(u_k, v_k))(u_k, v_k) < \liminf\limits_{k \to \infty}E_{\lambda}'(t_n^+(u, v)(u_k, v_k))(u_k, v_k).$$
		Hence, we infer that $t_n^+(u_k, v_k) \le t_n^+(u, v) \le t_n^-(u, v) \le t_n^-(u_k, v_k) = 1.$ As a consequence, we obtain
		$$C_{\mathcal{N}^-_{\lambda}} < \liminf\limits_{k \to \infty}E_{_{\lambda}}(t_n^-(u, v)(u_k, v_k)) \le \liminf\limits_{k \to \infty}E_{_{\lambda}}(u_k, v_k) = C_{\mathcal{N}^-_{\lambda} }.$$ 
		This is is a contradiction showing that $\Vert (u, v) \Vert^2 = \liminf\limits_{k \to \infty}\Vert (u_k, v_k) \Vert^2$.  Therefore, $(u_k, v_k)\to (u, v)$ in $X$. This ends the proof.
	\end{proof}
	
	\begin{prop}\label{convforte N^+sing}
		Suppose ($P_0$), ($P$), ($V_0$) and ($V_1'$). Assume that $\lambda \in (0, \lambda^*)$. Let $(u_k, v_k)\in \mathcal{N}^+_{\lambda}$ be a minimizing sequence for $C_{\mathcal{N}^+_{\lambda}}$. Then, up to a subsequence, there exists $(u, v) \in \mathcal{A}$ such that $(u_k, v_k) \to (u, v)$ in $X$ where $(u, v) \in \mathcal{N}^+_{\lambda}$.
	\end{prop}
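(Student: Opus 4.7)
The plan is to mimic the structure of Proposition \ref{convfortN-}, replacing the local-maximum role of $t_n^-$ by the local-minimum role of $t_n^+$. First, by the coercivity of $E_\lambda$ on the Nehari set (Proposition \ref{E coersivasing}) the minimizing sequence $(u_k,v_k)$ is bounded in $X$, so up to a subsequence $(u_k,v_k) \rightharpoonup (u,v)$ in $X$; Lemma \ref{imersoes Sobolev 2, 2*s} combined with the dominated convergence argument from the proof of Proposition \ref{lambda frac Sem Infsing} yields $P(u_k) \to P(u)$, $Q(v_k) \to Q(v)$, and $B(u_k,v_k) \to B(u,v)$. A short fiber computation shows $C_{\mathcal{N}^+_\lambda \cap \mathcal{A}} < 0$: for any fixed $(\phi,\psi) \in \mathcal{A}$ the fiber $\gamma_\lambda(t) = E_\lambda(t\phi,t\psi)$ is strictly negative for small $t > 0$, so its first critical point $t_n^+(\phi,\psi)$ is a local minimum at which $\gamma_\lambda$ is strictly negative. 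Weak lower semicontinuity of $\Vert\cdot\Vert^2$ together with the strong convergences above then gives
\[
E_\lambda(u,v) \le \liminf_{k \to \infty} E_\lambda(u_k,v_k) = C_{\mathcal{N}^+_\lambda \cap \mathcal{A}} < 0, \qquad E'_\lambda(u,v)(u,v) \le 0,
\]
so in particular $(u,v) \neq (0,0)$.

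Strong convergence is then obtained by contradiction. Assuming $\Vert(u,v)\Vert^2 < \liminf \Vert(u_k,v_k)\Vert^2$, both displayed inequalities become strict, giving $E'_\lambda(u,v)(u,v) < 0$ and $E_\lambda(u,v) < C_{\mathcal{N}^+_\lambda \cap \mathcal{A}}$. With $(u,v) \in \mathcal{A}$ (see the obstacle discussed below), Proposition \ref{tn-,tn+sing} supplies $0 < t_n^+(u,v) < t_n^-(u,v)$. The identity $E'_\lambda(tu,tv)(tu,tv) = 0$ at $t = t_n^+(u,v)$, combined with the lower semicontinuity of $E'_\lambda$, forces the value $t_n^+(u,v)$ to lie in $[t_n^+(u_k,v_k), t_n^-(u_k,v_k)] = [1, t_n^-(u_k,v_k)]$ for $k$ large, so $t_n^+(u,v) \ge 1$. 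Since $E'_\lambda(u,v)(u,v) < 0$ excludes $1 \in [t_n^+(u,v), t_n^-(u,v)]$ by Remark \ref{Rn lambda e E'sing}, only $t_n^+(u,v) > 1$ remains; strict monotonicity of $\gamma_\lambda$ on $(0, t_n^+(u,v))$ then yields $\gamma_\lambda(t_n^+(u,v)) < \gamma_\lambda(1) = E_\lambda(u,v) < C_{\mathcal{N}^+_\lambda \cap \mathcal{A}}$, contradicting $t_n^+(u,v)(u,v) \in \mathcal{N}^+_\lambda \cap \mathcal{A}$. Hence $(u_k,v_k) \to (u,v)$ in $X$, and the strong limit $(u,v) \in \mathcal{N}_\lambda$ realizes the infimum $C_{\mathcal{N}^+_\lambda \cap \mathcal{A}}$.

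Finally, Proposition \ref{N0vaziosing} gives $\mathcal{N}^0_\lambda = \emptyset$, and the strict inequality $C_{\mathcal{N}^+_\lambda \cap \mathcal{A}} < C_{\mathcal{N}^-_\lambda}$ (obtained by scaling any $(z,w) \in \mathcal{N}^-_\lambda \subset \mathcal{A}$, see Remark \ref{N- cont Asing}, by the factor $t_n^+(z,w) < 1 = t_n^-(z,w)$) excludes $(u,v) \in \mathcal{N}^-_\lambda$, placing $(u,v)$ in $\mathcal{N}^+_\lambda$. The main obstacle I expect is verifying $(u,v) \in \mathcal{A}$, since $\mathcal{N}^+_\lambda$ does not enjoy an automatic lower bound on the coupling term $B$ analogous to Remark \ref{N- cont Asing}. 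I plan to resolve this by a perturbation argument: if $B(u,v) = 0$, a compactly supported perturbation $(\tilde u, \tilde v) \in \mathcal{A}$ close to $(u,v)$ produces a fiber whose local minimum $\gamma_\lambda(t_n^+(\tilde u,\tilde v))$ lies strictly below $E_\lambda(u,v) = C_{\mathcal{N}^+_\lambda \cap \mathcal{A}}$, because turning on a positive $B$-contribution shifts the fiber strictly downward at every $t > 0$, contradicting the definition of the infimum.
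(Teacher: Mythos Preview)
Your core argument matches the paper's: coercivity gives boundedness and weak convergence, lower semicontinuity yields $E_\lambda(u,v)\le C_{\mathcal N^+_\lambda}$ and $E'_\lambda(u,v)(u,v)\le 0$, whence $t_n^+(u,v)\ge 1$, and the chain $C_{\mathcal N^+_\lambda}\le E_\lambda(t_n^+(u,v)(u,v))\le E_\lambda(u,v)\le C_{\mathcal N^+_\lambda}$ forces equality and hence strong convergence. The paper runs this chain directly rather than by contradiction, and it opens by introducing the decoupled scalar problems \eqref{eqsingu}--\eqref{eqsingv} (apparently only to exhibit a positive pair $(w_1,w_2)\in\mathcal A$), but it too does not actually verify that the weak limit lies in $\mathcal A$; it simply asserts ``$(u,v)\in\mathcal N^+_\lambda\cap\mathcal A$'' at the end. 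So the obstacle you flag is genuine and is left implicit in the paper as well.

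Your perturbation plan needs sharpening, however. The justification ``turning on a positive $B$-contribution shifts the fiber strictly downward at every $t>0$'' is only literally true if $A,P,Q$ are frozen, which no actual perturbation in $X$ achieves; for a small perturbation the induced change in $B$ is of order $\varepsilon^{\min(\alpha,\beta)}$ and is dominated by the $O(\varepsilon)$ change in $A$. A specific choice does work, but the decisive term is the singular part, not $B$: if $B(u,v)=0$ then $v=0$ on a set of positive measure inside $\{u>0\}$, and taking $\tilde v=v+\varepsilon\phi$ with $\phi\ge 0$ supported there gives $Q(\tilde v)-Q(v)=\varepsilon^{1-q}\int b\,\phi^{1-q}$, of order $\varepsilon^{1-q}$, which dominates both the $O(\varepsilon)$ change in $A$ and the $O(\varepsilon^\beta)$ gain in $B$. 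This forces $E_\lambda(u,\tilde v)<E_\lambda(u,v)\le C_{\mathcal N^+_\lambda\cap\mathcal A}$; since $B(u,\tilde v)>0$ and $t_n^-(u,\tilde v)\to\infty$ as $\varepsilon\to 0$, projecting via $t_n^+$ then gives the contradiction you want.
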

	\begin{proof}
		First, we consider the singular elliptic problems:
		\begin{equation}\label{eqsingu}
			(-\Delta)^su +V_1(x)u = \lambda a(x)u^{-p}, 
		\end{equation} 
		\begin{equation}\label{eqsingv}
			(-\Delta)^sv +V_2(x)v = \lambda b(x)v^{-q}.
		\end{equation} 
		Recall that $V_i: \mathbb{R}^N \to \mathbb{R}, i = 1, 2$ are continuous potentials. 
		Furthermore, we consider the following associated energy functionals:
		$$J_{1, \lambda}(u) = \frac{1}{2}\Vert u \Vert^2 - \frac{\lambda}{1 - p}\int a(x)\vert u \vert^{1 -p}dx,\;\;\;\; \mbox{and}\;\;\;\;J_{2, \lambda}(v) = \frac{1}{2}\Vert v \Vert^2 - \frac{\lambda}{1 - q}\int b(x)\vert v \vert^{1 - q}dx.$$
		Let us consider also the Nehari sets as follows
		$$\mathcal{N}_{1, \lambda} = \{u \in X_1\setminus_{\{0\}}; J_{1, \lambda}'(u)u = 0 \};\;\;\;\;\;\mathcal{N}_{2, \lambda} = \{v \in X_2\setminus_{\{0\}}; J_{2, \lambda}'(v)(v) = 0 \};$$$$\mathcal{N}_{1, \lambda}^+ = \{u \in \mathcal{N}_{1, \lambda}; J_{1, \lambda}''(u)u^2 > 0 \};\;\;\;\;\;\mathcal{N}_{2, \lambda}^+ = \{v \in \mathcal{N}_{2, \lambda}; J_{2, \lambda}''(v)(v)^2 > 0 \}.$$
		Consider $w_1 \in \mathcal{N}_{1, \lambda}^+$ and $w_2 \in \mathcal{N}_{2, \lambda}^+$ be weak solutions for the singular elliptic problem given in \eqref{eqsingu} and \eqref{eqsingv}, respectively. Now, applying the ideas developed by \cite{Yijing2001,silvasing2018}, we see that $w_1, w_2 > 0$. It is important to emphasize that problems \eqref{eqsingu} and \eqref{eqsingv} does not admit $u = 0$ as trivial weak solution. Thus, we deduce that $(w_1, w_2) \in \mathcal{A}$. 
		Furthermore, assuming that $(u, v)\in \mathcal{N}_{\lambda}^+$ is a weak solution for the System \eqref{sistema Principal singular}, we deduce that \eqref{equfracasingula} is verified. Furthermore, we mention that $E_{\lambda}''(u, v)(u, v)^2 > 0.$
		In particular, we have that
		$E_{\lambda}'(u, v)(u, v) = 0 \;\;\;\mbox{and}\;\;\; C_{\mathcal{N}_{\lambda}^+} = E(u, v).$
		It remains to show that $(u_k, v_k) \to (u, v)$.  Recall that $E_{\lambda}(u_k, v_k) \to C_{\mathcal{N}^+_{\lambda}}$. Furthermore, we see that
		$$E_{\lambda}(u, v) \le \liminf\limits_{k \to \infty} E_{_{\lambda}}(u_k, v_k) = C_{\mathcal{N}^+_{\lambda}}\;\;\;\mbox{and}\;\;\;E'_{\lambda}(u, v)(u, v) \le \liminf\limits_{k \to \infty} E'_{_{\lambda}}(u_k, v_k)(u_k, v_k) = 0.$$
		In the same way, we obtain that $E'_{\lambda}(u, v)(u, v) \le 0$. Therefore, $E'_{\lambda}(tu, tv)(tu, tv) \le 0$ for each  $t \in (0, t_n^+(u, v))$. The last assertion implies that $t_n^+(u, v) \ge 1$. Moreover, $t_n^+(u, v)(u, v) \in \mathcal{N}^+_{\lambda}$. Hence, 
		$$C_{\mathcal{N}^+_{\lambda}} \le E_{\lambda}(t_n^+(u, v)(u, v)) \le E_{\lambda}(u, v) \le \liminf\limits_{k \to \infty} E_{\lambda}(u_k, v_k) = C_{\mathcal{N}^+_{\lambda}}.$$
		As a consequence, $(u_k, v_k) \to (u, v)$ in $X$. In particular, we obtain that $(u, v) \in \mathcal{N}_{\lambda}^+\cap \mathcal{A}$. This ends the proof.
	\end{proof}
	\begin{prop}\label{CN+ negsig}
		Suppose ($P_0$), ($P$), ($V_0$), ($V_1'$) and $\lambda \in (0, \lambda^*)$. Then $C_{\mathcal{N}^+_{\lambda}\cap \mathcal{A}}< 0$.
	\end{prop}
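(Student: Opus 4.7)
The plan is to show the negativity directly by exhibiting a competitor in $\mathcal{N}_\lambda^+ \cap \mathcal{A}$ at which $E_\lambda$ is strictly negative. Fix any $(u,v) \in \mathcal{A}$; such pairs exist, and since $a, b > 0$ together with $(u,v) \in \mathcal{A}$ force $u, v$ to be nonzero on a set of positive measure, we have $P(u), Q(v), B(u,v) > 0$. By Proposition \ref{tn-,tn+sing}, for $\lambda \in (0, \lambda^*)$ there is a well-defined $t_n^+(u,v) \in (0, t_n(u,v))$ with $t_n^+(u,v)(u,v) \in \mathcal{N}_\lambda^+$, and because $\mathcal{A}$ is cone-invariant (scaling by $t>0$ multiplies $B$ by $t^{\alpha+\beta}$), this scaled pair also lies in $\mathcal{A}$. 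Hence it is a legitimate competitor for $C_{\mathcal{N}_\lambda^+ \cap \mathcal{A}}$.

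Next, I would study the fibering map
$$\gamma_\lambda(t) = E_\lambda(tu,tv) = \frac{t^2}{2}\|(u,v)\|^2 - \frac{\lambda t^{1-p}}{1-p}P(u) - \frac{\lambda t^{1-q}}{1-q}Q(v) - \frac{\theta t^{\alpha+\beta}}{\alpha+\beta}B(u,v),$$
observing that $\gamma_\lambda(0^+) = 0$, and that
$$\gamma_\lambda'(t) = t\|(u,v)\|^2 - \lambda t^{-p}P(u) - \lambda t^{-q}Q(v) - \theta t^{\alpha+\beta-1}B(u,v) \longrightarrow -\infty \quad \text{as } t \to 0^+,$$
since $p, q > 0$ and $P(u), Q(v) > 0$. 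Thus $\gamma_\lambda$ is strictly decreasing on some neighborhood $(0, \delta)$ of the origin, which in turn yields $\gamma_\lambda(t) < 0$ for all $t \in (0, \delta)$.

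The critical points of $\gamma_\lambda$ on $(0,\infty)$ are exactly $t_n^+(u,v)$ (local minimum) and $t_n^-(u,v)$ (local maximum), by Proposition \ref{tn-,tn+sing}. In particular, $\gamma_\lambda'$ does not vanish on $(0, t_n^+(u,v))$; combined with $\gamma_\lambda'(0^+) = -\infty$ and continuity, this forces $\gamma_\lambda'(t) < 0$ throughout $(0, t_n^+(u,v))$. Therefore $\gamma_\lambda$ is strictly decreasing on the whole interval $(0, t_n^+(u,v))$. Picking any $t_0 \in (0, \min\{\delta, t_n^+(u,v)\})$ gives
$$\gamma_\lambda(t_n^+(u,v)) < \gamma_\lambda(t_0) < 0.$$

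Finally, I would conclude
$$C_{\mathcal{N}_\lambda^+ \cap \mathcal{A}} \le E_\lambda\bigl(t_n^+(u,v)(u,v)\bigr) = \gamma_\lambda(t_n^+(u,v)) < 0,$$
which is the desired inequality. There is no real obstacle here; the argument rests entirely on the already-established fiber structure of Proposition \ref{tn-,tn+sing} and on the fact that the singular terms (with exponents $1-p, 1-q < 2 < \alpha+\beta$) dominate near $t=0$, forcing $\gamma_\lambda$ to dip below zero before reaching its first critical point. The one step to double-check carefully is the strict monotonicity on $(0, t_n^+(u,v))$, which follows cleanly from the fact that $\gamma_\lambda'$ has no zero there and is negative near the origin.
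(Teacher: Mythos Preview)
Your proposal is correct and follows essentially the same approach as the paper: both arguments exploit the fibering map $\gamma_\lambda(t) = E_\lambda(tu,tv)$, observe that it is strictly decreasing on $(0, t_n^+(u,v))$ by Proposition~\ref{tn-,tn+sing}, and combine this with $\gamma_\lambda(0^+) = 0$ to conclude $E_\lambda(t_n^+(u,v)(u,v)) < 0$. Your version is more explicit about \emph{why} the monotonicity holds (via $\gamma_\lambda'(0^+) = -\infty$ and the absence of critical points before $t_n^+(u,v)$), whereas the paper simply invokes Proposition~\ref{tn-,tn+sing} directly.
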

	\begin{proof}
		Firstly, by using the Proposition \ref{tn-,tn+sing}, the $t \mapsto E_{\lambda}(tu, tv)$ is decreasing for each $0 < t < t_n^+(u, v)$. As a consequence, $E_{\lambda}(t_n^+(u, v)(u, v)) < 0$. In particular, we obtain that $C_{\mathcal{N}^+_{\lambda}\cap \mathcal{A}} < E_{\lambda}(t_n^+(u, v)(u, v)) < 0$. This finishes the proof. 
	\end{proof}
	\begin{prop}\label{elemN+N-inf}
		Suppose ($P_0$), ($P$), ($V_0$) and ($V_1'$). Assume also that $0 < \lambda < \lambda^*$ holds. Then for some $(u, v) \in \mathcal{N}_{\lambda}^+$ and $(z, w) \in \mathcal{N}_{\lambda}^-$, 
		\begin{equation}\label{pribminsing}
			E_{\lambda}(u, v) = C_{\mathcal{N}_{\lambda}^+} \;\;\;\; \mbox{and}\;\;\;\; E_{\lambda}(z, w) = C_{\mathcal{N}_{\lambda}^-}.
		\end{equation}
	\end{prop}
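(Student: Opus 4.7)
The plan is to show that the two infima are attained by direct application of the strong convergence results already established. By Proposition \ref{E coersivasing}, the functional $E_\lambda$ is coercive on $\mathcal{N}_\lambda$, so both $C_{\mathcal{N}_\lambda^+}$ and $C_{\mathcal{N}_\lambda^-}$ are finite. Moreover, Proposition \ref{CN+ negsig} gives $C_{\mathcal{N}_\lambda^+\cap\mathcal{A}}<0$, so a minimizing sequence can be chosen with strictly negative energy.

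For the $\mathcal{N}_\lambda^-$ level, I would take any sequence $(z_k,w_k)\in \mathcal{N}_\lambda^-$ with $E_\lambda(z_k,w_k)\to C_{\mathcal{N}_\lambda^-}$. Coercivity gives boundedness in $X$, and by Remark \ref{N- cont Asing} together with Proposition \ref{N-longe zerosing} the sequence stays uniformly away from zero and from the boundary of $\mathcal{A}$. Proposition \ref{convfortN-} then delivers $(z,w)\in \mathcal{N}_\lambda^-$ with $(z_k,w_k)\to(z,w)$ strongly in $X$. Continuity of $E_\lambda$ on $X$ yields $E_\lambda(z,w)=C_{\mathcal{N}_\lambda^-}$.

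For the $\mathcal{N}_\lambda^+$ level I would argue analogously. Pick $(u_k,v_k)\in\mathcal{N}_\lambda^+\cap\mathcal{A}$ with $E_\lambda(u_k,v_k)\to C_{\mathcal{N}_\lambda^+\cap\mathcal{A}}$. Coercivity yields boundedness in $X$, so up to a subsequence $(u_k,v_k)\rightharpoonup(u,v)$. Proposition \ref{convforte N^+sing} then produces $(u,v)\in\mathcal{N}_\lambda^+\cap \mathcal{A}$ with strong convergence $(u_k,v_k)\to(u,v)$ in $X$; continuity of $E_\lambda$ gives $E_\lambda(u,v)=C_{\mathcal{N}_\lambda^+\cap\mathcal{A}}$.

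The main obstacle, already settled inside Proposition \ref{convforte N^+sing}, is to verify that the weak limit of a minimizing sequence for $C_{\mathcal{N}_\lambda^+}$ does not escape to the boundary of $\mathcal{A}$ (where the coupling $B(u,v)$ would vanish and the Nehari identity would collapse to the two purely singular equations \eqref{eqsingu}--\eqref{eqsingv}); this is resolved there by comparing with ground states $w_1\in \mathcal{N}_{1,\lambda}^+$ and $w_2\in \mathcal{N}_{2,\lambda}^+$ of the scalar singular problems, whose positivity forces $(w_1,w_2)\in \mathcal{A}$, so the strict negativity $C_{\mathcal{N}_\lambda^+\cap\mathcal{A}}<0$ from Proposition \ref{CN+ negsig} prevents the limit from lying in $\partial\mathcal{A}$. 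Combining the two items gives the proposition.
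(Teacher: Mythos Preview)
Your proposal is correct and follows essentially the same route as the paper: take minimizing sequences in $\mathcal{N}_\lambda^+$ and $\mathcal{N}_\lambda^-$, invoke Propositions~\ref{convforte N^+sing} and~\ref{convfortN-} respectively to obtain strong convergence to elements of the corresponding Nehari subsets, and conclude by continuity of $E_\lambda$. The extra commentary you provide about coercivity, the role of Proposition~\ref{CN+ negsig}, and the obstacle of the weak limit escaping to $\partial\mathcal{A}$ is accurate background, but the paper's proof is more terse and simply cites the two strong-convergence propositions directly.
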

	\begin{proof}
		Let us consider a minimizing sequence $(u_k, v_k) \in \mathcal{N}_{_{\lambda}}^+$. Here we observe that $(u_k, v_k) \to (u, v)$ in $X$, see Proposition \ref{convforte N^+sing}. Furthermore, we obtain that
		$$E_{\lambda}(u, v) = \liminf\limits_{k\to \infty}E_{\lambda}(u_k, v_k) = C_{\mathcal{N}_{\lambda}^+}.$$
		Similarly, by using Proposition \ref{convfortN-}, the sequence $(z_k, w_k)$ satisfies 
		$(z_k, w_k) \to (z, w)$ in $X$ for some $(z, w) \in \mathcal{N}_{\lambda}^-$. Under these conditions, we observe that
		$$ E_{\lambda}(z, w) = \liminf\limits_{k \to \infty}E_{\lambda}(z_k, w_k) = C_{\mathcal{N}_{\lambda}^-}.$$ This ends the proof. 
	\end{proof}
	\begin{prop}\label{passo1sing}
		Suppose ($P_0$), ($P$), ($V_0$), ($V_1'$) and $0 < \lambda < \lambda^*$. Let $(u, v)\in \mathcal{N}_{\lambda}^+$ and $(z, w) \in \mathcal{N}_{\lambda}^-$ such that $E_{\lambda}(u, v) = C_{\mathcal{N}_{\lambda}^+}$ and $E_{\lambda}(z, w) = C_{\mathcal{N}_{\lambda}^-}$ Then for every pair $(\psi_1, \psi_2) \in X_+$ there exists $\varepsilon_0 > 0$ such that 
		\begin{itemize}
			\item[$i)$]$E_{\lambda}(u, v) \le E_{\lambda}(u + \varepsilon\psi_1, v + \varepsilon \psi_2)$,  $\forall \varepsilon \in [0, \varepsilon_0]$; 
			\item[$ii)$] Given $(z_{\varepsilon}, w_{\varepsilon}) = (z, w) + \varepsilon(\psi_1, \psi_2)$ we obtain that $ t_n^-(z_{\varepsilon}, w_{\varepsilon}) \to 1$ as $\varepsilon \to 0$. 
		\end{itemize}
	\end{prop}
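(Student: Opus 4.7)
My plan reduces both items to the continuity of $t_n^{\pm}$ on $\mathcal{A}$ established in Proposition \ref{tn-,tn+sing}, together with the monotonicity structure of the fibering map $\gamma_{\varepsilon}(t) := E_{\lambda}(t(u_{0} + \varepsilon\psi_1, v_{0} + \varepsilon\psi_2))$ recorded in Remark \ref{Rn lambda e E'sing} and Lemma \ref{apendice1}.

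First I would verify that both perturbed pairs $(u_\varepsilon, v_\varepsilon) := (u,v) + \varepsilon(\psi_1, \psi_2)$ and $(z_\varepsilon, w_\varepsilon) := (z,w) + \varepsilon(\psi_1, \psi_2)$ remain in $\mathcal{A}$ for small $\varepsilon \ge 0$. Since $(z,w) \in \mathcal{N}_{\lambda}^{-} \subset \mathcal{A}$ by Remark \ref{N- cont Asing}, and $(u,v) \in \mathcal{N}_{\lambda}^{+} \cap \mathcal{A}$ by Proposition \ref{convforte N^+sing}, the coupling integrals $B(u,v)$ and $B(z,w)$ are strictly positive, and continuity of $B : X \to \mathbb{R}$ provides some $\varepsilon_0 > 0$ for which this is preserved.

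For item $(ii)$ the argument is essentially immediate: by definition of $\mathcal{N}_\lambda^-$ we have $t_n^-(z,w) = 1$, hence $t_n^- \in C^0(\mathcal{A}, \mathbb{R})$ yields $t_n^-(z_\varepsilon, w_\varepsilon) \to 1$ as $\varepsilon \to 0^+$. For item $(i)$ I would exploit that $(u,v) \in \mathcal{N}_\lambda^+$ forces $t_n^+(u,v) = 1$, and the strict inequality $t_n^+(u, v) < t_n(u, v) < t_n^-(u, v)$ from Proposition \ref{tn-,tn+sing} then gives $t_n^-(u,v) > 1$. Continuity of both maps lets me shrink $\varepsilon_0$ so that $t_n^-(u_\varepsilon, v_\varepsilon) > 1$ for every $\varepsilon \in [0, \varepsilon_0]$. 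The signs of $Q_n'$ on the two branches, combined with Remark \ref{Rn lambda e E'sing}, then show that $\gamma_\varepsilon$ is strictly decreasing on $(0, t_n^+(u_\varepsilon, v_\varepsilon))$ and strictly increasing on $(t_n^+(u_\varepsilon, v_\varepsilon), t_n^-(u_\varepsilon, v_\varepsilon))$; in particular it attains its minimum on the open interval $(0, t_n^-(u_\varepsilon, v_\varepsilon))$ precisely at $t_n^+(u_\varepsilon, v_\varepsilon)$. Since $1$ lies in this interval I obtain
$$E_\lambda(u_\varepsilon, v_\varepsilon) = \gamma_\varepsilon(1) \ge \gamma_\varepsilon(t_n^+(u_\varepsilon, v_\varepsilon)) = E_\lambda\bigl(t_n^+(u_\varepsilon, v_\varepsilon)(u_\varepsilon, v_\varepsilon)\bigr) \ge C_{\mathcal{N}_\lambda^+} = E_\lambda(u, v),$$
where the last inequality uses that $t_n^+(u_\varepsilon, v_\varepsilon)(u_\varepsilon, v_\varepsilon) \in \mathcal{N}_\lambda^+$ and that $(u,v)$ realizes the infimum.

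The only delicate point is securing the strict uniform inequality $t_n^-(u_\varepsilon, v_\varepsilon) > 1$, which is indispensable for placing $t = 1$ in the valley where $\gamma_\varepsilon$ majorizes its local minimum value; without it, $t = 1$ could land in the decreasing branch past $t_n^-$ and the key comparison would fail. This is where the strict separation $t_n^-(u,v) > 1$ supplied by $(u,v) \in \mathcal{N}_\lambda^+$, together with the $C^0$-continuity of $t_n^-$ on $\mathcal{A}$, does the essential work; everything else is routine manipulation of the already-established properties of the fibering map.
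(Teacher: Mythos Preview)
Your proposal is correct and follows essentially the same line as the paper. For item $(i)$ the paper secures $1 \in (0, t_n^-(u_\varepsilon, v_\varepsilon))$ via the continuity of $\varepsilon \mapsto E_\lambda''(u_\varepsilon, v_\varepsilon)(u_\varepsilon, v_\varepsilon)^2$ rather than the continuity of $t_n^-$ that you invoke, and for item $(ii)$ the paper redoes the Implicit Function Theorem argument in place instead of citing Proposition~\ref{tn-,tn+sing}; your route is slightly more economical but the underlying mechanism is identical.
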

	\begin{proof}
		$i)$ Let $(\psi_1, \psi_2) \in X_+$ be fixed and $(u_{\varepsilon}, v_{\varepsilon}) = (u, v) + \varepsilon(\psi_1, \psi_2)$. Now, we write
		$$\gamma_{_{\lambda, (u, v)}} (t) : = E_{\lambda}(tu, tv) \;\;\; \mbox{and}\;\;\; \gamma_{_{\lambda, (u_{\varepsilon}, v_{\varepsilon})}} (t):= E_{\lambda}(tu_{\varepsilon}, tv_{\varepsilon}). $$ 
		Now, by using \eqref{E'' cap 2}, we have that 
		$$\gamma_{_{\lambda, (u_{\varepsilon}, v_{\varepsilon})}}(1) = 2A(u_{\varepsilon}, v_{\varepsilon}) - \lambda(1 - p)P(u_{\varepsilon}) -\lambda(1 - q)Q( v_{\varepsilon}) - \theta (\alpha + \beta)B(u_{\varepsilon}, v_{\varepsilon}). $$
		Notice also that $\varepsilon \mapsto \gamma_{_{\lambda, (u_{\varepsilon}, v_{\varepsilon})}}(1)$ is continuous for each $\varepsilon > 0$. Hence, there exists $\varepsilon_0 > 0$ such that
		$\gamma_{_{\lambda, (u_\varepsilon, v_\varepsilon)}}''(1) > 0$ holds for each $\varepsilon \in [0, \varepsilon_0]$. As a consequence, we mention that 
		$$E_{\lambda}''(u_{\varepsilon}, v_{\varepsilon})(u_{\varepsilon}, v_{\varepsilon})^2 > 0,\;\;\;\; \mbox{for}\;\;\; \varepsilon \in [0, \varepsilon_0].$$
		The last assertion implies that $1 \in (0, t_n^-(u_{\varepsilon}, v_{\varepsilon}))$. Recall also that $t\mapsto E_{\lambda}(tu{\varepsilon}, t v_{\varepsilon})$ with $t \in [0, t_n^-(u_{\varepsilon}, v_{\varepsilon})]$ assumes its minimum at $t_n^+(u_{\varepsilon}, v_{\varepsilon})$. Therefore, we obtain that
		$$E_{\lambda}(u, v) \le E_{\lambda}(t_n^+(u_{\varepsilon}, v_{\varepsilon})(u_{\varepsilon}, v_{\varepsilon})) \le E_{\lambda}(u_{\varepsilon}, v_{\varepsilon}).$$
		
		$ii)$ The main goal here is to show that given $(z, w) \in \mathcal{N}^-_{\lambda}$ there exists a neighborhood of $(z, w)$ such that all elements can be projected into $\mathcal{N}^-_{\lambda}$. Consider $z_\varepsilon = z + \varepsilon \psi_1$ and $w_\varepsilon = w + \varepsilon \psi_2$ where $(z, w) \in \mathcal{N}^-_{\lambda}$. Define the function $F: (0, \infty) \times X \to \mathbb{R}$ given by: 
		$$F(t, (\psi_1, \psi_2)) = A(t(z_\varepsilon, w_\varepsilon)) - \lambda P(t(z_\varepsilon
		)) - \lambda Q(t(w_\varepsilon
		)) - \theta B(t(z_\varepsilon, w_\varepsilon)).$$
		Since $(z, w) \in \mathcal{N}_{\lambda}$ we obtain that
		$F(1, (0, 0)) = E_{\lambda}'(z, w)(z, w) = 0.$ Furthermore, we know that $(z, w) \in \mathcal{N}_{\lambda}^-.$ It is easy to see that 
		$$\frac{\partial F}{\partial t} (1, (0, 0))= \gamma''_{_{\lambda, (z, w)}}(1) < 0.$$
		Hence, by using the Implicit Function Theorem, there exist open $I \subset \mathbb{R}$ a neighborhood of $1$, $\Omega$ a neighborhood of $(z, w)$ and $f: \Omega \to I$ such that $f(z, w) = 1$ is verified. Notice also that $f \in C^0 (\Omega, I)$ is unique and  
		$$F(f(z_\varepsilon, w_\varepsilon), (z_\varepsilon, w_\varepsilon)) = 0, \;\;\;\; (z_\varepsilon, w_\varepsilon) \in \Omega, \,\, \partial_t F(f(z_\varepsilon, w_\varepsilon), (z_\varepsilon, w_\varepsilon)) < 0,\;\;\; \mbox{for all}\;\;\; \varepsilon \in (0, \varepsilon_0).$$
		In particular, we see that $f(z_\varepsilon, w_\varepsilon)(z_\varepsilon, w_\varepsilon) \in \mathcal{N}_{\lambda}^-.$
		In fact, for each $\varepsilon$ small enough, we infer that
		$$\frac{\partial F}{\partial t}(f(z_\varepsilon, w_\varepsilon),(z_\varepsilon, w_\varepsilon)) < 0.$$
		Under these conditions, we prove that
		\begin{eqnarray*}
			2f(z_\varepsilon, w_\varepsilon)A(z_\varepsilon, w_\varepsilon) - \lambda (1 - p) f(z_\varepsilon, w_\varepsilon)^{-p}P(z_\varepsilon)- \lambda (1 - q) f(z_\varepsilon, w_\varepsilon)^{-q}Q(w_\varepsilon) 
			- \theta (\alpha + \beta)f(z_\varepsilon, w_\varepsilon)^{\alpha + \beta - 1}B(z_\varepsilon, w_\varepsilon) < 0. &&
		\end{eqnarray*}
		Therefore, we deduce that 
		$$2A(f(z_\varepsilon, w_\varepsilon)(z_\varepsilon, w_\varepsilon)) - \lambda (1 - p) P(f(z_\varepsilon, w_\varepsilon) z_\varepsilon)- \lambda (1 - q) Q(f(z_\varepsilon, w_\varepsilon)w_\varepsilon) - \theta (\alpha + \beta)B(f(z_\varepsilon, w_\varepsilon)(z_\varepsilon, w_\varepsilon)) = \gamma''_{\lambda, (z_\varepsilon, w_\varepsilon)}(1) < 0.$$
		Thus, we obtain that $f(z_\varepsilon, w_\varepsilon)(z_\varepsilon, w_\varepsilon) \in \mathcal{N}_{\lambda}^-$. Obviously, we mention that $t_n^-(z_\varepsilon, w_\varepsilon) = f(z_\varepsilon, w_\varepsilon)$ in $\Omega$. Furthermore, we observe that 
		$$\lim\limits_{\varepsilon \to 0} (z_\varepsilon, w_\varepsilon) = (z, w) \;\;\;\; \mbox{and}\;\;\;\; \lim\limits_{\varepsilon \to 0}t_n^-(z_\varepsilon, w_\varepsilon) = 1 = t_n^-(z, w).$$
		This completes the proof.
	\end{proof}
	
	\begin{lem}\label{minimizasing} 
		Suppose $(P_0),(P), (V_0),(V_1')$ and $\lambda\in (0,\lambda^*)$. Let $(u, v)\in \mathcal{N}_{\lambda}^+$ and $(z, w)\in \mathcal{N}_{\lambda}^-$ be such that $ E_{\lambda}(u, v) = C_{\mathcal{N}_{\lambda}^+}$ and $ E_{\lambda}(z, w) = C_{\mathcal{N}_{\lambda}^-}$. Then $u, v, z, w > 0$ a.e. in $\mathbb{R}^N$ and for each pair $(\psi_1, \psi_2) \in X_+$, we obtain
		\begin{eqnarray}\label{IneqsolfracaN+}
			\left<(u, v), (\psi_1, \psi_2)\right> -  \frac{\theta}{\alpha + \beta}\int \alpha\vert u\vert^{\alpha - 2}u \psi_1 \vert v \vert^{\beta}+ \beta\vert u\vert^{\alpha} \vert v \vert^{\beta - 2} v \psi_2dx   - \lambda\int a(x)\vert u\vert^{-p}\psi_1 +b(x)\vert v\vert^{-q}\psi_2dx \ge 0 .
		\end{eqnarray}    \begin{eqnarray}\label{IneqsolfracaN-}
			\left<(z, w), (\psi_1, \psi_2)\right> - \frac{\theta}{\alpha + \beta}\int \alpha\vert z\vert^{\alpha - 2}z \psi_1 \vert w \vert^{\beta} + \beta\vert z\vert^{\alpha} \vert w \vert^{\beta - 2} w \psi_2dx   - \lambda\int a(x)\vert z\vert^{-p}\psi_1 +  b(x)\vert w\vert^{-q}\psi_2dx \ge 0 .
		\end{eqnarray}
	\end{lem}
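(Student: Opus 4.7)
My plan is to derive both variational inequalities from a one-sided energy perturbation combined with Fatou's lemma applied to the singular integrals, and to deduce pointwise positivity $u,v,z,w>0$ a.e.\ as a by-product of the same argument. As a preliminary reduction, since $[\,|f|\,] \le [f]$ and $P(u),Q(v),B(u,v)$ depend only on $|u|,|v|$, one has $E_{\lambda}(|u|,|v|)\le E_{\lambda}(u,v)$; projecting onto $\mathcal{N}_{\lambda}^{\pm}$ via $t_n^{\pm}$ shows that we may assume $u,v,z,w\ge 0$ a.e.

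For $(u,v)\in\mathcal{N}_{\lambda}^{+}$ and $(\psi_1,\psi_2)\in X_{+}$, Proposition \ref{passo1sing}(i) furnishes
\begin{equation*}
0 \;\le\; E_{\lambda}(u+\varepsilon\psi_1,\,v+\varepsilon\psi_2) - E_{\lambda}(u,v),\qquad \varepsilon\in[0,\varepsilon_0].
\end{equation*}
I would divide by $\varepsilon$ and let $\varepsilon\to 0^{+}$. The quadratic norm piece contributes $2\langle(u,v),(\psi_1,\psi_2)\rangle+O(\varepsilon)$, and $[B(u+\varepsilon\psi_1,v+\varepsilon\psi_2)-B(u,v)]/\varepsilon$ converges to the expected G\^ateaux derivative by dominated convergence, with $|u|^{\alpha-1}\psi_1|v|^{\beta}$ and $|u|^{\alpha}|v|^{\beta-1}\psi_2$ bounded in $L^1$ by H\"older and the Sobolev embedding. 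For the singular terms, the monotonicity $(u+\varepsilon\psi_1)^{1-p}\ge u^{1-p}$ (valid since $u,\psi_1\ge 0$ and $1-p>0$) makes the increment nonnegative, and Fatou's lemma gives
\begin{equation*}
\liminf_{\varepsilon\to 0^{+}} \tfrac{1}{\varepsilon}\int a\bigl[(u+\varepsilon\psi_1)^{1-p}-u^{1-p}\bigr]\,dx \;\ge\; (1-p)\int a\,u^{-p}\psi_1\,dx
\end{equation*}
with the convention $u^{-p}\psi_1=+\infty$ on $\{u=0,\,\psi_1>0\}$, and analogously for the $Q$-term. If either $\{u=0,\,\psi_1>0\}$ or $\{v=0,\,\psi_2>0\}$ had positive measure, the liminf would be $+\infty$, forcing the difference quotient to $-\infty$ and contradicting the one-sided inequality; hence $u,v>0$ a.e. Dominated convergence then promotes the liminf to an equality and yields \eqref{IneqsolfracaN+}.

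For $(z,w)\in\mathcal{N}_{\lambda}^{-}$, I would put $(z_\varepsilon,w_\varepsilon)=(z+\varepsilon\psi_1,w+\varepsilon\psi_2)$ and $t_\varepsilon:=t_n^{-}(z_\varepsilon,w_\varepsilon)$; Proposition \ref{passo1sing}(ii) gives $t_\varepsilon\to 1$ and $t_\varepsilon(z_\varepsilon,w_\varepsilon)\in\mathcal{N}_{\lambda}^{-}$. Minimality of $(z,w)$ on $\mathcal{N}_{\lambda}^{-}$, together with the fact that $t\mapsto E_{\lambda}(tz,tw)$ attains a local maximum at $t=1$ (since $E''_{\lambda}(z,w)(z,w)^{2}<0$), produces for all sufficiently small $\varepsilon$
\begin{equation*}
0 \;\le\; E_{\lambda}(t_\varepsilon z_\varepsilon,t_\varepsilon w_\varepsilon) - E_{\lambda}(z,w) \;\le\; E_{\lambda}(t_\varepsilon z_\varepsilon,t_\varepsilon w_\varepsilon) - E_{\lambda}(t_\varepsilon z,t_\varepsilon w).
\end{equation*}
The right-hand bracket is a pure perturbation in the direction $(\psi_1,\psi_2)$ scaled by a bounded factor $t_\varepsilon\to 1$, so dividing by $\varepsilon$, passing to the $\liminf$, and invoking Fatou on the singular increments exactly as in the preceding paragraph delivers $z,w>0$ a.e.\ and \eqref{IneqsolfracaN-}. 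The central obstacle in both arguments is this singular limit: $E_{\lambda}$ fails to be Fr\'echet differentiable at minimizers, and it is precisely Fatou's lemma applied to the nonnegative increment $(u+\varepsilon\psi_1)^{1-p}-u^{1-p}$ that simultaneously enforces strict positivity and produces the linear singular contributions $\lambda\int a\,u^{-p}\psi_1\,dx$ and $\lambda\int b\,v^{-q}\psi_2\,dx$ in the final weak variational inequalities.
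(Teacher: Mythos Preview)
Your proof is correct and follows essentially the same route as the paper: a one-sided energy comparison coming from Proposition~\ref{passo1sing} (item (i) for $\mathcal{N}_\lambda^+$, item (ii) together with the local-maximum property of the fiber map at $t=1$ for $\mathcal{N}_\lambda^-$), followed by division by $\varepsilon$, dominated convergence on the smooth pieces, and Fatou's lemma on the singular increment to obtain both positivity and the linear singular contribution. Your presentation is in fact cleaner than the paper's, which at one point invokes $\langle R_n'(u,v),(\psi_1,\psi_2)\rangle\ge 0$ and derives an auxiliary inequality with coefficients $2,\ (1-p),\ (1-q)$ that is not the target \eqref{IneqsolfracaN+}; your direct energy-difference computation lands on the correct coefficients immediately. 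One cosmetic slip: the phrase ``the quadratic norm piece contributes $2\langle(u,v),(\psi_1,\psi_2)\rangle+O(\varepsilon)$'' should read $\langle(u,v),(\psi_1,\psi_2)\rangle+O(\varepsilon)$ once the factor $\tfrac12$ in $E_\lambda$ is accounted for, and the final step upgrading the Fatou liminf to an equality via dominated convergence is true but not needed for the stated inequalities.
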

	\begin{proof}
		The proof follows the same ideas discussed in Step 2 of Proposition \ref{Neharilambda*}. Indeed, by using Proposition \ref{passo1sing}, we deduce that $E_\lambda(u_\varepsilon, v_\varepsilon) - E_\lambda(u, v) \ge 0.$
		As a consequence, we infer that
		\begin{eqnarray*} 		0 & \le &   \frac{1}{2}\Vert (u_\varepsilon, v_\varepsilon) \Vert^2 - \frac{\lambda}{1 - p}\int a(x)\vert u_\varepsilon \vert ^{1 - p}dx - \frac{\lambda}{1 - q} \int b(x) \vert v_\varepsilon \vert ^{1 - q}dx + \nonumber\\
			& - &  \frac{\theta}{\alpha + \beta}\int \vert u_\varepsilon \vert^{\alpha}\vert v_\varepsilon \vert^{\beta} dx - \frac{1}{2}\Vert (u, v) \Vert^2 + \frac{\lambda}{1 -p}\int a(x)\vert u \vert ^{1 - p}dx + \nonumber\\
			&& + \frac{\lambda}{1 - q} \int b(x) \vert v \vert ^{1 - q}dx +  \frac{\theta}{\alpha + \beta}\int \vert u \vert^{\alpha}\vert v \vert^{\beta} dx.
		\end{eqnarray*}
		The last expression implies that
		\begin{eqnarray*} 		\frac{\lambda}{1 - p}\left(\int a(x)\left[\vert u_\varepsilon \vert ^{1 - p} - \vert u \vert ^{1 - p}\right]dx \right)+ \frac{\lambda}{1 - q}\left(\int b(x) \left[\vert v_\varepsilon \vert ^{1 - q} - \vert v \vert ^{1 - q}\right]dx\right)
			& \le &    \nonumber\\
			\frac{1}{2}\Vert (u_\varepsilon, v_\varepsilon) \Vert^2	- \frac{1}{2}\Vert (u, v) \Vert^2+
			\frac{\theta}{\alpha + \beta}\int \vert u_\varepsilon \vert^{\alpha}\vert v_\varepsilon \vert^{\beta} dx -\frac{\theta}{\alpha + \beta}\int \vert u \vert^{\alpha}\vert v \vert^{\beta} dx.&&
		\end{eqnarray*}
		Now, using the last assertion and doing $\varepsilon \to 0$ we obtain that
		\begin{eqnarray*}
			\lambda\left[\int \frac{a(x)\left(\vert u_\varepsilon\vert^{1 - p} - \vert u \vert^{1 - p}\right)}{(1 - p)\varepsilon}dx + \int \frac{b(x)\left(\vert v_\varepsilon\vert^{1 - q} - \vert v \vert^{1 - q}\right)}{(1 - q)\varepsilon}dx\right]
			\le \lim\limits_{\varepsilon \to 0} \left [ \frac{\Vert (u_\varepsilon, v_\varepsilon)\Vert^2 - \Vert (u, v)\Vert^2}{2 \varepsilon} - \theta\int \frac{\vert u_\varepsilon\vert^{\alpha}\vert v_\varepsilon\vert^{\beta} -\vert u\vert^{\alpha}\vert v\vert^{\beta}}{(\alpha + \beta)\varepsilon}dx \right].&&
		\end{eqnarray*}
		From now on, using the similar ideas employed in the proof of Proposition \ref{Neharilambda*}, the singular term has Gateaux derivative for any direction $(\psi_1, \psi_2) \in X_+$. In fact, we observe that 
		$$R_n(u, v) = \frac{A(u, v) - \theta B(u, v)}{P(u) + Q(v)} = \lambda.$$
		Thus, choosing $(\psi_1, \psi_2) \in X_+$, we infer that 
		$\left<R_n'(u, v),(\psi_1, \psi_2)\right>\ge 0.$
		Using the same ideas provided in the proof of Proposition \ref{Neharilambda*} we obtain that
		$$f(u, v) = \frac{1}{ \int a(x)\vert u \vert^{1 
				- p}dx +  \int b(x)\vert v \vert^{1 
				- q}dx} \;\;\; \mbox{and} \;\;\; g(u, v) = \Vert(u, v)\Vert^2 - \theta \int \vert u \vert^{\alpha}\vert v \vert^{\beta}dx.$$
		Under these conditions, by using the directional derivatives $f'(u, v)(\psi_1, \psi_2)$, $g'(u, v)(\psi_1, \psi_2)$, we obtain that
		\begin{eqnarray}\label{ineqsolsing}
			2\left<(u, v), (\psi_1, \psi_2)\right> - \theta \alpha \int \vert u \vert^{\alpha -2}u \psi_1\vert v\vert^{\beta}dx - \theta \beta \int \vert u \vert^{\alpha}\vert v\vert^{\beta - 2}v\psi_2dx &&\nonumber \\
			- \lambda\left((1 - p)\int a(x)\vert u \vert^{-p}\psi_1dx + (1 - q)\int b(x)\vert v \vert^{-q}\psi_2dx\right) \ge 0,\;\;\;\forall\;(\psi_1, \psi_2) \in X_+.&&
		\end{eqnarray}
		Now, we shall prove that \eqref{IneqsolfracaN-} holds true. Firstly, by using Proposition \ref{passo1sing}, item $ii)$, given any $(z, w) \in \mathcal{N}_{\lambda}^-$ such that $E_{\lambda}(z, w) = C_{\mathcal{N}_{\lambda}^-}$, there exists $t_n^-(z_\varepsilon, w_\varepsilon)$ such that $t_n^-(z_\varepsilon, w_\varepsilon)(z_\varepsilon, w_\varepsilon) \in \mathcal{N}_{\lambda}^-$. Moreover, we know that $t_n^-(z_\varepsilon, w_\varepsilon) \to 1$ as $t_n^-(z, w)$. Hence, we infer that
		\begin{eqnarray*}
			E_{\lambda}(t_n^-(z_\varepsilon, w_\varepsilon)(z_\varepsilon, w_\varepsilon)) &\ge& E_{\lambda}(z, w) = \gamma_{\lambda, (z, w)}(1) \ge \gamma_{\lambda, (z, w)}(t_n^-(z_\varepsilon, w_\varepsilon))= E_{\lambda}(t_n^-(z_\varepsilon, w_\varepsilon)(z, w)).
		\end{eqnarray*}
		Moreover, we observe that
		$E_{\lambda}(t_n^-(z_\varepsilon, w_\varepsilon)(z_\varepsilon, w_\varepsilon)) -  E_{\lambda}(t_n^-(z_\varepsilon, w_\varepsilon)(z, w)) \ge 0.$
		Using the last inequality we obtain the following estimate:
		\begin{eqnarray*}
			&&\left[t_n^-(z_\varepsilon, w_\varepsilon)\right]^2\left[\frac{\Vert(z_\varepsilon, w_\varepsilon)\Vert^2 - \Vert(z, w)\Vert^2}{2 \varepsilon}\right]  - \left[t_n^-(z_\varepsilon, w_\varepsilon)\right]^{\alpha + \beta}\left[\int \frac{\vert z_\varepsilon \vert^{\alpha}\vert w_\varepsilon \vert^{\beta} - \vert z \vert^{\alpha}\vert w\vert^{\beta}}{(\alpha + \beta)\varepsilon}dx\right]\\&&
			\geq\;\;\; \left[t_n^-(z_\varepsilon, w_\varepsilon)\right]^{1 - p}\lambda\left[\int a(x)\frac{\vert z_\varepsilon \vert^{1 - p} - \vert z \vert^{1 - p}}{(1 -p)\varepsilon}dx\right] + \left[t_n^-(z_\varepsilon, w_\varepsilon)\right]^{1 - q}\lambda\left[\int b(x)\frac{\vert w_\varepsilon \vert^{1 - q} - \vert w \vert^{1 - q}}{(1 -q)\varepsilon}dx\right]
		\end{eqnarray*}
		where $\varepsilon > 0$ is small enough. Now, for any $(\psi_1, \psi_2) \in X_+$ and doing as $\varepsilon \to 0$, we obtain that
		\begin{eqnarray*}
			\left<(z, w), (\psi_1, \psi_2)\right> - \frac{\theta}{\alpha + \beta}\int \alpha\vert z\vert^{\alpha - 2}z \psi_1\vert w \vert^{\beta} + \beta \vert z\vert^{\alpha} \vert w \vert^{\beta - 2}w\psi_2dx 
			-\lambda\int a(x) \vert z \vert^{-p}\psi_1 + b(x)\vert w\vert^{- q} \psi_2dx \geq 0,
		\end{eqnarray*}
		This ends the proof. 
	\end{proof}
	
	\begin{prop}\label{solucaosing}
		Suppose ($P_0$), ($P$), ($V_0$), ($V_1'$) and $0 < \lambda < \lambda^*$. Then $(u, v) \in \mathcal{N}^+_{\lambda}$ and $(z, w) \in \mathcal{N}^-_{\lambda}$ where $(u,v)$ and $(z,w)$ given by \eqref{pribminsing} are weak solutions for the System \eqref{sistema Principal singular}.
	\end{prop}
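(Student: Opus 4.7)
The plan is to upgrade the one-sided inequalities \eqref{IneqsolfracaN+} and \eqref{IneqsolfracaN-} from Lemma \ref{minimizasing}, which hold only for test functions in $X_+$, to genuine equalities valid for all $(\psi_1,\psi_2)\in X$. The arguments for $(u,v)\in\mathcal{N}_{\lambda}^+$ and $(z,w)\in\mathcal{N}_{\lambda}^-$ are essentially identical, so I would present them in parallel and focus on $(u,v)$.

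Given any $(\psi_1,\psi_2)\in X$ and $\varepsilon>0$ small, the idea is to insert the truncated admissible test functions $\phi_1=(u+\varepsilon\psi_1)^+$ and $\phi_2=(v+\varepsilon\psi_2)^+$ into \eqref{IneqsolfracaN+}; these lie in $X_+$, so the inequality applies. Decomposing $\phi_i=(u+\varepsilon\psi_i)-(u+\varepsilon\psi_i)^-$ and invoking the Nehari identity $E'_\lambda(u,v)(u,v)=0$, the problem reduces to controlling the error contributions living on the sets
$$A_x^-=\{x\in\mathbb{R}^N:u(x)+\varepsilon\psi_1(x)<0\},\quad B_x^-=\{x\in\mathbb{R}^N:v(x)+\varepsilon\psi_2(x)<0\}.$$
Since Lemma \ref{minimizasing} guarantees $u,v>0$ a.e., one has $|A_x^-|,|B_x^-|\to 0$ as $\varepsilon\to 0^+$.

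The technical core is the treatment of the Gagliardo seminorm under this truncation, which I would carry out exactly as in Step 3 of the proof of Proposition \ref{Neharilambda*}: the double integral is split into the four pieces $A_x^{\pm}\times A_y^{\pm}$ (and analogously for the $v$-component on the pieces $B_x^{\pm}\times B_y^{\pm}$), the mixed-sign contributions are controlled via the sign information $u(y)\ge -\varepsilon\psi_1(y)$ on $A_y^+$, and the diagonal piece on $A_x^-\times A_y^-$ is absorbed into an $O(\varepsilon)$ remainder. The local terms involving the potentials $V_i$, the coupling $\theta B(u,v)$, and the singular pieces $a(x)u^{-p}$, $b(x)v^{-q}$ are handled by direct algebraic manipulations parallel to \eqref{termoteta1}--\eqref{estimsing Q(w)}. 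Dividing the resulting inequality by $\varepsilon$ and invoking the Dominated Convergence Theorem as $\varepsilon\to 0^+$, all characteristic functions $\chi_{A_x^-},\chi_{A_y^-},\chi_{B_x^-},\chi_{B_y^-}$ vanish in the limit, producing the one-sided variational inequality
$$\left<(u,v),(\psi_1,\psi_2)\right>-\frac{\theta}{\alpha+\beta}\!\int\! \alpha|u|^{\alpha-2}u\psi_1|v|^{\beta}+\beta|u|^{\alpha}|v|^{\beta-2}v\psi_2\,dx-\lambda\!\int\! a(x)|u|^{-p}\psi_1+b(x)|v|^{-q}\psi_2\,dx\ge 0$$
for every $(\psi_1,\psi_2)\in X$. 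Substituting $(-\psi_1,-\psi_2)\in X$ in place of $(\psi_1,\psi_2)$ yields the reverse inequality, and hence equality, which is precisely the weak formulation \eqref{equfracasingula}. The same argument applied to $(z,w)\in\mathcal{N}_{\lambda}^-$ via \eqref{IneqsolfracaN-} produces a weak solution there as well.

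The main obstacle is the careful bookkeeping of the nonlocal Gagliardo term under the truncation $(u+\varepsilon\psi_1)^+$, since the nonlocality intertwines the regions $A_x^+$ and $A_x^-$ in a non-trivial way. Fortunately, this is precisely the computation already executed in the proof of Proposition \ref{Neharilambda*}, so the present argument is essentially a verbatim adaptation with $\lambda^*$ replaced by $\lambda\in(0,\lambda^*)$ and the Nehari identity $E_{\lambda^*}''(u,v)(u,v)^2=0$ replaced by $E_\lambda'(u,v)(u,v)=0$. A secondary technical point is the passage to the limit in the singular term after division by $\varepsilon$, which is justified by dominated convergence using $u,v>0$ a.e. (so $u^{-p},v^{-q}$ are finite a.e.) together with the integrability hypotheses $a\in L^{2/(1+p)}$, $b\in L^{2/(1+q)}$ from $(P_0)$ and Sobolev embedding.
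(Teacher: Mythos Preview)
Your proposal is correct and follows essentially the same route as the paper: starting from the one-sided inequalities of Lemma \ref{minimizasing}, inserting the truncated test functions $((u+\varepsilon\psi_1)^+,(v+\varepsilon\psi_2)^+)$, replaying the Gagliardo-term bookkeeping from Step 3 of Proposition \ref{Neharilambda*} with the Nehari identity $E_\lambda'(u,v)(u,v)=0$ in place of $E_{\lambda^*}''(z,w)(z,w)^2=0$, dividing by $\varepsilon$, passing to the limit via dominated convergence, and then swapping $(\psi_1,\psi_2)\mapsto(-\psi_1,-\psi_2)$ to obtain equality. The paper's own proof is in fact briefer than yours, as it simply invokes Step 3 of Proposition \ref{Neharilambda*} by reference rather than re-describing the splitting into $A_x^\pm\times A_y^\pm$.
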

	\begin{proof}
		Firstly, we show that $(u, v) \in \mathcal{N}_{\lambda}^+$ is a weak solution for \eqref{sistema Principal singular}. The main idea here is apply the Step $2$ of Proposition \ref{Neharilambda*}. Let us consider $(\phi_1, \phi_2) \in X$ any fixed function. Define $\psi_1 = (u + \varepsilon\phi_1)^+ \ge 0$ and $\psi_2 = (v + \varepsilon\phi_2)^+ \ge 0$ where $\varepsilon > 0$. Hence, we obtain that \eqref{ineqsolsing} holds for $(\psi_1, \psi_2) \in X$. Now, using the same ideas employed in Step 3 for the proof of Proposition \ref{Neharilambda*}, we deduce that \eqref{IneqsolfracaN+} holds for $(\psi, \psi) \in X$. The reverse inequality is obtained using $(-\phi_1, -\phi_2) \in X$ as a test function. Hence, we obtain that
		\begin{eqnarray*}
			\left<(u, v), (\varphi_1, \varphi_2)\right> - \frac{\theta}{\alpha + \beta}\int  \alpha\vert u\vert^{\alpha - 2}u\varphi_1\vert v\vert^{\beta}+\beta\vert u\vert^{\alpha}\vert v\vert^{\beta-2}v\varphi_2dx 
			- \lambda_k \int  a(x)\vert u\vert^{-p}\varphi_1dx + b(x)\vert v\vert^{-q}\varphi_2dx = 0.
		\end{eqnarray*}
		Similarly, \eqref{IneqsolfracaN-} holds for any $(\phi_1, \phi_2) \in X$. This ends the proof.
	\end{proof}

	\section{Multiplicity of solutions for $\lambda = \lambda^*$}
	In the present section we shall consider our main problem assuming that $\lambda = \lambda^*$. In order to clarify the notation, for each $\lambda \in (0, \lambda^*)$ and given any $(u, v)\in X$, we shall use $t_{\lambda}^+(u, v)$ and $t_{\lambda}^-(u, v)$ instead of $t_n^+(u, v)$ and $t_n^-(u, v)$, respectively.
	
	\begin{lem}\label{Cn+e Cn-decresc}
		Suppose ($P_0$), ($P$), ($V_0$), and ($V_1'$). Let $(u, v) \in \mathcal{A}$ and $I \subset \mathbb{R}$ be an open interval such that $t_{\lambda}^+(u, v)$ and $t_{\lambda}^-(u, v)$ are well-defined for every $\lambda \in I$. Then, we obtain that
		\begin{itemize}
			\item[$a)$] The functionals $\lambda \mapsto t_{\lambda}^\pm(u, v)$ are $C^{1}(I, \mathbb{R})$. Moreover, $\lambda \mapsto t_{\lambda}^-(u, v)$ is decreasing and $\lambda \mapsto t_{\lambda}^+(u, v)$ is increasing.
			\item[$b)$] For each $(u, v) \in \mathcal{A}$, the functional $\lambda \mapsto E_{(u, v)}^+(\lambda) := E_{\lambda}(t_{\lambda}^+(u, v)(u, v))$ is $C^{1}(I, \mathbb{R})$ and decreasing.
			\item[$c)$] For each $(u, v) \in \mathcal{A}$, the functional $\lambda \mapsto E_{(u, v)}^-(\lambda) := E_{\lambda}(t_{\lambda}^-(u, v)(u, v))$ is $C^{1}(I, \mathbb{R})$ and decreasing.
		\end{itemize}
	\end{lem}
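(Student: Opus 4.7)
The plan is to handle part (a) via the Implicit Function Theorem, and parts (b) and (c) via a chain-rule computation that exploits the fact that $t_\lambda^\pm(u,v)$ are critical points of the fiber map $t\mapsto E_\lambda(tu,tv)$.

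For (a), fix $(u,v)\in\mathcal{A}$ and define $\Phi(t,\lambda)=R_n(tu,tv)-\lambda$ on $(0,\infty)\times I$. The function $t\mapsto Q_n(t)=R_n(tu,tv)$ is explicitly smooth, so $\Phi\in C^1$. By definition of $t_\lambda^\pm$ one has $\Phi(t_\lambda^\pm(u,v),\lambda)=0$, and by Proposition~\ref{tR_n'G' = E''sing} combined with the sign information in Proposition~\ref{tn-,tn+sing},
$$\partial_t\Phi(t_\lambda^\pm(u,v),\lambda)=\frac{E''_\lambda(t_\lambda^\pm u,t_\lambda^\pm v)(t_\lambda^\pm u,t_\lambda^\pm v)^2}{t_\lambda^\pm\,G(t_\lambda^\pm u,t_\lambda^\pm v)}$$
is strictly positive at $t_\lambda^+$ and strictly negative at $t_\lambda^-$. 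The IFT yields $\lambda\mapsto t_\lambda^\pm(u,v)\in C^1(I,\mathbb{R})$ together with
$$\frac{dt_\lambda^\pm}{d\lambda}=\frac{1}{\partial_t R_n(tu,tv)\big|_{t=t_\lambda^\pm}},$$
which is positive on the $+$ branch and negative on the $-$ branch. Monotonicity of $\lambda \mapsto t_\lambda^\pm$ follows at once.

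For (b) and (c), set $\gamma_{\lambda,(u,v)}(t):=E_\lambda(tu,tv)$, which is jointly smooth in $(t,\lambda)$ since
$$\gamma_{\lambda,(u,v)}(t)=\frac{t^2}{2}A(u,v)-\frac{\lambda\,t^{1-p}}{1-p}P(u)-\frac{\lambda\,t^{1-q}}{1-q}Q(v)-\frac{\theta\,t^{\alpha+\beta}}{\alpha+\beta}B(u,v).$$
By definition $E_{(u,v)}^\pm(\lambda)=\gamma_{\lambda,(u,v)}(t_\lambda^\pm(u,v))$, so the $C^1$ regularity from (a) and the chain rule give
$$\frac{d}{d\lambda}E_{(u,v)}^\pm(\lambda)=\partial_\lambda\gamma_{\lambda,(u,v)}(t_\lambda^\pm(u,v))+\gamma'_{\lambda,(u,v)}(t_\lambda^\pm(u,v))\,\frac{dt_\lambda^\pm}{d\lambda}.$$
The second term vanishes because $t_\lambda^\pm(u,v)$ is a critical point of $t\mapsto \gamma_{\lambda,(u,v)}(t)$ by Proposition~\ref{tn-,tn+sing}. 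The first term equals
$$-\frac{(t_\lambda^\pm(u,v))^{1-p}}{1-p}P(u)-\frac{(t_\lambda^\pm(u,v))^{1-q}}{1-q}Q(v)<0,$$
since $P(u),Q(v)>0$ on $\mathcal{A}$ by hypothesis $(P_0)$ (and $t_\lambda^\pm>0$). Hence both $E_{(u,v)}^\pm$ are of class $C^1$ on $I$ and strictly decreasing.

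The main delicate point is the non-vanishing of $\partial_t R_n(tu,tv)$ at $t=t_\lambda^\pm$, which is exactly where the non-degeneracy of the branches $\mathcal{N}_\lambda^\pm$ already established in Proposition~\ref{tn-,tn+sing} enters. Once that is in place, the IFT and the chain-rule argument avoid any difficulty coming from the fact that $E_\lambda$ is only continuous on $X$: along the ray $\mathbb{R}_+(u,v)$ the energy is a finite combination of (possibly fractional) powers of $t$, hence smooth in $t$ and in the parameter $\lambda$, so the singular nature of the problem plays no role in transverse directions here.
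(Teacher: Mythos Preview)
Your proof is correct and follows essentially the same route as the paper. The only cosmetic difference is that you apply the Implicit Function Theorem to $\Phi(t,\lambda)=R_n(tu,tv)-\lambda$, whereas the paper applies it to $F(\lambda,t,(u,v))=\gamma_\lambda'(t)$; these are equivalent formulations of the Nehari condition (via Remark~\ref{Rn lambda e E'sing}), and both lead to the same derivative formula and the same sign analysis for parts (b) and (c).
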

	\begin{proof}
		Define the function $F: (0, \infty)\times(0, \infty)\times X \to \mathbb{R}$ in the following form 
		$$F(\lambda, t, (u, v)) = \gamma_{\lambda}'(t) = E_{\lambda}'(t(u, v))(u, v) = t A(u, v) - \lambda t^{-p}P(u) - \lambda t^{-q}Q(v) - \theta t^{\alpha + \beta - 1}B(u, v)$$
		where $\lambda_i \in I$. Recall also that $t_{_{\lambda_i}}^+(u, v)$ is well-defined and $t_{_{\lambda_i}}^+(u, v)(u, v) \in \mathcal{N}_{\lambda_i}^+, i = 1,2$. Moreover,
		$$\frac{\partial F}{\partial t}(\lambda_i, t_{_{\lambda_i}}^+(u, v), (u, v)) = \gamma_{\lambda_i}''(t_{_{\lambda_i}}^+(u, v)) > 0 \,\,\mbox{and} \,\, F(\lambda_i, t_{_{\lambda_i}}^+(u, v), (u, v)) = \gamma_{\lambda_i}'(t_{_{\lambda_i}}^+(u, , v)) = 0.$$
		It follows from the Implicit Function Theorem (\cite[Theorem 4.2.1]{Dra}) that there exists a unique $\varphi(\lambda)$ such that $\varphi(\lambda) = t_{_{\lambda}}^+(u, v)$, $\lambda \in (\lambda_i - \varepsilon, \lambda_i + \varepsilon)$ e $\varphi \in C^{\infty}((\lambda_i - \varepsilon, \lambda_i + \varepsilon), \mathbb{R})$ for some $\varepsilon > 0$ and for each $\lambda$. Since $I\subset\mathbb{R}$ is an open and $\lambda_i$ is arbitrary, we conclude that $\varphi \in C^{1}(I, \mathbb{R})$. Furthermore, we mention that
		$$\frac{\partial \varphi}{\partial \lambda}(\lambda) 
		= \frac{\partial t_{_{\lambda}}^+}{\partial \lambda}(u, v) = - \frac{\frac{\partial F}{\partial \lambda}(\lambda, t_{_{\lambda}}^+(u, v), (u, v))}{\frac{\partial F}{\partial t}(\lambda, t_{_{\lambda}}^+(u, v), (u, v))}.$$
		As a consequence, we obtain that
		$$\frac{\partial t_{_{\lambda}}^+}{\partial \lambda}(u, v) =  \frac{(t_{_{\lambda}}^+(u, v))^{-p} P(u) + (t_{_{\lambda}}^+(u, v))^{-q} Q(v)}{\gamma_{\lambda}''(t_{_{\lambda}}^+(u, v))} > 0.$$
		Hence, the function $\lambda \mapsto t_{\lambda}^+(u, v)$ is increasing. Similarly, we have that $t_{\lambda}^-(u, v)$ is decreasing and $C^{1}(I, \mathbb{R})$. 
		
		Now, we shall prove that $\lambda \mapsto E_{(u, v)}^+(\lambda)$ is decreasing. More specifically, we analyze the derivative with respect to $\lambda$. Notice also that
		$E_{(u, v)}^+(\lambda) = E_{\lambda}(t_{_{\lambda}}^+(u, v)(u, v))$ with $(u, v) \in X.$ In particular, we infer that
		\begin{eqnarray*}
			E_{(u, v)}^+(\lambda) & = & \frac{(t_{_{\lambda}}^+(u, v))^2}{2}A(u, v) - \frac{\lambda}{1 - p}(t_{_{\lambda}}^+(u, v))^{1 - p}P(u) - \frac{\lambda}{1 - q}(t_{_{\lambda}}^+(u, v))^{1 - q}Q(v) - \frac{\theta}{\alpha + \beta}(t_{_{\lambda}}^+(u, v))^{\alpha + \beta}B(u, v).
		\end{eqnarray*}
		Hence, we obtain that
		\begin{eqnarray*}
			\frac{d E_{(u, v)}^+}{d\lambda}(\lambda) & = &t_{_{\lambda}}^+(u, v)\frac{\partial}{\partial \lambda}t_{\lambda}^+(u, v)A(u, v) - \lambda t_{\lambda}^+(u, v)^{-p}\frac{\partial}{\partial \lambda}t_{\lambda}^+(u, v)P(u) + \nonumber \\ 
			&&-\lambda t_{\lambda}^+(u, v)^{-q} \frac{\partial}{\partial \lambda}t_{\lambda}^+(u, v) Q(v) - \theta t_{\lambda}^+(u, v)^{\alpha + \beta - 1} \frac{\partial}{\partial \lambda}t_{\lambda}^+(u, v) B(u, v) + \nonumber \\
			&&- \left(\frac{(t_{_{\lambda}}^+(u, v))^{1 - p}}{1 - p}P(u) + \frac{(t_{_{\lambda}}^+(u, v))^{1 - q}}{1 - q}Q(v) \right) .
		\end{eqnarray*}
		As a product, we see that
		\begin{eqnarray*}
			\frac{d E_{(u, v)}^+}{d\lambda}(\lambda) & = &\frac{1}{t_{\lambda}^+(u, v)}\frac{\partial}{\partial \lambda}t_{\lambda}^+(u, v) \left (A(t_{\lambda}^+(u, v)(u, v)) - \lambda P(t_{\lambda}^+(u, v)u) + \right. \nonumber \\ 
			&&\left.-\lambda Q(t_{\lambda}^+(u, v)v) - \theta B(t_{\lambda}^+(u, v)(u, v)\right) - \left(\frac{(t_{_{\lambda}}^+(u, v))^{1 - p}}{1 - p}P(u) + \frac{(t_{_{\lambda}}^+(u, v))^{1 - q}}{1 - q}Q(v) \right).  
		\end{eqnarray*}
		Notice also that $t_{\lambda}^+(u, v)(u, v) \in \mathcal{N}_{\lambda}^+$. It follows also that
		$E_{\lambda}'(t_{\lambda}^+(u, v)(u, v))(t_{\lambda}^+(u, v)(u, v)) = 0.$
		Therefore, we infer that 
		\begin{eqnarray*}
			\frac{d E_{(u, v)}^+}{d\lambda}(\lambda) & = &- \left(\frac{(t_{_{\lambda}}^+(u, v))^{1 - p}}{1 - p}P(u) + \frac{(t_{_{\lambda}}^+(u, v))^{1 - q}}{1 - q}Q(v) \right) < 0.  
		\end{eqnarray*}
		As a consequence, $E_{(u, v)}^+(\lambda)$ is decreasing with respect to the parameter $\lambda$. Similarly, we have that $E_{(u, v)}^-(\lambda)$ is decreasing with respect to the parameter $\lambda$. Recall also $t_{\lambda}^\pm(u, v) > 0$ for $\lambda \in I$. Hence, we obtain that
		$$E_{(u, v)}^+(\lambda) = E_{_{\lambda}}(t_{_{\lambda}}^+(u, v)(u, v)) \;\;\;\; \mbox{and} \;\;\;\; E_{(u, v)}^- (\lambda) = E_{_{\lambda}}(t_{_{\lambda}}^-(u, v)(u, v)).$$ 
		As a consequence, we have that $\lambda \mapsto E_{(u, v)}^+$ and $\lambda \mapsto E_{(u, v)}^-$ are in $C^{1}(I, \mathbb{R}).$ This ends the proof. 
	\end{proof}
	
	In order to ensure that there exist two weak solutions to the System $(S_{\lambda^*})$ we use Proposition \ref{limCn-}. Let $\lambda \in (0, \lambda^*)$ be fixed. Consider $(u_{\lambda}, v_{\lambda}) \in \mathcal{N}_{\lambda}^+$ and $(z_{\lambda}, w_{\lambda}) \in \mathcal{N}_{\lambda}^-$ as weak solutions for the System $(S_{\lambda})$, see for instance Proposition \ref{solucaosing}. For simplicity, we shall use $(u_k, v_k)$ instead of $(u_{\lambda_k}, v_{\lambda_k})$. 
	
	\begin{prop}\label{limCn-}
		Suppose ($P_0$), ($P_1$), ($P$), ($V_0$), and ($V_1'$). Let $\tilde{\lambda} \in (0, \lambda^*]$. Suppose $(\lambda_k) \subset (0, \lambda^*)$ such that $\lambda_k \to \tilde{\lambda}$. Then, we obtain the following statements:\\
		\noindent a) It holds that the functional $\lambda \mapsto C_{\mathcal{N}_{\lambda}^\pm}$ is decreasing for $0 < \lambda \leq \lambda^*$.\\
		\noindent  b) The functionals $\lambda\! \mapsto\! (u_{\lambda}, v_{\lambda})$ and $\lambda \!\mapsto \!(z_{\lambda}, w_{\lambda})$ are continuous;\\ 
		\noindent  c) It holds that the functional $\lambda \mapsto C_{\mathcal{N}_{\lambda}^\pm}$ is left-continuous for $0 < \lambda < \lambda^*$.\\
		\noindent  d) There holds $\lim\limits_{\lambda \to \lambda^*} C_{\mathcal{N}_{\lambda}^\pm} = C_{\mathcal{N}_{\lambda^*}^\pm}$.    
	\end{prop}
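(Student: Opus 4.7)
The plan is to establish items (a)--(d) in order, leaning on the parametric regularity of the projection maps developed in Lemma~\ref{Cn+e Cn-decresc}.

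For \textbf{(a)}, I would fix $0 < \lambda_1 < \lambda_2 \le \lambda^*$ and pick a minimizer $(u_1, v_1)\in \mathcal{N}_{\lambda_1}^+\cap\mathcal{A}$ for $C_{\mathcal{N}_{\lambda_1}^+}$, available from Proposition~\ref{elemN+N-inf} together with $C_{\mathcal{N}_{\lambda_1}^+\cap\mathcal{A}} < 0$ from Proposition~\ref{CN+ negsig}. Since $\Lambda_n(u_1, v_1)\ge \lambda^*\ge \lambda_2 > \lambda_1$, Proposition~\ref{tn-,tn+sing} delivers the projection $t_{\lambda_2}^+(u_1, v_1)(u_1, v_1) \in \mathcal{N}_{\lambda_2}^+$ (the borderline case $\lambda_2 = \lambda^* = \Lambda_n(u_1,v_1)$ being reached at the end by the continuity produced in part (d)); the strict decrease of $\mu \mapsto E^+_{(u_1,v_1)}(\mu)$ from Lemma~\ref{Cn+e Cn-decresc}(b) then yields $C_{\mathcal{N}_{\lambda_2}^+}\le E_{\lambda_2}(t_{\lambda_2}^+(u_1,v_1)(u_1,v_1)) < C_{\mathcal{N}_{\lambda_1}^+}$. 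The analogous chain with Lemma~\ref{Cn+e Cn-decresc}(c) handles the Nehari minus case.

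For \textbf{(b)}, I would take $\lambda_k \to \lambda$ in $(0, \lambda^*)$ and observe that the coercivity bound \eqref{coerciva} is uniform for $\lambda$ varying in a compact subinterval, so the minimizing sequences $(u_{\lambda_k}, v_{\lambda_k})$ and $(z_{\lambda_k}, w_{\lambda_k})$ are bounded in $X$. After extracting weakly convergent subsequences with limits $(\hat u,\hat v)$ and $(\hat z,\hat w)$, I would mimic the convergence arguments of Propositions~\ref{convforte N^+sing} and~\ref{convfortN-}, with $\lambda_k \to \lambda$ entering only through continuous quantities and with the continuity of $\mu\mapsto t_\mu^\pm$ from Lemma~\ref{Cn+e Cn-decresc}(a) controlling the projection step. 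Strong convergence then follows and simultaneously gives $C_{\mathcal{N}_{\lambda_k}^\pm}\to C_{\mathcal{N}_\lambda^\pm}$. Item \textbf{(c)} is then immediate: monotonicity from (a) provides $\liminf_k C_{\mathcal{N}_{\lambda_k}^\pm}\ge C_{\mathcal{N}_\lambda^\pm}$ whenever $\lambda_k\nearrow \lambda <\lambda^*$, while evaluating along the realizer $(u_\lambda, v_\lambda)$ of $C_{\mathcal{N}_\lambda^+}$ with $t_{\lambda_k}^+(u_\lambda, v_\lambda)\to 1$ gives the matching upper bound $\limsup_k C_{\mathcal{N}_{\lambda_k}^+}\le C_{\mathcal{N}_\lambda^+}$.

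\textbf{Part (d)} is the main obstacle. For any $\lambda_k\to \lambda^*$ with $\lambda_k\in(0,\lambda^*)$, the monotone limits $L^\pm=\lim_k C_{\mathcal{N}_{\lambda_k}^\pm}$ exist by (a). I would take the weak solutions $(u_k, v_k)\in \mathcal{N}_{\lambda_k}^+$ and $(z_k, w_k)\in \mathcal{N}_{\lambda_k}^-$ produced by Proposition~\ref{solucaosing}, verify boundedness via the uniform coercivity argument used in (b), and pass to weak limits $(u_*, v_*)$ and $(z_*, w_*)$ which solve $(S_{\lambda^*})$ weakly. The crucial difficulty is to prevent these limits from collapsing into $\mathcal{N}_{\lambda^*}^0$: this is precisely where hypothesis $(P_1)$ enters, through Corollary~\ref{incompatibilidade}, which forbids any weak solution of $(S_{\lambda^*})$ from living in the degenerate set, thereby forcing the limits into $\mathcal{N}_{\lambda^*}^\pm$. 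Once the limits are properly placed, upgrading to strong convergence by the argument of Propositions~\ref{convforte N^+sing}--\ref{convfortN-} combined with a projection estimate as in (c) identifies $L^\pm = C_{\mathcal{N}_{\lambda^*}^\pm}$, closing the proof and, in passing, extending the strict decrease of (a) up to $\lambda = \lambda^*$.
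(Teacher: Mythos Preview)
Your overall strategy is sound and matches the paper's on parts (a), (c), and (d); in particular you correctly identify that hypothesis $(P_1)$ enters only through Corollary~\ref{incompatibilidade} to keep the limiting minimizers out of $\mathcal{N}_{\lambda^*}^0$. The one genuine methodological difference is in part (b). You propose to obtain strong convergence of $(u_{\lambda_k},v_{\lambda_k})$ by adapting the variational fibering-map contradiction of Propositions~\ref{convfortN-} and~\ref{convforte N^+sing} to a moving parameter $\lambda_k$, controlling the projection step via the $C^1$ regularity of $\mu\mapsto t_\mu^\pm$ from Lemma~\ref{Cn+e Cn-decresc}(a). The paper instead exploits that each $(u_{\lambda_k},v_{\lambda_k})$ is an actual weak solution of $(S_{\lambda_k})$: it tests the Euler--Lagrange identity \eqref{pontcriffunen} against $(u_{\lambda_k}-u_{\tilde\lambda},\,v_{\lambda_k}-v_{\tilde\lambda})$, uses H\"older on the coupling term and Fatou's lemma on the singular term, and reads off $\limsup_k\Vert(u_{\lambda_k}-u_{\tilde\lambda},v_{\lambda_k}-v_{\tilde\lambda})\Vert^2\le 0$. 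Your route is workable but more delicate, since the fibering argument needs $(u,v)\in\mathcal{A}$ \emph{before} strong convergence is known, and the sandwiching of energy levels with varying $\lambda_k$ must be done carefully to avoid circularity with (c); the paper's PDE-test-function route sidesteps both issues and yields strong convergence first, after which the membership in $\mathcal{N}_{\tilde\lambda}^\pm$ is checked a posteriori.
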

	\begin{proof}
		\textbf{a)} Firstly, we shall prove that $\lambda \mapsto C_{\mathcal{N}_{\lambda}^-}$ is decreasing. Now, for any
		$\lambda_1 < \lambda_2$, we infer that $E_{\lambda_2}(tu, tv) < E_{\lambda_1}(tu, tv)$ and $E_{\lambda_2}'(tu, tv)(u, v) < E_{\lambda_1}'(tu, tv)(u, v)$ for all $t > 0$.
		Consider $(u_i,v_i)$ in such way that
		$$C_{\mathcal{N}_{\lambda_1}^-} =  E_{\lambda_1}(t_{\lambda_1}^-(u_1, v_1)(u_1, v_1))\;\;\;\; \mbox{and}\;\;\;\;C_{\mathcal{N}_{\lambda_2}^-} =  E_{\lambda_2}(t_{\lambda_2}^-(u_2, v_2)(u_2, v_2)).$$
		Recall also that 
		$E_{\lambda_2}'(t(u_1, v_1))(u_1, v_1) < E_{\lambda_1}'(t(u_1,v_1))(u_1,v_1) < 0$ for each  $0 < t < t_{\lambda_1}^+(u_1,v_1)$ or $t > t_{\lambda_1}^-(u_1, v_1)$.
		Under these conditions, we infer that
		\begin{equation}\label{reltlamb}
			t_{\lambda_1}^+(u_1, v_1) < t_{\lambda_2}^+(u_1, v_1) < t_{\lambda_2}^-(u_1, v_1) < t_{\lambda_1}^-(u_1, v_1).
		\end{equation}
		Thus, we see that 
		\begin{eqnarray*}
			C_{\mathcal{N}_{\lambda_2}^-} = E_{\lambda_2}(t_{\lambda_2}^-(u_2, v_2)(u_2, v_2))& \le & E_{\lambda_2}(t_{\lambda_2}^-(u_1,v_1)(u_1,v_1)) <  E_{\lambda_1}(t_{\lambda_2}^-(u_1,v_1)(u_1,v_1))
			< E_{\lambda_1}(t_{\lambda_1}^-(u_1,v_1)(u_1,v_1)) = C_{\mathcal{N}_{\lambda_1}^-}.
		\end{eqnarray*}
		Here was used the fact that the fibering map $t \mapsto E_{\lambda_1}(tu_1, tv_1)$ is increasing over the interval $[t_{\lambda_1}^+(u_1, v_1), t_{\lambda_1}^-(u_1, v_1)]$. 
		
		Similarly, we deduce also that $\lambda \mapsto C_{\mathcal{N}_{\lambda}^+}$ is decreasing. In fact, the function $t \mapsto E_{\lambda_2}(tu_1,tv_1)$ is decreasing in the set $t \in (0, t_{\lambda_2}^+(u_1, v_1))$ and taking into account \eqref{reltlamb},  we deduce that
		\begin{eqnarray*}
			C_{\mathcal{N}_{\lambda_1}^+}  =  E_{\lambda_1}(t_{\lambda_1}^+(u_1, v_1)(u_1, v_1)) > E_{\lambda_2}(t_{\lambda_1}^+(u_1, v_1)(u_1, v_1)) >  E_{\lambda_2}(t_{\lambda_2}^+(u_1, v_1)(u_1, v_1)) >E_{\lambda_2}(t_{\lambda_2}^+(u_2, v_2)(u_2, v_2)) = C_{\mathcal{N}_{\lambda_2}^+}.
		\end{eqnarray*}
		
		Now, we shall prove the item \textbf{b)}. Define \( E:(0, +\infty) \times X \to \mathbb{R} \) given by \( E(\lambda, (u, v)) := E_{\lambda}(u, v) \). It is important to mention that \( E \) is continuous in the variable \( (u, v) \). Now, we shall verify that $\lambda \mapsto E_\lambda(u,v)$ is continuous. Consider a sequence $(\lambda_k)$ such that \( \lambda_k \to \lambda \). Hence, we are able to prove the following assertion:
		\begin{equation}\label{Econtinua}
			\vert E(\lambda_k, (u, v)) - E(\lambda, (u, v)) \vert \le \left|\lambda_k - \lambda\right| \left| \frac{1}{1 - p}P(u) + \frac{1}{1 - q}Q(v)\right| \to 0.  
		\end{equation}
		As a consequence, the functional $\lambda \mapsto E(\lambda, u, v)$ is continuous. Recall also that $(u_{\lambda_k}, v_{\lambda_k}) \in \mathcal{N}_{\lambda_k}^+$ and $(z_{\lambda_k}, w_{\lambda_k}) \in \mathcal{N}_{\lambda_k}^-$. Moreover, the functional $E_{\lambda_k}$ is coercive in the Nehari set. Therefore, $(u_{\lambda_k}, v_{\lambda_k})$ and $(z_{\lambda_k}, w_{\lambda_k})$ are bounded, see Proposition \ref{E coersivasing}. Furthermore, we observe that
		$\lambda \mapsto t_{\lambda}^+(u, v)\;\;\; \mbox{and}\;\;\; (\lambda, (u, v)) \mapsto E(\lambda, (u, v))$ are continuous functions.
		Notice also that $t_{\lambda_k}^+(u_{\tilde{\lambda}}, v_{\tilde{\lambda}})(u_{\tilde{\lambda}}, v_{\tilde{\lambda}}) \in \mathcal{N}_{\lambda_k}^+$. Hence, $C_{\mathcal{N}_{\lambda_k}^+} \le E_{\lambda_k}(t_{\lambda_k}^+(u_{\tilde{\lambda}}, v_{\tilde{\lambda}})(u_{\tilde{\lambda}} v_{\tilde{\lambda}})).$ Moreover, we obtain that 
		$$\limsup_{\lambda_k \to \tilde{\lambda}^-}C_{\mathcal{N}_{\lambda_k}^+} \le \limsup_{\lambda_k \to \tilde{\lambda}^-}E_{\lambda_k}(t_{\lambda_k}^+(u_{\tilde{\lambda}}, v_{\tilde{\lambda}})(u_{\tilde{\lambda}}, v_{\tilde{\lambda}})) = E_{\tilde{\lambda}}(t_{\tilde{\lambda}}^+(u_{\tilde{\lambda}}, v_{\tilde{\lambda}})(u_{\tilde{\lambda}}, v_{\tilde{\lambda}})).$$
		As a byproduct, we obtain that 
		\begin{equation}\label{cn+limtd}
			\limsup_{\lambda_k \to \tilde{\lambda}^-}C_{\mathcal{N}_{\lambda_k}^+} \le C_{\mathcal{N}_{\tilde{\lambda}}^+}.
		\end{equation}
		Now, by using Proposition \ref{E coersivasing}, we mention also that
		\begin{equation}\label{coerciva2}
			C_{\mathcal{N}_{\lambda_k}^+} = E_{_{\lambda_k}}(u_k, v_k) \ge  C_1 \Vert(u_k, v_k)\Vert^2 - C_2 \Vert (u_k, v_k) \Vert^{1 - p} - C_3\Vert (u_k, v_k) \Vert^{1 - q}.
		\end{equation}
		Therefore, $(u_k, v_k)$ is bounded in $X$. Similarly, $(z_k, w_k)$ is also bounded. In this way, $(u_k, v_k) \rightharpoonup (u_{\tilde{\lambda}}, u_{\tilde{\lambda}})$ for some
		$(u_{\tilde{\lambda}}, u_{\tilde{\lambda}}) \in X.$ It remains to prove that $(u_k, v_k) \to (u_{\tilde{\lambda}}, u_{\tilde{\lambda}})$ in $X$.
		Notice also that $(u_{\lambda_k}, v_{\lambda_k})$ is a weak solution for Systems \eqref{equfracasingula} with $\lambda = \lambda_k$.
		Analogously, by using the same ideas discussed in the proof of Proposition \ref{solucaosing}, we infer that
		\begin{eqnarray}\label{pontcriffunen}
			0 \! =\! \left<(u_{k}, v_{k}), (\varphi_1, \varphi_2)\right> \!-\!  \frac{\theta}{\alpha + \beta}\int \!\alpha\vert u_{k}\vert^{\alpha - 2}u_{k}\varphi_1\vert v_{k}\vert^{\beta}-\beta\vert u_{k}\vert^{\alpha}\vert v_{k}\vert^{\beta-2}v_{k}\varphi_2\! - \!\lambda_k \int\! a(x)\vert u_{k}\vert^{-p}\varphi_1- b(x)\vert v_{k}\vert^{-q}\varphi_2.
		\end{eqnarray}
		In particular, choosing $(\psi_1, \psi_2) = (u_{\lambda_k} - u_{\tilde{\lambda}}, v_{\lambda_k} - v_{\tilde{\lambda}})$, we see that
		\begin{eqnarray}\label{prodconvfortuv}
			\left<(u_{k}, v_{k}), (u_{k} - u_{\tilde{\lambda}}, v_{k} - v_{\tilde{\lambda}})\right> &=&  \frac{\theta}{\alpha + \beta}\int \alpha\vert u_{k}\vert^{\alpha - 2}u_{k}(u_{k} - u_{\tilde{\lambda}})\vert v_{k}\vert^{\beta}+\beta \vert u_{k}\vert^{\alpha}\vert v_{k}\vert^{\beta-2}v_{k}(v_{k} - v_{\tilde{\lambda}}) \nonumber\\ 
			&&   + \lambda_k \int a(x)\vert u_{k}\vert^{-p}(u_{k} - u_{\tilde{\lambda}})+ b(x)\vert v_{k}\vert^{-q}(v_{k} - v_{\tilde{\lambda}}).
		\end{eqnarray}
		Now, we shall analyze each term on the right-hand side of \eqref{prodconvfortuv}. In order to do that we mention 
		\begin{eqnarray*}
			\left | \int  \vert u_{k}\vert^{\alpha - 2}u_{k}(u_{k} - u_{\tilde{\lambda}})\vert v_{k}\vert^{\beta}dx\right | 
			&\le& \int  \vert u_{k}\vert^{\alpha - 2}\vert u_{k}\vert \vert(u_{k} - u_{\tilde{\lambda}})\vert \vert v_{k}\vert^{\beta}dx \leq \int  \vert u_{k}\vert^{\alpha - 1}\vert(u_{k} - u_{\tilde{\lambda}})\vert \vert v_{k}\vert^{\beta}dx.
		\end{eqnarray*}
		Hence, by applying Hölder's inequality with exponents $r_1 = \frac{\alpha + \beta}{\alpha - 1}$, $r_2 = \alpha + \beta$ and $r_3 = \frac{\alpha + \beta}{\beta}$, we ensure that 
		\begin{eqnarray*}
			\left|\int  \vert u_{k}\vert^{\alpha - 2}u_{k}(u_{k} - u_{\tilde{\lambda}})\vert v_{k}\vert^{\beta}dx\right| & \le & \Vert u_{k}\Vert^{\alpha - 1}_{\alpha + \beta}\Vert u_{k} - u_{\tilde{\lambda}}\Vert_{\alpha + \beta} \Vert v_{k}\Vert^{\beta}_{\alpha + \beta}.
		\end{eqnarray*}
		In view of hypothesis ($P$) we have that $2 < \alpha + \beta < 2^*_s$. Now, by using Lemma \ref{imersoes Sobolev 2, 2*s}, we have that $X\hookrightarrow\hookrightarrow L^{\alpha + \beta}(\mathbb{R}^N)$. Therefore, $\Vert u_{k} - u_{\tilde{\lambda}}\Vert_{\alpha + \beta} \to 0$ as $k \to \infty$.
		As a consequence, we obtain that
		\begin{equation}\label{ineqIuv}
			\limsup\limits_{k \to \infty} \left |\int  \vert u_{k}\vert^{\alpha - 2}u_{k}(u_{k} - u_{\tilde{\lambda}})\vert v_{k}\vert^{\beta}dx\right| = 0. 
		\end{equation}
		Similarly, we also mention that
		\begin{equation}\label{ineqIIuv}
			\limsup\limits_{k \to \infty} \left |\int  \vert u_{k}\vert^{\alpha}\vert v_{k}\vert^{\beta - 2}v_{k}(v_{k} - v_{\tilde{\lambda}})dx \right| = 0.
		\end{equation}
		Under these conditions, we estimate \eqref{prodconvfortuv} as follows:
		\begin{eqnarray*}
			\limsup\limits_{k \to \infty}\left<(u_{k}, v_{k}), (\Gamma_k(u), \Gamma_k(v))\right> & \le & \limsup\limits_{k \to \infty}\left[\lambda_k \int a(x)\vert u_{k}\vert^{-p}(\Gamma_k(u))  + b(x)\vert v_{k}\vert^{-q}(\Gamma_k(v)) dx\right],
		\end{eqnarray*}
		where $ \Gamma_k(u) = u_{k} - u_{\tilde{\lambda}}, \;\Gamma_k(v) = v_{k} - v_{\tilde{\lambda}}$. According to Proposition \ref{N-longe zerosing}, $u_{k}, v_{k} > 0$ in $\mathbb{R}^N$. Therefore, we see that 
		\begin{eqnarray*}
			\limsup\limits_{k \to \infty}\left<(u_{k}, v_{k}), (u_{k} - v_{\tilde{\lambda}}, v_{k} - v_{\tilde{\lambda}})\right> & \le & \limsup\limits_{k \to \infty}\lambda_k \int a(x)\vert u_{k}\vert^{1-p}dx  + \limsup\limits_{k \to \infty}\lambda_k \int  - a(x)\vert u_{k}\vert^{-p}u_{\tilde{\lambda}}dx \nonumber \\
			& & +\limsup\limits_{k \to \infty}\lambda_k \int b(x)\vert v_{k}\vert^{1 - q}dx+ \limsup\limits_{k \to \infty}\lambda_k \int  - b(x)\vert v_{k}\vert^{- q}v_{\tilde{\lambda}}dx.
		\end{eqnarray*}
		Furthermore, we obtain
		\begin{eqnarray*}
			\limsup\limits_{k \to \infty}\left<(u_{k}, v_{k}), (u_{k} - u_{\tilde{\lambda}}, v_{k} - v_{\tilde{\lambda}})\right> & \le & \limsup\limits_{k \to \infty}\lambda_k \int a(x)\vert u_{k}\vert^{1-p}dx -\liminf\limits_{k \to \infty}\lambda_k \int a(x)\vert u_{k}\vert^{-p}u_{\tilde{\lambda}}dx \nonumber \\
			&  & + \limsup\limits_{k \to \infty}\lambda_k \int b(x)\vert v_{k}\vert^{1 - q}dx  - \liminf\limits_{k \to \infty}\lambda_k \int b(x)\vert v_{k}\vert^{- q}v_{\tilde{\lambda}}dx.
		\end{eqnarray*}
		It follows from Fatou's Lemma and the last estimates that
		\begin{eqnarray*}
			\limsup\limits_{k \to \infty}\left<(u_{k}, v_{k}), (u_{k} - u_{\tilde{\lambda}}, v_{k} - v_{\tilde{\lambda}})\right> & \le & \tilde{\lambda} \int a(x)\vert u_{\tilde{\lambda}}\vert^{1-p}dx - \int \liminf\limits_{k \to \infty} \lambda_k 
			a(x)\vert u_{k}\vert^{-p}u_{\tilde{\lambda}}dx  \nonumber \\
			& & + \tilde{\lambda} \int b(x)\vert v_{\tilde{\lambda}}\vert^{1 - q}dx  - \int \liminf\limits_{k \to \infty} 
			\lambda_k b(x)\vert v_{k}\vert^{- q}v_{\tilde{\lambda}}dx .
		\end{eqnarray*}
		Therefore, we deduce that
		\begin{equation}\label{ineqIIIuv}
			\limsup\limits_{k \to \infty}\left<(u_{k}, v_{k}), (u_{k} - u_{\tilde{\lambda}}, v_{k} - v_{\tilde{\lambda}})\right>  \le 0. 
		\end{equation}
		Now, taking into account \eqref{ineqIuv}, \eqref{ineqIIuv} and \eqref{ineqIIIuv}, we infer that
		\begin{eqnarray*}
			\limsup\limits_{k \to \infty} \Vert(\Gamma_k(u), \Gamma_k(v))\Vert^2 & = & \limsup\limits_{k \to \infty}\left<(\Gamma_k(u), \Gamma_k(v)) , (\Gamma_k(u), \Gamma_k(v))\right> \nonumber \\
			& \le & \limsup\limits_{k \to \infty}\left<(u_{k}, v_{k}), (u_{k} - u_{\tilde{\lambda}}, v_{k} - v_{\tilde{\lambda}})\right> + \limsup\limits_{k \to \infty} \left(- \left<(u_{\tilde{\lambda}} , v_{\tilde{\lambda}}), (u_{k} - u_{\tilde{\lambda}}, v_{k} - v_{\tilde{\lambda}})\right>\right)\nonumber \\   &=&\limsup\limits_{k \to \infty}\left<(u_{k}, v_{k}), (u_{k} - u_{\tilde{\lambda}}, v_{k} - v_{\tilde{\lambda}})\right>  - \liminf\limits_{k \to \infty} \left<( u_{\tilde{\lambda}} , v_{\tilde{\lambda}}), (u_{k} - u_{\tilde{\lambda}}, v_{k} - v_{\tilde{\lambda}})\right>.
		\end{eqnarray*}
		On the other hand, by using that $(u_{k}, v_{k}) \rightharpoonup (u_{\tilde{\lambda}}, v_{\tilde{\lambda}})$
		and  \eqref{ineqIIIuv}, we see that
		$\limsup\limits_{k \to \infty} \Vert(u_{k} - u_{\tilde{\lambda}}, v_{k} - v_{\tilde{\lambda}})\Vert = 0.$
		As a consequence, $(u_{k},v_{k}) \to (u_{\tilde{\lambda}}, v_{\tilde{\lambda}})$ in $X$. The same argument can be applied for the sequence $(z_k, w_k)$. Therefore, $\lambda \mapsto (u_\lambda, v_\lambda)$ and $\lambda \mapsto (z_\lambda, w_\lambda)$ are continuous functions.
		
		Now, we shall prove the item \textbf{c)}. Here we need to show that $(u_{\tilde{\lambda}}, v_{\tilde{\lambda}}) \in \mathcal{N}_{\tilde{\lambda} }^+$. Recall that the functional $(\lambda, (u, v)) \mapsto E(\lambda, (u, v))$ is continuous. Then, for each sequence $\lambda_k \to \tilde{\lambda}$ and $(u_k, v_k) \to (u_{\tilde{\lambda}}, v_{\tilde{\lambda}})$, we deduce that
		$$\lim\limits_{k \to \infty}E(\lambda_k, (u_k, v_k)) = E(\tilde{\lambda}, (u_{\tilde{\lambda}}, v_{\tilde{\lambda}})) = E_{\tilde{\lambda}}(u_{\tilde{\lambda}}, v_{\tilde{\lambda}}).$$
		Now, by using \eqref{cn+limtd}, we infer that
		$$E_{\tilde{\lambda}}(u_{\tilde{\lambda}}, v_{\tilde{\lambda}}) = \lim\limits_{k \to \infty} E(\lambda_k, (u_k, v_k)) = \lim\limits_{k \to \infty} C_{\mathcal{N}_{\lambda_k}^+} \le C_{\mathcal{N}_{\tilde{\lambda}}^+} < 0.$$
		Hence, $(u_{\tilde{\lambda}}, v_{\tilde{\lambda}}) \ne (0, 0)$. Consider also
		$\lambda_k \to \tilde{\lambda} \in (0, \lambda^*)$, $(u_k, v_k) \to (u_{\tilde{\lambda}}, v_{\tilde{\lambda}})$ in $X$ and $(u_k, v_k)\in \mathcal{N}_{\lambda_k}^+$, $u_{\tilde{\lambda}} \ne 0$ and $v_{\tilde{\lambda}} \ne 0$ in such way that
		$$E(\lambda_k, (u_k, v_k)) = E_{\lambda_k}(u_k, v_k) = C_{\mathcal{N}_{\lambda_k}^+},\;\;\;\;\left\{\begin{array}{l}E_{\lambda_k}'(u_k, v_k)(u_k, v_k) = 0,\;\;\; E_{\tilde{\lambda}}'(u_{\tilde{\lambda}}, v_{\tilde{\lambda}})(u_{\tilde{\lambda}}, v_{\tilde{\lambda}}) = 0,\\E_{\lambda_k}''(u_k, v_k)(u_k, v_k)^2 > 0,\;\;\; E_{\tilde{\lambda}}''(u_{\tilde{\lambda}}, v_{\tilde{\lambda}})(u_{\tilde{\lambda}}, v_{\tilde{\lambda}})^2 \ge 0.\end{array}\right.$$
		Under these conditions, $(u_{\tilde{\lambda}}, v_{\tilde{\lambda}}) \in \mathcal{N}_{\lambda}^0 \cup \mathcal{N}_{\lambda}^+$. Now, for each $\tilde{\lambda} \in (0, \lambda^*)$, we have that $\mathcal{N}_{\tilde{\lambda}} = \emptyset$. Moreover, for $\tilde{\lambda} = \lambda^*$, by using Proposition \ref{incompatibilidade} together with \eqref{pontcriffunen}, we conclude that $(u_{\tilde{\lambda}}, v_{\tilde{\lambda}}) \notin \mathcal{N}_{\lambda^*}^0$. Hence, $(u_{\tilde{\lambda}}, v_{\tilde{\lambda}}) \in \mathcal{N}_{\tilde{\lambda}}^+$. 
		Summarizing, by using \eqref{cn+limtd}, we obtain that $\limsup\limits_{\lambda_k \to \tilde{\lambda}} C_{\mathcal{N}_{\lambda_k}^+} = C_{\mathcal{N}_{\tilde{\lambda}}^+}$. Here was used the fact that
		$$C_{\mathcal{N}_{\tilde{\lambda}}^+} \le E(\tilde{\lambda}, (u_{\tilde{\lambda}}, v_{\tilde{\lambda}})) = \limsup\limits_{\lambda_k \to \tilde{\lambda}} E(\lambda_k, (u_k, v_k)) = \limsup\limits_{\lambda_k \to \tilde{\lambda}} C_{\mathcal{N}_{\lambda_k}^+} \le C_{\mathcal{N}_{\tilde{\lambda}}^+}.$$
		
		Now, we shall prove item \textbf{d)}. In order to do that we consider a sequence $\lambda_k \to (\lambda^*)^-$. Therefore, $\lim\limits_{\lambda_k \to (\lambda^*)^-}C_{\mathcal{N}_{\lambda_k}^+} = C_{\mathcal{N}_{\lambda^*}^+}.$ Similarly, we deduce that $\lim\limits_{k \to \infty}C_{\mathcal{N}_{\lambda_k}^-} = C_{\mathcal{N}_{\lambda^*}^-}.$
		This finishes the proof.
	\end{proof}
	
	\begin{prop}\label{exist2solu}
		Assume $(P_0), (P_1), (P), (V_0)$ and $(V_1')$.Then, System $\!(S_{\lambda^*})\!$ has at least two solutions $\!(z_{*}, w_{*})\! \in\! \mathcal{N}_{\lambda^*}^-\!$ and $(u_{*}, v_{*})\! \in \!\mathcal{N}_{\lambda^*}^+$.
	\end{prop}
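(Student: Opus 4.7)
The plan is to produce the two solutions as limits of the solutions $(u_k,v_k)\in\mathcal{N}_{\lambda_k}^+$ and $(z_k,w_k)\in\mathcal{N}_{\lambda_k}^-$ constructed in Proposition \ref{solucaosing}, where $(\lambda_k)\subset(0,\lambda^*)$ is any sequence with $\lambda_k\to\lambda^*$. The continuity results of Proposition \ref{limCn-} (b)--(d) already give strong convergence $(u_k,v_k)\to(u_*,v_*)$ and $(z_k,w_k)\to(z_*,w_*)$ in $X$, together with $E_{\lambda^*}(u_*,v_*)=C_{\mathcal{N}_{\lambda^*}^+}$ and $E_{\lambda^*}(z_*,w_*)=C_{\mathcal{N}_{\lambda^*}^-}$. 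So the core of the argument is: (i) pass to the limit in the weak formulation \eqref{equfracasingula} at $\lambda=\lambda_k$; and (ii) rule out that either limit lies in the degenerate set $\mathcal{N}_{\lambda^*}^0$.

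For step (i), each $(u_k,v_k)$ satisfies the identity \eqref{pontcriffunen} (which is the weak formulation at parameter $\lambda_k$) for every $(\varphi_1,\varphi_2)\in X$. The linear term $\langle(u_k,v_k),(\varphi_1,\varphi_2)\rangle$ converges by strong convergence in $X$; the coupling term $\int\alpha|u_k|^{\alpha-2}u_k\varphi_1|v_k|^{\beta}+\beta|u_k|^{\alpha}|v_k|^{\beta-2}v_k\varphi_2\,dx$ converges by the Hölder/Sobolev estimates used in \eqref{ineqIuv}--\eqref{ineqIIuv}, exactly as in the proof of Proposition \ref{limCn-}. The delicate ingredient is the singular term $\lambda_k\int a(x)|u_k|^{-p}\varphi_1+b(x)|v_k|^{-q}\varphi_2\,dx$: by Proposition \ref{N-longe zerosing} and the fact that the ground-state solutions of Proposition \ref{solucaosing} are strictly positive a.e. (cf.\ Lemma \ref{minimizasing}), one has a.e.\ convergence $|u_k|^{-p}\to|u_*|^{-p}$ and $|v_k|^{-q}\to|v_*|^{-q}$. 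Combining this with a standard dominated-convergence argument based on the bounds in \eqref{bounded} lets one pass to the limit and conclude that $(u_*,v_*)$ and $(z_*,w_*)$ are weak solutions of $(S_{\lambda^*})$. Positivity a.e.\ in $\mathbb{R}^N$ follows from Proposition \ref{uvnaonulas}.

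For step (ii), the construction already places each limit in the closure of the corresponding signed Nehari set. By the weak-limit inequalities of Remark \ref{E E' E'' infer sem frac} applied at $\lambda=\lambda^*$, one gets $E'_{\lambda^*}(u_*,v_*)(u_*,v_*)=0$ together with $E''_{\lambda^*}(u_*,v_*)(u_*,v_*)^2\ge 0$, so $(u_*,v_*)\in\mathcal{N}_{\lambda^*}^+\cup\mathcal{N}_{\lambda^*}^0$; symmetrically $(z_*,w_*)\in\mathcal{N}_{\lambda^*}^-\cup\mathcal{N}_{\lambda^*}^0$. The key non-existence statement is Corollary \ref{incompatibilidade}, which says that no weak solution of $(S_{\lambda^*})$ can lie in $\mathcal{N}_{\lambda^*}^0$. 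Since step (i) has already established that both limits are weak solutions, they must lie respectively in $\mathcal{N}_{\lambda^*}^+$ and $\mathcal{N}_{\lambda^*}^-$, completing the proof.

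The main obstacle, in my view, is exactly this last exclusion: the argument hinges entirely on Corollary \ref{incompatibilidade}, whose proof used the extra integrability hypothesis $(P_1)$ to reach a contradiction with $a\notin L^1(\mathbb{R}^N)$. Without that tool one could only conclude that the limit belongs to $\mathcal{N}_{\lambda^*}^+\cup\mathcal{N}_{\lambda^*}^0$ (resp.\ $\mathcal{N}_{\lambda^*}^-\cup\mathcal{N}_{\lambda^*}^0$), and the whole strategy would collapse because at $\lambda=\lambda^*$ the Nehari set genuinely contains degenerate points (recall $\mathcal{N}_{\lambda^*}^0\ne\emptyset$). A secondary but routine technical point is the justification of the limit of the singular integrals, where one must use strictly that $u_k,v_k>0$ a.e., together with the bound \eqref{bounded}, to apply dominated convergence.
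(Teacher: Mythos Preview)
Your overall architecture matches the paper's: approximate by $\lambda_k\nearrow\lambda^*$, extract limits of the solutions $(u_k,v_k)\in\mathcal N_{\lambda_k}^+$ and $(z_k,w_k)\in\mathcal N_{\lambda_k}^-$, and invoke Corollary \ref{incompatibilidade} to exclude $\mathcal N_{\lambda^*}^0$. The use of Proposition \ref{limCn-} to obtain strong convergence and identification of the energy levels is also in line with the paper.

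The genuine gap is in your step (i), in the passage to the limit in the \emph{singular} term. You claim a ``standard dominated-convergence argument based on the bounds in \eqref{bounded}'' yields convergence of $\lambda_k\int a(x)|u_k|^{-p}\varphi_1+b(x)|v_k|^{-q}\varphi_2\,dx$ for arbitrary $(\varphi_1,\varphi_2)\in X$. But \eqref{bounded} only controls $\int a(x)|u_k|^{1-p}dx$, not $\int a(x)|u_k|^{-p}|\varphi_1|\,dx$ for a general test function; there is no $L^1$ dominator for $a(x)|u_k|^{-p}|\varphi_1|$ since $|u_k|^{-p}$ is unbounded and the sequence need not be uniformly bounded away from zero. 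So dominated convergence does not apply here.

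The paper handles this differently and this is the substantive content of its proof. For $\varphi_1,\varphi_2\ge 0$ it applies Fatou's lemma to obtain only the one-sided estimate
\[
\lambda^*\int a|z_*|^{-p}\varphi_1+b|w_*|^{-q}\varphi_2\,dx\ \le\ \liminf_{k}\lambda_k\int a|z_k|^{-p}\varphi_1+b|w_k|^{-q}\varphi_2\,dx,
\]
which, combined with convergence of the other terms, gives the \emph{inequality} $E_{\lambda^*}'(z_*,w_*)(\varphi_1,\varphi_2)\ge 0$ for nonnegative test functions. It then upgrades this to the full weak identity by inserting $(\phi_1,\phi_2)=\big((z_*+\varepsilon\psi_1)^+,(w_*+\varepsilon\psi_2)^+\big)$ and replaying the estimates of Step~3 of Proposition \ref{Neharilambda*} (exploiting $(z_*,w_*)\in\mathcal N_{\lambda^*}^-\cup\mathcal N_{\lambda^*}^0$, so that $E_{\lambda^*}''(z_*,w_*)(z_*,w_*)^2\le 0$), finally taking $(-\psi_1,-\psi_2)$ to get equality. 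Your proposal skips exactly this mechanism; once you replace the dominated-convergence claim by the Fatou-plus-test-function argument, the rest of your outline goes through.
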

	\begin{proof}
		Firstly, we shall prove that there exists a solution \((z_{*}, w_{*}) \in \mathcal{N}_{\lambda^*}^-\) for System \((S_{\lambda^*})\). Define a sequence \(\{\lambda_k\} \subset (0, \lambda^*)\) such that \(\lambda_k \to (\lambda^*)^-\) as \(k \to \infty\). For each \(k \in \mathbb{N}\), by using Propositions \ref{elemN+N-inf} and \ref{solucaosing}, there exists a solution \((z_{k}, w_{k}) \in \mathcal{N}_{\lambda_k}^-\) for the System \((S_{\lambda_k})\). Now, we claim that \((z_{k}, w_{k})\) is bounded. Otherwise, up to a subsequence, \(\Vert(z_{k}, w_{k})\Vert \to \infty\) as \(k \to \infty\). Under these conditions, we obtain that 
		\begin{equation}\label{Elambdaksing}
			E_{\lambda_k}(z_{k}, w_{k}) = \frac{1}{2}A(z_{k}, w_{k}) - \frac{\lambda_k}{1 - p} P(z_{k}) - \frac{\lambda_k}{1 - q} Q(w_{k}) - \frac{\theta}{\alpha + \beta}B(z_{k}, w_{k}). 
		\end{equation}
		Now, by using \( E_{\lambda_k}'(z_k, w_k)(z_k, w_k) = 0 \) and \eqref{Elambdaksing}, we rewrite the last identity as follows:
		\begin{eqnarray*}
			E_{\lambda_k}(z_{k}, w_{k})&=& \left(\frac{1}{2} - \frac{1}{\alpha + \beta}\right)A(z_{k}, w_{k}) - \lambda_k\left(\frac{1}{1 - p} - \frac{1}{\alpha + \beta} \right)P(z_{k})  -\lambda_k\left(\frac{1}{1 - q} - \frac{1}{\alpha + \beta} \right) Q(w_{k}).
		\end{eqnarray*}
		Hence, by using the Hölder inequality, the Sobolev embedding and the ideas employed in \eqref{coerciva}, we deduce that 
		$$C_{\mathcal{N}_{\lambda_k}^-} = E_{\lambda_k}(z_{k}, w_{k}) \ge  C_1\Vert(z_{k}, w_{k})\Vert^2 - C_2\Vert (z_{k},w_{k})\Vert^{1 - p} - C_3 \Vert(z_{k},w_{k})\Vert^{1 - q}.$$
		It follows from Lemma \ref{Cn+e Cn-decresc} and Proposition \ref{limCn-} that
		$\lambda\mapsto C_{\mathcal{N}_{\lambda}^-}$ is bounded and decreasing. Notice also that the functional \( E_{\lambda_k} \) is coercive. Therefore,  $(z_{k}, w_{k})$ is bounded in $X$. Furthermore, $C_{\mathcal{N}_{\lambda^*}^-}:= \lim\limits_{\lambda_k \uparrow \lambda^*}C_{\mathcal{N}_{\lambda_k}^-}.$
		In this way, there exists \((z_{*}, w_{*})\) in $X$ such that \((z_{k}, w_{k}) \rightharpoonup (z_{*}, w_{*})\) in $X$.

		At this stage, we shall guarantee that $z_{*}, w_{*}$ are positive functions. To begin with, by using Proposition \ref{solucaosing}, for any \((\varphi_1, \varphi_2) \in X\), we infer that
		\begin{eqnarray*}
			\left<(z_{k}, w_{k}), (\varphi_1, \varphi_2)\right> -  \frac{\theta}{\alpha + \beta}\int  \alpha\vert z_{k}\vert^{\alpha - 2}z_{k}\varphi_1\vert w_{k}\vert^{\beta} -\beta\vert z_{k}\vert^{\alpha}\vert w_{k}\vert^{\beta-2}w_{k}\varphi_2dx\;=\; \lambda_k \int  a(x)\vert z_{k}\vert^{-p}\varphi_1+ b(x)\vert w_{k}\vert^{-q}\varphi_2dx.&&\nonumber
		\end{eqnarray*}
		In particular, taking \(\varphi_1, \varphi_2 \ge 0\) and applying the Fatou's Lemma, we obtain that
		\begin{eqnarray*}
			\lambda^* \left(\int  a(x)\vert z_{*}\vert^{-p}\varphi_1dx + \int  b(x)\vert w_{*}\vert^{-q}\varphi_2dx\right)\leq\liminf\limits_{k \to \infty}\lambda_k \int  a(x)\vert z_{k}\vert^{-p}\varphi_1dx + \liminf\limits_{k \to \infty}\lambda_k \int  b(x)\vert w_k\vert^{-q}\varphi_2dx&& \\
			\le\liminf\limits_{k \to \infty}\left(\left<(z_{k}, w_{k}), (\varphi_1, \varphi_2)\right> - \theta \frac{\alpha}{\alpha + \beta}\int  \vert z_{k}\vert^{\alpha - 2}z_{k}\varphi_1\vert w_{k}\vert^{\beta}dx- \theta\frac{\beta}{\alpha + \beta}\int  \vert z_{k}\vert^{\alpha}\vert w_{k}\vert^{\beta-2}w_{k}\varphi_2dx \right) < \infty.&&
		\end{eqnarray*}
		Define the following auxiliary functions
		\begin{equation}
			G_1(x) =
			\left \{
			\begin{array}{cc}
				z_{*}^{-p}(x), & \mbox{if} \;\;\; z_{*}(x) \ne 0; \\
				\infty, & \mbox{if} \;\;\; z_{*}(x) = 0.
			\end{array}
			\right.
			\;\;\;\;\;\;\;\;\;\;
			G_2(x) =
			\left \{
			\begin{array}{cc}
				w_{*}^{-q}(x), & \mbox{if} \;\;\; w_{*}(x) \ne 0; \\
				\infty, & \mbox{if} \;\;\; w_{*}(x) = 0.
			\end{array}
			\right.
		\end{equation}
		Hence, 
		$0 < \lambda^* \int  a(x)\vert z_{*}\vert^{-p}\varphi_1dx + \lambda^* \int  b(x)\vert w_{*}\vert^{-q}\varphi_2dx < \infty.$
		In particular, $z_{*},w_{*} > 0$ a.e. in $\mathbb{R}^N$ and $(z_{*}, w_{*}) \in \mathcal{A}.$
		
		It remains to prove the strong convergence. The main idea here is to repeat the procedure discussed in the proof of Proposition \ref{limCn-}. In fact, we assume that $\tilde{\lambda} = \lambda^*$ and using as test function $(\phi_1,\phi_2) = (z_{k} - z_{*}, w_{k} - w_{*})$, we obtain
		\begin{equation}\label{pertNeha}
			\gamma_{\lambda^*,(z_{*}, w_{*})}^{'} (1) = \lim\limits_{k \to \infty}\gamma_{\lambda_k,(z_{k}, w_{k})}^{'}(1) = 0;\;\;\;\;\;\;\;\;\;\;  
			\gamma_{\lambda^*,(z_{*}, w_{*})}^{''}(1) = \lim\limits_{k \to \infty}\gamma_{\lambda_k,(z_{k}, w_{k})}^{''}(1) \le 0.  
		\end{equation}
		Thus we have that  $(z_{*}, w_{*}) \in \mathcal{N}_{_{\lambda^*}}^-$ or $( z_{*}, w_{*}) \in \mathcal{N}_{_{\lambda^*}}^0$. Recall also that
		\begin{eqnarray*}
			\left<(z_{k}, w_{k}), (\phi_1, \phi_2)\right> - \theta \frac{\alpha}{\alpha + \beta}\int \vert z_{k} \vert^{\alpha -2}z_{k} \phi_1\vert w_{k}\vert^{\beta}dx && \nonumber\\- \theta\frac{\beta}{\alpha + \beta}\int  \vert z_{k} \vert^{\alpha}\vert w_{k}\vert^{\beta - 2}w_{k}\phi_2dx 
			- \lambda_k\left( \int  a(x)\vert z_{k} \vert^{-p}\phi_1dx +  \int  b(x)\vert w_{k} \vert^{-q}\phi_2dx \right)= 0.&&
		\end{eqnarray*}
		holds for all $(\phi_1, \phi_2) \in X$. In particular, for each $(\phi_1, \phi_2) \in X_+$, we see that
		\begin{eqnarray*}
			\left<(z_{*}, w_{*}), (\phi_1, \phi_2)\right> - \theta \frac{\alpha}{\alpha + \beta} \int \vert z_{*} \vert^{\alpha -2}z_{*} \phi_1\vert w_{*}\vert^{\beta}dx - \theta \frac{\beta}{\alpha + \beta} \int \vert z_{*} \vert^{\alpha}\vert w_{*}\vert^{\beta - 2}w_{*}\phi_2dx  &&\nonumber \\
			= \liminf\limits_{k \to \infty}\left[ \lambda_k\int\left( a(x)\vert z_{k} \vert^{-p}\phi_1 + b(x)\vert w_{k} \vert^{-q}\phi_2\right)dx\right]
			\geq \lambda^*\int\left( a(x)\vert z_* \vert^{-p}\phi_1 + b(x)\vert w_* \vert^{-q}\phi_2\right)dx.
		\end{eqnarray*}
		Hence, we obtain that
		\begin{eqnarray*}
			\left<(z_{*}, w_{*}), (\phi_1, \phi_2)\right> - \theta \frac{\alpha}{\alpha + \beta} \int \vert z_{*} \vert^{\alpha -2}z_{*} \phi_1\vert w_{*}\vert^{\beta}dx - \theta \frac{\beta}{\alpha + \beta} \int \vert z_{*} \vert^{\alpha}\vert w_{*}\vert^{\beta - 2}w_{*}\phi_2dx  &&\nonumber \\
			- \lambda^* \int \left(\; a(x)\vert z_* \vert^{-p}\phi_1+ b(x)\vert w_* \vert^{-q}\phi_2\right)dx \ge 0
		\end{eqnarray*}
		is satisfied for each $(\phi_1, \phi_2) \in X$, $\phi_1, \phi_2 > 0$. Now,  choosing $(\phi_1, \phi_2) = \left((z_{*} + \varepsilon\psi_1)^+, (w_{*} + \varepsilon\psi_2)^+\right)$ , $(\psi_1, \psi_2) \in X$ as a test function, we deduce that
		\begin{eqnarray*}
			\left<(z_{*}, w_{*}), ((z_{*}\! +\! \varepsilon\psi_1)^+, (w_{*}\! +\! \varepsilon\psi_2)^+)\right> - \frac{\theta}{\alpha + \beta} \int \alpha\vert z_{k} \vert^{\alpha -2}z_{*} (z_{*} \!+\! \varepsilon\psi_1)^+\vert w_{*}\vert^{\beta}+\beta\vert z_{*} \vert^{\alpha}\vert w_{k}\vert^{\beta - 2}w_{*}(w_{*} \!+\!\varepsilon\psi_2)^+dx 
			\\
			- \lambda^*\int \left(\;a(x)\vert z_{*} \vert^{-p}(z_{*} \!+ \!\varepsilon\psi_1)^+ +  b(x)\vert w_{*} \vert^{-q}(w_{*}\! + \!\varepsilon\psi_2)^+\;\right)dx \ge 0.
		\end{eqnarray*} 
		Notice also that $(z_{*}, w_{*}) \in \mathcal{N}_{\lambda^*}^- \cup \mathcal{N}_{\lambda^*}^0$. Then, $E''_{\lambda^*}(z_{*}, w_{*})(z_{*}, w_{*})^2 \le 0$. Hence, by using the same estimates as was done in Step 3 of Proposition
		\ref{Neharilambda*}, we prove that \eqref{desimpsing2} is satisfied. Now, by dividing last estimate by $\varepsilon$, we deduce that
		\begin{eqnarray*}
			\left<(z_{*}, w_{*}), (\psi_1, \psi_2)\right> -  \frac{\theta}{\alpha + \beta} \int \left(\alpha\vert z_{*} \vert^{\alpha -2}z_{*} \psi_1\vert w_{*}\vert^{\beta} +\beta\vert z_{*} \vert^{\alpha}\vert w_{*}\vert^{\beta - 2}w_{*}\psi_2\right)dx &&\\
			- \lambda^*\int \left(a(x)\vert z_{*} \vert^{-p}\psi_1 +b(x)\vert w_{*} \vert^{-q}\psi_2\right)dx \geq 0.
		\end{eqnarray*}
		Now, using $(-\psi_1, -\psi_2) \in X$ as testing function, we obtain that $(z_{*}, w_{*})$ is a weak solution for the System ($S_{\lambda^*}$). Furthermore, by using Corollary \ref{incompatibilidade}, does not exist any weak solution of ($S_{\lambda^*}$) in $\mathcal{N}_{\lambda^*}^0$. Hence, any nontrivial solution for the System ($S_{\lambda^*}$) belongs to $\mathcal{N}_{\lambda^*}^-$ or $\mathcal{N}_{\lambda^*}^+$.
		
		At this stage, by using Propositions \ref{convfortN-} and \ref{limCn-}, we see that
		$$E_{\lambda^*}(z_{*}, w_{*}) = \lim\limits_{k \to \infty} E_{\lambda_k}(z_{_k}, w_{k}) = \lim\limits_{k\to \infty}C_{\mathcal{N}_{\lambda_k}^-} = C_{\mathcal{N}_{\lambda^*}^-}.$$
		Note that $(z_{*}, w_{*}) \in \mathcal{N}_{\lambda^*}^-$ is a global minimum of $E_{\lambda^*}$ restricted to $\mathcal {N}_{\lambda^*}^-$.
		
		In order to show the existence of a second solution for the System ($S_{\lambda^*}$), we proceed in a similar way. Consider a sequence $\{\lambda_k\} \subset (0, \lambda^*)$, such that $\lambda_k \to \lambda^*$ and $\{(u_{k}, v_{k})\}\subset\mathcal{N}_{_{\lambda_k}}^+$. Therefore, $(u_{k}, v_{k})$ is bounded in $X$ and there exist $u_{*}, v_{*} > 0$ where $(u_{*}, v_{*}) \in \mathcal{N}_{\lambda^*}^+\cup \mathcal{N}_{\lambda^*}^0$ such that $(u_{k}, v_{k}) \to (u_{*}, v_{*})$ in $X$. Furthermore, $(u_{k}, v_{k})$ is a solution for the System ($S_{\lambda^*}$). Hence, we obtain that $(u_{*}, v_{*}) \in \mathcal{A}$, $\gamma_{\lambda^*, (u_{*}, v_{*})}^{'}(1) = 0$ and $\gamma_{\lambda^*, (u_{*}, v_{*})}^{''}(1) \ge 0$. Now, using the same ideas discussed just above, we show that $(u_{*}, v_{*}) \in \mathcal{N}_{\lambda^*}^+$. Under these conditions, we infer that $(u_{*}, v_{*}) \in \mathcal{N}_{\lambda^*}^+$ is a global minimum for the functional $E_{\lambda^*}$ restricted to $\mathcal{N}_{\lambda^*}^+\cup \mathcal{N}_{\lambda^*}^0.$ Thus, by using Corollary \ref{incompatibilidade}, we deduce that $(u_{*}, v_{*})\notin \mathcal{N}_{\lambda^*}^0$. This ends the proof. 
	\end{proof}
	\section{The proof of main theorems}
	\noindent {\bf \it The proof of Theorem \ref{teor principal 01sing}}. Firstly, by using  Propositions \ref{elemN+N-inf} and \ref{uvnaonulas}, for each $\lambda \in (0, \lambda^*)$, there exists a positive weak solution $(u, v) \in \mathcal{N}^+_{\lambda}$ for the minimization problem \eqref{C^N+sing}. According to Proposition \ref{solucaosing}, we know that $(u, v)$ is a weak solution for our main problem. Furthermore, by using Proposition \ref{CN+ negsig}, we obtain $E_{\lambda}(u, v) = C_{\mathcal{N}^+_{\lambda}\cap \mathcal{A}} \le E_{\lambda}(t_n^+(u, v)(u, v)) = C_{\lambda} < 0.
	\;\;\square$\\
	
	\noindent {\bf \it The proof of Theorem \ref{teor principal 02sing}}.
	\textbf{(i)} In view of Proposition \ref{elemN+N-inf} there exists $(z, w) \in \mathcal{N}^-_{\lambda}$ such that $C_{\mathcal{N}^-_{\lambda } \cap \mathcal{A}} = E_{\lambda}(z, w)$. Recall that there is an extreme value $\lambda^* > 0$ such that for all $\lambda \in (0, \lambda^*)$ we obtain that $(z, w)$ is a weak solution for our main problem. Therefore, using Proposition and \eqref{uvnaonulas}, $z > 0$ and $w > 0.$\\
	\textbf{(ii)} For each $\lambda \in (0, \lambda^*)$ we have that $t_n^-(z, w) = 1 > t_e(z, w)$. Thus, $\lambda = R_n(z, w) = R_n(t_n^-(z, w)(z, w) < R_e(t_n^-(z, w)(z, w) = R_e(z, w).$ Hence, $E_{\lambda}(z, w) > 0$ and $C_{\mathcal{N}_{_\lambda}^-} = E_{\lambda}(z, w ) > 0$.\\
	\textbf{(iii)} Assume that $\lambda = \lambda_*$. Hence,  we deduce that  $E_{\lambda}(z,w) = 0$.  Since $\lambda_*= \Lambda_e(z, w) = R_e(t_e(z, w)(z, w)) = R_e(t_n^- (z, w)(z, w)) = R_e(z, w).$\\
	\textbf{(iv)} Let $\lambda \in (\lambda_*, \lambda^*)$ be fixed. Consider $(u, v) \in \mathcal{A}$. Then, there exist two roots for the equation $Q_n(t) = \lambda$, see Proposition \ref{tn-,tn+sing}. Furthermore, we know that $t_n^-(u, v)(u, v) \in \mathcal{N}^-_{\lambda}$. Thus, we have that $\lambda_* \le \Lambda_e (u, v) = R_e(t_e(u, v)(u, v))$. Notice that $t_n^-(u, v) \in (0 , t_e(u, v))$. Moreover, we have that $R_e(t_n^-(u, v)(u, v) < R_n(t_n^-(u, v)(u, v) = \lambda$ . Therefore,  $E_{\lambda}(t_n^-(u, v)(u, v) < 0$. Under these conditions, $C_{\mathcal{N}^-_{_\lambda}} \le E_{\lambda}(t_n^-(u, v)(u, v)) < 0$. This ends the proof. 
	$\;\;\square$\\
	
	\noindent{\it The proof of Theorem \ref{teor principal 03cap2}}.
	Consider $\lambda \in (0, \lambda^*)$. According to Proposition \ref{solucaosing} we obtain two weak solutions of the System \eqref{sistema Principal singular} given by $(u, v) \!\in \!\mathcal {N}_{\lambda}^+$ and $(z, w)\! \in\! \mathcal{N}_{\lambda}^-$. Now,  applying Proposition \ref{uvnaonulas}, we obtain that $u, v, z$ and $w$ are strictly positive a.e. in $\Omega.$ $\;\;\square$\\

	\noindent {\it The proof of Theorem \ref{teor principal 04cap2}}.
	Assume that $\lambda = \lambda^*$. Consider a sequence $\lambda_k \to (\lambda^*)^-,k\to\infty$. Hence, there exist two weak solutions for  $(S_{\lambda_k})$ given by $(u_k, v_k) \in \mathcal{N}_{\lambda}^+$ and $(z_k, w_k) \in \mathcal{N}_{\lambda}^-$. The main idea is to guarantee that the limit of the sequences of solutions converges for weak solutions for the System $S_{\lambda^*}$. Recall that  $\mathcal{N}_{\lambda^*}^0 \ne \emptyset$. Now, by using Proposition \ref{limCn-} item \textbf{b)}, we infer that the sequences of solutions strongly converges. Namely, we obtain that the limits $(u_k, v_k) \to (u_*, v_*)$ and $(z_k, w_k) \to (z_*, w_*)$ in $X$ which are minimizers in $\mathcal{N}_{\lambda^*}^+$ and $\mathcal{N}_{\lambda^*}^-$, respectively.
	In view of Corollary \ref{incompatibilidade} we deduce that also that $(u_*, v_*)$ and $(z_*, w_*)$ does not belong to $\mathcal{N}_{\lambda^*}^0$. Furthermore, by using Proposition \ref{exist2solu}, we show that $(u_*, v_*)$ and $(z_*, w_*)$ are weak solutions for the System $(S_{\lambda^*})$. Under these conditions, applying the Proposition \ref{uvnaonulas}, we obtain that $u_*, v_*, z_*$ and $w_*$ are strictly positive. $\square$ 
	
\section{\textbf{Declarations}}

\textbf{Ethical Approval}

It is not applicable.

\textbf{Competing interests}

There are no competing interests.

\textbf{Authors' contributions}

All authors wrote and reviewed this paper.

\textbf{Funding}

CNPq/Brazil with grant 309026/2020-2.

\textbf{Availability of data and materials}

All the data can be accessed from this article.


\end{document}